\documentclass[a4paper]{amsart}

\usepackage{amssymb,latexsym}

\author[W.\thinspace{}Kim]{Wansu Kim}
\address{Wansu Kim\\%
Department of Mathematics\\%
Korea Institute of Advanced Study\\%
85 Hoegi-ro\\%
Seoul, 02455\\%
South Korea}
\email{wansukim@kias.re.kr}

\usepackage[active]{srcltx}
\usepackage{amsmath}
\usepackage{amsfonts}

\usepackage{amsthm}
\usepackage{amscd}
\usepackage{mathrsfs}

\usepackage{fancyhdr}
\usepackage{graphicx}
\usepackage{psfrag}
\usepackage{calc}
\usepackage{url}

\usepackage{pb-diagram,pb-xy}

\usepackage{times}
\usepackage{verbatim}
\usepackage[cmtip,arrow,matrix,curve,tips,frame]{xy}
\usepackage{hyperref}
\xyoption{matrix}

\usepackage{ifthen}
\usepackage{color}
\usepackage{amsxtra}
\usepackage{amstext}
\usepackage{amssymb}
\usepackage{stmaryrd}
\usepackage{mathrsfs}
\usepackage{epsfig}
\usepackage{pifont}
\usepackage{charter}
\usepackage{avant}
\usepackage{courier}
\usepackage[T1]{fontenc}
\usepackage{textcomp}
\usepackage{bbm}

\setcounter{secnumdepth}{4}
\setcounter{tocdepth}{1}





%

%
 
\numberwithin{equation}{subsection}

\theoremstyle{plain}
\newtheorem{thm}[subsection]{Theorem}
\newtheorem*{thm*}{Theorem}

\newtheorem{thmsub}[equation]{Theorem}

\newtheorem*{exthm*}{Expected Theorem}

\newtheorem{lemsub}[equation]{Lemma}
\newtheorem*{lem*}{Lemma}

\newtheorem{prop}[subsection]{Proposition}

\newtheorem{propsub}[equation]{Proposition}
\newtheorem*{prop*}{Proposition}

\newtheorem{corsub}[equation]{Corollary}
\newtheorem*{cor*}{Corollary}

\newtheorem*{claim*}{Claim}

\newtheorem*{conj*}{Conjecture}

\theoremstyle{definition}
\newtheorem{defn}[subsection]{Definition}
\newtheorem*{defn*}{Definition}
\newtheorem{defnsub}[equation]{Definition}

\newtheorem*{ques*}{Question}

\newtheorem{exasub}[equation]{Example}
\newtheorem*{exa*}{Example}

\theoremstyle{remark}

\newtheorem{rmksub}[equation]{Remark}
\newtheorem*{rmk*}{Remark}

\numberwithin{figure}{subsection}
\numberwithin{table}{subsection}

%





\newcounter{listnum}


%



\newcommand{\BE}{\mathbb{E}}
\newcommand{\BX}{\mathbb{X}}

\DeclareMathOperator{\pr}{pr}

\DeclareMathOperator{\rank}{rank}
\DeclareMathOperator{\rk}{rk}

\DeclareMathOperator{\id}{id}

\newcommand{\rig}{\mathrm{rig}}

\DeclareFontEncoding{OT2}{}{} 



\newcommand{\nf}[1]{\underline{#1}}
\newcommand{\ol}[1]{\overline{#1}}
\newcommand{\wt}[1]{\widetilde{#1}}

\newcommand{\tim}{\!\cdot\!}
\newcommand{\geqs}{\geqslant}
\newcommand{\leqs}{\leqslant}

\newcommand{\et}{\text{\rm\'et}}

\newcommand{\fet}{\text{\rm f\'et}}

\newcommand{\red}{\mathrm{red}}

\newcommand{\wh}[1]{\widehat{#1}}

\newcommand{\X}{\mathcal{X}}
\newcommand{\Y}{\mathcal{Y}}

\newcommand{\XX}{\mathfrak{X}}
\newcommand{\YY}{\mathfrak{Y}}
\newcommand{\ZZ}{\mathfrak{Z}}
\newcommand{\UU}{\mathfrak{U}}

\newcommand{\E}{\mathscr{E}}
\newcommand{\sF}{\mathscr{F}}

\newcommand{\cF}{\mathcal{F}}

\newcommand{\cO}{\mathcal{O}}

\newcommand{\M}{\mathcal{M}}

\newcommand{\bM}{\boldsymbol{\mathrm{M}}}

\newcommand{\triv}{\mathbf{1}}

\newcommand{\llpar}{(\!(}
\newcommand{\rrpar}{)\!)}

\DeclareMathOperator{\Spec}{Spec}

\DeclareMathOperator{\Spf}{Spf}

\DeclareMathOperator{\Spa}{Spa}

\newcommand{\starr}{^\times}

\DeclareMathOperator{\Frac}{Frac}

\newcommand{\ra}{\rightarrow}
\newcommand{\xra}[1]{\xrightarrow{#1}}
\newcommand{\xla}[1]{\xleftarrow{#1}}

\newcommand{\hra}{\hookrightarrow}

\newcommand{\thra}{\twoheadrightarrow}

\newcommand{\la}{\leftarrow}

\newcommand{\riso}{\xrightarrow{\sim}}
\newcommand{\liso}{\stackrel{\sim}{\la}}

\newcommand{\dra}{\dashrightarrow}

\newcommand{\Flag}{\mathtt{Fl}}

\newcommand{\com}[1]{^{(#1)}}

\newcommand{\invlim}{\mathop{\varprojlim}\limits}

\newcommand{\Gm}{\mathbb{G}_m}

\newcommand{\set}[1]{\{#1\}}

\newcommand{\iv}{^{-1}}
\newcommand{\ivtd}[1]{\tfrac{1}{#1}}

\newcommand{\vphi}{\varphi}

\newcommand{\sig}{\sigma}
\newcommand{\Sig}{\mathfrak{S}}

\newcommand{\Q}{\mathbb{Q}}

\newcommand{\cC}{\mathcal{C}}
\newcommand{\cD}{\mathcal{D}}

\newcommand{\cP}{\mathcal{P}}

\newcommand{\KK}{{\mathsf K}}

\newcommand{\BC}{\mathbb{C}}

\newcommand{\Sets}{(\mbox{\rm\bf Sets})}

\newcommand{\Qbar}{\overline{\Q}}

\newcommand{\Z}{\mathbb{Z}}

\newcommand{\F}{\mathbb{F}}

\newcommand{\Fpbar}{\overline{\F}_p}

\newcommand{\Qp}{\Q_p}

\newcommand{\Qell}{\Q_{\ell}}

\newcommand{\Qpbar}{\Qbar_p}

\newcommand{\Kbar}{\overline{K}}

\newcommand{\Zp}{\Z_p}

\newcommand{\Fp}{\F_p}

\DeclareMathOperator{\Nilp}{Nilp}
\DeclareMathOperator{\Alg}{Alg}
\newcommand{\ft}{\mathrm{ft}}

\DeclareMathOperator{\Isom}{Isom}

\newcommand{\sm}{\mathrm{sm}}

\newcommand{\Def}{\mathfrak{Def}}
\newcommand{\RZ}{\mathtt{RZ}}

\newcommand{\OO}{\mathcal{O}}
\newcommand{\fo}{\mathscr{O}}
\newcommand{\ep}{\epsilon}


\DeclareMathOperator{\diag}{diag}
\DeclareMathOperator{\Frob}{Frob}

\DeclareMathOperator{\ord}{ord}

\newcommand{\GL}{\mathrm{GL}}
\DeclareMathOperator{\GSp}{GSp}

\newcommand{\ad}{\mathrm{ad}}

\DeclareMathOperator{\Gal}{Gal}

\DeclareMathOperator{\GSpin}{GSpin}

\newcommand{\fa}{\mathfrak{a}}
\newcommand{\bb}{\mathfrak{b}}

\newcommand{\m}{\mathfrak{m}}
\newcommand{\n}{\mathfrak{n}}

\newcommand{\gM}{\mathfrak{M}}

\newcommand{\GLn}{\GL_n}

\newcommand{\ur}{\mathrm{ur}}
\newcommand{\univ}{\mathrm{univ}}

\newcommand{\fl}{\mathrm{fl}}

\DeclareMathOperator{\Hom}{Hom}
\DeclareMathOperator{\QHom}{QHom}

\DeclareMathOperator{\End}{End}

\DeclareMathOperator{\Lie}{Lie}

\newcommand{\bft}{\mathbf{t}}

\newcommand{\Fil}{\mathtt{Fil}}
\newcommand{\gr}{\mathtt{gr}}

\newcommand{\Sh}{\mathtt{Sh}}
\newcommand{\UH}{\mathfrak{H}}
\newcommand{\sS}{\mathscr{S}}

\newcommand{\cris}{\mathrm{cris}}
\DeclareMathOperator{\CRIS}{CRIS}
\newcommand{\Acris}{A_{\cris}}

\newcommand{\Bcris}{B_{\cris}}
\newcommand{\Dcris}{D_{\cris}}
\newcommand{\DdR}{D_{\dR}}

\DeclareMathOperator{\dR}{dR}

\DeclareMathOperator{\tor}{tor}

\DeclareMathOperator{\Rep}{Rep}

\newcommand{\prep}{\Rep_{\Zp}}
\newcommand{\Qprep}{\Rep_{\Qp}}

\DeclareMathOperator{\Qisg}{Qisg}

\newcommand{\DD}{\mathbb{D}}

\newcommand{\art}[1]{\mathfrak{AR}_{#1}}

\title{Rapoport-Zink spaces of Hodge type}

\begin{document}
\begin{abstract}
When $p>2$, we construct a Hodge-type analogue of Rapoport-Zink spaces under the unramifiedness assumption, as formal schemes parametrising ``deformations'' (up to quasi-isogeny) of $p$-divisible groups with certain crystalline Tate tensors. We also define natural rigid analytic towers with expected extra structure, providing more examples of ``local Shimura varieties'' conjectured by Rapoport and Viehmann.
\end{abstract}
\keywords{Rapoport-Zink Spaces, local Shimura varieties, crystalline Dieudonn\'e theory}
\subjclass[2010]{14L05, 14F30}
\maketitle
\tableofcontents

\section{Introduction}
Let  $(G, \UH)$ be a Hodge-type Shimura datum; i.e., $(G,\UH)$ can be embedded into the Shimura datum associated to some symplectic similitude group (i.e., Siegel Shimura datum). 
By choosing such an embedding, the associated complex Shimura variety $\Sh(G,\UH)_\BC$ obtains a family of abelian varieties (coming from the ambient Siegel modular variety) together with Hodge cycles.

In this paper, we construct, in the unramified case, a natural $p$-adic local analogue of such Shimura varieties; loosely speaking, what we constructed can be regarded as ``moduli spaces'' of $p$-divisible groups equipped with certain ``crystalline Tate tensors''. Since the precise definition is rather technical, let us just indicate the idea. Recall that for a $\Q$-Hodge structure $H$, a Hodge cycle on $H$ can be understood as a morphism $t:\triv \ra H$ of $\Q$-Hodge structures, where $\triv$ is the trivial $\Q$-Hodge structure of rank $1$. Our definition of crystalline Tate tensors is very similar, with $\Q$-Hodge structures replaced by $F$-crystals\footnote{By $F$-crystal, we mean  Tate twists by \emph{any} integers of crystals equipped with nondegenerate Frobenius action. This is to allow the dual of an $F$-crystal to be an $F$-crystal.} equipped with Hodge filtration.\footnote{We only define the notion of crystalline Tate tensors for $p$-divisible groups defined over ``formally smooth'' base rings, to avoid subtleties involving torsions of crystalline Dieudonn\'e theory. See Definition~\ref{def:RZG} for the precise definition over nice enough base rings.}

Let $G$ be a connected \emph{unramified}\footnote{I.e., quasi-split and split over $\Qp^{\ur}$.} reductive group over $\Qp$. We fix a reductive $\Zp$-model of $G$ (which exists by unramifiedness), and also denote it by $G$. We choose an element $b\in G(\wh\Q_p^{\ur})$ which gives rise to a $p$-divisible group $\BX$ over $\Fpbar$ in the following sense: for some finite free $\Zp$-module $\Lambda$ with faithful $G$-action, the $F$-crystal $\bM:=(\wh\Z_p^{\ur}\otimes \Lambda^*, b\circ(\sig\otimes\id))$ gives rise to a $p$-divisible group $\BX$ by the (contravariant) Dieudonn\'e theory. In this case, we can associate to such $b$ an ``unramified Hodge-type local Shimura datum'' $(G,[b],\set{\mu\iv})$ (\emph{cf.} \S\ref{subsec:FilGIsoc}.)

Let $\bM^\otimes$ denote the direct sum of the combinations of tensor products, symmetric and alternating products, and duals of $\bM$. Then the fact that $b\in G(K_0)$ gets encoded as the existence of certain ``Frobenius-invariant tensors'' $(\bft_\alpha)\in\bM^\otimes$; \emph{cf.} Lemma~\ref{lem:CrysTateOverW}, Proposition~\ref{prop:Chevalley}.

Rapoport and Zink constructed a moduli space $\RZ_\BX$ parametrising ``deformations'' of $\BX$ up to quasi-isogeny \cite[Theorem~2.16]{RapoportZink:RZspace}. Here, $\RZ_\BX$ is a formal scheme which is locally formally of finite type over  $\wh \Z_p^{\ur}$. Our main result is roughly of the following form:
\begin{thm*}[\ref{thm:RZHType}]
Let $(G,b)$ and $\BX$ be as above, and assume that $p>2$.\footnote{The assumption is made in order to use the Grothendieck-Messing deformation theory for the nilpotent ideal generated by $p$.} 
Then there exists a closed formal subscheme $\RZ_{G,b}\subset \RZ_\BX$ which classifies deformations (up to quasi-isogeny) of $\BX$ with Tate tensors $(\bft_\alpha)$, such that the Hodge filtration of the $p$-divisible group is \'etale-locally given by some cocharacter in the conjugacy class $\set\mu$. (See Definitions~\ref{def:RZG} and \ref{def:RZGloc} for the precise  conditions that define $\RZ_{G,b}$.) Furthermore, $\RZ_{G,b}$ is formally smooth, is functorial in $(G,b)$, and only depends on the associated unramified Hodge-type local Shimura datum $(G,[b],\set{\mu\iv})$ up to isomorphism (and not on the auxiliary choice of $\BX$).
\end{thm*}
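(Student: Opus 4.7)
The plan is to realise $\RZ_{G,b}$ as the closed formal subscheme of $\RZ_\BX$ cut out by two conditions: that the Tate tensors $(t_\alpha)\in\bM^\otimes$, transported via the universal quasi-isogeny, extend integrally to the universal Dieudonn\'e crystal; and that the resulting Hodge filtration is \'etale-locally induced by a cocharacter in $\{\mu\}$. Formal smoothness will then follow from a Grothendieck--Messing argument, for which the hypothesis $p>2$ supplies the required nilpotent PD-structure on small thickenings.

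For $(X,\rho)\in\RZ_\BX(R)$, the quasi-isogeny $\rho$ induces an isomorphism $\DD(X)^\otimes[1/p]\riso \bM^\otimes\otimes R[1/p]$ over a suitable PD-envelope, so each $t_\alpha$ yields a rational section $\tilde t_\alpha$ of $\DD(X)^\otimes$. The locus where every $\tilde t_\alpha$ extends integrally is Zariski-closed (locally, by the vanishing of finitely many denominators). Using the tensor-theoretic description $G=\mathrm{Stab}_{\GL(\Lambda)}(t_\alpha)$ of Proposition~\ref{prop:Chevalley}, these integral tensors equip the universal crystal with a $G$-structure, and the further condition that the Hodge filtration be \'etale-locally of the type determined by $\{\mu\}$ is closed, controlled by closedness of the relevant $G$-orbit in the flag variety of $\GL(\Lambda)$. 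To verify formal smoothness, let $R\twoheadrightarrow R'$ be a small surjection of complete local Noetherian $\wh\Z_p^{\ur}$-algebras with nilpotent kernel $J$ satisfying $pJ=0$, and consider an $R'$-point $(X',\rho',(\tilde t_\alpha))$ of $\RZ_{G,b}$. The crystal property extends the tensors to $\DD(X')(R)^\otimes$; by Grothendieck--Messing, lifts of $X'$ to $R$ correspond to lifts of the Hodge filtration to a direct summand of $\DD(X')(R)$, and the $G$-compatible lifts are precisely those keeping $\tilde t_{\alpha,R}$ in $\Fil^0$, equivalently, those induced by a cocharacter in $\{\mu\}$. These form a torsor under the tangent space of the flag variety $G/P$, where $P$ is the parabolic determined by $\{\mu\}$; smoothness of $G/P$ over $\Z_p$ then yields the existence of a lift and hence formal smoothness of $\RZ_{G,b}$.

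Functoriality in $(G,b)$ and independence of the auxiliary $\Lambda$ both follow from the group-theoretic formulation of the defining conditions: a morphism of local Shimura data induces a compatible map of crystals preserving tensors and cocharacter type, and two different choices of $\Lambda$ produce the same closed subfunctor because both record the same $G$-structure via Proposition~\ref{prop:Chevalley}. The main obstacle is the interplay in the smoothness step between integrality of $\tilde t_\alpha$ and the $G$-compatible lifting of the Hodge filtration: the ambient $\GL(\Lambda)$-theory produces a direct summand filtration that need not be $G$-compatible a priori, and it is the smoothness of $G/P$---together with the compatibility between the PD-structure on $J$ and the $G$-action on the evaluated crystal (where $p>2$ enters)---that forces $\RZ_{G,b}$ to be formally smooth. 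A secondary bookkeeping subtlety is ensuring that the closed subfunctor defined by the two conditions is actually representable by a formal scheme, rather than merely a closed subfunctor of $\RZ_\BX$; this is reduced to the representability of $\RZ_\BX$ by checking the relevant completeness and ind-finite-type conditions inherited from the ambient space.
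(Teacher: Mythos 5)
There is a genuine gap, and it sits at the heart of your construction: the claim that ``the locus where every $\tilde t_\alpha$ extends integrally is Zariski-closed (locally, by the vanishing of finitely many denominators)'' does not work. The obstruction is not a matter of denominators of a rational section of a coherent sheaf on the base: the datum $s_{\alpha,\DD}$ is a Frobenius-equivariant morphism of \emph{isocrystals}, and over a general $R\in\Nilp_W$ it may fail to come from a morphism of integral crystals, and even when it does the integral morphism need not be unique, because crystalline Dieudonn\'e theory has nonzero $p$-torsion morphisms over bad base rings (this is exactly the subtlety flagged after Definition~\ref{def:t}, with a counterexample cited there). For this reason the condition ``the tensors extend integrally to the universal crystal'' is not a functorially closed condition that you can impose on $\RZ_\BX$-points over arbitrary rings in $\Nilp_W$; the paper only formulates it over formally smooth bases (Definition~\ref{def:RZG}), and the space itself is \emph{not} obtained by extracting defining equations in $\RZ_\BX$. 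Consequently your final step, reducing representability of the subfunctor to that of $\RZ_\BX$ by ``checking completeness and ind-finite-type conditions'', has no content: since you have no closed immersion of functors, representability is precisely what must be proved, and it is the main labour of the paper.

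The paper's route is instead indirect: one first pins down what the closed points must be (the affine Deligne--Lusztig set, Proposition~\ref{prop:LR}) and what the formal completions must be (Faltings's deformation spaces $\Def_{X_x,G}$, Theorem~\ref{thm:FaltingsDef}), defines $\RZ^\Lambda_{G,b}$ on finite-type bases by these pointwise/formal-local conditions (Definition~\ref{def:RZGloc}), and then proves two hard technical statements: that the formal-locally defined Tate tensors descend to a genuine morphism of crystals over the finite-type base (Proposition~\ref{prop:descent}, via liftable $p$-adic $p$-torsion-free PD thickenings and f-semiperfect rings), and that tensors defined over all infinitesimal neighbourhoods spread out (Proposition~\ref{prop:extension}). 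Representability is then obtained from Artin's criterion applied to the truncated pieces $\RZ^\Lambda_{G,b}(h)^{m,n}$, and closedness in $\RZ_\BX$ requires a separate argument (quasi-compactness via de Jong's alterations, Proposition~\ref{prop:Alteration} and Corollary~\ref{cor:q-compImg}, plus the valuative criterion, Lemma~\ref{lem:ValCrit}). Your Grothendieck--Messing argument for formal smoothness via $\set\mu$-filtrations is in the spirit of Proposition~\ref{prop:LiftingTate} and Corollary~\ref{cor:deformation}, but note that even there one needs the integral tensors and their sections over square-zero thickenings to exist and be unique, which again rests on the descent machinery you have skipped. The functoriality and independence-of-$\Lambda$ claims likewise cannot be read off from ``the same $G$-structure'' without first knowing the space is determined by its $\kappa$-points and completions, which is how the paper proves them (Lemma~\ref{lem:ProdRZG}, Proposition~\ref{prop:FunctRZG}).
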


Rapoport and Viehmann conjectured that to any (not necessarily unramified nor Hodge-type) ``local Shimura datum'' $(G,[b].\set{\mu\iv})$, there exists a rigid analytic tower of ``local Shimura varieties'' with suitable extra structure \cite[\S5]{RapoportViehmann:LocShVar}. In \S\ref{sec:ExtraStr}, we construct the rigid analytic tower over the generic fibre of $\RZ_{G,b}$ equipped with suitable extra structure as predicted in \cite[\S5]{RapoportViehmann:LocShVar}; in other words, we construct ``local Shimura varieties'' associated to any unramified Hodge-type local Shimura data when $p>2$.\footnote{Rapoport and Viehmann also conjectured that ``local Shimura varieties'' could be constructed by a purely group-theoretic means. Note that our construction of ``local Shimura varieties'' (in the unramified Hodge-type case) is not purely group-theoretic as we make crucial use of $p$-divisible groups.}

In the case of unramified  EL and PEL type, we also show that $\RZ_{G,b}$ recovers the original construction of Rapoport-Zink space  in \cite[Theorem~3.25]{RapoportZink:RZspace}. See Proposition~\ref{prop:ELnPEL} for the precise statement.
On the other hand, the theorem provides Rapoport-Zink spaces for more general class of groups $G$ that do not necessarily arise from any EL or PEL datum. For example, we may allow $G$ to be the spin similitude group associated to a split quadratic space over $\Qp$, which do not arise from any EL or PEL datum if the rank of the quadratic space is at least $7$. Note also that the ``functoriality'' assertion of the theorem produces some interesting morphisms between EL and PEL Rapoport-Zink spaces, which may not be easily seen from the original construction. See Remark~\ref{rmk:ExcIsom} for such an example involving an ``exceptional isomorphism''.

Recently, Scholze and Weinstein  \cite{ScholzeWeinstein:RZ} constructed the ``infinite-level'' Rapoport-Zink spaces of EL- and PEL- type, which provides a new  approach to study Repoport-Zink spaces. As remarked in the introduction of \cite{ScholzeWeinstein:RZ}, for quite a general ``local Shimura datum'' $(G,[b],\set{\mu\iv})$ -- without requiring $G$ to be unramified, nor $[b]$ to come from a $p$-divisible group -- it should be possible to construct the infinite-level Rapoport-Zink space for $(G,[b],\set\mu)$ using the technique in \cite{ScholzeWeinstein:RZ}. This approach does not require any formal model at the  ``maximal level'' Rapoport-Zink space, nor does it give a natural formal model, while our approach is to start with the  formal scheme at the hyperspecial maximal level and build up the rigid analytic tower from there. Perhaps, having a formal scheme at the hyperspecial maximal level could be useful in some applications; for example,  $p$-adic uniformisation of Hodge-type Shimura varieties; see the next paragraph for more details.
In \S\ref{subsec:InfLevel} we give a construction of $\RZ_{G,b}^\infty$ using ``finite-level'' Rapoport-Zink spaces $\RZ_{G,b}^\KK$ and \cite[Theorem~D]{ScholzeWeinstein:RZ}. Note that there should be more natural ``purely infinite-level'' construction of $\RZ_{G,b}^\infty$, which should work more generally, but we give our ``finite-level'' construction just to link our work with \cite {ScholzeWeinstein:RZ}.

In the PEL case, Rapoport and Zink also showed that certain arithmetic quotients of PEL Rapoport-Zink spaces can be related to PEL Shimura varieties, generalising the theorem of Drinfeld and Cerednik on $p$-adic uniformisation of Shimura curves; \emph{cf.} \cite[Ch.~VI]{RapoportZink:RZspace}. This is a useful tool for studying the mod~$p$ geometry of PEL Shimura varieties -- especially, the basic (i.e. supersingular) locus -- by reducing the question to a purely local problem of studying the corresponding Rapoport-Zink space.
In the sequel of this paper \cite{Kim:Unif}, we give a Hodge-type generalisation of this result at odd good reduction primes. In particular, the result is  applicable to $\GSpin(n,2)$ Shimura varieties for any $n$.

Since the first version of this paper appeared, some alternative constructions of Hodge-type Rapoport-Zink spaces were developed under some mild restrictions.
Ben~Howard and George~Pappas \cite{HowardPappas:GSpin} gave another construction of Hodge-type Rapoport-Zink spaces, compatible wiht ours, in the case when the Hodge-type local Shimura datum $(G,[b],\set{\mu\iv})$ comes from a (global) Hodge-type Shimura datum. Their construction relies on the existence of integral canonical models of Hodge-type Shimura varieties and the Rapoport-Zink uniformisation for Siegel modular varieties. This global construction is simpler than ours and automatically gives the Rapoport-Zink uniformisation of Hodge-type Shimura varieties (recovering \cite{Kim:Unif}), while our construction is purely local (as it should be) and does not require $(G,[b],\set{\mu\iv})$ to come from a global Shimura datum.

Xinwen~Zhu \cite{Zhu:Satake} gave a purely group-theoretic (and local) construction of the perfection of the underlying reduced schemes via mixed characteristic affine grassmannian. Oliver~B\"ultel and George~Pappas \cite{BueltelPappas:G-muDisplays} gave yet another purely group-theoretic (and local) construction of Hodge-type Rapoport-Zink spaces via the theory of $(G,\mu)$-displays if the fixed $p$-divisible group $\BX$ does not have any non-trivial multiplicative or \'etale part.

Let us comment on the proof of the main theorem (Theorem~\ref{thm:RZHType}).
We do not directly extract  defining equations of  $\RZ_{G,b}$  in $\RZ_\BX$, but instead  we  take an indirect approach. 
As a starting point, observe that the candidate for the set of closed points $\RZ_{G,b}(\Fpbar)\subset\RZ_\BX(\Fpbar)$ is given by the affine Deligne-Lusztig set (\emph{cf.} Proposition~\ref{prop:LR}). Furthermore, Faltings constructed a formally smooth closed subspace $(\RZ_{G,b})\wh{_x}$ of the completion $(\RZ_\BX)\wh{_x}$ at $x\in\RZ_{G,b}(\Fpbar)$, which gives a natural candidate for the completion of $\RZ_{G,b}$ at $x$.

We then  construct a formal algebraic space  $\RZ_{G,b}$  by patching together Faltings deformation spaces $(\RZ_{G,b})\wh{_x}$ via Artin's algebraisation technique (\emph{cf.} \S\ref{subsec:Artin}).\footnote{Although Artin's criterion is only for algebraic spaces, not for formal algebraic spaces, we apply Artin's criterion to suitable ``closed subspaces'' which turn out to be algebraic spaces.} The key step is to show that Tate tensors, constructed formal-locally over $(\RZ_{G,b})\wh{_x}$ by Faltings, patch together and spread out to some neighbourhood of $x$ whenever they should (\emph{cf.} Propositions~\ref{prop:descent} and \ref{prop:extension}).

The main reason why we exclude $p=2$ is that the standard PD structure on $pR$ is not nilpotent unless either $p>2$ or $pR=0$. In particular, if $p=2$ then we cannot apply the Grothendieck-Messing deformation theory \cite{messingthesis} for the thickenings $R\thra R/p$. The main results of this paper will be extended to the case when $p=2$ in the author's forthcoming paper.

\subsection*{Structure of the paper}
In \S\ref{sec:housekeeping} we recall and introduce some basic definitions, such as  (iso)crystals with $G$-structures, filtrations, and cocharacters. In \S\ref{sec:FaltingsDefor} we review Faltings's explicit construction of the ``universal deformation'' of $p$-divisible groups with Tate tensors. The theorem on the existence of $\RZ_{G,b}$ is stated in \S\ref{sec:RZ}, which is proved in \S\ref{sec:fpqcDesc} and \S\ref{sec:RepPf}.

In \S\ref{sec:ExtraStr} we define various extra structures on $\RZ_{G,b}$ as predicted in \cite[\S5]{RapoportViehmann:LocShVar}, including rigid analytic tower and the Hecke and quasi-isogeny group actions. In \S\ref{sec:Ccris}, we show the existence and integrality of ``\'etale realisations'' of the crystalline Tate tensors,  which are needed for constructing the rigid analytic tower in \S\ref{sec:ExtraStr}.

\subsection*{Acknowledgement}
The author would like to thank Chris Blake, James Newton, Tony Scholl,  Teruyoshi Yoshida, and Xinwen Zhu for helpful discussions and advice, and especially Brian Conrad and George Pappas for their careful reading of the manuscript and providing helpful suggestions. The author sincerely thanks the anonymous referees who gave extremely detailed readings of the manuscript, pointed out the gap in the earlier version regarding the torsion of PD hulls, and provided many valuable comments that helped improving the exposition.

While the paper was being written and revised, the author was supported by Herchel Smith Postdoctoral Fellowship and the EPSRC (Engineering and Physical Sciences Research Council) in the form of EP/L025302/1.

\section{Definitions and Preliminaries}\label{sec:housekeeping}
\subsection{Notation}

For any ring $R$, an $R$-module $M$, and an $R$-algebra $R'$, we write $M_{R'}:=R'\otimes_R M$. Similarly, if $R$ is a noetherian adic ring and  $\XX$ is a formal scheme over $\Spf R$, then for any continuous morphism of adic rings $R\ra R'$ we write $\XX_{R'}:=\XX\times_{\Spf R}\Spf R'$.

For any ring $\fo$, we let $\Alg_\fo$ denote the category of $\fo$-algebras. Now assume that $\fo$ is a $p$-adic discrete valuation ring, and we let  $\Nilp_\fo$ denote the category of $\fo$-algebras $R$ where $p$ is nilpotent. Let $\art\fo$ denote the category of artin local $\fo$-algebras with residue field $\fo/\m_\fo$. Note that any (not necessarily $p$-adic) locally noetherian formal scheme $\XX$ over $\Spf \fo$  defines a set-valued functor on $\Nilp_\fo$ by $\XX(R):=\Hom_\fo(\Spec R,\XX)$ for $R\in\Nilp_\fo$.

We will work with formal schemes that satisfy the following finiteness condition:
\begin{defnsub}\label{def:ForFType}
Let $\fo$ be a $p$-adic discrete valuation ring. A locally noetherian formal $\fo$-scheme $\XX$  is \emph{locally formally  of finite type} over $\Spf\fo$ if $\XX_\red$ is locally of finite type over $\fo/\m_\fo$ (where $\XX_\red$ is the closed subscheme of $\XX$ defined by the ideal of locally topologically nilpotent sections). 

A formal $\fo$-scheme $\XX$ is   \emph{formally of finite type} if it is quasi-compact and locally formally of finite type over $\Spf \fo$.
\end{defnsub}

\begin{rmksub}\label{rmk:ForFType}
Let us denote $\fo[[u_1,\cdots,u_r]]\langle v_1,\cdots,v_s\rangle$ to be the completion of $\fo[u_1,\cdots,u_r,v_1,\cdots,v_s]$ with respect to $(p,u_1,\cdots,u_r)$.
We say that a topological $\fo$-algebra $R$ is  \emph{formally finitely generated} over $\fo$ if it can be written as the quotient of $\fo[[u_1,\cdots,u_r]]\langle v_1,\cdots,v_s\rangle$. 
Note that the Jacobson radical $J$ of $R$ is the ideal of topological nilpotent elements, and the natural topology on $R$ is the $J$-adic topology, with respect to which $R$ is separated and complete. (In particular, \emph{any maximal ideal of $R$ is necessarily open}.)

Now, it follows that an affine formal scheme $\XX=\Spf R$ is formally of finite type over $\fo$ if and only if $R$ is a formally finitely generated $\fo$-algebra equipped with the natural topology (i.e., the $J$-adic topology, where $J$ is the Jacobson radical).
Similarly, a locally noetherian formal scheme $\XX$ over $\fo$ is locally formally of finite type over $\fo$ if and only if $\XX$ admits an affine open covering $\set{\Spf R_\xi}$ where each $R_\xi$ is formally finitely generated over $\fo$ equipped with the natural topology.

Later in this paper, we work with two (not necessarily equivalent) topologies on formally finitely generated $\fo$-algebras $R$; namely, the natural topology and the $p$-adic topology (\emph{cf.} Definition~\ref{def:RZG}). To minimise confusions, we introduce the following convention:
\begin{enumerate}
\item If the topology on $R$ is not specified, we always endow $R$ with the $J$-adic topology (referred to as the \emph{natural topology}), where $J$ is the Jacobson radical.
An \emph{ideal of definition} $J_0$ of $R$ means an ideal of definition for the natural topology.
\item By $\Spf R$, we always endow $R$ with the natural topology. If there is any risk of confusion, we use the notation $\Spf (R,J_0)$ where $J_0$ is an ideal of definition for the natural topology.
\item A formally finitely generated $W$-algebra $R$ is called \emph{formally smooth (over $W$} if it is formally smooth over $W$ with respect to the natural topology.
\item We write $\Spf (R,(p))$ to denote the formal spectrum of $R$ endowed with the $p$-adic topology. If $p$ is nilpotent in $R$ (so the $p$-adic topology on $R$ is discrete), then we have $\Spec R = \Spf (R,(p))$.
\end{enumerate}
Since any maximal ideal of $R$ is open for the natural topology, the natural maps
$\Spec R \to \Spf (R,(p)) \to \Spf (R,J)$ induce bijections on the set of closed points. 
\end{rmksub}

Let $\cC$ be a pseudo-abelian\footnote{Pseudo-abelian categories are defined in the same way as abelian categories, except that we only require the existence of kernel for idempotent morphisms instead of requiring the existence of kernel and cokernel for any morphism. In practice, the pseudo-abelian categories that we will encounter are the category of filtered or graded objects in some abelian category.} symmetric tensor category such that arbitrary (infinite) direct sum exists. (For definitions in category theory, see \cite{Wedhorn:TannakianDVR} and references therein.) Let $\triv$ denote the unit object for $\otimes$-product in $\cC$ (which exists by axiom of tensor categories).

Let $\cD$ be a full subcategory of $\cC$ which is stable under direct sums, tensor products, and direct factors. Assume furthermore that $\cD$ is rigid; i.e., every object of $\cD$ has a dual. (For example, $\cC$ can be the category of $R$-modules filtered by direct factors over $R$, and $\cD$ can be the full subcategory of finitely generated projective $R$-modules filtered by direct factors.)

\begin{defnsub}\label{def:tensors}
For any object $M\in \cD$, we let
\[M^\otimes\in \cC\]
denote the direct sum of any (finite) combination of tensor products, symmetric products, alternating products, and duals of $M$. Note that  $M^\otimes = (M^*)^\otimes$.
\end{defnsub}

\begin{defnsub}\label{def:reductive}
Throughout this paper, a \emph{reductive group}  $G$ over a scheme $S$ means an affine smooth group scheme such that each geometric fibre is a \emph{connected} reductive group. (Note that we always impose the connectedness as a part of the definition of reductive groups.)
\end{defnsub}

\begin{propsub}\label{prop:Chevalley}
Let $R$ be either a field of characteristic~$0$ or a discrete valuation ring of mixed characteristic.
Let $G$ be a reductive group over $R$, and  $\Lambda$ a finite free $R$-module equipped with a closed immersion of algebraic $R$-groups $G\hra \GL(\Lambda)$. (We identify $G$ with its image in $\GL(\Lambda)$.) Then there exist finitely many  elements $s_\alpha\in \Lambda^\otimes$ such that $G$ coincides with the pointwise stabiliser of $(s_\alpha)$; i.e., for any $R$-algebra $R'$ we have
\[G(R') = \set{g\in\GL(\Lambda_{R'})\text{ such that }g(s_\alpha) = s_\alpha\ \forall \alpha}.\]
\end{propsub}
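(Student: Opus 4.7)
The result is a minor variant of a classical fact, due to Deligne in the field case and to Kisin in the DVR case. My approach would follow the standard strategy of extracting the tensors from the defining ideal of $G$ under the translation action of $\GL(\Lambda)$ on its coordinate ring.

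The first step is to find a finitely generated $G$-stable $R$-submodule $W \subset \Lambda^\otimes$ whose scheme-theoretic stabiliser in $\GL(\Lambda)$ coincides with $G$. For this, I would consider the right-translation action of $G$ on $\cO(\GL(\Lambda))$, which, after inverting $\det$, is built from finite direct sums of tensor constructions of $\Lambda$ and $\Lambda^*$. The defining ideal $I_G \subset \cO(\GL(\Lambda))$ is $G$-stable under right translation and finitely generated (by Noetherianity), so it is generated as an ideal by some finitely generated $G$-stable submodule $W$. After multiplying by a sufficient power of $\det$, I may view $W \subset \Lambda^\otimes$. A direct verification then shows $G = \mathrm{Stab}_{\GL(\Lambda)}(W)$: any $g$ stabilising $W$ stabilises the ideal $I_G$ it generates, hence stabilises the closed subscheme $G \subset \GL(\Lambda)$ under right translation, which (together with $1 \in G$) forces $g \in G$.

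The second step is to upgrade ``stabiliser of the subspace $W$'' to ``pointwise stabiliser of finitely many tensors''. In the field case, complete reducibility in characteristic zero yields a $G$-equivariant projector $e_W$ onto $W$ inside the finite direct sum of tensor components of $\Lambda^\otimes$ containing $W$. Since $\End$ of such tensor components lies in $\Lambda^\otimes$ by closure under duals and tensor products, $e_W$ is itself an element of $\Lambda^\otimes$. Its pointwise stabiliser coincides with $G$: an element $g$ fixes $e_W$ iff it commutes with $e_W$ iff it preserves both $W = \mathrm{image}(e_W)$ and $\ker(e_W)$, hence in particular lies in $\mathrm{Stab}_{\GL(\Lambda)}(W) = G$; and conversely $G$ fixes $e_W$ by $G$-equivariance. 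Taking this single projector as the sole tensor $s$ suffices in the field case.

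The main obstacle is the DVR case, where linear reductivity fails in mixed characteristic so the projector trick does not apply directly. My plan there is to first apply the field case to the generic fibre to obtain tensors $s_\alpha^\circ \in (\Lambda_K)^\otimes$ with pointwise stabiliser $G_K$, then scale by a suitable power of a uniformiser to obtain integral tensors $s_\alpha \in \Lambda^\otimes$. Writing $H \subset \GL(\Lambda)$ for the resulting pointwise stabiliser, we have $G \subset H$ and $H_K = G_K$. Since $G$ is reductive, hence $R$-flat and equal to the schematic closure of $G_K$ in $\GL(\Lambda)$, the equality $G = H$ would follow from the $R$-flatness of $H$. Arranging this $R$-flatness by a careful choice of tensors is the delicate point, and I would proceed by exploiting $G$-stable integral structures in suitable tensor constructions along the lines of Kisin's argument in the integral Shimura-variety setting.
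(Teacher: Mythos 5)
The paper does not actually argue this proposition: its proof is a one-line citation to Kisin, \emph{Integral models for Shimura varieties of abelian type}, Proposition~1.3.2, which is the more general statement. Measured against that, your characteristic-$0$ half is fine and is exactly the standard Chevalley--Deligne argument (finite-dimensional $G$-stable piece of the ideal of $G$ in $\cO(\GL(\Lambda))$, twist by a power of $\det$ to land in $\Lambda^\otimes$, then replace the subspace by the $G$-equivariant projector onto it, using semisimplicity of $\Rep(G_K)$ in characteristic $0$).

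The genuine gap is the mixed-characteristic DVR case, which is the only case the paper actually uses ($R=\Zp$) and the only part with real content beyond the classical statement. Your plan --- take Deligne's tensors on the generic fibre, clear denominators by a power of the uniformiser, and then try to verify that the resulting pointwise stabiliser $H$ is $R$-flat --- cannot be completed as stated: flatness of $H$ is not a property one checks after an arbitrary integral rescaling, because for a general choice of integral representatives the special fibre $H_k$ can be strictly larger than $G_k$ (the fixed locus of the reductions of the tensors has no reason to be the reduction of $G$). The missing idea is precisely how to \emph{choose} the tensors so that the stabiliser comes out flat; in Kisin's argument this is done by working with $G$-stable saturated (direct-summand) integral structures inside the relevant tensor constructions and exploiting reductivity of $G$ over the DVR to control invariants and the special fibre --- none of which appears in your sketch. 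Deferring this step to ``along the lines of Kisin's argument'' is, with respect to giving a proof, circular: it amounts to citing the very result being proved, which is what the paper itself does, but then your text should be presented as a citation rather than as a proof of the DVR case.
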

\begin{proof}
A more general statement is proved in \cite[Proposition~1.3.2]{Kisin:IntModelAbType}.
\end{proof}

\begin{exasub}\label{exa:GSp}
For any ring $R$,  let $\Lambda$ be a finite free $R$-module, and $(,):\Lambda\otimes\Lambda \thra R$ be a perfect alternating form. Then we will construct a  tensor $s\in\Lambda^\otimes$ from $R\starr\cdot (,)$ so that its pointwise stabiliser is $\GSp(\Lambda,(,))$. Let $c:\GSp(\Lambda,(,))\ra\Gm$ be the similitude character, and let $R(c)$ be $R$ as a $R$-module equipped with the $\GSp(\Lambda,(,))$-action via $c$. Then $(,)$ induces a $\GSp(\Lambda,(,))$-equivariant morphism $\Lambda\otimes\Lambda \thra R(c)$. 

Now, since the pairing $(,)$ induces an isomorphism $\Lambda\riso\Lambda^*$ sending $\lambda\in \Lambda$ to $(\lambda,\cdot)\in\Lambda^*$, we can view $(,)$ as a pairing of $\Lambda^*$. Then the the natural action of $\GSp(\Lambda,(,))$ on $\Lambda^*$ preserves this pairing up to similitude, with similitude character $c\iv$; i.e., we obtain the following $\GSp(\Lambda,(,))$-equivariant morphism
\[\Lambda^*\otimes\Lambda^*\thra R(c\iv) = R(c)^*,\]
induced by $(,)$.
By double duality, we now obtain a $\GSp(\Lambda,(,)) $-equivariant section $R(c) \hra \Lambda\otimes\Lambda$. (By choosing an $R$-basis $\{e_i\}$ of $\Lambda$, we can make these maps explicit as follows. Let $\{e_i^*\}$ denote the dual basis of $\Lambda^*$, and we define $e_i^\perp\in\Lambda$ by the condition $(e_i^\perp,e_j) = \delta_{ij}$ for any $j$. Then the pairing of $\Lambda^*$ induced by $(,)$ sends $e_i^*\otimes e_j^*$ to $(e_i^\perp,e_j^\perp)$. By applying double duality, the map $R(c) \hra \Lambda\otimes\Lambda$ sends the basis $1\in R(c)$ to $\sum_{i,j} (e_i^\perp, e_j^\perp)\cdot (e_i\otimes e_j)$.)

Therefore, $(,)$ induces the following   $\GSp(\Lambda,(,)) $-equivariant endomorphism
\begin{equation}\label{eqn:GSp}
\Lambda\otimes\Lambda \thra R(c) \hra \Lambda\otimes\Lambda; \quad e_i\otimes e_j \mapsto (e_i,e_j)\sum_{i',j'} (e_{i'}^\perp,e_{j'}^\perp) \cdot (e_{i'}\otimes e_{j'}).
\end{equation}
(The endomorphism is independent of the choice of $\{e_i\}$.)
We may view this endomorphism as a tensor $s\in \Lambda^{\otimes2}\otimes\Lambda^{*\otimes2}$. Note that replacing $(,)$ with any $R\starr$-multiple does not modify $s$.

We claim  that the pointwise stabiliser of $s$ in $\GL_{\Zp}(\Lambda)$ is  $\GSp(\Lambda,(,)) $. Indeed, any $g\in\GSp(\Lambda,(,))$ fixes each of the maps in (\ref{eqn:GSp}) by construction. Conversely, if $g\in\GL_R(\Lambda)$ fixes $s$, then it should preserve the image of $R(c)\hra \Lambda\otimes\Lambda$ (i.e., the line in generated by $\sum_{i',j'} (e_{i'}^\perp,e_{j'}^\perp) \cdot (e_{i'}\otimes e_{j'})$). This condition means that $g$ fixes the pairing on $\Lambda^*$ induced by $(,)$ up to similitude, which is equivalent to fixing the pairing $(,)$ on $\Lambda$ up to similitude.
\end{exasub}

\subsection{$\set\mu$-filtrations}\label{subsec:muFil}

\begin{defnsub}\label{def:grmu}
Let $R$ be any ring and $M$ a finitely generated projective $R$-module.
For a cocharacter  $\mu:\Gm\ra \GL_R(M)$, we define a grading $\gr^\bullet_\mu M$ so that $\gr^a_\mu( M)$ is the ``weight $(-a)$-part''; in other words, the $\Gm$-action on $M$ via $\mu$ leaves each grading stable, and the resulting $\Gm$-action on $\gr^a_\mu( M)$ is given by
\[ \Gm \xra{z\mapsto z^{-a}} \Gm \xra{z\mapsto z\id} \GL(\gr^a_\mu(M)).\]
We say that a filtration $\Fil^\bullet M$ is induced by $\mu$ if we can write $\Fil^a M = \bigoplus_{a'\geqslant a}\gr^a_\mu M$.
%
\end{defnsub}
In the definition, we chose the sign so that it is compatible with  the standard sign convention in the theory of Shimura varieties (as in \cite{Deligne:CorvalisShimura,Milne:Shimura}).

Let $G$ be a  reductive group over $\Zp$. We choose  $\Lambda\in\prep(G)$ with faithful $G$-action, and finitely many tensors $(s_\alpha)\subset \Lambda^\otimes$ which define $G$ (in the sense of Proposition~\ref{prop:Chevalley}). Let $\XX$ be a locally noetherian formal scheme over $\Spf \Zp$, and $\E$  a finite-rank locally free $\OO_\XX$-module (i.e., a vector bundle on $\XX$). Let $(t_\alpha)\subset \Gamma(\XX,\E^\otimes)$ be finitely many global sections. 

We will introduce a notion of filtrations on $\E$ which \'etale-locally admits a splitting given by some cocharacter (but such a splitting may not be defined globally); see Definition~\ref{def:RZG} for the relevant setting.

Given $\mu:\Gm\to G_W$, let us  recall the definition of parabolic groups and their unipotent radicals over $W$:
\begin{equation}\label{eqn:parabolic}
\begin{aligned}
P_G(\mu)&:=\{g\in G_W|\ \lim_{t\to0}\ad(\mu(t))g \text{ exists.} \} \subset G_W\text{ and}\\
U_G(\mu\iv)&:=\{g\in G_W|\ \lim_{t\to0}\ad(\mu\iv(t))g=\id \} \subset G_W,
\end{aligned}
\end{equation}
where the existence of the limit is defined as in \cite[p.~46]{ConradGabberPrasad:PRedGp2}. Recall that both groups are smooth over $W$; \emph{cf.} \cite[Proposition~2.1.8(3)]{ConradGabberPrasad:PRedGp2}.

\begin{defnsub}[X.~Zhu {\cite[Definition~3.8]{Zhu:Satake}}]\label{def:muFil}
Let $\mu:\Gm\ra G_W$ be a cocharacter, and let $\set\mu$ denote the $G(W)$-conjugacy class of $\mu$.
A filtration $\Fil^\bullet\E$  of $\E$ is called a \emph{$\set\mu$-filtration} (with respect to $(t_\alpha)$) if the following formal scheme
\begin{equation}\label{eqn:PFil}
\cP_{\Fil^\bullet_\E}:=\nf\Isom_{\OO_\XX}\big([\E,(t_\alpha),\Fil^\bullet\E],[\OO_\XX\otimes_{\Zp}\Lambda,(1\otimes s_\alpha), \Fil^\bullet_\mu]\big)
\end{equation}
is a $P_G(\mu)$-torsor. (This notion does not depend on the choice of $\mu\in\{\mu\}$.)
\end{defnsub}

\begin{rmksub}
Note that if there exists a $\set\mu$-filtration of $\E$ with respect to $(t_\alpha)$, then the following formal scheme is a $G$-torsor:
\begin{equation}\label{eqn:P}
\cP:=\nf\Isom_{\OO_\XX}\big([\E,(t_\alpha)],[\OO_\XX\otimes_{\Zp}\Lambda,(1\otimes s_\alpha)]\big).
\end{equation}

If $\cP$ is a $G$-torsor, then a filtration $\Fil^\bullet\E$  of $\E$ is a $\set\mu$-filtration (with respect to $(t_\alpha)$) if and only if the following condition holds: for some \'etale-local section $\varsigma\in\cP(\YY)$ (where $\YY\to\XX$ is an \'etale covering), the filtration $(\Fil^\bullet\E)_{\YY}$ of $\E_{\YY}$ is induced from the conjugate of the cocharacter $\mu$ over $\YY$ by some element $g\in G(\YY)$. In the earlier version of this paper, this was stated as the definition of $\set\mu$-filtration, and Definition~\ref{def:muFil} is the equivalent formulation by X.~Zhu \cite[Definition~3.8]{Zhu:Satake}.
\end{rmksub}

\begin{rmksub}\label{rmk:muFilGLn}
If $G=\GL(\Lambda)$ and $\E$ is a vector bundle over $\XX$ with rank equal to $\rank_{\Zp}(\Lambda)$, then a filtration $\Fil^\bullet\E$ of $\E$ is a $\set\mu$-filtration for some $\set\mu$ if and only if each of the graded pieces $\gr^i\E$ is of constant rank. (And one can write down $\set\mu$ uniquely up to conjugation from the ranks of $\gr^i\E$.)
\end{rmksub}

In the setting of Definition~\ref{def:muFil}, let $\Flag_{G,\set\mu}^{\E,(t_\alpha)}$ denote the functor on formal schemes over $\XX$, which associates to $\YY\xra f\XX$ the set of $\set\mu$-filtrations of $f^*\E$ with respect to $(f^*t_\alpha)$. We write  $\Flag_{\set\mu}^{\E}:=\Flag_{\GL(\Lambda),\set\mu}^{\E,\emptyset}$, and  use the same symbol to denote  the representing formal scheme, which is relatively projective and smooth  over $\XX$.

\begin{lemsub}\label{lem:muFil}
Assume that $\cP$  (\ref{eqn:P}) is a $G$-torsor.
Then $\Flag_{G,\set\mu}^{\E,(t_\alpha)}$ can be represented by a closed formal subscheme of $\Flag_{\set\mu}^{\E}$ which is smooth over $\XX$ with nonempty  geometrically connected fibres.
\end{lemsub}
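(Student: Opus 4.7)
The plan is to reformulate the functor in terms of parabolic reductions of $P_\XX$ and then realise it as an étale twist of the partial flag variety $G_W/P^\mu$.

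First I would invoke Remark~\ref{rmk:Zhu}: for any formal $\XX$-scheme $\YY\to\XX$, giving a $\set\mu$-filtration on the pullback of $\E$ with respect to the pulled-back $(t_\alpha)$ is the same as giving a $P^\mu$-reduction of $(P_\XX)_\YY$, where $P^\mu\subset G_W$ is the parabolic stabiliser of $\Fildot_\mu$. Equivalently, such filtrations are in bijection with $\YY$-sections of the fppf quotient $P_\XX/P^\mu$ (with $P^\mu$ acting on the right via $G_W$). Since $P_\XX$ is a $G$-torsor by hypothesis, $P_\XX/P^\mu$ exists as a formal scheme that is étale-locally trivial on $\XX$ with fibre $G_W/P^\mu$; this is my candidate representing object for $\Flag_{G,\set\mu}^{\E,(t_\alpha)}$.

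Next I would check that the forgetful map to $\Flag_{\set\mu}^\E$ realises $P_\XX/P^\mu$ as a closed formal subscheme. Writing $P^{\mu|_{\GL}}\subset\GL(\Lambda_W)$ for the parabolic stabilising $\Fildot_\mu$, one has $\Flag_{\set\mu}^\E=\nf\isom_{\OO_\XX}(\E,\OO_\XX\otimes\Lambda)/P^{\mu|_{\GL}}$, and the forgetful map is induced both by the closed immersion $P_\XX\hra\nf\isom_{\OO_\XX}(\E,\OO_\XX\otimes\Lambda)$ (cut out by the tensor-matching conditions) and by the inclusion $P^\mu = G_W\cap P^{\mu|_{\GL}}$. Étale-locally on $\XX$, trivialising $P_\XX$ identifies this map with the base change of $G_W/P^\mu\hra\GL(\Lambda_W)/P^{\mu|_{\GL}}$, which is a proper monomorphism between smooth proper $W$-schemes and hence a closed immersion. Étale descent then upgrades this to a global closed immersion.

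Finally, smoothness of $P_\XX/P^\mu\to\XX$, non-emptiness of fibres, and geometric connectedness of fibres all reduce, by étale descent, to the corresponding properties of $G_W/P^\mu\to\Spec W$, which hold because the quotient of a connected reductive $W$-group by a parabolic subgroup is smooth projective with geometrically connected fibres. The main (if modest) obstacle I anticipate is the closed-immersion verification in the second step --- namely, making scheme-theoretically precise the étale-local identification with $G_W/P^\mu\hra\GL(\Lambda_W)/P^{\mu|_{\GL}}$ and checking that this descends along the trivialising étale cover of $P_\XX$.
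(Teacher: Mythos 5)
Your proof is correct and follows essentially the same route as the paper: étale-locally trivialise $P_\XX$ to identify the functor with the flag variety $G_W/P^\mu$ sitting as a closed formal subscheme of $\Flag_{\set\mu}^{\E}$, and then globalise by effectivity of étale descent for closed immersions, with smoothness, non-emptiness and geometric connectedness of fibres read off from $G_W/P^\mu$. Your repackaging of the representing object as the twisted form $P_\XX/P^\mu$ via Remark~\ref{rmk:Zhu} is only a cosmetic difference from the paper's gluing argument.
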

\begin{proof}
Let us first deduce the lemma when the torsor $\cP_{\Fil^\bullet_\E}$ (\ref{eqn:PFil}) is trivial. In this case, the representing formal scheme is isomorphic to $(G_W/P_G(\mu))\times_{\Spec W} \XX$. Note that the representability of $G_W/P_G(\mu)$ as a proper smooth $W$-scheme can be deduced from  \cite[Proposition~2.1.8(3)]{ConradGabberPrasad:PRedGp2} and the Iwasawa decomposition (\cite[\S4.4]{BruhatTits:RedGp1}).\footnote{Indeed, \cite[Proposition~2.1.8(3)]{ConradGabberPrasad:PRedGp2} together with the Iwasawa decomposition shows that $G_W/P_G(\mu)$ admits an open dense subscheme isomorphic to $U_G(\mu\iv)$ and its translates by $G(W)$ covers $G_W/P_G(\mu)$. And the $W$-scheme $G_W/P_G(\mu)$ satisfies the valuative criterion for properness thanks to the properness of each fibre and the Iwasawa decomposition.} Now, the natural morphism $\Flag_{G,\set\mu}^{\E,(t_\alpha)}\to \Flag_{\set\mu}^{\E}$ is a closed immersion, being a proper monomorphism. (Note that the properness follows from the properness of the source and the target.)

In general, the natural inclusion $\Flag_{G,\set\mu}^{\E,(t_\alpha)} \hra \Flag_{\set\mu}^{\E}$ can be  represented by a closed immersion \'etale-locally on $\XX$, and it respects the \'etale descent datum. Now the lemma follows from  effectivity of \'etale descent for closed immersions. 
\end{proof}

The following corollary is straightforward from Lemma~\ref{lem:muFil}:
\begin{corsub}\label{cor:muFilviaCompl}
Assume that $\XX$ is locally noetherian and  $\cP$  (\ref{eqn:P}) is a $G$-torsor. Then a filtration $\Fil^\bullet \E$ is a $\mu$-filtration if and only if it is so over the formal neighbourhood $\wh\XX_x$ for any closed point $x$ in $\XX$.
\end{corsub}

Let us finish the section with the following trivial but useful lemma:
\begin{lemsub}\label{lem:muFil0}
Let $\E/\XX$ and $(t_\alpha)$ be as in Definition~\ref{def:muFil}, and let $\Fil^\bullet \E$ be a $\set\mu$-filtration on $\E$. Then we have $t_\alpha \in \Gamma(\XX, \Fil^0\E^\otimes)$ for each $\alpha$.
\end{lemsub}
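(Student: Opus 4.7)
The plan is to verify the membership $t_\alpha\in \Gamma(\XX,\Fil^0\E^\otimes)$ étale-locally on $\XX$ after trivialising the $G$-torsor $P_\XX$, which transports the question to the model pair $(\Lambda,(s_\alpha))$; there the conclusion is immediate because $s_\alpha$ is $G$-invariant, hence fixed by any cocharacter factoring through $G$, hence of weight $0$, hence in $\Fil^0$.

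Concretely, the inclusion into $\Fil^0 \E^\otimes$ is a sheaf-theoretic condition that can be tested on an étale cover (and $\Fil^0\E^\otimes$ is a subsheaf whose formation commutes with étale base change). By Definition~\ref{def:muFil}(1) I pass to the étale covering $\{\YY_\xi\}$ over which $P_\XX$ admits sections $\varsigma_\xi$ identifying $(\E_{\YY_\xi},(t_\alpha)_{\YY_\xi})$ with $(\OO_{\YY_\xi}\otimes_{\Zp}\Lambda,(1\otimes s_\alpha))$, and by Definition~\ref{def:muFil}(2) I may further arrange that under this identification $(\Fil^\bullet\E)_{\YY_\xi}$ coincides with the filtration of $\OO_{\YY_\xi}\otimes_{\Zp}\Lambda$ induced by some conjugate $\mu_\xi=g\mu g^{-1}$ with $g\in G(\YY_\xi)$. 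The filtration so induced on $(\OO_{\YY_\xi}\otimes\Lambda)^\otimes$ is tautologically the one coming from the diagonal $\Gm$-action via $\mu_\xi$ on $\Lambda^\otimes$, since tensor products, symmetric and alternating powers, and duals all carry a canonical induced $\Gm$-action.

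The key step will then be to invoke Proposition~\ref{prop:Chevalley}: since $s_\alpha$ is fixed pointwise by $G(R')$ for every $\Zp$-algebra $R'$ and $\mu_\xi$ factors through $G$ by construction, we have $\mu_\xi(z)\cdot(1\otimes s_\alpha)=1\otimes s_\alpha$ for every $z$. With the sign convention of Definition~\ref{def:grmu}, a weight-$0$ element lies in $\gr^0\subseteq \Fil^0$, so $1\otimes s_\alpha\in\Fil^0$ of $(\OO_{\YY_\xi}\otimes\Lambda)^\otimes$. Transporting back through $\varsigma_\xi$ gives $(t_\alpha)_{\YY_\xi}\in\Gamma(\YY_\xi,(\Fil^0\E^\otimes)_{\YY_\xi})$, and étale descent then yields the claim over $\XX$.

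There is no genuine obstacle here: the lemma is essentially an exercise in unpacking Definition~\ref{def:muFil} and using that $G$-invariants lie in the $0$-weight space of any cocharacter landing in $G$. The one point requiring a small amount of care is the identification of the filtration on $\E^\otimes$ induced from $\Fil^\bullet\E$ with the one coming from $\mu_\xi$ acting diagonally on $\Lambda^\otimes$, which is handled by the functoriality of the $\otimes$-constructions; once this compatibility is in place, the rest is immediate.
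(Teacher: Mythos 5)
Your proof is correct and follows essentially the same route as the paper: trivialise the torsor $P_\XX$ étale-locally so the filtration is induced by a cocharacter factoring through $G$, observe that $(t_\alpha)$ (equivalently $(1\otimes s_\alpha)$) are fixed by the resulting $\Gm$-action and hence lie in the weight-$0$ graded piece of $\E^\otimes$, and conclude $t_\alpha\in\Fil^0\E^\otimes$ by descent. The only difference is cosmetic: you transport explicitly to the model $(\Lambda,(s_\alpha))$ and cite Proposition~\ref{prop:Chevalley}, while the paper works directly with $\E$ and the fact that the cocharacter factors through the pointwise stabiliser of $(t_\alpha)$.
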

\begin{proof}
Since the claim is \'etale-local on $\XX$, we may assume that the torsor $\cP_{\Fil^\bullet \E}$ (\ref{eqn:PFil}) is trivial. Fixing a trivialisation of $\cP_{\Fil^\bullet \E}$ we may assume that the filtration $\Fil^\bullet\E$ is given by a cocharacter $\mu:\Gm\ra G_\XX$. This means that $\Fil^\bullet\E$ admits a splitting  by the weight decomposition $\gr^\bullet_\mu\E$ using the $\Gm$-action by $\mu$. In particular, $\gr^0_\mu\E^\otimes$ is precisely the $\Gm$-invariants (with respect to $\mu$). On the other hand, the $\Gm$-action fixes each  $t_\alpha$ as the cocharacter $\mu$ factors through the pointwise stabiliser of $(t_\alpha)$. It follows that $t_\alpha \in \gr^0_\mu\E^\otimes$ for each $\alpha$.
\end{proof}

\subsection{Review on $p$-divisible groups and crystalline Dieudonn\'e theory}\label{subsec:DieudonneThy}
Throughout the paper, $\kappa$ be an algebraically closed field of characteristic $p>0$  unless stated otherwise. 
(Most of the time, there is no harm to set $\kappa:=\Fpbar$). We will set $W:= W(\kappa)$ and $K_0:= W[\ivtd p]$. Let $\sig$ denote the Witt vector Frobenius map on $W$ and $K_0$.

We will only consider \emph{compatible} PD thickenings; i.e., the PD structure is required to be compatible with the standard PD structure on $p\Zp$.

If $B$ is a $\Zp$-algebra and $\bb\subset B$ is a PD ideal, then for any $x\in \bb$ we let $x^{[i]}$ denote the $i$th divided power.

\begin{defnsub}
We define the \emph{isogeny category} of $p$-divisible groups over a scheme $\XX$ over $\Spf\Zp$ as follows:
\begin{itemize}
\item objects are $p$-divisible groups over $\XX$
\item morphisms $\iota:X \dra X'$ are  global sections of the following Zariski sheaf over $\XX$:
\[\QHom_R(X,X'):=\Hom(X,X')\otimes_\Z\Q. \]
 If $\XX$ is quasi-compact (for example, $\XX = \Spec R$ for $R\in\Nilp_W$) then morphisms can be understood as equivalence classes of diagrams of the form
\[ X \xla{[p^n]} X \xra{\iota'}X'\]
where the equivalence relation is defined by ``calculus of fractions''. We also say that $\iota$ is \emph{defined up to isogeny}, and often write $\iota=\ivtd{p^n}\iota'$ or $\iota' = p^n\iota$.
\end{itemize}
We will use dashed arrows $X\dra X'$ to denote morphisms defined up to isogeny. By \emph{quasi-isogeny} $\iota:X\dra X'$, we mean an isomorphism in the isogeny category; i.e., an invertible global section of $\QHom_R(X,X')$. Let  $\Qisg_R(X,X')$ be the set of quasi-isogenies. A quasi-isogeny $\iota$ is called an \emph{isogeny} if $\iota$ is an actual map of $p$-divisible groups.

We define the \emph{height} $h(\iota)$ of an isogeny $\iota:X\ra X'$ to be a locally constant function on $\XX$ so that Zariski-locally on $\XX$, the order of $\ker(\iota)$ is $p^{h(\iota)}$. We extend the definition of the \emph{height}  to a quasi-isogenies by
\[h(p^{-n}\iota):= h(\iota) - h([p^n])\]
Zariski-locally on $\XX$, where $\iota$ is an isogeny. 
\end{defnsub}

When $p$ is  nilpotent on the base, $\Qisg_R(X,X')$ satisfies the ``rigidity property'' analogous to rigidity of crystals; namely, for any $B\thra R$ with  nilpotent kernel killed by some power of $p$, and $p$-divisible groups $X$ and $X'$ over $B$, the natural morphism
\begin{equation}\label{eqn:DrinfeldRigidity}
 \Hom_B(X,X')[\ivtd p] \ra \Hom_R(X_R,X'_R)[\ivtd p]
\end{equation}
is bijective.
(\emph{Cf.} \cite[Lemma~1.1.3]{Katz:SerreTate}. Note that it is important to invert $p$ to obtain a bijection, since to lift a morphism $X_R\to X'_R$ over $B$ one often needs to introduce some denominator.)
In particular, for a locally noetherian formal scheme $\XX$ over $\Zp$, the isogeny categories of $p$-divisible groups over  $\XX$ and $\XX_{\red}$ are equivalent via the pull-back functor.

 Let $\XX$ be a where $p$ is nilpotent, and let $\ol\XX$ be the closed subscheme of $\XX$ cut out by the ideal generated by $p$. Let $i_{\CRIS}:=(i_{\CRIS,*},i_{\CRIS}^*):(\ol\XX/\Zp)_{\CRIS}\ra(\XX/\Zp)_{\CRIS}$ be the morphism of topoi induced from the closed immersion $\ol\XX\hra \XX$. Then $i_{\CRIS,*}$ and $i_{\CRIS}^*$ induce quasi-inverse exact equivalences of categories between the categories of crystals of quasi-coherent (respectively, finite locally free) $\cO_{\ol\XX/\Zp}$-modules and $\cO_{\XX/\Zp}$-modules. (This follows from  \cite[Lemma~2.1.4]{dejong:crysdieubyformalrigid}, which can be applied since the natural map $i_{\CRIS,*}\cO_{\ol\XX/\Zp} \ra \cO_{\XX/\Zp}$ is an isomorphism by \cite[\S5.17.3]{Berthelot-Ogus}.) In particular, for any crystal $\DD$ of quasi-coherent $\OO_{\XX/\Zp}$-modules, we define the pull-back by the absolute Frobenius morphism $\sig:\ol\XX\ra\ol\XX$ as follows:
\[
\sig^*\DD := i_{\CRIS,*}(\sig^*_{\CRIS} i_{\CRIS}^*\DD).
\]

For a $p$-divisible group $X$ over $\XX$, we have a contravariant Dieudonn\'e crystal\footnote{See \cite{messingthesis}, \cite{Mazur-Messing}, or \cite{Berthelot-Breen-Messing:DieudonneII} for the construction.} $\DD(X)$ equipped with  a filtration  $(\Lie X)^*\cong\Fil^1_X\subset \DD(X)_\XX$ by a Zariski-local direct factor as a vector bundle on $\XX$, where $\DD(X)_\XX$ is the pull-back of $\DD(X)$ to the Zariski site of $\XX$. We call $\Fil^1_X$  the \emph{Hodge filtration} for $X$. If $\XX = \Spec R$, then  we can regard the Hodge filtration as a filtration on the $R$-sections $\Fil^1_X \subset \DD(X)(R)$.
From the relative Frobenius morphism $F:X_{\ol \XX}\ra\sig^*X_{\ol\XX}$, we obtain the Frobenius morphism $F:\sig^*\DD(X)\ra\DD(X)$.
On tensor products of $\DD(X)$'s, we naturally extend the Frobenius structure and filtration.

We set $\triv:=\DD(\Qp/\Zp)$ and $\triv(-1):=\DD(\mu_{p^\infty})$. Note that $\triv\cong \OO_{\XX/\Zp}$ with the usual Frobenius structure and $\Fil^1 = 0$. We define $\DD(X)^*$ to be the $\OO_{\XX/\Zp}$-linear dual with the dual filtration. (Note that the Frobenius structure on $\DD(X)^*$ is defined ``up to isogeny''.) We set $\triv(0):= \triv$ and
\[\triv(-c):= \triv(-1)^{\otimes c}\ \text{and}\ \triv(c):=\triv(-c)^* \text{ if }c>0.\]
For any crystal $\DD$ with Frobenius structure and Hodge filtration, we set $\DD(r):=\DD\otimes\triv(r)$ for any $r\in \Z$. Note that $\DD(X^\vee)\cong \DD(X)^*(-1)$ by  \cite[\S5.3]{Berthelot-Breen-Messing:DieudonneII}.

We can extend the above definitions for $p$-divisible groups $X$ over a locally noetherian formal scheme $\XX$ over $\Spf\Zp$ as follows. We write $\XX = \varinjlim_n\XX_n$ where $\XX_n$ is a closed subscheme of $\XX$ cut out by some ideal of definition, and we set $X_n:=X_{\XX_n}$. Then we define $\DD(X)$ to be the projective system $\{\DD(X_n)\}$ with respect to the pull-back under the natural inclusion $\XX_n\hra\XX_{n+1}$ (\emph{cf.} \cite[\S2.4]{dejong:crysdieubyformalrigid}), and we carry out all the operations for $\DD(X)$ on the level of the projective system $\{\DD(X_n)\}$ (such as $\sigma^*\DD(X):=\{\sigma^*\DD(X_n)\}$, $\DD(X)^*:=\{\DD(X_n)^*\}$, etc.).
We let $\DD(X)_\XX$ denote the vector bundle on $\XX$ given by the projective system $\{\DD(X_n)_{\XX_n}\}$, and we obtain the Hodge filtration $\Fil^1_X\subset\DD(X)_\XX$ given by the projective system $\{\Fil^1_{X_n}\}$. If $\XX=\Spf R$, then we denote by $\DD(X)(R)$ the global sections of $\DD(X)_\XX$, and identify $\Fil^1_X$ with its global section.

\begin{defnsub}
We define the \emph{category of isocrystals} over $\XX$ as follows:
\begin{itemize}
\item objects are locally free $\OO_{\XX/\Zp}$-modules $\DD$; we write $\DD[\ivtd p]$ if we view $\DD$ as an isocrystal;
\item morphisms are  global sections of the Zariski sheaf $\Hom(\DD,\DD')[\ivtd p]$ over $\XX$. If $\XX$ is quasi-compact (for example, $\XX = \Spec R$ for $R\in\Nilp_W$) then morphisms can be understood as
 equivalence classes of diagrams of the form
\[ \DD \xla{p^n} \DD \xra{\iota'}\DD'\]
where the equivalence relation is defined by ``calculus of fractions''.
\end{itemize}

An \emph{$F$-isocrystal} over $\XX$ is a pair $(\DD[\ivtd p],F)$ where $\DD[\ivtd p]$ is an isocrystal over $\XX$ and $F:\sig^*\DD[\ivtd p]\riso \DD[\ivtd p]$ is an isomorphism of isocrystals. 
\end{defnsub}
For a $p$-divisible group $X$, $\DD(X)[\ivtd p]$ can be naturally viewed as an $F$-isocrystal. Note also that the notion of $F$-isocrystals is closed under direct sums, tensor products, and duality. So $\DD(X)^*[\ivtd p]$ is an $F$-isocrystal although the Frobenius structure may not be defined on $\DD(X)^*$.

We will often let $\triv(-c)$ denote the $F$-isocrystal associated to $\triv(-c)$.

Note that for any morphism up to isogeny $\iota:X\dra X'$ of $p$-divisible groups over $\XX$, we obtain a morphism of isocrystals $\DD(\iota):\DD(X')[\ivtd p]\ra\DD(X)[\ivtd p]$. If $\iota$ is a quasi-isogeny, then $\DD(\iota)$ is an isomorphism of $F$-isocrystals.

We now define $\DD(X)^\otimes$ by setting $\cC$ to be the category of (integral) crystals of quasi-coherent  $\OO_{\XX/\Zp}$-modules and $\cD\subset\cC$ to be the full subcategory of finitely generated locally free objects (\emph{cf.} Definition~\ref{def:tensors}). Then the Hodge filtration on $\DD(X)_\XX$ induces a natural filtration on $\DD(X)^\otimes_\XX$, and the Frobenius morphism on $\DD(X)$ induces an isomorphism of isocrystals $F:\sig^*\DD(X)^\otimes[\ivtd p]\riso \DD(X)^\otimes[\ivtd p]$.
More generally, for any quasi-isogeny $\iota:X\dra X'$ of $p$-divisible groups over $R$, $\DD(\iota)$ extends to
\[\DD(\iota):\DD(X')^\otimes[1/p] \riso \DD(X)^\otimes [1/p].\]

\begin{defnsub}\label{def:sections}
For a ring $R$ where $p$ is nilpotent, a surjection $B \thra R$ is called \emph{$p$-adic PD thickening} if $B=\varprojlim_m B/p^m$  where $ B/p^m$'s form a projective system of PD thickenings compatible with the PD structure for $p\Zp$.

Let $X$ be a $p$-divisible group over $R$. For a $p$-adic PD thickening $B\thra R$, we set
\[
\DD(X)(B):=\varprojlim_m \DD(X)(B/p^m)
\]

For $t:\triv\ra\DD(X)^\otimes$ a morphism of crystals,  we define the \emph{section} $t(B)$ of $t$ over $B$ (or the \emph{$B$-section} of $t$) to be the image of $1$ under the map
\[B=\triv(B)  \xra t \DD(X)(B)^\otimes.\]
\end{defnsub}

Let us review the interpretation of crystals and morphisms thereof as modules with connection and horizontal morphisms thereof, following \cite[\S2.2]{dejong:crysdieubyformalrigid}. Let $R$ be a formally finitely generated $W/p^m$-algebra for some $m$, and we choose a surjection $A \thra R$ from a formally smooth formally finitely generated $W$-algebra $A$. Let $D\thra R$ be the $p$-adic completed PD hull of $A\thra R$. (Note that if $R$ is formally smooth over $W/p^m$, we may choose $A$ so that $A/p^m = R$, in which case we have $D=A$.)

Let $\wh\Omega_{A}$ denote the module of $p$-adically continuous K\"ahler differentials of $A$. By \cite[Remark~1.3.4]{dejong:crysdieubyformalrigid}, $\wh\Omega_{A}$ is isomorphic to the module of continuous K\"ahler differentials with respect to the natural  topology. (In particular, $\wh\Omega_{A}$ is a locally free $A$-module with rank equal to the relative dimension of $A$ over $W$; note that $\wh\Omega_W = 0$ as the residue field of $W$ is perfect, so $\wh\Omega_{A} = \wh\Omega_{A/W}$.) Then the universal $p$-adically continuous derivation $\nabla: A \to \wh\Omega_{A}$ naturally extends to a $p$-adically continuous connection
\[
 \nabla(=\nabla_D): D \to D\otimes_{A}\wh\Omega_{A};
 \]
 see \cite[(2.2.1.2)]{dejong:crysdieubyformalrigid} for the formula.

Let $\E$ be a crystal of quasi-coherent $\OO_{\Spec R/\Zp}$-modules. (Note that we work over $\Spec R$, \emph{not} over $\Spf R$ with the natural adic topology.) Then one can naturally define the connection
\begin{equation}\label{eqn:CrysConn}
\nabla(=\nabla_\E):\E(D) \to \E(D)\otimes_{A}\wh\Omega_{A}
\end{equation}
over $\nabla_D$; \emph{cf.} \cite[Remark~2.2.4~d)]{dejong:crysdieubyformalrigid}. We recall the following result
\begin{lemsub}[{\cite[Proposition~2.2.2]{dejong:crysdieubyformalrigid}}]\label{lem:CrysConn}
The connection $\nabla_\E$ is integrable and topologically quasi-nilpotent (in the sense of \cite[Remark~2.2.4~c)]{dejong:crysdieubyformalrigid}). Furthermore, the assignment $\E\rightsquigarrow (\E(D),\nabla_\E)$ induces an equivalence of categories from the categories of crystals of quasi-coherent $\OO_{\Spec R/\Zp}$-modules to the category of $D$-modules with integrable  and topologically quasi-nilpotent connections, matching locally free crystals and locally free $D$-modules.
\end{lemsub}

\begin{rmksub}\label{rmk:JadicConn}
We choose $R= A/p$, so $D=A$. Let $\E$ be a crystal of locally free $\OO_{\Spec R/\Zp}$-module of  finite rank.
Then although the crystalline connection on $\E(A)$ is a priori only $p$-adically continuous, it turns out to be continuous for the natural $J$-adic topology (where $J$ is an ideal of definition). Indeed, this follows from the $J$-adic continuity of  the universal $p$-adically continuous derivation $\nabla:A\to\wh\Omega_{A}$; \emph{cf.} \cite[\S2.5]{dejong:crysdieubyformalrigid}.
\end{rmksub}

\begin{rmksub}\label{rmk:isoc}
We may extend Lemma~\ref{lem:CrysConn} for isocrystals over $\Spec R$ as follows. Let $\E[\ivtd p]$ be an isocrystal over $\Spec R$. We choose a crystal $\E$ representing $\E[\ivtd p]$, and consider $\E(D)[\ivtd p]$ equipped with  connection $\nabla_{\E[\frac{1}{p}]}:\E(D)[\ivtd p] \to \E(D)[\ivtd p]\otimes_{\wt A}\wh\Omega_{\wt A}$ defined by extending $\nabla_\E$. Then $(\E(D)[\ivtd p],\nabla_{\E[\frac{1}{p}]})$ is independent of the choice of $\E$. Now, Lemma~\ref{lem:CrysConn}  shows that the assignment $\E[\ivtd p]\rightsquigarrow(\E(D)[\ivtd p],\nabla_{\E[\frac{1}{p}]})$ induces a fully faithful functor from the category of isocrystals over $\Spec R$ to the category of $D[\ivtd p]$-modules with integrable topologically quasi-nilpotent connection.
\end{rmksub}

 \begin{rmksub}\label{rmk:Fisoc}
 We choose a lift of Frobenius endomorphism $\sig: \wt A\to \wt A$, which naturally extends to a lift of Frobenius endomorphism $\sig:D\to D$.\footnote{\label{footnote} To see this, $D$ turns out to coincide with the $p$-adically completed PD hull of $\wt A\thra R/p$, and $\sigma:\wt A\to\wt A$ sends $\ker (\wt A\thra R/p)$ into itself. Now we apply the universal property of PD hull to obtain $\sigma:D\to D$.} Given an $F$-isocrystal $\E[\ivtd p]$ over $\Spec R$ (i.e, an isocrystal $\E$ equipped with an isomorphism $F:\sigma^*\E[\ivtd p]\riso\E[\ivtd p]$), we obtain the following horizontal map
 \[
 F: \sigma^*\E(D)[\ivtd p] = D\otimes_{\sigma,D}\E(D)[\ivtd p] \riso \E(D)[\ivtd p]
 \]
induced by $F:\sigma^*\E[\ivtd p]\riso\E[\ivtd p]$. Then Lemma~\ref{lem:CrysConn} induces a fully faithful functor from the category of $F$-isocrystals to certain Frobenius-modules with connection.
 \end{rmksub}

 \begin{rmksub}\label{rmk:CrysBC}
 Let $R\to R'$ is a quotient map of formally finitely generated $W/p^m$-algebras, and let $D'\thra R'$ denote the $p$-adically completed PD hull of $A\thra R\thra R'$. Then we have a natural PD morphism $D\to D'$.

 Let $\E$ be a crystal of quasi-coherent $\OO_{\Spec R/\Zp}$-modules, and $\E'$ its pull-back over $\Spec R'$. Then we have a natural isomorphism
 \[
 \E'(D') \cong \E(D)\wh\otimes_D D',
 \]
 where $-\wh\otimes_D D'$ is the $p$-adic completion of the scalar extension. Furthermore, one can obtain $\nabla_{\E'}$ by naturally extending $\nabla_{\E}$. (This horizontal isomorphism can be obtained by applying the rigidity axiom of crystals \cite[D\'efinition~1.2.1]{Berthelot-Breen-Messing:DieudonneII} to the PD morphism $D/p^{m'}\to D'/p^{m'}$ over $R\to R'$ for any $m'\geqslant m$.) This discussion can be applied to the pull-back of ($F$-)isocrystals via  closed immersions $\Spec R'\hra \Spec R$.

Let $J$ be an ideal of definition of $R$. Then a projective system  $\{\E_n\}$ of crystals over $\Spec R/J^n$ can be interpreted as a projective system $\{\E_n(D_n)\}$ of $D_n$-modules with connection, where $D_n\thra R/J^n$ is the $p$-adically completed PD hull of $\wt A\thra R\thra R/J^n$. This way, one can get an interpretation of ``crystals over $\Spf R$''  in terms of certain modules with connection (which would be different from the descriptions of crystals over $\Spec R$ unless $R$ is finitely generated over $W/p^m$).
 \end{rmksub}

 \subsection{$F$-isocrystals with $G$-structure over $\kappa$}\label{subsec:GIsoc}
 We review some basic definitions and results on ``$F$-isocrystals with $G$-structure''. See \cite[Ch.1]{RapoportZink:RZspace} for a detailed overview.

 Recall that the category of quasi-coherent crystals of $\OO_{\Spec \kappa/W}$-modules is equivalent to the category of $W$-modules by taking sections over $W = W(\kappa)$. Therefore, an $F$-isocrystal over $\kappa$ can be regarded as a pair $(D,F)$, where $D$ is a $K_0$-vector space and $F:\sig^*D\riso D$ is an isomorphism.  By abuse of notation, we also call $(D,F)$ an \emph{$F$-isocrystal} over $\kappa$.

Let  $(D,F)$ be a rank-$n$ $F$-isocrystal over $\kappa$. By choosing a basis of $D$ (or equivalently, by choosing a faithful algebraic action of ${\GLn}$ on $D$), we can find $b\in\GLn(K_0)$ which is the matrix representation of $F$ (i.e., $F$ is the linearisation of $b\sigma$). Choosing a different basis of $D$, $b$ is replaced by a suitable ``$\sig$-twisted conjugate''.
Motivated by this, we make the following definition (\emph{cf.} \cite{Kottwitz:Gisoc1}, \cite[\S1.7]{RapoportZink:RZspace}):
\begin{defnsub}
\label{def:GIsoc} Let $G$ be a linear algebraic group over $\Qp$. We say that $b,b'\in G(K_0)$ are \emph{$\sig$-conjugate in $G(K_0)$} if there exists $g\in G(K_0)$ such that $b'=g b \sig(g)\iv$. Let $[b]\subset G(K_0)$ denote the set of $\sig$-conjugates of $b\in G(K_0)$ in $G(K_0)$. 
\end{defnsub}

Let $\Qprep(G)$ denote the category of finite-dimensional $\Qp$-vector spaces with algebraic $G$-action. Then for any $b\in G(K_0)$ we can functorially associate, to  any $V\in\Qprep(G)$, an $F$-isocrystal $\beta_b(V) = (K_0\otimes_{\Qp}V, F_b)$, where $F_b$ is defined as follows:
\begin{equation}
F_b: \sigma^*(K_0\otimes_{\Qp}V) \cong K_0 \otimes_{\Qp} V \xra{\rho_V(b)} K_0\otimes_{\Qp}V.
\end{equation}
Here, $\rho_V:G\ra\GL(V)$ is the homomorphism of algebraic groups defining the $G$-action on $V$.
For $b,b'\in G(K_0)$, we have $\beta_b\cong \beta_{b'}$ if  $b$ and $b'$ are $\sig$-conjugate in $G(K_0)$.
Note that if $G=\GLn/\Qp$ and $V=\Qp^n$ is the standard representation of $\GLn$, then $b\mapsto \beta_b(V)$ induces a bijection between the set of $\sig$-conjugacy classes in $\GLn(K_0)$ and the set of isomorphism classes $F$-isocrystals of rank $n$ over $\kappa$.

We define the following group valued functor $J_b=J_{G,b}$ on $\Alg_{\Qp}$:
\begin{equation}\label{eqn:J}
J_b(R):=\set{g\in G(R\otimes_{\Qp}K_0)|\ gb\sig(g)\iv = b},\ \forall R\in\Alg_{\Qp}.
\end{equation}

\begin{propsub}[{\cite[Corollary~1.14]{RapoportZink:RZspace}}]
If $G$ is a  reductive group over $\Qp$ then $J_b$ can be represented by  an inner form of some Levi subgroup of $G$.
\end{propsub}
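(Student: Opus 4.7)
The approach is to realise $J_b$ by Galois descent of a natural Levi subgroup of $G_{K_0}$ equipped with a twisted Frobenius, following Kottwitz. The key input is the slope homomorphism: to any $b\in G(K_0)$ one canonically attaches a morphism $\nu_b:\mathbb{D}\to G_{K_0}$ from the pro-torus with character group $\Q$, characterised by the fact that for every $V\in\Qprep(G)$ its composition with $\rho_V$ yields the slope grading of the $F$-isocrystal $\beta_b(V)$ furnished by the Dieudonn\'e--Manin classification. As a preliminary step I would replace $b$ by a $\sig$-conjugate so as to make $b$ \emph{decent}, meaning $(b\sig)^s=(s\nu_b)(p)\cdot\sig^s$ for some $s\geqs 1$, with $s\nu_b$ a genuine cocharacter $\Gm\ra G_{K_0}$ defined over a finite unramified extension of $\Qp$. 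This decency identity is equivalent to the relation $\sig(\nu_b)=\Int(b\iv)\circ\nu_b$, which is what the rest of the argument rests on.

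Next I would set $M_b:=Z_{G_{K_0}}(\nu_b)$, the centraliser of the image of $\nu_b$. Since $G$ is connected reductive and $\nu_b$ factors through a maximal torus of $G_{K_0}$ (as one sees from the characterisation on each $\Qprep(G)$ and from the canonical decomposition of semisimple elements), $M_b$ is a connected Levi subgroup of $G_{K_0}$; the relation $\sig(\nu_b)=\Int(b\iv)\circ\nu_b$ then implies $b\in M_b(K_0)$ and that $M_b$ is stable under the twisted Frobenius $\Phi_b:=\Int(b)\circ\sig$. Iterating the decency identity, $\Phi_b^s$ acts on $M_b$ as conjugation by an element of $M_b(K_0)$; in particular $\Phi_b$ becomes an \emph{inner} automorphism after passing to the adjoint quotient of $M_b$. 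Galois descent for reductive groups therefore produces a reductive group $J$ over $\Qp$ together with an isomorphism $J_{K_0}\riso M_b$ intertwining $\Phi_b$ with the Galois action, and by construction $J$ is an inner form of the $\Qp$-form of $M_b$, i.e.\ of a Levi subgroup of $G$.

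It remains to check that $J$ represents the functor $J_b$ of \eqref{eqn:J}. Unwinding the descent, the $R$-points of $J$ for $R\in\Alg_{\Qp}$ are exactly
\[ J(R)=\set{g\in M_b(R\otimes_{\Qp}K_0)\text{ such that }gb\sig(g)\iv=b}, \]
so the inclusion $J(R)\subset J_b(R)$ is tautological. For the converse, any $g\in G(R\otimes_{\Qp}K_0)$ satisfying $gb\sig(g)\iv=b$ must automatically centralise $\nu_b$, because the slope cocharacter is functorially determined by $b$ and is therefore preserved by the $\sig$-conjugation action it induces on $G(R\otimes_{\Qp}K_0)$; hence $g\in M_b(R\otimes_{\Qp}K_0)$, proving $J_b=J$.

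The principal obstacle is the decency reduction together with the existence and properties of $\nu_b$. The existence is essentially Tannakian: the assignment $V\mapsto\beta_b(V)$ is a tensor functor from $\Qprep(G)$ to $F$-isocrystals over $\kappa$, and the canonical $\Q$-grading by slopes yields a tensor-compatible $\mathbb{D}$-action on this fibre functor, which by Tannakian reconstruction comes from a unique $\nu_b:\mathbb{D}\ra G_{K_0}$. Decency is obtained by approximating $b$ within $[b]$ by an element defined over $\wh{\Q}_p^{\ur}$ and invoking a standard compactness argument on $\sig$-conjugacy classes. Once these two inputs are in place, representability of $J_b$ and the inner-form assertion follow formally, as sketched above.
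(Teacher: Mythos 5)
The central gap is your claim that the relation $\sig(\nu_b)=\Int(b\iv)\circ\nu_b$ together with decency gives $b\in M_b(K_0)$. It does not: since $\Int(b)(\sig(\nu_b))=\nu_b$ always holds, $b$ centralises $\nu_b$ precisely when $\sig(\nu_b)=\nu_b$, i.e.\ when $\nu_b$ is defined over $\Qp$; decency only puts $\nu_b$ over the finite extension $\Q_{p^s}$. For a concrete counterexample take $G=\GL_2$, $b_0=\mathrm{diag}(1,p)$ and $b=g b_0\sig(g)\iv$ with $g\in\GL_2(\Q_{p^2})$ generic: then $b$ is decent with $s=2$, $\nu_b=\Int(g)\nu_{b_0}$ is not $\sig$-invariant, and $b\notin M_b(K_0)$. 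This error is not cosmetic. The stability of $M_b$ under $\Phi_b=\Int(b)\circ\sig$ survives (it uses only the displayed relation), but the final step ``$J$ is an inner form of \emph{the $\Qp$-form of $M_b$}, i.e.\ of a Levi subgroup of $G$'' is exactly where $b\in M_b(K_0)$, equivalently $\Qp$-rationality of $\nu_b$, is needed: without it $M_b$ carries no canonical $\Qp$-structure, $\Int(b)$ need not even preserve $M_b$, and the geometric conjugacy class of $M_b$ need not contain any $\Qp$-rational Levi of $G$ at all. (For $G=D^\times$, $D$ the quaternion division algebra over $\Qp$, there are non-basic classes with Newton slopes $3/2,1/2$ for which $J_b\cong\Gm\times\Gm$, while $G$ has no proper Levi subgroup over $\Qp$; so the proposition must be read with the correct rationality caveat, which in this paper is harmless because $G$ is unramified, hence quasi-split.) The step your outline is missing — and which is the actual content of the Rapoport--Zink/Kottwitz argument the paper simply cites — is to first replace $b$ inside its $\sig$-conjugacy class so that $\nu_b$ is defined over $\Qp$, using that the Newton point is Galois-invariant and that $G$ is quasi-split; only then is $M:=Z_G(\nu_b)$ a Levi subgroup of $G$ over $\Qp$, $b\in M(K_0)$, and the twisted descent datum $\Int(b)\circ\sig$ exhibits $J_b$ as an inner twist of $M$.

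Two further points need tightening, though they are fixable. First, decency is strictly stronger than the relation $\sig(\nu_b)=\Int(b\iv)\circ\nu_b$ (which holds for every $b$); what decency buys is that $b$ and $\nu_b$ live over $\Q_{p^s}$ and that $\Phi_b^s$ acts on $M_b$ as $\sig^s$ (the extra conjugation is by the central element $(s\nu_b)(p)$), which is what lets you descend along the finite cyclic extension $\Q_{p^s}/\Qp$ rather than along the infinite extension $K_0/\Qp$ with a single semilinear automorphism, where effectivity is not automatic. Second, your verification that $J_b(R)\subset M_b(R\otimes_{\Qp}K_0)$ for an arbitrary $\Qp$-algebra $R$ is hand-waved: ``the slope cocharacter is functorially determined by $b$'' is a statement about elements of $G(K_0)$, not about $R$-points. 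The honest argument shows that the slope projectors of each $\beta_b(V)$ lie in the $F$-invariants of $\End(V\otimes_{\Qp}K_0)$ and that forming these $F$-invariants commutes with $-\otimes_{\Qp}R$, so any $g$ commuting with $b\sig$ over $R\otimes_{\Qp}K_0$ preserves every slope decomposition and hence centralises $\nu_b$. With these repairs your route coincides with the one in Rapoport--Zink (the paper itself gives no proof beyond the citation).
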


\subsection{Affine Deligne-Lusztig set}\label{subsec:FilGIsoc}

From now on, we let $G$ be a  reductive group which is \emph{unramified}\footnote{I.e., quasi-split and split over $\Qp^{\ur}$; or equivalently, $G$ admits a reductive model over $\Zp$.} over $\Qp$. We fix a reductive model $G_{\Zp}$ over $\Zp$, and will often write $G=G_{\Zp}$ if there is no risk of confusion. For any $\Zp$-algebra $R$, we write $G_R$ the base change of $G$ over $\Spec R$.

Recall that $G$ is split over $W$ (which is a strictly henselian discrete valuation ring). Choosing a maximal torus $T$ of $G_W$, we have a bijection
\begin{multline}\label{eqn:CartanDecomp}
X_*(T)/W(G,T) \riso \Hom_W(\Gm,G_W)/G(W) \\
\cong \Hom_{K_0}(\Gm,G_{K_0})/G(K_0) \riso G(W)\backslash G(K_0)/G(W)
\end{multline}
induced by $\set{\nu}\mapsto G(W)p^\nu G(W)$,  where $p^\nu:=\nu(p)\in G(K_0)$ and $W(G,T)$ is the Weyl group; indeed, the first two bijections are standard for split reductive groups, and the last bijection is the Cartan decomposition.

\begin{defnsub}\label{def:ADL}
To $b\in G(K_0)$ and a $G(W)$-conjugacy class $\set\nu$ of cocharacters $\nu:\Gm\ra G_W$, we associate the \emph{affine Deligne-Lusztig set} as follows:
\begin{multline*}
 X^G_{\set\nu}(b):=\big\{g\in G(K_0) \text{ such that } g\iv b \sig(g)\in G(W)p^\nu G(W)\big\}/G(W)\\
 \subset G(K_0)/G(W).
\end{multline*}
In the intended application, we will choose $\set\nu$ so that $b\in G(W)p^\nu G(W)$. 
If $\set\nu$ is chosen this way, then we write $X^G(b):=X_{\set\nu}^G(b)$ since it only depends on $(G,b)$.
\end{defnsub}

For $\gamma\in G(K_0)$, the left translation $gG(W)\ra \gamma gG(W)$ induces
\begin{equation}\label{eqn:ADLEqCl}
X^G_{\set{\nu}}(\gamma\iv b\sig(\gamma)) \riso X^G_{\set{\nu}}(b).
\end{equation}
In particular, we obtain a natural action $J_b(\Qp)$ on $X^G_{\set\nu}(b)$ (since $J_b(\Qp)\subset G(K_0)$). 

The following properties are straightforward to verify from the definition:
\begin{lemsub}\label{lem:FunctAffDL}
\begin{enumerate}
\item For any morphism $f:G\ra G'$ of  reductive group over $\Zp$, we have a natural map $X^G_{\set\nu}(b) \ra X^{G'}_{\set{f\circ\nu}}(f(b))$ induced by $gG(W)\mapsto f(g)G'(W)$. Furthermore, if $f$ is a closed immersion, then the induced map on the affine Deligne-Lusztig sets is injective.
\item For another  reductive group $G'$ over $\Zp$, $b'\in G'(W)$ and a conjugacy class of cocharacters $\nu':\Gm\ra G'_W$, we have an isomorphism
\[X^{G\times G'}_{\set{(\nu,\nu')}}(b,b')\riso X^G_{\set\nu}(b)\times X^{G'}_{\set{\nu'}}(b')\]
induced by the natural projections.
\end{enumerate}
In particular, $f:G\ra G'$ induces $X^G(b)\ra X^{G'}(f(b))$ and we have a natural isomorphism $X^{G\times G'}(b,b')\riso X^G(b)\times X^{G'}(b')$.
\end{lemsub}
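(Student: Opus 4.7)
The plan is to unwind the definitions and check compatibility with the operations involved ($\sigma$, Cartan decomposition, and left translation).

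For (1), first I would verify that the rule $gG(W)\mapsto f(g)G'(W)$ gives a well-defined map of sets. Since $f$ is defined over $\Zp$, it is $\sigma$-equivariant and sends $G(W)$ into $G'(W)$; hence both the independence from the choice of coset representative and the condition $f(g)\iv f(b)\sigma(f(g))\in G'(W)p^{f\circ\nu}G'(W)$ follow immediately by applying $f$ to the corresponding statement in $G(K_0)$ (noting that $f(p^\nu)=p^{f\circ\nu}$). For the injectivity when $f$ is a closed immersion, suppose $f(g_1)G'(W)=f(g_2)G'(W)$, i.e.\ $f(g_1\iv g_2)\in G'(W)$. The key point is the identity
\[
G(W) = G(K_0)\cap G'(W),
\]
which holds because $G_{\Zp}\hra G'_{\Zp}$ is a closed immersion of flat $\Zp$-schemes, so a $K_0$-point of $G$ lying in the closed subscheme $G'(W)\subset G'(K_0)$ automatically lies in $G(W)$. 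Applied to $g_1\iv g_2$ this yields $g_1 G(W)=g_2 G(W)$.

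For (2), the projections $\pr:G\cross G'\ra G$ and $\pr':G\cross G'\ra G'$ are morphisms of reductive $\Zp$-groups, so part (1) gives a well-defined map to the product. Surjectivity and injectivity reduce to three elementary compatibilities:
\[
(G\cross G')(W) = G(W)\cross G'(W),\quad (G\cross G')(K_0) = G(K_0)\cross G'(K_0),
\]
and, since the Cartan decomposition \eqref{eqn:CartanDecomp} is multiplicative in the group,
\[
(G\cross G')(W)\,p^{(\nu,\nu')}\,(G\cross G')(W) = \big(G(W)p^\nu G(W)\big)\cross\big(G'(W)p^{\nu'}G'(W)\big).
\]
Given $(g_1G(W),g_2G'(W))$ on the right-hand side, the pair $(g_1,g_2)$ then satisfies the required Cartan condition componentwise, giving surjectivity; injectivity is immediate from the product decomposition of $(G\cross G')(W)$.

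The ``in particular'' clause is automatic: taking $g=1$ in the definition of $X^G(b)$ shows that $b\in G(W)p^\nu G(W)$ implies $f(b)\in G'(W)p^{f\circ\nu}G'(W)$, so the notation $X^{G'}(f(b))$ is justified; similarly $(b,b')\in (G\cross G')(W)p^{(\nu,\nu')}(G\cross G')(W)$. No step should present any serious obstacle; the only point worth flagging is the identity $G(W)=G(K_0)\cap G'(W)$, which rests on $G\hra G'$ being a closed immersion of $\Zp$-flat schemes and is what forces the injectivity conclusion in (1).
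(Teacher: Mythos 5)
Your proposal is correct, and for everything except the injectivity claim it coincides with the paper, which simply declares those parts straightforward. The one genuinely nontrivial point — that for a closed immersion $f:G\hra G'$ one has $G(W)=G(K_0)\cap G'(W)$, which is exactly what forces injectivity on cosets — is handled differently. The paper first reduces to $G'=\GL_n$ by composing with an embedding $G'\hra\GL_n$, and then invokes Proposition~\ref{prop:Chevalley}: since $G$ is the pointwise stabiliser of tensors $(s_\alpha)\subset\Lambda^\otimes$, an element of $G(K_0)\cap\GL(\Lambda_W)$ fixes the tensors and hence lies in $G(W)$. You instead argue directly and scheme-theoretically: a $W$-point of $G'$ whose generic fibre lands in the closed subscheme $G$ factors through $G$, because the ideal it pulls back to $W$ dies after inverting $p$ and $W$ is a domain. (Your phrase ``the closed subscheme $G'(W)\subset G'(K_0)$'' is loose — it is the subscheme $G\subset G'$ that is closed, not the set of $W$-points — and the flatness you cite is not actually needed; torsion-freeness of $W$ is what does the work.) Your route is more elementary and more general, requiring no auxiliary embedding into $\GL_n$ and no tensor description of $G$, while the paper's version stays within its tensor-centric framework and reuses a proposition it needs anyway; both are complete proofs of the same key identity.
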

\begin{proof}
The only possibly non-trivial assertion is the injectivity of the map $X^G_{\set\nu}(b) \ra X^{G'}_{\set{f\circ\nu}}(f(b))$  when $f$ is a closed immersion. Indeed, the following map
\[G(K_0)/G(W) \ra G'(K_0)/G'(W),\]
induced by $f$, is injective, from which the desired injectivity follows.
\end{proof}

Later, we will consider pairs $(G,b)$ which can be related to some $p$-divisible group. Let us first spell out the condition when  $G=\GL_n$ and $\Lambda = \Zp^n$ is the standard representation. Then $\rho_{\Lambda^*[\frac{1}{p}]}:\GL_n\to\GL_n$ is the isomorphism sending $b$ to its transpose-inverse. Now, we consider $\beta_b(\Lambda^*[\frac{1}{p}]):=(K_0\otimes_{\Zp}\Lambda^*, F_b)$ (\emph{cf.} Definition~\ref{def:GIsoc}), and let $\bM^\Lambda_b $ denote the $W$-lattice $W\otimes_{\Zp}\Lambda^*$ in $\beta_b(\Lambda^*[\frac{1}{p}])$. Then $\bM^\Lambda_b$ is a Dieudonn\'e module of some $p$-divisible group over $\kappa$ if and only if we have $p\bM^\Lambda_b \subset F_b(\sigma^*\bM^\Lambda_b) \subset \bM^\Lambda_b$, which is equivalent to requiring that $b\in \GL_n(W)p^\nu \GL_n(W)$ where $p^\nu:=\diag(\underbrace{1,\cdots,1}_{n-d}, \underbrace{p^{-1},\cdots,p^{-1}}_d)$ for some $d\in [0,n]$.
\begin{defnsub}\label{def:HodgeAssump}
We fix $b\in G(K_0)$. Assume that there exists a faithful $G$-representation $\Lambda\in \prep(G)$ such that  the $W$-lattice
\[\bM^\Lambda_b:=W\otimes_{\Zp}\Lambda^* \subset \beta_b(\Lambda^*[\tfrac{1}{p}]) \]
satisfies $p\bM^\Lambda_b \subset F_b(\sigma^*\bM^\Lambda_b) \subset \bM^\Lambda_b$, where $\rho_{\Lambda^*[\frac{1}{p} ]}:G\to\GL_{\Qp}(\Lambda^*[\ivtd p])$ is the morphism defining the contragradient $G(K_0)$-action on $K_0\otimes_{\Zp}\Lambda^*$. Then, $\bM^\Lambda_b$ is an $F_b$-stable $W$-lattice in $\beta_b(\Lambda^*[\ivtd p])$, which is a (contravariant) Dieudonn\'e module of some $p$-divisible group over $\kappa$.
(The existence of such $\Lambda$ is a restrictive condition on $G$ and $b$. See Example~\ref{exa:ShVar} for the reason to dualise $\Lambda$ in the definition of $\bM^\Lambda_b$.)

For such $b$ and $\Lambda$, we let $\BX_b^\Lambda$ denote the $p$-divisible group over $\kappa$ such that we have an isomorphism $\DD(\BX_b^\Lambda)(W)\cong \bM^\Lambda_b$ as an $F$-crystal.
\end{defnsub}

Just like the abelian variety which appears as a complex point of some Shimura variety of Hodge type has certain Hodge cycles, the $p$-divisible group $\BX$ as in Definition~\ref{def:HodgeAssump} has certain crystalline tensors from the fact that $b\in G(K_0)$. The following lemma is straightforward.
\begin{lemsub}\label{lem:CrysTateOverW}
Let $(G,b)$  and $\BX^\Lambda_b$ be as in Definition~\ref{def:HodgeAssump}.
Let $s\in\Lambda^\otimes$ be such that for any $\Zp$-algebra $R$, $1\otimes s\in R\otimes \Lambda^\otimes$ is fixed by $G(R)$. (For example, we may take $s=s_\alpha$ for $s_\alpha$ as in Proposition~\ref{prop:Chevalley}.)
Consider $1\otimes s \in W\otimes\Lambda^\otimes = \DD(\BX^\Lambda_b)(W)^\otimes$. (Recall that $\Lambda^\otimes = (\Lambda^*)^\otimes$; \emph{cf.} Definition~\ref{def:tensors}.) Then $1\otimes s\in\DD(\BX^\Lambda_b)(W)^\otimes[\ivtd p]$ is fixed by $F$, where we extend $F$ naturally to $\DD(\BX^\Lambda_b)(W)^\otimes[\ivtd p]$. 
\end{lemsub}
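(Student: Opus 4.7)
The approach is to unwind both sides against the explicit matrix description of $F$ given by the construction $\beta_b$ of \S\ref{subsec:GIsoc}. By Definition~\ref{def:HodgeAssump}, the isocrystal $\DD(\BX)(W)[\ivtd p]$ is canonically isomorphic to $\beta_b(\Lambda_{\Qp})$; concretely, the Frobenius acts on $K_0\otimes_{\Qp}\Lambda_{\Qp}$ as $\rho_{\Lambda}(b)\circ(\sigma\otimes\id)$, where $\rho_\Lambda\colon G\to\GL(\Lambda)$ is the structural representation, and the lattice $W\otimes_{\Zp}\Lambda = \bM_b^\Lambda$ inside it recovers $\DD(\BX)(W)$.

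Next I would observe that $V\mapsto\beta_b(V)$ is a tensor functor from $\Qprep(G)$ to the category of $F$-isocrystals over $\kappa$, compatible with tensor products, symmetric and alternating powers, and linear duals. Consequently it identifies $\DD(\BX)(W)^\otimes[\ivtd p]$ with $\beta_b(\Lambda_{\Qp}^\otimes)$, the Frobenius on the right being $\rho_{\Lambda^\otimes}(b)\circ(\sigma\otimes\id)$ for the induced $G$-representation $\rho_{\Lambda^\otimes}$ on $\Lambda^\otimes$. A brief check is needed here to confirm that the linear dual used in Definition~\ref{def:tensors} matches the linear dual inside $\beta_b$ (rather than the Tate-twisted dual underlying $\DD(\BX^\vee)$); this is immediate from functoriality of $\beta_b$ with respect to duality on both sides.

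With this identification, the conclusion is a one-line calculation. Applied to $1\otimes s \in W\otimes_{\Zp}\Lambda^\otimes$, the semilinear factor $\sigma\otimes\id$ fixes $1\otimes s$ since $s$ lies in $\Lambda^\otimes$ and the $W$-component is $1$, and then the invariance hypothesis applied to the $\Zp$-algebra $R=K_0$ and the element $b\in G(K_0)$ gives $\rho_{\Lambda^\otimes}(b)(1\otimes s)=1\otimes s$. Composing, $F(1\otimes s)=1\otimes s$, as required.

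Since the statement is essentially a direct unpacking of definitions, there is no genuine obstacle; the one point requiring a little care is the bookkeeping of duals just mentioned, but it poses no real difficulty because the hypothesis specifies $G$-invariance of $s$ directly on $\Lambda^\otimes$ (not on some Tate twist thereof), and $\beta_b$ is manifestly a tensor functor with respect to the same tensor operations.
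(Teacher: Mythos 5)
Your proof is correct, and it is exactly the direct verification the paper has in mind: the paper states this lemma without proof, calling it straightforward, and the intended argument is precisely your unwinding of $F=\rho_{\Lambda^\otimes}(b)\circ(\sigma\otimes\id)$ on $\beta_b(\Lambda^\otimes_{\Qp})$ together with the $G(K_0)$-invariance of $1\otimes s$. Your side remark on duals (linear dual versus Tate-twisted dual, with the Frobenius on duals defined up to isogeny as the contragredient) is the right point to check and is handled correctly.
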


\begin{defnsub}\label{def:bft}
We fix finitely many tensors $(s_\alpha)\subset \Lambda^\otimes$ which defines $G$ in the sense of Proposition~\ref{prop:Chevalley}. We set $(\bft_\alpha):=(1\otimes s_\alpha) \in W\otimes_{\Zp}\Lambda^\otimes\cong \DD(\BX^\Lambda_b)(W)^\otimes$. By Lemma~\ref{lem:CrysTateOverW}, each $\bft_\alpha$ is fixed by the crystalline Frobenius operator in $\DD(\BX^\Lambda_b)(W)^\otimes[\ivtd p]$, which is induced by $b$ via $\bM^\Lambda_b=\DD(\BX^\Lambda_b)(W)$.
\end{defnsub}

\begin{lemsub}\label{lem:X0}
Let $(G,b)$ and $\Lambda$ be as in Definition~\ref{def:HodgeAssump}, and set  $\BX:=\BX_b^\Lambda$. Let $\set\mu$ be a $G(W)$-conjugacy class of cocharacters $\Gm\ra G_W$. Then the following hold:
\begin{enumerate}
\item\label{lem:X0:nu}
For a cocharacter $\mu:\Gm\ra G_W$, the Hodge filtration $\Fil^1_{\BX}\subset \DD(\BX)(\kappa)= \kappa\otimes_{\Zp}\Lambda^*$ is induced by $\mu$ if and only if  $b\in G(W)p^{\sig^*\mu\iv}$. (Recall that we identified $\DD(\BX)(W) = \bM^\Lambda_b = W\otimes_{\Zp}\Lambda^*$.)
\item\label{lem:X0:muFil}
The Hodge filtration $\Fil^1_{\BX}\subset \DD(\BX)(\kappa)$ is a $\set\mu$-filtration with respect to the image of $(\bft_\alpha)$ in $\DD(\BX)(\kappa)^\otimes$  (\emph{cf.} Definition~\ref{def:muFil})  if and only if we have $b\in G(W)p^{\sig^*\mu\iv}G(W)$.
\item\label{lem:X0:tensor}
The image of $(\bft_\alpha)$ in $\DD(\BX)(\kappa)^\otimes$ lies in the $0$th filtration with respect to the Hodge filtration.
\end{enumerate}
\end{lemsub}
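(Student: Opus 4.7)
The idea is to compute the Hodge filtration $\Fil^1_{\BX} \subset \DD(\BX)(\kappa)$ directly from the semi-linear Verschiebung. In contravariant Dieudonn\'e theory one has $\Fil^1_{\BX} = V(\bM)/p\bM$, where $V = pF\iv\colon \bM \to \bM$ is the $\sig\iv$-semi-linear Verschiebung satisfying $FV = VF = p$. Since the semi-linear Frobenius on $\bM = W \otimes \Lambda$ is $F(v) = b\,\sig(v)$, so that $V(v) = \sig\iv(pb\iv v)$, a direct calculation on coefficient vectors gives
\[
V(\bM) \;=\; p\cdot\sig\iv(b)\iv(\bM)
\]
as a $W$-sublattice of $\bM$.

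For part~\ref{lem:X0:nu}, write $b = g_0\,p^{\sig^*\mu\iv}$ with $g_0 \in G(W)$. The key functorial identity $(\sig\iv)^*(\sig^*\mu) = \mu$ yields $\sig\iv(p^{\sig^*\mu\iv}) = p^{\mu\iv}$, and combined with $\sig\iv(g_0)\iv \in G(W) \subset \GL(\bM)$ preserving $\bM$, the displayed formula simplifies to $V(\bM) = p\cdot p^{\mu}(\bM)$. The integrality assumption $p\bM \subset F(\sig^*\bM) \subset \bM$ from Definition~\ref{def:HodgeAssump} constrains the $\mu$-weights on $\Lambda$ to lie in $\set{0,-1}$, so decomposing $\bM = \Lambda^{0,\mu}_W \oplus \Lambda^{-1,\mu}_W$ by $\mu$-weight spaces gives $V(\bM) = p\Lambda^{0,\mu}_W \oplus \Lambda^{-1,\mu}_W$. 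Reducing modulo $p\bM$ yields $\Fil^1_{\BX} = \kappa \otimes \Lambda^{-1,\mu}$, which is exactly the $\Fil^1$ of the $\mu$-induced filtration in the sense of Definition~\ref{def:grmu}.

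For part~\ref{lem:X0:muFil}, the direction $(\Leftarrow)$ is a routine extension of the above: writing $b = g_0\,p^{\sig^*\mu\iv}g_1$ with $g_0, g_1 \in G(W)$, the same computation yields $V(\bM) = \sig\iv(g_1)\iv\bigl(p\Lambda^{0,\mu}_W \oplus \Lambda^{-1,\mu}_W\bigr)$, so $\Fil^1_{\BX}$ is the $\Fil^1$ induced by the $G(\kappa)$-conjugate $\overline{\sig\iv(g_1)}\iv \, \mu_\kappa \, \overline{\sig\iv(g_1)}$ of $\mu_\kappa$, hence a $\set\mu$-filtration (the torsor $P_{\Spec\kappa}$ of~\eqref{eqn:P} being trivial, split by the given identification $\DD(\BX)(W) = W \otimes \Lambda$ matching $(t_\alpha)$ with $(1 \otimes s_\alpha)$). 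For $(\Rightarrow)$, the Cartan decomposition~\eqref{eqn:CartanDecomp} furnishes a unique $G(W)$-conjugacy class $\set\nu$ with $b \in G(W)p^{\sig^*\nu\iv}G(W)$; applying $(\Leftarrow)$ to $\nu$ shows that $\Fil^1_{\BX}$ is simultaneously a $\set\mu$-filtration and a $\set\nu$-filtration, so $\mu_\kappa$ and $\nu_\kappa$ both induce $\Fil^1_{\BX}$ after $G(\kappa)$-conjugation. Since the $G$-conjugacy class of a cocharacter is determined by the filtration it induces on the faithful representation $\Lambda$ (the stabilising parabolic being pinned down by the filtration, and the cocharacter being uniquely determined within its Levi by the weights on the graded pieces), $\mu_\kappa$ and $\nu_\kappa$ are $G(\kappa)$-conjugate; combined with the surjection $G(W) \thra G(\kappa)$ from smoothness of $G$, this lifts to $\set\mu = \set\nu$.

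Finally, part~\ref{lem:X0:tensor} follows at once: the Cartan decomposition supplies some $\mu$ with $b \in G(W)p^{\sig^*\mu\iv}G(W)$, part~\ref{lem:X0:muFil} then says $\Fil^1_{\BX}$ is a $\set\mu$-filtration with respect to $(t_\alpha)$, and Lemma~\ref{lem:muFil0} applied at $\Spec\kappa$ gives $t_\alpha \in \Fil^0\DD(\BX)(\kappa)^\otimes$. The main subtlety throughout is the Frobenius twist: the identity $\sig\iv(p^{\sig^*\mu\iv}) = p^{\mu\iv}$ is what converts the $\sig^*$ in the hypothesis on $b$ into the plain $\mu$ in the conclusion about the Hodge filtration, and the converse direction in part~\ref{lem:X0:muFil} rests on the uniqueness (up to $G$-conjugation) of a cocharacter inducing a given filtration.
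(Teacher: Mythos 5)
Your proof is correct and follows essentially the same route as the paper: you compute the Hodge filtration from the semilinear structure of $\bM^\Lambda_b$ (via the image of $V=pF\iv$ mod $p$, where the paper equivalently uses the kernel of $b(\sig\otimes\id)$ mod $p$), reduce the double-coset case to the single-coset case by absorbing one $G(W)$-factor into a conjugate of $\mu$, and then conclude by uniqueness of the conjugacy class inducing a given filtration, with part~(\ref{lem:X0:tensor}) coming from Lemma~\ref{lem:muFil0} exactly as in the paper. The one step argued more loosely than it should be is the final lift from $G(\kappa)$-conjugacy to $\set\mu=\set\nu$: surjectivity of $G(W)\thra G(\kappa)$ only lets you lift the conjugating element, after which the two $W$-cocharacters agree merely over $\kappa$, so you still need that $\Hom_W(\Gm,G_W)/G(W)\ra\Hom_\kappa(\Gm,G_\kappa)/G(\kappa)$ is a bijection for split reductive $G$ (both sides being dominant coweights) --- which is precisely the constancy of the sheaf of conjugacy classes that the paper invokes at this point; your Levi/parabolic argument for why the filtration determines the conjugacy class over $\kappa$ is fine, and is in fact spelled out a bit more than in the paper.
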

\begin{proof}
We write $V:=\Lambda[\ivtd p]$ and let $\rho_{V^*}:G\to\GL(V^*)$ denote the homomorphism defining the $G$-action on $V^*$.

Let us first show (\ref{lem:X0:nu}). Assume that $b\in G(W)p^\nu$ for some $\nu:\Gm\ra G_W$. Since the Hodge filtration $\Fil^1_{\BX}\subset \DD(\BX)(\kappa)$ is the kernel of $\rho_{V^*}(b)(\sigma\otimes\id_{\Lambda^*})$ acting on $\kappa\otimes_{\Zp}\Lambda^* = \DD(\BX)(\kappa)$, we have
\[(\rho_{V^*}(b)\sig)\iv(p\bM^\Lambda_b) = \sig\iv(\rho_{V^*}((p\iv)^\nu)\cdot p\bM^\Lambda_b) = \gr^1_{(\sig\iv)^*\nu\iv}\bM^\Lambda_b + p\bM^\Lambda_b.\]
(To see the second equality, recall that $\gr^i_{(\sig\iv)^*\nu\iv}\bM^\Lambda_b$ is the eigenspace for the action of $\rho_{V^*}((p\iv)^\nu)$ with eigenvalue $p^i$.)
This shows that the Hodge filtration $\Fil^1_\BX$ is the image of $\gr^1_{(\sig\iv)^*\nu\iv}\bM^\Lambda_b$ in $\bM^\Lambda_b/p\bM^\Lambda_b\cong \DD(\BX)(\kappa)$ (i.e.,  $\Fil^1_\BX$ is induced by $\mu$ with $\nu:=\sigma^*\mu\iv$).

Conversely, assume that  $\Fil^1_\BX$ is induced by $\mu:\Gm\to G_W$. By reversing the engineering, we have
\[
\rho_{V^*}(p^\mu)(p\bM^\Lambda_b) = \gr^1_{\mu}\bM^\Lambda_b + p\bM^\Lambda_b = (\rho_{V^*}(b)\sigma)\iv(p\bM^\Lambda_b).
\]
Applying $p\iv \rho_{V^*}(b)\sigma$ to this equation, it follows that $\rho_{V^*}(b\cdot \sigma(p^\mu))\bM^\Lambda_b = \bM^\Lambda_b$. (Note that $\rho_{V^*}$ commutes with $\sigma$ as it is defined over $\Qp$.)
Since  $G(W)\subset G(K_0)$ is the  stabiliser of the $W$-lattice $\bM^\Lambda_b = W\otimes_{\Zp}\Lambda^*$ in $K_0\otimes _{\Qp}V^*$ (via $\rho_{V^*}$), we have $b\cdot \sigma(p^\mu) = b\cdot p^{\sigma^*\mu} \in G(W)$.
This proves (\ref{lem:X0:nu}).

Let us show (\ref{lem:X0:muFil}). Choose a cocharacter $\nu:\Gm\ra G_W$ such that $b\in G(W)p^{\nu}G(W)$; recall that such $\nu$ is unique up to $G(W)$-conjugate. By replacing $\nu$ by a suitable $G(W)$-conjugate, we may assume that $b = gp^\nu$ for some $g\in G(W)$; indeed, if $b =  g_1 p^{\nu} g_2$ for $ g_1,  g_2\in G(W)$, then we take $g = g_1g_2$ and replace $\nu$ with $g_2\iv\nu g_2$. From (\ref{lem:X0:nu}) it follows that the Hodge filtration $\Fil^1_\BX$ is a $\set\mu$-filtration for $\mu = (\sig\iv)^*\nu\iv$.
Conversely, if $\Fil^1_\BX$ is a $\{\mu\}$-filtration, then it suffices, by (\ref{lem:X0:nu}), to show the existence of a cocharacter $\mu:\Gm\to G_W$ (over $W$, not just over $\Fpbar$) that induces $\Fil^1_\BX$. To show the existence of such $\mu$, note that there exists, by definition of $\{\mu\}$-filtration,  a cocharacter $\mu_0\in\{\mu\}$ over $W$ such that $\Fil^1_\BX$ is given by the conjugate of the special fibre $\bar\mu_0:\Gm\to G_{\Fpbar}$ by some $\bar g\in G(\Fpbar)$.  Now, we choose a lift $g\in G(W)$ of $\bar g$ and let $\mu$ denote the conjugate of $\mu_0$ by $g$. This proves (\ref{lem:X0:muFil}).

By Lemma~\ref{lem:muFil0}, (\ref{lem:X0:tensor}) follows from (\ref{lem:X0:muFil}).
\end{proof}

\begin{rmksub}\label{rmk:X0}
We use the notation in Lemma~\ref{lem:X0}. Then Lemma~\ref{lem:X0} asserts that for any $\BX:=\BX^\Lambda_b$ with $b\in G(K_0)$, there exists a unique conjugacy class of cocharacters $\set\mu$ such that the Hodge filtration $\Fil^1_\BX$ is a $\set\mu$-filtration.

Since there is no obstruction of lifting $\set\mu$-filtrations (\emph{cf.} Lemma~\ref{lem:muFil}), there exists a $\set\mu$-filtration $\Fil^1_{\wt \BX}\subset \bM^\Lambda_b$ lifting $\Fil^1_\BX$. If $p>2$, then such a lift $\Fil^1_{\wt \BX}$ gives rise to a $p$-divisible group $\wt\BX$ over $W$. For such a lift $\wt \BX$, the tensors $(\bft_\alpha)\subset \DD(\wt\BX)(W)^\otimes  =(\bM^\Lambda_b)^\otimes$ lie in the $0$th filtration with respect to the Hodge filtration for $\wt\BX$; \emph{cf.} Lemma~\ref{lem:muFil0}.
\end{rmksub}

We consider $(G,b)$ and $\Lambda$ as in Definition~\ref{def:HodgeAssump}, and choose finitely many tensors $(s_\alpha)\subset \Lambda^\otimes$ defining $G\subset\GL(\Lambda)$ as in Proposition~\ref{prop:Chevalley}. Then to $(G,b)$ and $\Lambda$, we have associated $(\BX,(\bft_\alpha))$ and an isomorphism $\varsigma:W\otimes_{\Zp}\Lambda^*\cong\DD(\BX)(W)$, where $\BX:=\BX_b^\Lambda$ is a $p$-divisible group over $\kappa$, $(\bft_\alpha)\subset \DD(\BX)(W)^\otimes$ are $F$-invariant tensors (\emph{cf.} Lemma~\ref{lem:CrysTateOverW}), and $\varsigma$ is a $W$-linear isomorphism which matches $(1\otimes s_\alpha)$ and  $(\bft_\alpha)$. Note that  we can recover $(G,b)$ from $(\BX, (\bft_\alpha), \varsigma)$. In the setting of Example~\ref{exa:GSp} (when $G=\GSp_{2g}$ and $\Lambda = \Zp^{2g}$ is the standard representation), we can interpret $(\BX,(\bft_\alpha))$ as a principally quasi-polarised $p$-divisible group.

We will now interpret $X^G(b) = X^G_{\set{\sig^*\mu\iv}}(b)$ in terms of quasi-isogenies of $p$-divisible groups with $F$-invariant tensors over $\kappa$. For a coset in $G(K_0)/G(W)$ belonging to $X^G(b)$, we pick a representative $g$ and set $b':=g\iv b \sig(g)$. Consider $\bM^\Lambda_{b'} = W\otimes_{\Zp}\Lambda^*$ with $F$ given by $b'\in G(K_0)$, and $F$-invariant tensors $(\bft'_\alpha) = (1\otimes s_\alpha)\subset(\bM^\Lambda_{b'})^\otimes$.  The condition $b'\in G(W)p^{\sig^*\mu\iv}G(W)$ implies  that $\bM^\Lambda_{b'}$ corresponds to  a $p$-divisible group $\BX':=\BX^\Lambda_{b'}$, whose Hodge filtration is a $\set\mu$-filtration with respect to $\bf(t'_\alpha)$ by Lemma~\ref{lem:X0}. We also obtain a quasi isogeny $\iota:\BX\dra\BX'$ corresponding to
\[ \bM^\Lambda_{b'}[\ivtd p] = K_0\otimes_{\Zp}\Lambda^* \xra{g} K_0\otimes_{\Zp}\Lambda^* = \bM^\Lambda_b[\ivtd p],\]
which matches $(\bft'_\alpha)\subset(\bM^\Lambda_{b'})^\otimes$ with $(\bft_\alpha)\subset(\bM^\Lambda_b)^\otimes$. The tuple $(\BX',(\bft'_\alpha),\iota)$ only depends on $gG(W)$ up to isomorphism of $p$-divisible respecting the tensors and quasi-isogeny.

\begin{propsub}\label{prop:LR}
The map defined above gives a bijection from $X^G(b)$ to the set of isomorphism classes of tuples $(\BX', (\bft'_\alpha), \iota)$ which satisfy the following
\begin{itemize}
\item $\BX'$ is a $p$-divisible group over $\kappa$ and $(\bft'_\alpha)\subset \DD(\BX')(W)^\otimes$ are $F$-invariant tensors, such that there exists a $W$-linear isomorphism  $\varsigma':\DD(\BX')(W) \riso W\otimes_{\Zp}\Lambda^*$  that matches $(1\otimes s_\alpha)$ and $(\bft'_\alpha)$, and the Hodge filtration $\Fil^1_{\BX'}$ is a $\set\mu$-filtration with respect to $(t'_\alpha)$.
\item $\iota:\BX\dra\BX'$  is a quasi-isogeny such that $\DD(\iota):\DD(\BX')(W)[\ivtd p] \riso \DD(\BX)(W)[\ivtd p]$ matches $(\bft'_\alpha)$ with $(\bft_\alpha)$.
\end{itemize}
\end{propsub}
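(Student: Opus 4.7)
The plan is to verify the three standard steps: that the assignment $gG(W)\mapsto (\BX',(t'_\alpha),\iota)$ is well-defined on cosets, and that it is both injective and surjective onto the set of tuples described.

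\textbf{Well-definedness.} First I would check that replacing $g$ by $gh$ for some $h\in G(W)$ produces an isomorphic tuple. The new $b''=h\iv b'\sig(h)$ differs from $b'$ by $\sig$-conjugation by $h\in G(W)$, so the map $h:\bM^\Lambda_{b''}\riso \bM^\Lambda_{b'}$ is a $W$-linear isomorphism of $F$-crystals matching the tensors $(1\otimes s_\alpha)$ (because $h\in G(W)$ fixes them by Proposition~\ref{prop:Chevalley}). By the contravariant Dieudonn\'e equivalence over $\kappa$, this produces an isomorphism $\BX^\Lambda_{b''}\riso \BX^\Lambda_{b'}$ carrying the quasi-isogeny from $\BX$ to one another, so the tuple is determined up to isomorphism by the coset $gG(W)$. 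That the Hodge filtration on $\BX'$ is a $\set\mu$-filtration with respect to $(t'_\alpha)$ is exactly Lemma~\ref{lem:X0}(\ref{lem:X0:muFil}) applied to $b'$, since $b'\in G(W)p^{\sig^*\mu\iv}G(W)$ by definition of $X^G(b)$; and $F$-invariance of $(t'_\alpha)$ is Lemma~\ref{lem:CrysTateOverW}.

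\textbf{Injectivity.} Suppose $g_1G(W)$ and $g_2G(W)$ yield isomorphic tuples. An isomorphism of tuples gives, by Dieudonn\'e theory, a $W$-linear, $F$-compatible isomorphism $\varphi:\bM^\Lambda_{b'_1}\riso \bM^\Lambda_{b'_2}$ matching $(1\otimes s_\alpha)$ on both sides and compatible with the quasi-isogenies from $\bM^\Lambda_b[\ivtd p]$. Via the fixed identifications with $W\otimes\Lambda$, $\varphi$ becomes an element $h\in \GL_W(W\otimes\Lambda)$ that fixes $(1\otimes s_\alpha)$, hence $h\in G(W)$ by Proposition~\ref{prop:Chevalley}. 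Compatibility with the $\iota_i$'s translates, after inverting $p$, into $g_1=g_2 h$ in $G(K_0)$, so the cosets coincide.

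\textbf{Surjectivity.} Given a tuple $(\BX',(t'_\alpha),\iota)$ as in the statement, use $\iota$ to identify $\DD(\BX')(W)[\ivtd p]$ with $\DD(\BX)(W)[\ivtd p]=K_0\otimes\Lambda$; under this identification $\DD(\BX')(W)$ becomes a $W$-lattice $L\subset K_0\otimes\Lambda$, $(t'_\alpha)$ is carried to $(t_\alpha)=(1\otimes s_\alpha)$, and the Frobenius on $L$ becomes $F'=b\sig\iota\circ(\DD(\iota))\iv$. Now pick an isomorphism $\varsigma':\DD(\BX')(W)\riso W\otimes\Lambda$ matching the tensors, as hypothesised; it identifies $L$ with $W\otimes\Lambda$ inside $K_0\otimes\Lambda$ and is therefore given by some element $g\in\GL_{K_0}(K_0\otimes\Lambda)$. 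Since $g$ carries $(1\otimes s_\alpha)$ to $(1\otimes s_\alpha)$, Proposition~\ref{prop:Chevalley} forces $g\in G(K_0)$. Under $\varsigma'$, $F'$ becomes some $b'\sig$, and chasing definitions gives $b'=g\iv b\sig(g)$. Finally, Lemma~\ref{lem:X0}(\ref{lem:X0:muFil}) applied to $\BX'$ shows that the Hodge filtration being a $\set\mu$-filtration is equivalent to $b'\in G(W)p^{\sig^*\mu\iv}G(W)$, so $gG(W)\in X^G(b)$ and clearly maps to the given tuple. The main obstacle is ensuring that all tensor- and Frobenius-matching conditions interact correctly under the inversion-of-$p$ and lattice-rescaling operations, but each identification is straightforward once Proposition~\ref{prop:Chevalley} and Lemma~\ref{lem:X0}(\ref{lem:X0:muFil}) are in hand.
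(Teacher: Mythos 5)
Your proposal is correct and follows essentially the same route as the paper: the paper simply packages your injectivity/surjectivity checks as the construction of an explicit inverse map, using the same two key inputs — Proposition~\ref{prop:Chevalley} to force tensor-preserving maps into $G(W)$ or $G(K_0)$, and Lemma~\ref{lem:X0}(\ref{lem:X0:muFil}) to translate the $\set\mu$-filtration condition into $b'\in G(W)p^{\sig^*\mu\iv}G(W)$ — together with the observation that a different choice of $\varsigma'$ changes $g$ only by right multiplication by an element of $G(W)$. (Only a cosmetic remark: your displayed formula for the transported Frobenius $F'$ is garbled, and the direction of the isomorphism of $p$-divisible groups flips under the contravariant Dieudonn\'e functor, but neither affects the argument.)
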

\begin{proof}
We will define the inverse map. By construction of $\BX=\BX^\Lambda_b$, we  have an $F$-equivariant isomorphism $\varsigma:\DD(\BX)(W) \riso \bM^\Lambda_b = (W\otimes_{\Zp}\Lambda^*,F_b)$ matching $(\bft_\alpha)$ with $(1\otimes s_\alpha)$.
Let  $(\BX', (\bft'_\alpha), \iota)$ be a tuple as in the statement, and we choose a $W$-linear isomorphism  $\varsigma':\DD(\BX')(W) \riso W\otimes_{\Zp}\Lambda^*$ that matches $(1\otimes s_\alpha)$ and $(\bft'_\alpha)$, which exists by assumption on $(\BX', (\bft'_\alpha), \iota)$. Then one can find a unique $b'\in G(K_0)$ depending on $\varsigma'$, such that 
$\varsigma'$ induces an $F$-equivariant isomorphism $\DD(\BX')(W) \riso \bM^\Lambda_{b'}$. 
Let $g\in G(K_0)$ be an element that defines the middle isomorphism below
\[\DD(\iota):\xymatrix@1{\DD(\BX')(W)[\ivtd p] \ar[r]^-{\cong}_-{\varsigma'} & \beta_{b'}(\Lambda^*[\ivtd p]) \ar[r]^-{\cong}_-{\rho(g)} & \beta_{b}(\Lambda^*[\ivtd p]) \ar[r]^-{\cong}_-{\varsigma\iv} & \DD(\BX)(W)[\ivtd p]}.\]
Note that the underlying $K_0$-vector space of  $\beta_{b'}(\Lambda^*[\ivtd p])$ and $\beta_{b}(\Lambda^*[\ivtd p])$ is $K_0\otimes_{\Zp}\Lambda^*$, so $\rho_{\Lambda^*[\frac{1}{p}]}(g)$ defines the middle isomorphism above. 
Since $\DD(\iota)$ is $F$-equivariant, we obtain $gb'\sigma = b\sig(g)\sigma$  (i.e., $b' = g\iv b\sig(g)$). Lastly, since the Hodge filtration for $\BX'$ is a $\set\mu$-filtration, we have $b'\in G(W)p^{\sig^*\mu\iv}G(W)$.

The choice of tensor-preserving $W$-linear isomorphism  $\varsigma':\DD(\BX')(W) \riso W\otimes_{\Zp}\Lambda^*$ is not canonical, but any other choice of $\varsigma'$ is of the form $\varsigma'\circ h$ for some $h\in G(W)$, which would replace $g$ by $gh$. 
One can easily check that map sending the isomorphism class of $(\BX', (\bft'_\alpha), \iota)$ to $gG(W)$ is well defined, and gives the desired inverse map.
\end{proof}

The following notion, which is the local analogue of Hodge-type Shimura data, turns out to provide a group-theoretic invariant associated to  $X^G(b)$ up to bijection given by (\ref{eqn:ADLEqCl}):

\begin{defnsub}\label{def:LocShData}
Let $G$ be a  reductive group over $\Zp$, and $b\in G(K_0)$. We associate to any $(G,b)$ a tuple $(G,[b],\set{\mu\iv})$, where $[b]$ is  the $\sig$-conjugacy class of $b$ in $G(K_0)$ and $\set{\mu\iv}$ is the unique $G(W)$-conjugacy class of cocharacters $\Gm\ra G_W$ such that $b\in G(W)p^{\sig^*\mu\iv}G(W)$. (The unique existence of such $\set\mu$ is by the Cartan decomposition.) 

If there is a faithful representation $\Lambda\in\prep(G)$ as in Definition~\ref{def:HodgeAssump} (i.e., there exists a $p$-divisible group $\BX^\Lambda_b$ as in Definition~\ref{def:HodgeAssump}), then we call the associated triple  $(G,[b],\set{\mu\iv})$ an \emph{(unramified)  local Shimura datum of Hodge type}. We take the obvious notion of morphism.
\end{defnsub}

To an unramified Hodge-type local Shimura datum, one can easily associate a local Shimura datum as defined in Rapoport and Viehmann by replacing $G$ with $G_{\Qp}$ \cite[Definition~5.1]{RapoportViehmann:LocShVar}. (Since $G$ is split over $W$, geometric conjugacy classes of cocharacters can be viewed as $G(W)$-conjugacy classes of cocharacters defined over $W$.) 

If $(G,[b],\set{\mu\iv})$ is an unramified Hodge-type local Shimura datum via $\Lambda$, then the inclusion $G\hra\GL(\Lambda)$ induces a morphism $(G,[b],\set{\mu\iv})\ra(\GL(\Lambda),[b],\set{\mu\iv})$ of unramified Hodge-type local Shimura data.

Let  $(G',[b'],\set{\mu^{\prime-1}})$ be another unramified local Shimura datum of Hodge type induced by $(G',b')$ and a faithful $G$-representation $\Lambda'$ (giving rise to  a $p$-divisible group $\BX^{\Lambda'}_{b'}$). Then the  product $(G\times G', [(b,b')],\set{(\mu\iv,\mu^{\prime-1})})$  is again an integral unramified Hodge-type local Shimura datum. (Indeed, we can associate the following $p$-divisible group $\BX^{\Lambda\times\Lambda'}_{(b,b')}\cong \BX^\Lambda_b\times \BX^{\Lambda'}_{b'}$.)

\begin{exasub}\label{exa:ShVar}
Assume that $G$ comes from a reductive group over $\Z_{(p)}$, which we also denote by $G$. Assume that there exists  a Hodge-type Shimura datum $(G_\Q,\UH)$.
%
Let $\KK_p:= G(\Zp)$ be the hyperspecial maximal subgroup of $G(\Qp)$. By construction, the integral canonical model $\sS_{\KK_p}(G_\Q,\mathfrak{H})$, when it exists, carries a ``universal'' abelian scheme depending on some auxiliary choices. See  \cite[\S1.4]{Vasiu:GoodRedn1} or \cite[\S2.3]{Kisin:IntModelAbType} for more details on the construction. Pick any point valued in $W:=W(\Fpbar)$, and let $\wt\BX$ be the  $p$-divisible group associated to the corresponding abelian scheme over $W$. Let $\Lambda:= T(\wt \BX)$ denote the (integral) Tate module. Then by construction there exist finitely many Galois-invariant tensors $(s_\alpha)\subset \Lambda^\otimes$ whose pointwise stabiliser is $G_{\Zp}$. By a conjecture of Milne (proved independently in \cite[Main~Theorem~1.2]{Vasiu:ys} and \cite[Proposition~1.3.4]{Kisin:IntModelAbType}) there exists a $W$-isomorphism
\[W\otimes_{\Zp}\Lambda^*\cong \DD(\wt \BX)(W)\]
which takes $(1\otimes s_\alpha)$ to the $F$-invariant tensors $(\bft_{\alpha})$ obtained from $(s_\alpha)$ via crystalline comparison isomorphism. Choosing such an isomorphism, we can extract $b\in G(K_0)$ from the matrix representation of $F$.

As $\bft_{\alpha}:\triv\ra(\bM^\Lambda_b)^\otimes$ are morphisms of ``strongly divisible modules'', we may apply Wintenberger's theory of canonical splitting \cite[Th\'eor\`eme~3.1.2]{Wintenberger:Splitting} and obtain a unique cocharacter $\mu:\Gm\ra G_W$ such that $\mu$ gives the Hodge filtration for $\wt \BX$ and $b\in G(W)p^\nu$ with $\nu = \sigma^*\mu\iv$;  \emph{cf.} Lemma~\ref{lem:X0}. Therefore, the triple $(G_{\Zp},[b],\set{\mu\iv})$ obtained from $\sS_{\KK_p}(G_\Q,\UH)(W)$ is an unramified Hodge-type local Shimura datum in the sense of Definition~\ref{def:HodgeAssump}.
Note that the geometric conjugacy class $\set\mu$ corresponds to the geometric conjugacy class associated to the Shimura datum $(G_\Q,\UH)$.
\end{exasub}

\section{Faltings's construction of universal deformation}\label{sec:FaltingsDefor}

In this section we review Faltings' explicit constructions of a ``universal'' deformation of $p$-divisible groups with crystalline Tate tensors (depending on some auxiliary choices). Furthermore, we also show that the underlying formal scheme of the deformation space of $p$-divisible groups with crystalline Tate tensors is independent of auxiliary choices (such as the choice of  $\Lambda$ and tensors $(s_\alpha)\subset \Lambda^\otimes$) and satisfies some functoriality properties (\emph{cf.} Proposition~\ref{prop:FunctDefor}).
We will crucially use this formal local construction to obtain the natural closed formal subscheme of a Rapoport-Zink space where some natural crystalline Tate tensors are defined. Most results in this section (except Propositions~\ref{prop:FunctDefor} and \ref{prop:LiftingTate}) can be found in \cite[\S7]{Faltings:IntegralCrysCohoVeryRamBase} and \cite[\S4]{Moonen:IntModels}.

Let $\kappa$ be an algebraically closed field of characteristic $p>2$, with $W:=W(\kappa)$. We consider a $p$-divisible group $\BX$ over $\kappa$. 
Recall that $\art{W}$ is the category of artin local $W$-algebras with residue field $\kappa$.
\begin{defn}
We define a functor $\Def_\BX:\art{W} \ra\Sets$ as follows: for any $R\in\art{W}$ we set
\[\Def_\BX(R):=\set{(X_{/R},\iota) :\  \iota: \BX\riso X_\kappa)}_{/\cong},\]
where $X$ is a $p$-divisible group over $R$, and an isomorphism of tuples  $(X,\iota)\riso (X',\iota)$ means an isomorphism $X\riso X'$ lifting $\iota'\circ\iota\iv$. We will often suppress the isomorphism $\iota:\BX\riso X_\kappa$, and write $X\in\Def_\BX(R)$.
\end{defn}

\subsection{Explicit construction in characteristic~$p$}\label{subsec:UnivDeforModp}
The functor $\Def_\BX$ can be prorepresented by the formal power series ring over $W$ with $d\tim d^\vee$ variables, where $d$ and $d^\vee$ are respectively the dimensions of $\BX$ and $\BX^\vee$; \emph{cf.}  \cite[Corollaire~4.8(i)]{Illusie:DeforBT}.
Faltings made such an identification via explicitly describing a ``universal Dieudonn\'e crystal'' when $p>2$; \emph{cf.} \cite[\S4.8]{Moonen:IntModels}, \cite[\S1.5]{Kisin:IntModelAbType}, which we recall now.

For a   $p$-divisible group $\BX$ over $\kappa$, we write $(\bM,F):=\DD(\BX)(W)$ for the contravariant Dieudonn\'e module. Choosing a lift $\wt\BX$  over $\Spf W$, we obtain a direct factor $\Fil^1_{\wt\BX}\subset \DD(\wt\BX)(W)\cong\bM$  from the Hodge filtration for $\wt\BX$. We also fix a splitting of this filtration; or equivalently, we choose a cocharacter $\mu:\Gm\ra \GL_{W}(\bM^*)$ which induces $\Fil^1_{\wt\BX}$ and have weights in $\{0,-1\}$. (Note our sign convention in Definition~\ref{def:grmu}.)   
Using the choice of  splitting,  we can define the ``opposite unipotent subgroup'' $U(\mu\iv)$ (i.e., $U(\mu\iv)$ is the unipotent radical of the  parabolic subgroup opposite to the stabiliser $P(\mu)$ of  $\Fil^1_{\wt\BX}$; note that we have $U(\mu\iv) = U_{\GL(\Lambda)}(\mu\iv)$ using the notation of  (\ref{eqn:parabolic})). Let $A _{\GL}$ be such that $\Spf A _{\GL} \cong \wh U(\mu\iv)$ is the formal completion of $U(\mu\iv)$ at the identity section. We choose an isomorphism $A_{\GL}\cong W[[u_{ij}]]$, and define a lift of Frobenius $\sig : A _{\GL} \ra A _{\GL} $ by $\sig(u_{ij}) = u_{ij}^p$.

Let $u_t\in \wh U(\mu\iv)(A _{\GL})$ be the tautological point and we define the following:
\[
\bM _{\GL}:= A _{\GL}\otimes_{W} \bM;\quad \Fil^1\bM _{\GL}:= A _{\GL}\otimes_{W} \Fil^1_{\wt\BX};
\quad F:=u_{t}\iv\circ(A _{\GL}\otimes F).
\]
More concretely, if we choose an isomorphism $\bM\cong \bM^\Lambda_b$ (with the notation of Definition~\ref{def:HodgeAssump} for $G=\GL(\Lambda)$) then the matrix representation of $F$ on $\bM _{\GL}\cong A _{\GL}\otimes_{\Zp}\Lambda^*$ is $u_t\iv b$.

As discussed in \cite[\S4.5]{Moonen:IntModels}, Faltings constructed a unique integrable connection $\nabla$ on $\bM _{\GL}$ which commutes with $F$. In particular, the tuple $(\bM _{\GL},F,\nabla)$ is a \emph{crystalline Dieudonn\'e module} in the sense of \cite[Definition~2.3.4]{dejong:crysdieubyformalrigid} and gives rise to a $p$-divisible group $\ol X _{\GL}$ over $\Spec A _{\GL}/(p)$ by
\cite[Main~Theorem~1]{dejong:crysdieubyformalrigid}.\footnote{Crystalline Dieudonn\'e modules over $A _{\GL}$ correspond to Dieudonn\'e crystals over $\Spec A _{\GL}/(p)$, not $\Spf A _{\GL}/(p)$. The distinction between $\Spec$ and $\Spf$ will be important, especially for verifying the effectivity property \S\ref{subsec:Artin}(\ref{subsec:Artin:Eff}).} Since  $\sigma^*(\Fil^1\bM _{\GL}/(p))$ is the kernel of $F$ on $\bM _{\GL}/(p)$, it follows that $\Fil^1\bM _{\GL}/(p)$ is the Hodge filtration of $\ol X _{\GL}$.

Faltings also showed that $\ol X _{\GL}$ is a universal mod~$p$ deformation of $\BX$ via the Kodaira-Spencer theory. Implicit in the proof is the following lemma, which compares the tangent space of $A _{\GL}/(p)$ and the deformations over $\kappa[\ep]/(\ep^2)$ given by the Grothendieck-Messing deformation theory. We give a proof of the lemma as we will need it later (\emph{cf.} Proposition~\ref{prop:LiftingTate}).

\begin{lemsub}\label{lem:TangentSp}
Let $B:=\kappa[\ep]/(\ep^2)$, and we give the square-zero PD structure on $\bb:=\ep B$.
For any $\gamma\in \wh U(\mu\iv)(B) =\Hom_{W}(A _{\GL},B)$, we set ${}^\gamma X:=\gamma^* \ol X _{\GL}$. Then ${}^\gamma X$ is the lift of $\BX$ which corresponds, via the Grothendieck-Messing deformation theory, to the lift of the Hodge filtration $\gamma(B\otimes_{\kappa}\Fil^1_{\BX})\subset B\otimes_{\kappa} \DD(\BX)(\kappa) = \DD(\BX)(B)$, where $\gamma(B\otimes_{\kappa}\Fil^1_{\BX})$ is the filtration translated by $\gamma\in\wh U(\mu\iv)(B)$.
\end{lemsub}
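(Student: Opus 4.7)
Let $F^1 := \Fil^1_{\wt\BX} \subset \bM$, so that $\Fil^1_{\BX} = F^1/pF^1$ upon reduction modulo $p$. Let $\gamma_0 : A^\mu_{\GL} \to W \to \kappa \hra B$ be the composition through the residue field; this corresponds to the identity element of $\wh U^\mu(B)$, so $\gamma_0^* X^\mu_{\GL} = \BX \times_\kappa B$ is the trivial lift. Since $p>2$ and $p$ acts as zero on $B$, both $\gamma$ and $\gamma_0$ define morphisms of PD thickenings $(\Spec B, \Spec B) \to (\Spec A^\mu_{\GL}/(p), \Spec A^\mu_{\GL})$ in the crystalline site of $\Spec A^\mu_{\GL}/(p)$. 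The strategy is to compute $\DD({}^\gamma X)(B)$ and its Hodge filtration in two ways and compare.

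First, by the crystal axiom together with Faltings' description $\bM^\mu_{\GL} = A^\mu_{\GL} \otimes_W \bM$ with $\Fil^1 \bM^\mu_{\GL} = A^\mu_{\GL} \otimes_W F^1$, pullback along $\gamma$ yields
\[
\DD({}^\gamma X)(B) = B \otimes_{A^\mu_{\GL},\gamma} \bM^\mu_{\GL} = B \otimes_W \bM,\qquad \Fil^1_{{}^\gamma X} = B \otimes_W F^1 .
\]
Call this the Faltings identification $\phi_\gamma$; since $pB=0$, it matches $B \otimes_W \bM \cong B \otimes_\kappa \DD(\BX)(\kappa)$ and sends $B \otimes_W F^1$ to $B \otimes_\kappa \Fil^1_\BX$. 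The same computation for $\gamma_0$ gives the tautological identification $\phi_{\gamma_0}$, which coincides with the canonical Grothendieck--Messing identification $\phi_{\mathrm{GM}} : \DD({}^\gamma X)(B) \riso \DD(\BX)(B)$ obtained by restricting the crystal $\DD({}^\gamma X)$ along $\Spec \kappa \hra \Spec B$.

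Second, since $\gamma$ and $\gamma_0$ agree modulo the square-zero PD ideal $\ep B$, I can write $\gamma = 1 + \ep u$ with $u \in \Lie(U^\mu)$ and apply the standard Taylor-expansion formula for the stratification attached to the integrable connection $\nabla$ on $\bM^\mu_{\GL}$. This yields
\[
\phi_{\mathrm{GM}} \circ \phi_\gamma^{-1} = \id + \ep\,\nabla_u
\]
as automorphisms of $B \otimes_W \bM$, with higher-order terms vanishing because $\ep^2=0$. Consequently, the Hodge filtration of ${}^\gamma X$ transported via $\phi_{\mathrm{GM}}$ becomes $(\id + \ep \nabla_u)(B \otimes_W F^1)$.

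It remains to identify this with $\gamma(B \otimes_\kappa \Fil^1_\BX) = (\id + \ep u)(B \otimes_W F^1)$, for which it suffices to show that $\nabla_u$ and $u$ agree as maps $F^1 \to \bM/F^1$. This is exactly the Kodaira--Spencer property built into Faltings' construction: the connection $\nabla$ is uniquely pinned down by commuting with $F$, and the induced map $\Lie(U^\mu) \to \Hom(F^1, \bM/F^1)$ is the tautological isomorphism coming from the $\mu$-splitting of $\bM$. I expect the identification of the transition isomorphism with $\id + \ep \nabla_u$, combined with this Kodaira--Spencer computation, to be the technical core of the argument; once these are in hand, the desired equality of filtrations follows by direct bookkeeping of crystal evaluations.
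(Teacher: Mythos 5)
Your overall strategy (compute the Grothendieck--Messing transition between the $\gamma$- and $\gamma_0$-evaluations of the crystal via the Taylor/stratification formula for $\nabla$, then compare the first-order term with $\gamma$ itself) is a viable alternative to the paper's argument, but as written it has a genuine gap at exactly the point where the content of the lemma lies. The step ``$\nabla_u$ and $u$ agree as maps $F^1\to\bM/F^1$'' is not a citable consequence of ``the Kodaira--Spencer property built into Faltings' construction'': the universality of $X^\mu_{\GL}$ only says that the induced map $\Lie(U^\mu)\to\Hom(\Fil^1_{\BX},\DD(\BX)(\kappa)/\Fil^1_{\BX})$ is a bijection, not that it is the tautological map coming from the $\mu$-splitting, and that precise identification (including its sign) is exactly what the lemma asserts -- the paper itself notes that the statement is only \emph{implicit} in Faltings/Moonen, which is why it gives a proof. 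Appealing to it is therefore circular unless you actually compute the linear term of $\nabla$ at the closed point. The sign is genuinely at stake: the Frobenius on $\bM^\mu_{\GL}$ is $u_t^{-1}\circ(\sig\otimes F)$, and with the opposite convention $u_t\circ(\sig\otimes F)$ the same bookkeeping would yield $\gamma^{-1}(B\otimes_\kappa\Fil^1_{\BX})$; your proposal never engages with this, and the ``direct bookkeeping'' you defer, together with the direction of the Taylor isomorphism, is where the whole proof sits.

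The missing computation can be supplied: evaluating the Frobenius-compatibility of $\nabla$ on constant sections at the closed point, and using that the chosen lift $\sig(u_{ij})=u_{ij}^p$ makes $d\sig$ vanish there to first order, one gets $\Gamma(0)\circ F=u\circ F$ for the connection matrix $\Gamma$ in the direction $u$, hence $\Gamma(0)=u$ after inverting $p$ (as $F$ is an isogeny); this is what justifies your identity $\nabla_u=u$ with the correct sign. The paper instead avoids the connection altogether: it lifts the situation to $\wt B:=W[\ep]/(\ep^2)$ with the Frobenius lift $\sig(\ep)=0$, expresses the canonical comparison $\DD({}^\gamma X)(\wt B)\riso\DD(\BX)(\wt B)$ as an element $g\in 1+\ep\End_{W}(\bM)$, and uses Frobenius-equivariance -- $(\wt B\otimes F)\circ g=\wt B\otimes F$ since $\sig(\ep)=0$, and $F$ is injective up to isogeny -- to force $g=\tilde\gamma$ on the nose. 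Either route works, but your write-up stops short of the step that carries the content.
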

\begin{proof}
Let $\wt B:=W[\ep]/(\ep^2)$, and we give the square-zero PD structure on $\ep \wt B$, which is compatible with the standard PD structure on $(p)$. We also define a lift of Frobenius $\sig:\wt B\ra \wt B$ by letting $\sig(\ep):=\ep^p=0$. Since $\wt B$ is a (compatible) PD thickening of both $\kappa$ and $B$, there exists a natural Frobenius-equivariant isomorphism $\DD({}^\gamma X)(\wt B)\riso \DD(\BX)(\wt B)$ such that after reducing modulo~$p$ the Hodge filtration for ${}^\gamma X$ on the left hand side maps to the lift of the Hodge filtration for $\BX$ which corresponds to ${}^\gamma X$ via the Grothendieck-Messing deformation theory. Note that choosing a lift $\tilde\gamma\in\wh U(\mu\iv)(\wt B)$ of $\gamma$, we obtain natural isomorphisms
\begin{equation}\label{eqn:TangentSpVersal}
\DD({}^\gamma X)(\wt B)\cong \wt B\otimes_{\tilde\gamma,A _{\GL}}\bM _{\GL} \cong \wt B\otimes_{W}\DD(\BX)(W)
\end{equation}
and the crystalline Frobenius action on the left hand side corresponds to $\tilde \gamma\iv\circ(\wt B\otimes F)$ on the right hand side. (Recall that we used the \emph{inverse} of the tautological point to define Frobenius-action on $\bM _{\GL}$.)
Thus, the natural Frobenius-equivariant isomorphism $\DD({}^\gamma X)(\wt B)\riso \DD(\BX)(\wt B)$ can be translated as $g\in1+\ep\End_{W}(\bM)$ which makes the following diagram commute:
\[\xymatrix{
\DD({}^\gamma X)(\wt B) \ar[r]^-{\sim}_-{\text{(\ref{eqn:TangentSpVersal})}}&
\wt B\otimes_{W} \DD(\BX)(W) \ar[r]^-{\sim}_-{g}&
\wt B\otimes_{W}\DD(\BX)(W) &
\DD(\BX)(\wt B) \ar[l]_-{\sim}\\
& \wt B\otimes_{\sigma,W} \DD(\BX)(W)\ar[r]^-{\sim}_-{\sigma^*g} \ar[u]_-{\tilde\gamma\iv\circ(\wt B\otimes F)}&
\wt B\otimes_{\sigma,W}\DD(\BX)(W) \ar[u]^-{\wt B\otimes F}&
}.\]
By chasing the top row, the Hodge filtration of $^{\tilde\gamma}X$ in $\DD({}^{\tilde\gamma}X)(\wt B)$ maps to
 $g(\wt B\otimes_W \Fil^1_{\wt\BX})\subset\DD(\BX)(\wt B)$. So to prove the lemma, it suffices to prove that $g=\tilde\gamma$.
Indeed, note that $\sigma^*(g)=\id$ because $g\equiv\id\bmod{(\ep)}$ and $\sig(\ep)=0$, so
the commutative square above induces
\[
(g\cdot\tilde\gamma\iv)\circ(B\otimes\wt F)= (B\otimes\wt F).
\]
Since $(B\otimes\wt F)$ becomes an isomorphism after inverting $p$, we obtain $g=\tilde\gamma$ from the displayed equation.
\end{proof}

\subsection{Explicit construction: lifting Hodge filtration}
Since $\ol X _{\GL}$ is a universal mod~$p$ deformation of $\BX$ (\emph{cf.} Lemma~\ref{lem:TangentSp}), any lift  $X _{\GL}$ of $\ol X _{\GL}$ over $\Spf(A _{\GL},(p))$  is a universal deformation of $\BX$. Note that the Frobenius endomorphism and $\nabla$ on $\DD(X _{\GL})(A _{\GL})$ only depends on $\ol X _{\GL}$; i.e., we naturally have a Frobenius-equivariant horizontal isomorphism $\DD(X _{\GL})(A _{\GL} )\cong \bM _{\GL}$.

We now turn to the Hodge filtration for  $X _{\GL}$. 
Since the Hodge filtration for $\ol X _{\GL}$ is the image of $\Fil^1\bM _{\GL}$ in
$\bM _{\GL}/(p)$,
 the Grothendieck-Messing deformation theory gives us an $A _{\GL}$-lift $X _{\GL}$ of $\ol X _{\GL}$ with Hodge filtration $\Fil^1\bM _{\GL}$ if $p>2$.

To sum up, we have proved the following result:
\begin{thm}[Faltings]\label{thm:UnivDefGLn}
Assume that  $p>2$. Then the $p$-divisible group $X _{\GL}$ over $A _{\GL}$, corresponding to the filtered crystalline Dieudonn\'e module $(\bM _{\GL}, \Fil^1\bM _{\GL}, F,\nabla)$, is a universal deformation of $\BX$.
\end{thm}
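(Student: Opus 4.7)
The plan is to combine Illusie's prorepresentability theorem with the explicit tangent-space computation already done in Lemma~\ref{lem:TangentSp}. By \cite[Corollaire~4.8(i)]{Illusie:DeforBT}, the functor $\Def_\BX$ is prorepresentable by a formal power series ring $R^{\univ}$ over $W$ in $d\cdot d^\vee$ variables, where $d = \dim\BX$ and $d^\vee = \dim\BX^\vee$. On the other hand, $A^\mu_{\GL}$ is by construction the completed local ring of the opposite unipotent subgroup $U^\mu$ at the identity; since $U^\mu$ is the unipotent radical of the parabolic opposite to the stabilizer of $\Fil^1_{\wt\BX}$, its Lie algebra is $\Hom_W(\Fil^1_{\wt\BX}, \bM/\Fil^1_{\wt\BX})$, which is free of rank $d\cdot d^\vee$ over $W$. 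Hence $A^\mu_{\GL}$ is also a formal power series ring in $d\cdot d^\vee$ variables over $W$.

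The $p$-divisible group $X^\mu_{\GL}$ reduces to $\BX$ along the identity section $A^\mu_{\GL}\to W\to\kappa$ (since at $u_t = \mathbf{1}$ the Frobenius becomes $F$ and the filtered Dieudonn\'e module recovers $(\bM,\Fil^1_{\wt\BX},F)$, which corresponds to $\BX$). This furnishes a morphism $\varphi\colon R^{\univ}\to A^\mu_{\GL}$ of local $W$-algebras classifying $X^\mu_{\GL}$. Both rings are power series rings in the same number of variables over $W$, so by the standard fact that a local $W$-algebra map between such rings is an isomorphism if and only if it induces an isomorphism on tangent spaces (equivalently, on $\m/(\m^2+pR^{\univ})$), it suffices to check that $\varphi$ induces a bijection on $B$-points for $B = \kappa[\ep]/(\ep^2)$.

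The $B$-points of $\Spf A^\mu_{\GL}$ are, by definition, the elements of $\wh U^\mu(B) = \Lie U^\mu\otimes_W\kappa$, and the tautological element at $\gamma\in\wh U^\mu(B)$ pulls $X^\mu_{\GL}$ back to the deformation ${}^\gamma X$. On the other side, Grothendieck-Messing deformation theory (applicable because $p>2$ makes the divided-power structure on $\ker(B\thra\kappa)$ nilpotent and compatible with the standard one on $p\Zp$) identifies $\Def_\BX(B)$ with the set of lifts of $\Fil^1_\BX\subset\DD(\BX)(\kappa)$ to a direct summand of $\DD(\BX)(B) = B\otimes_\kappa\DD(\BX)(\kappa)$, which is a torsor under $\Hom_\kappa(\Fil^1_\BX, \bM/\Fil^1_\BX\otimes_W\kappa) = \Lie U^\mu\otimes_W\kappa$. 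Lemma~\ref{lem:TangentSp} says precisely that under $\varphi$ the element $\gamma$ is sent to the lift $\gamma(B\otimes_\kappa\Fil^1_\BX)$ of the Hodge filtration, so $\varphi$ is a bijection of torsors on $B$-points, hence an isomorphism on tangent spaces.

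The main content therefore is really the identification of the tangent maps, which is supplied by Lemma~\ref{lem:TangentSp}; the only subtle step is ensuring that the identification $\DD(\ol X^\mu_{\GL})\cong\bM^\mu_{\GL}$ coming from the explicit construction via $(A^\mu_{\GL},F,\nabla)$ agrees with the Grothendieck-Messing identification (for which one uses the Frobenius-equivariant isomorphism over $\wt B = W[\ep]/(\ep^2)$ together with the fact that $F$ becomes an isomorphism after inverting $p$, as in the proof of Lemma~\ref{lem:TangentSp}). Since one verifies that $\varphi$ is a local $W$-algebra homomorphism between formal power series rings of equal (finite) $W$-rank that is bijective on tangent spaces, it is an isomorphism, and hence $X^\mu_{\GL}$ is a universal deformation of $\BX$.
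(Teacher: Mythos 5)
Your argument is correct and follows essentially the same route as the paper: Illusie's prorepresentability, the rank count for $\wh U^\mu$, and Lemma~\ref{lem:TangentSp} to identify the classifying map on tangent spaces with the Grothendieck--Messing bijection, after which a local $W$-algebra map between power series rings in the same number of variables inducing an isomorphism on tangent spaces is an isomorphism. (One small quibble: $p>2$ is not what makes Grothendieck--Messing applicable to $\kappa[\ep]\thra\kappa$, since the square-zero PD ideal there is nilpotent for any $p$; it is needed to lift $\ol X^\mu_{\GL}$ along $A^\mu_{\GL}\thra A^\mu_{\GL}/(p)$, i.e.\ for the existence of $X^\mu_{\GL}$ itself, which your proof takes as given.)
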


\subsection{Deformation with Tate tensors}\label{subsec:FaltingsDeforConstr}
In this section, we return to the ``unramified  Hodge-type'' setting (\emph{cf.} Definition~\ref{def:HodgeAssump}): namely, we let $G\subset\GL(\Lambda)$ be a  reductive subgroup  over $\Zp$, and assume that there exists an isomorphism $\BX\cong \BX^\Lambda_b$ for some $b\in G(K_0)$. (In particular, we assume that there exists an isomorphism $\bM \cong \bM^\Lambda_b$ of Dieudonn\'e modules.) For finitely many tensors $(s_\alpha)\subset\Lambda^\otimes$ whose stabiliser is $G$,  we let  $(\bft_\alpha)\subset \bM^\otimes$ denote the image of $(1\otimes s_\alpha)\in(\bM^\Lambda_b)^\otimes$ via the isomorphism $\bM\cong\bM^\Lambda_b$. Note that $(\bft_\alpha)$ are $F$-invariant after inverting $p$.

Let us now recall Faltings' construction of the ``universal'' deformation of $( \BX, (\bft_\alpha))$ when $p>2$.  We can choose a $W$-lift $\wt\BX$   whose Hodge filtration is induced by some cocharacter $\mu:\Gm\ra G_W$; \emph{cf.} Remark~\ref{rmk:X0}. Then $(\bft_\alpha)$ lies in the $0$th filtration with respect to the Hodge filtration for $\wt\BX$ by Lemma~\ref{lem:muFil0}.

Let $U_G(\mu\iv):=U(-\mu)\cap G_W$ be the scheme-theoretic intersection. Since $\mu$ is valued in $G_W$, this definition coincides with (\ref{eqn:parabolic}), so it follows that $U_G(\mu\iv)$ is a smooth affine $W$-group with connected unipotent fibres (\emph{cf.} \cite[Proposition~2.1.8(3)]{ConradGabberPrasad:PRedGp2}). Let $A _G$ be the quotient of $A _{\GL}$ corresponding to the formal subgroup $\wh U_G(\mu\iv) \subset \wh U(\mu\iv)$. Then, $A _G$  is  a formal power series over $W$. We also choose``coordinates'' for $A _{\GL}$ so that  the kernel of $A _{\GL}\thra A _G$ is stable under $\sig$. (In particular, we get a lift of Frobenius $\sig$ on $A _G$ induced by $\sig$ on $A _{\GL}$.)

Let $X _{G}$ denote the pull-back of $X _{\GL}$ over $\Spf (A _G,(p))$. Then,  $\DD(X _{G})(A _G)$ with the Hodge filtration and Frobenius action corresponds to  the following quotient of  $(\bM _{\GL}, \Fil^1\bM _{\GL}, F)$:
\begin{equation}\label{eqn:UnivDefCrys}
\bM _{G}:= A _G\otimes_{W} \bM;\quad \Fil^1\bM _{G}:= A _G\otimes_{W} \Fil^1_{\wt\BX};
\quad F:=u_{t}\iv\circ(A _G\otimes F),
\end{equation}
where $(\bM,F):= \DD(\BX)(W)$ and $u_t\in U_G(\mu\iv)(A _G)$ is the tautological point. 

From this explicit description, it is immediate that the tensors $(1\otimes \bft_{\alpha})\subset (\bM _{G})^\otimes[\ivtd p]$ are $F$-invariant, and the pointwise stabiliser of $(1\otimes \bft_{\alpha})$ is $G_{A _G}\subset \GL(\bM)_{A _G}$. Since $\Fil^1\bM _{G}$ is  a $\set\mu$-filtration, the tensors $(1\otimes \bft_\alpha)$ lie in the $0$th filtration by Lemma~\ref{lem:muFil0}. It is also known that $(1\otimes \bft_\alpha)$ are horizontal (\emph{cf.} \cite[\S1.5.4]{Kisin:IntModelAbType}).  So for each $\alpha$ we obtain a morphism
\begin{equation}\label{eqn:tuniv}
\hat t^\univ_\alpha:\triv\ra\DD(X _{G})^\otimes
\end{equation}
of crystals over $\Spf(A _G,(p))$ such that $\hat t^\univ_\alpha(A _G)= 1\otimes \bft_\alpha$ on the $A _G$-sections by the usual dictionary \cite[Corollary~2.2.3]{dejong:crysdieubyformalrigid}. 

Now we can rephrase the theorem of Faltings  as follows (\emph{cf.} \cite[\S7]{Faltings:IntegralCrysCohoVeryRamBase}, \cite[Theorem~4.9]{Moonen:IntModels}):
\begin{thm}[Faltings]\label{thm:FaltingsDef}
Let  $A$ be either $W[[u_1,\cdots,u_N]]$ or $W[[u_1,\cdots,u_N]]/(p^m)$ for arbitrary $N\geqslant1$ and $m\geqslant1$, and choose a $p$-divisible group $X$ over $A$ which lifts $\BX$. Let $f: A _{\GL}\ra A$ be the morphism induced by $X$ (via $\Spf A _{\GL}\cong \Def_\BX$). Then $f$  factors through $A _G$ if and only if the map $\triv\ra\bM^\otimes$ sending $1$ to $\bft_\alpha$  has a (necessarily unique) lift to a morphism of crystals over $\Spf (A,(p))$
\[t_\alpha:\triv\ra\DD(X)^\otimes\]
which  is Frobenius-equivariant up to isogeny and has the property that its $A$-section $t_\alpha(A)\in\DD(X)(A)^\otimes$  lies in the $0$th filtration with respect to the Hodge filtration. If this holds, then we necessarily have  $f^*\hat t_\alpha^\univ = t_\alpha$.

Furthermore, the image of the closed immersion   $\Spf A _G\hra \Def_\BX$, given by $X _{G}$, is independent of the choice  of $(\bft_\alpha)$ and $\mu\in\set\mu$.
\end{thm}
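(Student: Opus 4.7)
The plan for the ``only if'' direction is essentially tautological: if $f$ factors through $A^\mu_G$, I pull back the universal tensors $t^\univ_\alpha$ along $f$ to obtain the required lifts $t_\alpha = f^*t^\univ_\alpha$ on $\DD(X)^\otimes$. Frobenius-equivariance up to isogeny and the $\Fil^0$ condition are inherited directly from the universal situation, and uniqueness of the lift follows from the rigidity of horizontal Frobenius-equivariant sections of crystals over $\Spf(A,(p))$ (using an argument analogous to (\ref{eqn:DrinfeldRigidity})). The identity $f^*t^\univ_\alpha = t_\alpha$ is then automatic from uniqueness.

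For the converse, I argue by infinitesimal deformation. By the universality of $A^\mu_{\GL}$ established in Theorem~\ref{thm:UnivDefGLn}, the map $f$ is determined by $X$. Since $A^\mu_G \hra A^\mu_{\GL}$ is a closed immersion and $A$ is $\m_A$-adically separated, it suffices to prove that $f$ factors through $A^\mu_G$ modulo each power $\m_A^n$, reducing the problem to the Artinian case. I then proceed by induction on the length. For a square-zero thickening $R' \thra R$ inside $A/\m_A^n$ with the factorization known over $R$, the Grothendieck--Messing theory (valid since $p>2$) classifies the lift $X_{R'}$ by the choice of a lift of the Hodge filtration. The hypothesis that $t_\alpha(R') \in \Fil^0 \DD(X_{R'})(R')^\otimes$ forces this Hodge filtration to remain a $\set\mu$-filtration with respect to $(t_\alpha(R'))$ in the sense of Definition~\ref{def:muFil}. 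Translating via the tangent-space description of Lemma~\ref{lem:TangentSp}, the element $\gamma \in \wh U^\mu$ that encodes the shift from the trivial lift to $X_{R'}$ must then preserve $(t_\alpha)$, hence lies in $\wh U^\mu \cap G_W = \wh U^\mu_G$. This is precisely the condition needed for the classifying map to factor through $A^\mu_G$ at this infinitesimal step.

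For the final independence statement, observe that the characterization of the image of $\Spf A^\mu_G \hra \Def_\BX$ just produced --- as the locus of deformations $X$ admitting Frobenius-equivariant horizontal $\Fil^0$-extensions of $(t_\alpha)$ --- makes no reference to the choice of $\mu\in\set\mu$. For independence of the tensor system $(s_\alpha)$, I would invoke Proposition~\ref{prop:Chevalley}: any two collections of tensors both cutting out $G$ generate one another under sum, tensor, symmetric, alternating, and dual operations, each of which preserves both Frobenius-equivariance and the $\Fil^0$ property. Hence the two characterizations pick out the same closed subscheme of $\Def_\BX$.

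The main obstacle I foresee is the infinitesimal step: rigorously extracting the element $\gamma\in\wh U^\mu_G$ from the Hodge-filtration data and confirming that its tensor-invariance is equivalent to membership in $\wh U^\mu\cap G_W$. Remark~\ref{rmk:Zhu}'s reformulation of $\set\mu$-filtrations as $P^\mu$-torsors supplies the cleanest framework: the tensor-preservation condition is exactly what promotes the ambient $P^\mu$-torsor of filtration-splittings to a $P^\mu\cap G_W$-torsor, thereby cutting out the $G$-factor $\wh U^\mu_G \subset \wh U^\mu$ at the level of displacements.
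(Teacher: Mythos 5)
Your plan is genuinely different from the paper's proof: the paper does not reprove the core statement but cites Faltings via \cite[Theorem~4.9]{Moonen:IntModels} for $A=W[[u_1,\cdots,u_N]]$ and then reduces the $W[[u_1,\cdots,u_N]]/(p^m)$ case by lifting the Hodge filtration to a $\set\mu$-filtration over $W[[u_1,\cdots,u_N]]$, while you attempt a self-contained Artinian induction via Grothendieck--Messing and Lemma~\ref{lem:TangentSp}. The shape of your induction is reasonable, but the decisive step is exactly the one you defer, and what you offer for it does not work. From the hypothesis you only get that $t_\alpha(R')$ lies in $\Fil^0$ of the \emph{new} Hodge filtration; this is a priori weaker than ``$\gamma$ preserves the tensors'', so phrasing the needed fact as ``tensor-invariance of $\gamma$ is equivalent to membership in $\wh U^\mu\cap G_W$'' presupposes the very implication to be proved. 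The appeal to Remark~\ref{rmk:Zhu} is also garbled: there $P^\mu$ is already a subgroup of $G_W$, so ``$P^\mu\cap G_W$'' is not the object you want, and saying that tensor-preservation ``promotes'' the torsor is a restatement of the conclusion, not an argument. The actual mechanism is a purity-of-weight computation: writing the displacement as $1+X$ with $X\in\Lie U^\mu\otimes\bb$ and using the reference trivialisation pulled back from $A^\mu_G$ (in which the tensor section is the \emph{constant} tensor, of pure $\mu$-weight $0$), the element $X\cdot t_\alpha$ is of pure nonzero weight, so the condition $t_\alpha(R')\in\Fil^0$ forces $X\cdot t_\alpha=0$; one then needs to identify the annihilator of $(t_\alpha)$ in $\Lie U^\mu\otimes\bb$ with $\Lie U^\mu_G\otimes\bb$, which uses that $G$ is the scheme-theoretic pointwise stabiliser (Proposition~\ref{prop:Chevalley}) together with a flatness/saturation argument to pass to coefficients in $\bb$. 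None of this is in your proposal, so the ``if'' direction is not established.

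Two further points. First, your induction silently uses that the given crystal tensors agree with the pulled-back universal ones over $\Spec R$ (as morphisms of crystals, not just on $R$-sections); without carrying the identity $t_\alpha|_{\Spec R}=f_R^*t_\alpha^\univ$ through the induction, the section $t_\alpha(R')$ has no relation to the constant reference tensor and the displacement computation has no content. Second, your justification of independence of $(s_\alpha)$ is incorrect: two tensor collections with the same pointwise stabiliser need not generate one another under the listed operations, and Proposition~\ref{prop:Chevalley} asserts nothing of the sort. The correct (and the paper's) reason is immediate: $A^\mu_G\subset A^\mu_{\GL}$ is defined via $\wh U^\mu_G=\wh U^\mu\cap G_W$ and makes no reference to the tensors, so once the universal property is known for each choice of $(s_\alpha)$, the locus is the same; independence of $\mu\in\set\mu$ then follows from the universal property, as in the paper.
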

\begin{proof}
The universal property for $A _G$ for test rings  of the form $A = W[[u_1,\cdots,u_N]]$ was proved by Faltings (\emph{cf.}  \cite[Theorem~4.9]{Moonen:IntModels}). When $A=W[[u_1,\cdots,u_N]]/(p^m)$, we choose a lift $\wt X$ over $\wt A:=W[[u_1,\cdots,u_N]]$ corresponding to a $\set\mu$-filtration (with respect to $(t_\alpha(\wt A))$) in $\DD(X)(\wt A)$ lifting the Hodge filtration of $X$. Then $t_\alpha$ also defines a unique morphism $\triv\ra\DD(\wt X)^\otimes$ (as it only depends on the mod~$p$ fibre of $\wt X$), and we obtain the desired claim by applying \cite[Theorem~4.9]{Moonen:IntModels} to $(\wt X,(t_\alpha))$.

The closed immersion $\Spf A _G\hra \Def_\BX$ is clearly independent of the choice of $(\bft_\alpha)$, and the independence claim on $\mu\in\set\mu$ follows from the universal property.
\end{proof}

\subsection{Functoriality of deformation spaces}\label{subsec:FunctDefor}
We identify the deformation functor $\Def_\BX$ with the formal spectrum of complete local noetherian ring which prorepresents $\Def_\BX$. 
\begin{defnsub}
Using the notation from Theorem~\ref{thm:FaltingsDef}, we define  $\Def_{\BX,G}$ to be the formally smooth closed formal subscheme of $\Def_\BX$ which classify deformations of $(\BX, (\bft_\alpha))$ over formal power series rings over $W$ or $W/(p^m)$ in the sense of Theorem~\ref{thm:FaltingsDef}. Note that for any cocharacter $\mu:\Gm\ra G_W$ giving rise to the Hodge filtration of $\BX$, we get an isomorphism $\wh U_G(\mu\iv)\riso \Def_{\BX,G}$ induced by $X _{G}$, and the closed formal subscheme $\Def_{\BX,G}\subset\Def_\BX$ is independent of the choice of $(t_\alpha)$.
\end{defnsub}

Note that for any isomorphism $(\BX,(\bft_\alpha))\riso (\BX^\Lambda_b,(1\otimes s_\alpha))$, we have a natural isomorphism $\Def_{\BX,G}\riso \Def_{\BX^\Lambda_b,G}$. Therefore, we fix an identification $\BX = \BX^\Lambda_b$ for the moment, and show that $\Def_{\BX,G}$ only depends on $(G,b)$, not on $\Lambda$, in a canonical way, and that it is functorial with respect to $(G,b)$. (See Remark~\ref{rmk:CosetRep} for the discussion on the choice of isomorphism $(\BX,(\bft_\alpha))\riso (\BX^\Lambda_b,(1\otimes s_\alpha))$.)

We consider another pair $(G',b')$ and $\Lambda'$ as in Definition~\ref{def:HodgeAssump}, and consider  $\BX':=\BX^{\Lambda'}_{b'}$. We also obtain a subfunctor $\Def_{\BX',G'}\subset \Def_{\BX'}$, such that for any cocharacter $\mu':\Gm\ra G'_W$ that induces the Hodge filtration of $\BX'$ we have a natural isomorphism $\wh U_{G'}(\mu'^{-1})\riso \Def_{\BX',G'}$ induced by $X'_{G'}$. We do not assume the existence of any morphism between $\BX$ and $\BX'$.

\begin{propsub}\label{prop:FunctDefor}
In the above setting, the natural monomorphism $\Def_\BX\times\Def_{\BX'} \ra \Def_{\BX\times\BX'}$, defined by taking the product of deformations, induces an isomorphism
\[
\Def_{\BX,G}\times\Def_{\BX',G'} \riso \Def_{\BX\times\BX', G\times G'}.
\]

Let $f:G_W\ra G'_W$ be a homomorphism over $W$ such that $f(b) = b'$. We choose a cocharacter $\mu:\Gm\ra G_W$ inducing the Hodge filtration of $\BX:=\BX^\Lambda_b$. (Since $f(b) = b'$, it follows that $f\circ\mu$ induces the Hodge filtration of $\BX'$.) Then the morphism $\Def_{\BX,G}\ra\Def_{\BX',G'}$, corresponding to $f|_{\wh U_G(\mu\iv)}:\wh U_G(\mu\iv) \ra \wh U_{G'}(f\circ\mu\iv)$,  depends only on $f$, not on the choice of $\mu$.
\end{propsub}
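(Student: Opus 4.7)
For the product isomorphism, I would choose cocharacters $\mu\colon\Gm\ra G_W$ and $\mu'\colon\Gm\ra G'_W$ inducing the Hodge filtrations of $\BX$ and $\BX'$; then $(\mu,\mu')$ induces that of $\BX\times\BX'$. Because parabolic subgroups of $G\times G'$ are products of parabolics of the factors (and likewise for unipotent radicals of opposite parabolics), $U^{(\mu,\mu')}_{G\times G'}=U^\mu_G\times U^{\mu'}_{G'}$, hence $\wh U^{(\mu,\mu')}_{G\times G'}=\wh U^\mu_G\times\wh U^{\mu'}_{G'}$. Combined with the three Faltings identifications of \S\ref{subsec:FaltingsDeforConstr}, this gives $\Def_{\BX,G}\times\Def_{\BX',G'}\riso\Def_{\BX\times\BX',G\times G'}$. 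That it coincides with the natural product morphism is checked on the universal object, whose Dieudonn\'e module over $A^\mu_G\,\wh\otimes_W\,A^{\mu'}_{G'}$ visibly splits as a direct sum.

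For the independence of $\mu$, I would first verify that $f\circ\mu$ induces $\Fil^1_{\BX'}$ so that $\wh U^{f\circ\mu}_{G'}$ and Faltings' construction for $G'$ make sense: by the proof of Lemma~\ref{lem:X0}(2) we may arrange $b=gp^{\sig^*\mu\iv}$ for some $g\in G(W)$, so $b'=f(g)p^{\sig^*(f\circ\mu)\iv}$ and Lemma~\ref{lem:X0}(1) applies. The main idea is then to describe $\phi_\mu$ in a $\mu$-independent way. Given $X\in\Def_{\BX,G}(R)$, Faltings' construction provides a canonical identification $\DD(X)(R)=R\otimes_W\bM$ making $\Fil^1_X$ a $\set\mu$-filtration with respect to $(1\otimes t_\alpha)$; by Remark~\ref{rmk:Zhu} this \'etale-locally amounts to a coset $g_0 P^\mu\subset G(R)$ with $\Fil^1_X=g_0(R\otimes_W\Fil^1_{\wt\BX})$. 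I would set $\Fil'_X:=f(g_0)(R\otimes_W\Fil^1_{\wt\BX'})$, which is well-defined (since $f(P^\mu)\subseteq P^{f\circ\mu}$ and the latter stabilises $\Fil^1_{\wt\BX'}$), glues \'etale-locally, and is a $\set{f\circ\mu}$-filtration lifting $\Fil^1_{\BX'}$; Grothendieck--Messing (using $p>2$) then produces $X'\in\Def_{\BX',G'}(R)$ by Theorem~\ref{thm:FaltingsDef}. If $\mu_1=\ad(k)\mu$ for $k\in G(W)$ is another valid choice, then $\Fil^1_{\wt\BX_1}=k\Fil^1_{\wt\BX}$ and $\Fil^1_{\wt\BX'_1}=f(k)\Fil^1_{\wt\BX'}$, and the corresponding coset satisfies $g_0\iv g_{0,1}k\in P^\mu$, so $f(g_{0,1})\Fil^1_{\wt\BX'_1}=f(g_0)\Fil^1_{\wt\BX'}$ and the recipe is $\mu$-independent.

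The remaining step, and the main technical obstacle, is to identify this intrinsic $X'$ with Faltings' $\phi_\mu(X)$. I would do this by evaluation on the universal object: for $X^\mu_G$ over $A^\mu_G$ the Hodge filtration is $u_t(A^\mu_G\otimes\Fil^1_{\wt\BX})$ with $u_t\in \wh U^\mu_G(A^\mu_G)$ the tautological point, and the intrinsic recipe gives $f(u_t)(A^\mu_G\otimes\Fil^1_{\wt\BX'})$, which Faltings' explicit description of $X^{f\circ\mu}_{G'}$ (\emph{cf.}~(\ref{eqn:UnivDefCrys})) identifies with the Hodge filtration of the pullback of $X^{f\circ\mu}_{G'}$ along $f|_{\wh U^\mu_G}$. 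Grothendieck--Messing then forces the two $p$-divisible groups to coincide, so $X'=\phi_\mu(X)$ and independence of $\mu$ follows.
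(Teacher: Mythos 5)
Your product-decomposition argument is correct and is essentially the paper's: both reduce to the identity $X^{(\mu,\mu')}_{G\times G'}\cong X^{\mu}_{G}\times X^{\mu'}_{G'}$, which is visible from the explicit construction in \S\ref{subsec:FaltingsDeforConstr}.

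The independence-of-$\mu$ part, however, has a genuine gap, located at the ``canonical identification $\DD(X)(R)=R\otimes_W\bM$'' on which your intrinsic recipe rests. No $\mu$-independent such identification is available. The identification supplied by Faltings' construction is part of the construction data (it depends on $\mu$, on the lift $\wt\BX$ and on the chosen splitting), and under it the Hodge filtration of the universal object is the \emph{constant} filtration $A^\mu_G\otimes_W\Fil^1_{\wt\BX}$, \emph{cf.}~(\ref{eqn:UnivDefCrys}): the deformation is recorded in the Frobenius through $u_t$, not in the filtration. With that identification your recipe degenerates (one may take $g_0\in P^\mu$ for every $X$, so $\Fil'_X$ is always the constant filtration) and cannot recover $f|_{\wh U^\mu_G}$. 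If instead you intend the Grothendieck--Messing identification obtained from crystal rigidity along a thickening of $\kappa$, that identification exists only for nilpotent PD (e.g.\ square-zero) thickenings, so for a general artinian $R$ one must filter $R\thra\kappa$ and iterate, and reconciling it with Faltings' identification is exactly the content of Lemma~\ref{lem:TangentSp}, proved only over $\kappa[\ep]$. In either reading, your comparison of the cosets $g_0$ and $g_{0,1}$ for $\mu$ and $\mu_1=\ad(k)\mu$ silently uses one and the same embedding of $\Fil^1_X$ into $R\otimes_W\bM$ for both choices; but changing $\mu$ changes the identification itself (by a nontrivial isomorphism of filtered $F$-crystals interchanging the two constant reference filtrations), and your computation does not track that change -- which is precisely the point at issue. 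The later steps inherit the problem: ``Grothendieck--Messing produces $X'\in\Def_{\BX',G'}(R)$'' is not a one-step application (one must iterate over square-zero steps and verify at each stage the tensor and filtration conditions of Theorem~\ref{thm:FaltingsDef}, as in Proposition~\ref{prop:LiftingTate}), and the final comparison over $A^\mu_G$ cannot be ``forced by Grothendieck--Messing'', since $A^\mu_G$ is not an infinitesimal thickening of anything on which the two groups are already identified.

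For contrast, the paper avoids all identification issues by a purely formal diagram chase: it factors $\Def_{\BX,G}\ra\Def_{\BX',G'}$ through $\Def_{\BX\times\BX',G\times G'}\cong\Def_{\BX,G}\times\Def_{\BX',G'}$ via the graph embedding $(\id,f):G\ra G\times G'$ (viewing $\Lambda\times\Lambda'$ as a $G$-representation), and each arrow in the factorisation is independent of $\mu$ because the closed subspaces $\Def_{\BX,G}\subset\Def_{\BX}$, the product decomposition, and the universal property of Theorem~\ref{thm:FaltingsDef} are intrinsic. To salvage your approach you would either need to prove the missing canonical ($\mu$-independent) identification and carry out the infinitesimal induction, or adopt an argument of the paper's formal type.
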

Before we begin the proof, let us make some remarks on the statement.
\begin{rmksub}
One can apply this proposition to the identity map on $(G,b)$ with different choice of $\Lambda$ and $\Lambda'$ to obtain a natural functorial isomorphism $\Def_{\BX^\Lambda_b,G}\riso\Def_{\BX^{\Lambda'}_b,G}$. Under this identification,  the  morphism $\Def_{\BX^\Lambda_b,G}\ra\Def_{\BX^{\Lambda'}_{b'},G'}$ associated to  $f:(G,b)\ra(G',b')$ depends only on $f$, not on the choice of $\Lambda$ and $\Lambda'$.
\end{rmksub}
\begin{proof}[Proof of Proposition~\ref{prop:FunctDefor}]
For the first assertion on the product decomposition, observe that we have  $X_{G\times G'} \cong X _{G}\times X'_{G'}$, which follows from the explicit description (\S\ref{subsec:FaltingsDeforConstr}). The claim now follows.

Let us now show that for a fixed choice of $\Lambda$ and $\Lambda'$ the map $\Def_{\BX,G}\ra\Def_{\BX',G'}$ induced by $f|_{\wh U_G(\mu\iv)}$ is independent of the choice of $\mu$. 
For this, we factor the map $\Def_{\BX,G}\ra\Def_{\BX',G'}$ as follows, and show that each arrow on the top row is independent of the choice of $\mu$:
\[\xymatrix@R=10pt{
\Def_{\BX,G}  \ar[d]^\cong& \ar[l]^-{\pr_1}\Def_{\BX\times\BX',G}  \ar@{^{(}->}[r] \ar[d]^\cong & \Def_{\BX\times\BX',G\times G'}  \ar[d]^\cong &\ar@/_1pc/[lll]_-{\pr_1} \ar[l]^-{\cong}\Def_{\BX,G}\times\Def_{\BX',G'} \ar@{->>}[r]^-{\pr_2} \ar[d]^\cong &\Def_{\BX',G'} \ar[d]^\cong\\
\wh U_G \ar@/_1.2pc/[rrrr]_-{f}&  \ar[l]_\id \wh U_G \ar@{^{(}->}[r]^-{(\id,f)} & \wh U_{G\times G'} & \ar[l]_-{\cong} \wh U_G\times\wh U_{G'}\ar@{->>}[r] &\wh U_{G'}
}.\]
Here, we write $\wh U_G:=\wh U_G(\mu\iv)$, etc.,  and  view $\Lambda\times\Lambda'$ as a faithful $G$-representation by $G \xra{(\id,f)} G\times G'$, so we have $\BX\times\BX' = \BX^{\Lambda\times\Lambda'}_b$.

Note that the third arrow on the top is the isomorphism defined by taking the product of deformations, which is independent of the choice of $\mu$ since the subspaces $\Def_{\BX,G}\subset\Def_\BX$ and $\Def_{\BX',G'}\subset\Def_{\BX'}$ are independent of the choice of cocharacters (\emph{cf.} Theorem~\ref{thm:FaltingsDef}). Similarly, it follows that the projection maps on the top row ($\pr_1$ and $\pr_2$) are independent of the choice of $\mu$, as they are the restrictions of the natural projections $\Def_\BX\times\Def_{\BX'} \thra \Def_\BX $ and $\Def_\BX\times\Def_{\BX'} \thra \Def_{\BX'} $  to a closed subspace independent of the choice of cocharacter.

The second arrow on the top row can be obtained from the universal property for $\Def_{\BX\times\BX',G\times G'}$, hence it is independent of the choice of $\mu$. This shows that the first arrow does not depend on the choice of $\mu$ as it can be obtained as the compositions of maps independent of $\mu$. Furthermore, it is an isomorphism as it corresponds to the identity map of $\wh U_G(\mu\iv)$.
Now, chasing the diagram, we conclude that the map $\Def_{\BX,G}\ra\Def_{\BX',G'}$ does not depend on the choice of $\mu$.
\end{proof}
\begin{rmksub}\label{rmk:CosetRep}
We remark on the effect of different choice of  isomorphism $\BX\cong\BX^\Lambda_b$ in Proposition~\ref{prop:FunctDefor}. For $g\in G(W)$, we write $b':=g\iv b\sig(g)$ and $\BX':=\BX^\Lambda_{b'}$. Then $g$ induces an $F$-equivariant isomorphism $g:\bM^\Lambda_{b'} \ra  \bM^\Lambda_b$, so we get an isomorphism
\[(\BX,(1\otimes s_\alpha))\riso (\BX',(1\otimes s_\alpha)),\]
preserving tensors, where $\BX:=\BX^\Lambda_b$ as before. This induces an isomorphism $\Def_{\BX,G}\riso \Def_{\BX',G}$. We want to give a group-theoretic interpretation of this isomorphism via the explicit construction of universal deformations with Tate tensors.

Choose a cocharacter $\mu:\Gm\ra G_W$ which induces the Hodge filtration of $\BX$. Then, $\mu':=g\iv \mu g$ induces the Hodge filtration of $\BX'$, so we have an isomorphism $\wh U_G(\mu'^{-1})=\Spf A'_G\riso \Def_{\BX',G}$ defined by the deformation $X'_G$ of $\BX'$.  Let $u_t\in \wh U_G(\mu\iv)(A _G)\subset G(A _G)$ and $u_t'\in \wh U_G(\mu'^{-1})(A'_G)\subset G(A'_G)$ be the tautological points.

We have an isomorphism $j_g:\wh U_G(\mu\iv) \riso \wh U_G(\mu'^{-1})$, defined by the conjugation by $g\iv$, and we have
$u_t' = g\iv (j_g^*u_t) g$ as an elements in $G(A'_G)$. So we have
\[
(u_t')\iv (g\iv b \sig(g)) = (g\iv j_g^*(u_t\iv) g)\cdot (g\iv b\sig(g)) = g\iv j_g^*(u_t \iv b) \sig(g).
\]
In particular, by identifying the underlying $A^{\mu'}_G$-modules of $\bM'_G$ and $j_g^*\bM _G$ with $A'_G\otimes_{\Zp}\Lambda^*$, the isomorphism $g:\bM'_G \riso j_g^*\bM _G$ is horizontal, filtered, and $F$-equivariant.
In short, we obtain the following commutative diagram of isomorphisms
\[\xymatrix{
\wh U_G(\mu\iv) \ar[r]^-{\sim}_-{j_g} \ar[d]_{X _G} &
\wh U_G(\mu'^{-1})  \ar[d]^{X'_G} \\
\Def_{\BX,G} \ar[r]^-{\sim}&\Def_{\BX',G}
},\]
where the bottom isomorphism is induced by $(\BX,(1\otimes s_\alpha))\riso (\BX',(1\otimes s_\alpha))$ which corresponds to $g:\bM^\Lambda_{b'} \ra  \bM^\Lambda_b$. 

Let us return to the setting of Proposition~\ref{prop:FunctDefor}, and consider a homomorphism $f:G\ra G'$. Then it follows without difficulty that for any $g\in G(W)$ the following  diagram commutes
\begin{equation}
\xymatrix{
\Def_{\BX^\Lambda_b,G} \ar[d]^\cong \ar[r] &\Def_{\BX^{\Lambda'}_{f(b)},G'}\ar[d]^\cong\\
\Def_{\BX^\Lambda_{g\iv b\sig(g)},G} \ar[r]&\Def_{\BX^{\Lambda'}_{f(g\iv b\sig(g))},G'}
},
\end{equation}
where the vertical isomorphisms are as constructed above associated to $g\in G(W)$ and $f(g)\in G'(W)$, respectively, and the horizontal arrows are associated to $f:G\ra G'$ via Proposition~\ref{prop:FunctDefor}.
\end{rmksub}

We now study deformation theory for points of $\Def_{\BX,G}$ valued in complete local noetherian rings. Let $R$ be a complete local noetherian $W/p^m$-algebra for some $m$, with residue field $\kappa$, and consider a $W$-morphism $f:\Spf R\ra\Def_{\BX,G}$. We set $(X_R,(t_\alpha)):=(f^*X _{G}, (f^*\hat t_\alpha^\univ))$.
Let $B\thra R$ be a square-zero thickening with finitely generated kernel $\bb$ (so that  $B$ is complete local noetherian ring as well), and give the square-zero PD structure on $\bb$; i.e., $a^{[i]}=0$ for any $i>1$ and $a\in\bb$.
Then we can define the $B$-sections $(t_\alpha(B))\subset\DD(X_R)(B)^\otimes$ as in Definition~\ref{def:sections}.
Let $\tilde f:\Spf B\ra\Def_{\BX,G}$ be a lift of  $f$, and set $(X_B,(\tilde t_\alpha)):=(\tilde f^*X _{G}, (\tilde f^*\hat t_\alpha^\univ))$. Then we have a natural isomorphism $\DD(X_B)(B)\cong \DD(X_R)(B)$, which matches $(\tilde t_\alpha(B))$ with $(t_\alpha(B))$.

\begin{prop}\label{prop:LiftingTate}
Assume that $p>2$.
Let $(X_R,(t_\alpha))$ and $B\thra R$ be as above. Then a $B$-lift $X_B$ of $X_R$ defines a $\Spf B$-point of $\Def_{\BX,G}$  if and only if the Hodge filtration
\[
\Fil^1_{X_B}\subset \DD(X_B)(B) \cong \DD(X_R)(B)
\]
is a $\set\mu$-filtration with respect to $(t_\alpha(B))$, where $\mu:\Gm\ra G_W$ is the cocharacter in the definition of $X _{G}$.
\end{prop}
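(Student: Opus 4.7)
The plan is to use Grothendieck--Messing deformation theory to compare two torsor structures on relevant sets of $B$-lifts. For the ``only if'' direction, if $X_B \cong \tilde f^* X^\mu_G$ for some lift $\tilde f\colon \Spec B \to \Def_{\BX,G}$ of $f$, then $\Fil^1_{X_B}$ is the pullback of $\Fil^1\bM^\mu_G = A^\mu_G\otimes_W \Fil^1_{\wt\BX}$, which is a $\set\mu$-filtration on $A^\mu_G\otimes_W\bM$ with respect to $(1\otimes t_\alpha)$ by the explicit formula \eqref{eqn:UnivDefCrys} (as $\Fil^1_{\wt\BX}$ is induced by $\mu\colon\Gm\to G_W$). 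Pulling back by $\tilde f$ yields the $\set\mu$-filtration property for $\Fil^1_{X_B}$ with respect to $(t_\alpha(B))$.

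For the ``if'' direction, I would begin by invoking formal smoothness of $\Def_{\BX,G}\cong \wh U^\mu_G$ to choose \emph{some} lift $\tilde f_0\colon \Spec B \to \Def_{\BX,G}$ of $f$, and set $X_{B,0}:=\tilde f_0^*X^\mu_G$; by the ``only if'' direction, $\Fil^1_{X_{B,0}}$ is automatically a $\set\mu$-filtration with respect to $(t_\alpha(B))$. Grothendieck--Messing deformation theory (applicable because $p>2$, so the PD structure on $p\Z_p$ is nilpotent and compatible with the square-zero PD structure on $\bb$) identifies $B$-lifts of $X_R$ with direct-factor lifts of $\Fil^1_{X_R}$ inside $\DD(X_R)(B)$. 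Within this ambient torsor, those lifts satisfying the $\set\mu$-filtration condition with respect to $(t_\alpha(B))$ form a torsor under $\Lie U^\mu_G\otimes_W\bb$: indeed, the flag scheme $\Flag_{G,\set\mu}^{\DD(X_R),(t_\alpha)}$ is smooth over $\Spec R$ with tangent space at $\Fil^1_{X_R}$ given by $\Lie U^\mu_G$ (opposite unipotent, defined étale-locally via a splitting given by $\mu$), by Lemma~\ref{lem:muFil}. Likewise, the set of lifts $\tilde f\colon \Spec B\to \Def_{\BX,G}$ of $f$ is a torsor under $\Lie\wh U^\mu_G\otimes_W\bb = \Lie U^\mu_G\otimes_W\bb$, since $\wh U^\mu_G$ is smooth and $\bb^2=0$.

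The main remaining step, and the main technical obstacle, is to verify that the comparison map $\tilde f\mapsto \Fil^1_{\tilde f^*X^\mu_G}$ between these two $\Lie U^\mu_G\otimes_W\bb$-torsors is equivariant; granted this, a map of torsors under the same abelian group is automatically bijective, so the hypothesis on $\Fil^1_{X_B}$ forces $X_B\cong \tilde f^*X^\mu_G$ for some lift $\tilde f$, as required. Equivariance will follow from a direct generalization of the argument of Lemma~\ref{lem:TangentSp}: writing $\tilde f = h\cdot \tilde f_0$ for the unique $h$ in $\ker(\wh U^\mu_G(B)\to \wh U^\mu_G(R))$, which is identified with $\Lie U^\mu_G\otimes_W\bb$, one computes the crystalline Frobenius on $\DD(\tilde f^*X^\mu_G)(\wt B)$ for a suitable compatible PD thickening $\wt B$ of both $B$ and $R$ (for instance, a $W$-flat lift of $B$ together with a lift of Frobenius). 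The same diagram-chase as in Lemma~\ref{lem:TangentSp} will then show that $\Fil^1_{\tilde f^*X^\mu_G}$ is exactly the translate of $\Fil^1_{X_{B,0}}$ by $h$ under the natural $\Lie U^\mu_G\otimes_W\bb$-action on $\set\mu$-filtration lifts. This is essentially bookkeeping over the proof of Lemma~\ref{lem:TangentSp}; the only genuine input is the identification of the two tangent-space actions, which is the heart of the argument.
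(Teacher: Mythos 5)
Your ``only if'' direction and your overall strategy (compare two pseudo-torsors of lifts under $\Lie U^\mu_G\otimes_W\bb$ and conclude bijectivity from equivariance) are sound and in fact parallel the paper's proof; you also correctly identify the equivariance of the map $\tilde f\mapsto \Fil^1_{\tilde f^*X^\mu_G}$ as the heart of the matter. The gap is in how you propose to prove that equivariance. The diagram chase of Lemma~\ref{lem:TangentSp} does \emph{not} generalize directly to an arbitrary square-zero thickening $B\thra R$ of artin local $W$-algebras: that argument hinges on the very special choice $\wt B=W[\ep]/(\ep^2)$ with $\sig(\ep)=\ep^p=0$, so that the semilinear Frobenius is blind to the deformation direction, giving $(\wt B\otimes F)\circ g=\wt B\otimes F$ and hence pinning down $g=\tilde\gamma$. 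For a general square-zero ideal $\bb$ (in particular when $\m_B\bb\neq0$) over a general artinian $R$, no Frobenius lift on a $W$-flat lift $\wt B$ annihilates a lift $\tilde\bb$ of $\bb$: one only has $\sig(\tilde y)\equiv \tilde y^p \bmod p$, and neither the $p$-th powers nor the $p$-multiples vanish; moreover, over a non-reduced base the identification of the crystalline Frobenius on $\DD(\tilde f^*X^\mu_G)(\wt B)$ with $(\text{lift of }u_t)^{-1}\circ(\wt B\otimes F)$ acquires correction terms coming from the incompatibility of the chosen Frobenius lifts on $A^\mu_G$ and on $\wt B$. So the uniqueness step that closes the argument in Lemma~\ref{lem:TangentSp} has no analogue, and your ``main remaining step'' is not established by the method you describe.

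The missing idea is the paper's reduction: prove the tangent-space statement only over $\kappa[\bb]$ (where the $\sig$-kills-the-ideal trick applies), transfer it to an arbitrary \emph{small} thickening ($\bb$ killed by $\m_B$, of length one) by the purely formal fibre-product isomorphism $B\times_R B\cong B\times_\kappa\kappa[\bb]$ of (\ref{eqn:SmallThickening}) together with Lemma~\ref{lem:GrothMessingTangent}, and then handle a general square-zero thickening by filtering it into successive small thickenings --- which in turn requires checking (as in the footnote of the paper's proof) that the sections $(t_\alpha(B))$ are unchanged when computed relative to an intermediate quotient $R'$ of $B$. To complete your proof you would need either this reduction or a genuinely different argument, e.g.\ an explicit identification, at an arbitrary point of $\Spf A^\mu_G$, of Faltings's connection modulo filtration-preserving endomorphisms (a Kodaira--Spencer computation that is not supplied by Lemma~\ref{lem:TangentSp} alone); as written, this step is a genuine gap.
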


Let us outline the basic strategy of the proof. 
The proposition when $B=\kappa[\ep]\thra R = \kappa$ can be deduced from Lemma~\ref{lem:TangentSp}. When $B\thra R$ is a small thickening (i.e., $\bb$ is of $B$-length~$1$) then we prove the proposition using the fact that the set of $B$-lifts of $f$ is a torsor under the reduced tangent space of $\Def_{\BX,G}$. The general case can be deduced by filtering $B\thra R$ into successive small thickenings.

Before beginning the proof of the proposition, let us review fibre products of rings.
Let $B\thra R$ be a small thickening of rings in $\art W$ with  kernel $\bb\subset B$. Let $\kappa[\bb]$ denote the $\kappa$-algebra whose underlying $\kappa$-vector space is $\kappa\oplus\bb$, such that $\bb$ is the augmentation ideal. If we pick a generator $\ep\in\bb$ then we have $\kappa[\bb] = \kappa[\ep]/\ep^2$.

We have the following isomorphism
\begin{equation}\label{eqn:SmallThickening}
B\times_\kappa \kappa[\bb] \riso B\times_R B =:B' ;\quad(a, \bar a + a')\mapsto (a, a+a') ,
\end{equation}
where $a\in B$, $a'\in \bb$, and $\bar a\in \kappa$ is the image of $a$. The inverse is given by $(a,a')\mapsto (a, \bar a + (a'-a))$.

Let $\cF:\art W\ra\Sets$ be a pro-representable functor. (For example, $\cF=\Def_\BX$ or $\cF = \Def_{\BX,G}$.) Then we have a natural bijection
\[\cF(B\times_R B')\riso \cF(B)\times_{\cF(R)}\cF(B')\]
for any $B,B'\thra R$. So we obtain from (\ref{eqn:SmallThickening})   a natural bijection
\begin{equation}\label{eqn:SmallThAction}
\cF(B)\times \cF(\kappa[\bb]) \riso \cF(B)\times_{\cF(R)}\cF(B),
\end{equation}
which defines an $\cF(\kappa[\bb])$-action on $\cF(B)$ (where $\cF(\kappa[\bb])$ has a natural structure of a $\kappa$-vector space by \cite[Lemma~2.10]{Schlessinger:FunctArtRing}), and makes the set of $\tilde f\in\cF(B)$ lifting a fixed $f\in\cF(R)$ into a  $\cF(\kappa[\bb])$-torsor.

Let us consider the case when $\cF=\Def_\BX$.
For any $R\in\art W$, we set $\bM_{R}:=R\otimes_W\bM$ and  $\Fil^1\bM_{R}:=R\otimes_W\Fil^1_{\wt\BX} \subset \bM_R$.
Via the Grothendieck-Messing deformation theory, we have a natural bijection
\begin{equation}\label{eqn:GrothMessingTangent}
\wh U(\mu\iv) (\kappa[\bb])\cong\Def_\BX(\kappa[\bb]).
\end{equation}
 Indeed, we associate to $\gamma\in \wh U(\mu\iv)(\kappa[\bb])$ the lift ${}^{\gamma}X\in\Def_\BX(\kappa[\bb])$ which corresponds to the filtration $\gamma(\Fil^1\bM_{\kappa[\bb]})$; \emph{cf.} Lemma~\ref{lem:TangentSp}.

Now we give the square-zero PD structure on $\bb$. (We still assume that $p>2$.)
We define the $\wh U(\mu\iv)(\kappa[\bb])$-action on $\Def_\BX(B)$ to be the one induced from the natural $\wh U(\mu\iv)(\kappa[\bb])$-action on the Hodge filtration of $X_B$ via the Grothendieck-Messing deformation theory.

\begin{lemsub}\label{lem:GrothMessingTangent}
In the above setting setting, the actions of $\wh U(\mu\iv)(\kappa[\bb])$ and $\Def_\BX(\kappa[\bb])$ on $\Def_\BX(B)$, which are defined above, coincide via the isomorphism (\ref{eqn:GrothMessingTangent}).
\end{lemsub}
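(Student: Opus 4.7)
The plan is to translate both actions to the level of lifts of the Hodge filtration via Grothendieck-Messing, and then verify they agree on that common description. Let $\Fil^1_{X_B}\subset\DD(X_R)(B)$ denote the Hodge filtration lift corresponding to $X_B$ (recall $\DD(X_R)(B)$ is canonical via the crystal structure). Since $\bb$ has $B$-length one, it is killed by the maximal ideal of $B$ and is in particular a $\kappa$-module, so the torsor of $B$-lifts of $\Fil^1_{X_R}$ inside $\DD(X_R)(B)$ is a torsor under $\Hom_\kappa(\Fil^1_\BX,\DD(\BX)(\kappa)/\Fil^1_\BX)\otimes_\kappa\bb$. By construction of $\mu$, this module is canonically $\Lie(U^\mu)_\kappa\otimes_\kappa\bb\cong\wh U^\mu(\kappa[\bb])$; and by Lemma~\ref{lem:TangentSp}, the bijection (\ref{eqn:GrothMessingTangent}) sends $\gamma\in\wh U^\mu(\kappa[\bb])$ to the lift ${}^\gamma X$ whose Hodge filtration is $\gamma(\Fil^1_\BX)$. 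Thus the $\wh U^\mu(\kappa[\bb])$-action on $\Def_\BX(B)$ is, by its very definition, translation of the Hodge filtration by $\gamma$ inside this torsor.

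Next, I would unravel the $\Def_\BX(\kappa[\bb])$-action. Given $X_B\in\Def_\BX(B)$ and $Y\in\Def_\BX(\kappa[\bb])$, the lift $Y\cdot X_B$ is characterised as the unique $X'_B\in\Def_\BX(B)$ such that the $B\times_R B$-deformation $(X_B,X'_B)$ corresponds, under the isomorphism (\ref{eqn:SmallThickening}), to the $(B\times_\kappa\kappa[\bb])$-deformation $(X_B,Y)$ of $\BX$. Applying Grothendieck-Messing to the (compatible) PD thickenings $B\times_R B\thra R$ and $B\times_\kappa\kappa[\bb]\thra B$ with the natural square-zero PD structures on their kernels, both deformations are determined by their Hodge filtration lifts. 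Functoriality of the crystalline Dieudonn\'e functor yields the natural identification $\DD(X_R)(B\times_R B)\cong\DD(X_R)(B)\times_{\DD(X_R)(R)}\DD(X_R)(B)$, under which the Hodge filtration of $(X_B,X'_B)$ is $\Fil^1_{X_B}\times_{\Fil^1_{X_R}}\Fil^1_{X'_B}$, and similarly for $(X_B,Y)$ the Hodge filtration is $\Fil^1_{X_B}\times_{\Fil^1_\BX}\Fil^1_Y$.

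The remaining step is to chase (\ref{eqn:SmallThickening}) at the level of Hodge filtrations. The ring isomorphism $(a,\bar a+a')\mapsto (a,a+a')$ induces, on pairs of direct-summand filtrations that agree over the base, the identity on the first factor and the translation ``add the $\bb$-component of the second factor'' on the Hodge filtration over $B$. Consequently, if $Y={}^\gamma X$ with $\gamma\in\wh U^\mu(\kappa[\bb])$, the resulting $\Fil^1_{X'_B}$ equals $\gamma\cdot\Fil^1_{X_B}$ inside the torsor identified above, which is exactly the $\wh U^\mu(\kappa[\bb])$-translation from the first paragraph. The main bookkeeping obstacle lies in this last verification -- carefully tracking canonical identifications across the two fibre-product PD thickenings and ensuring the componentwise description of the Hodge filtration lift is compatible with (\ref{eqn:SmallThickening}) -- but once the isomorphism is applied componentwise, the coincidence of the two actions drops out.
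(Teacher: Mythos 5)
Your proposal is correct and follows essentially the same route as the paper: identify the $\Def_\BX(\kappa[\bb])$-action by computing the Hodge filtration of the fibre-product deformation over $B\times_R B\cong B\times_\kappa\kappa[\bb]$ (via Grothendieck–Messing and functoriality), and then unravel the isomorphism~(\ref{eqn:SmallThickening}) on the second projection to see it acts by $\gamma$-translation of $\Fil^1_{X_B}$. You spend a bit more time making explicit the torsor structure under $\Hom_\kappa(\Fil^1_\BX,\DD(\BX)(\kappa)/\Fil^1_\BX)\otimes\bb\cong\Lie(U^\mu)_\kappa\otimes\bb$ and in spelling out the filtration chase componentwise, but the underlying mechanism — including the choice of square-zero PD structures on the two fibre-product thickenings and the use of Lemma~\ref{lem:TangentSp} — coincides with the paper's argument.
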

\begin{proof}
Let us give the square-zero PD structure on the kernel of  $B':=B\times_R B \thra R$ so that  both projections $B'\rightrightarrows B$ are PD morphisms.
Let ${}^{\gamma}X\in\Def_{\BX}(\kappa[\bb])$ be the deformation corresponding to $\gamma\in \wh U(\mu\iv)(\kappa[\bb])$. Then the action of ${}^{\gamma}X$ maps $ X_B$ to the  pull-back of $ X_B\times_{ \BX}{}^{\gamma}X\in\Def_{ \BX}(B')$ via the second projection $B'\thra B$. Meanwhile, $ X_B\times_{ \BX}{}^{\gamma}X$ corresponds to the filtration
\begin{equation}\label{eqn:SmallThActionFil}
\Fil^1_{ X_B}\times_{\Fil^1\bM_{\kappa}} \big[\gamma(\Fil^1\bM_{\kappa[\bb]})\big]
\subset \DD(X_R)(B) \times_{\bM_{\kappa}}\bM_{\kappa[\bb]} \cong \DD(X_R)(B'),
\end{equation}
using the notation as above. Now from the isomorphism (\ref{eqn:SmallThickening}) it  follows that the image of the filtration (\ref{eqn:SmallThActionFil}) under the second projection is  $\gamma\Fil^1_{ X_B}$.
\end{proof}

\begin{proof}[Proof of Proposition~\ref{prop:LiftingTate}]
If $B=\kappa[\ep]$ then the proposition is clear from Lemma~\ref{lem:TangentSp}. Now assume that $B\thra R$ is a small thickening (i.e., the $B$-length of $\bb$ is $1$). We let $f:\Spec R\ra\Def_{\BX,G}$ denote the map induced by $X_R$. 
For any lift $\tilde f:\Spec B\ra\Def_{\BX,G}$ of $f$ (with $X_B:=\tilde f^*X _{G}$) the Hodge filtration
\[
 \Fil^1_{X_B}\subset \DD(X_B)(B)\cong \DD(X_R)(B)
\]
is a $\set\mu$-filtration with respect to $(t_\alpha(B))$; indeed, $ \Fil^1_{X_B}$ and $(t_\alpha(B))$ are respectively the images of $\Fil^1\bM _{G}$ and $(\hat t_\alpha^\univ(A _G))$. (Note that we have a natural isomorphism $\DD(X_B)(B) = \tilde f^*\bM _{G}$.)
So  we obtain a map
\[
\set{\tilde f\in\Def_{\BX,G}(B)\text{ lifting }f} \ra \set{\{\mu\}\text{-filtrations in }\DD(X_R)(B)\text{ lifting }\Fil^1_{X_R}}
,\]
sending $\tilde f$ to $\Fil^1_{\tilde f^*X _{G}}$. By Lemma~\ref{lem:GrothMessingTangent}, this map is a morphism of $\wh U_G(\mu\iv)(\kappa[\bb])$-torsors, so it has to be a bijection. This proves the proposition when $B\thra R$ is a small thickening.

Now let $B\thra R$ be any square-zero thickening with $B\in\art W$, and consider an quotient $R'$ of $B$ which surjects onto $R$. Then $\bb':=\ker (B\thra R')$ is a square-zero ideal, so by giving the ``square-zero PD structure'', $\bb'$ is a PD subideal of $\bb$. We fix a lift $f'\in\Def_{\BX,G}(R')$ of $f$ and set $(X_{R'} , (t'_\alpha)):=(f^{\prime*}X _{G,b},(f^{\prime*}\hat t_\alpha^\univ))$.

Since $\bb'$ is a (nilpotent) PD subideal, we have a natural isomorphism $\DD(X_R)(B) \cong \DD(X_{R'})(B)$, which matches $(t_\alpha(B))$ and $(t_\alpha'(B))$.\footnote{This can be seen as follows. For any maps $\tilde f\in\Def_{\BX,G}(B)$ lifting $f'$ (and hence, $f$) and $(X_B,(\tilde t_\alpha))$ pulling back the universal objects, we have natural isomorphisms $\DD(X_R)(B) \cong \DD(X_{R'})(B)\cong \DD(X_B)(B)$ matching $(t_\alpha(B))$, $(t_\alpha'(B))$, and $(\tilde t_\alpha(B))$.} Therefore, the proposition for $B\thra R$ with $B\in\art W$ is obtained by filtering it with successive small thickenings.

When $B$ is a complete local noetherian $W/p^m$-algebra (for some $m$) with residue field $\kappa$, we  filter $B\thra R$ by square-zero thickenings of artin local rings $B_n\thra R_n$ (e.g., $B_n:=B/\m_B^n$ and $R_n:=B_n\otimes_BR$), and the proposition follows from applying the artin local case of the proposition to the lift $X_{B_n}$ of $X_{R_n}$ for any $n$.
\end{proof}

\section{Moduli of $p$-divisible groups with Tate tensors}\label{sec:RZ}
In this section, we  state the main results on the Hodge-type analogue $\RZ^\Lambda_{G,b}$ of Rapoport-Zink spaces (Theorem~\ref{thm:RZHType}). This construction recovers EL and PEL Rapoport-Zink spaces if $(G,b)$ is associated to an unramified EL and PEL datum   (\S\ref{subsec:ELnPEL}).

Let $\kappa$ be an algebraically closed field of characteristic~$p$, and set $W:=W(\kappa)$ and $K_0:=\Frac W$. The most interesting case is when $\kappa = \Fpbar$.
Let $ \BX$ be a $p$-divisible group  over $\kappa$. 

We begin with the  review of the formal moduli schemes classifying $p$-divisible groups up to quasi-isogeny  \cite[Ch.II]{RapoportZink:RZspace}, which is the starting point of the construction of $\RZ^\Lambda_{G,b}$.

\begin{defn}[{\cite[Definition~2.15]{RapoportZink:RZspace}}]\label{def:RZ}
Let $\RZ_\BX:\Nilp_W\ra\Sets$ be a covariant functor defined as follows: for any $R\in\Nilp_W$, $\RZ_\BX(R)$  is the set of isomorphism classes of $(X, \iota)$, where $X$ is a $p$-divisible group over $R$ and $\iota: \BX_{R/p} \dra X_{R/p}$ is a quasi-isogeny. (We take the obvious notion of isomorphism of $(X,\iota)$.)

For $h\in \Z$, let $\RZ_\BX(h):\Nilp_W\ra\Sets$ be a subfunctor of $\RZ_\BX$ defined by requiring that the quasi-isogeny $\iota$ has height $h$.
%
\end{defn}

\begin{rmksub}\label{rmk:RZ}
Let $\wt \BX$ be any $p$-divisible group over $W$ which lifts $\BX:=\BX^\Lambda_b$. Then  for any $(X,\iota)\in\RZ_\BX(R)$, $\iota$ uniquely lifts to
\begin{equation}
\tilde\iota:\wt\BX_R\dra X
\end{equation}
by rigidity of quasi-isogenies; \emph{cf.} (\ref{eqn:DrinfeldRigidity}). (Recall that $pR$ is a nilpotent ideal killed by some power of $p$.) So we may regard $\RZ_\BX(R)$ as the set of isomorphism classes of $(X,\tilde\iota)$ where $\tilde\iota:\wt\BX_R\dra X$ is a quasi-isogeny.
\end{rmksub}

\begin{thm}[Rapoport, Zink]
The functor $\RZ_\BX$ can be represented by a separated formal scheme which is locally formally  of finite type (\emph{cf.} Definition~\ref{def:ForFType}) and formally smooth over $W$. We also denote by $\RZ_\BX$ the  formal scheme representing $\RZ_\BX$. For $h\in\Z$, the subfunctor $\RZ_\BX(h)$ can be represented by an open and closed formal subscheme (also denoted by $\RZ_\BX(h)$). Furthermore, any irreducible component of the underlying reduced scheme $(\RZ_\BX)_{\red}$ of $\RZ_\BX$ is projective.
\end{thm}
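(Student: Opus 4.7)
The plan is to follow Rapoport-Zink \cite[Ch.~II]{RapoportZink:RZspace}, constructing $\RZ_\BX$ in four coordinated steps: identify the geometric points, supply pointwise formal smoothness via Faltings's universal deformation, glue into a formal scheme, and verify projectivity of the reduced irreducible components.

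For the geometric points, contravariant Dieudonn\'e theory identifies $(X,\iota)\in\RZ_\BX(\kappa')$, for a perfect extension $\kappa'/\kappa$, with a $W(\kappa')$-lattice $M\subset W(\kappa')\otimes_W\bM[\ivtd p]$ satisfying $pM\subset F(\sigma^*M)\subset M$ and $\dim_{\kappa'} M/F(\sigma^*M)=d:=\dim\BX$; the height $h$ of $\iota$ is the relative $W(\kappa')$-length of $M$ with respect to $W(\kappa')\otimes_W\bM$. After fixing a basis of $\bM$, such lattices of fixed height $h$ form a subset of a bounded Schubert variety in the affine Grassmannian of $\GLn$ (with $n=\mathrm{ht}(\BX)$) which comes from an honest closed, locally-of-finite-type $\kappa$-subscheme $\cR(h)$. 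This provides the candidate reduced scheme $\coprod_h\cR(h)$ for $(\RZ_\BX)_{\red}$, and projectivity of each irreducible component follows from its being closed inside a projective Schubert variety, establishing the final assertion of the theorem.

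At each closed point $x\in\cR(\kappa)$, Theorem~\ref{thm:UnivDefGLn} gives a universal deformation of $X_x$ prorepresentable by $W[[u_1,\dots,u_{dd^\vee}]]$; rigidity of quasi-isogenies (\ref{eqn:DrinfeldRigidity}) shows that $\iota_x$ extends uniquely to every such deformation, so the same formal power series ring prorepresents deformations of the rigidified pair $(X_x,\iota_x)$, yielding formal smoothness of relative dimension $dd^\vee$ over $W$. Globally I then assemble $\RZ_\BX$ by constructing, for each quasi-compact open $U\subset\cR(h)$, a formally smooth formal $W$-scheme $\widehat U$ with reduced subscheme $U$ whose completion at each closed point realises Faltings's universal deformation; these charts glue by functoriality of Faltings's construction under \'etale specialisation (a precursor of what is systematised in \S\ref{subsec:FunctDefor}). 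Separatedness holds because a quasi-isogeny into a $p$-divisible group is determined by its effect on the crystalline Dieudonn\'e module, a closed condition, and local constancy of the height function in connected families yields the decomposition $\RZ_\BX=\coprod_h\RZ_\BX(h)$ into open-and-closed formal subschemes.

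The main obstacle is the gluing step: one must produce a bona fide formal scheme in the sense of Definition~\ref{def:ForFType}, with each affine patch of the form $\Spf R$ where $R$ is formally finitely generated over $W$, rather than merely a formal algebraic space or ind-scheme. The essential input is that bounded loci in the affine Grassmannian are genuinely of finite type, so each $\cR(h)$ admits finite-type affine open covers, and above each such cover the Faltings deformation theory produces an affine formal thickening of the required shape; functoriality under \'etale specialisation then shows these patches glue into a single formal scheme, completing the proof.
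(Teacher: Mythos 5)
The paper does not reprove this statement: its ``proof'' consists of citing \cite[Theorem~2.16]{RapoportZink:RZspace} for representability and \cite[Proposition~2.32]{RapoportZink:RZspace} for projectivity of the irreducible components, plus the observation that $\RZ_\BX(h)$ is open and closed. Your proposal attempts a genuine construction, but it has real gaps. The first is your boundedness claim: it is false that the Dieudonn\'e lattices $M$ with $pM\subset F(\sigma^*M)\subset M$ of fixed height $h$ all lie in a single bounded Schubert variety. Already for $\BX=\Qp/\Zp\times\mu_{p^\infty}$ (so $F=\mathrm{diag}(1,p)\sigma$), the lattices $M_k=Wp^{-k}e_1\oplus Wp^{k}e_2$ satisfy the Dieudonn\'e condition, have height $0$, and leave every bounded part of the affine Grassmannian as $k\to\infty$. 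Hence $(\RZ_\BX(h))_{\red}$ is only \emph{locally} of finite type, an increasing union of projective schemes along closed immersions, and your one-line deduction of projectivity of irreducible components collapses: an ind-projective scheme can have non-quasi-compact irreducible components (e.g.\ $\varinjlim_n \mathbb{P}^n$ is irreducible and not quasi-compact), so the actual content of \cite[Proposition~2.32]{RapoportZink:RZspace} is that each irreducible component of $(\RZ_\BX)_{\red}$ is quasi-compact, and you have supplied no argument for this.

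The second gap is the gluing step, which you flag as the main obstacle but do not overcome. Prescribing formally smooth thickenings $\widehat U$ of opens of the reduced locus by the requirement that their completions at closed points be the universal deformation rings is not a construction: such thickenings are neither shown to exist nor to glue, and ``functoriality under \'etale specialisation'' is not an argument. (Indeed, the present paper needs all of \S\ref{sec:fpqcDesc}--\S\ref{sec:RepPf} --- Artin's criterion, descent and extension of Tate tensors --- precisely because patching completed local deformation rings is not a proof of representability.) The Rapoport--Zink argument proceeds differently: over a $W/p^m$-algebra on which $p^n\tilde\iota$ is an isogeny, $X$ is recovered as the quotient of $\wt\BX_R$ by a finite flat subgroup scheme of bounded order, so $\RZ_\BX(h)^{m,n}$ is a closed subscheme of a Grassmannian (this is exactly the description recalled around Definition~\ref{def:RZab}), and one then shows that the inductive limit of these projective schemes is a formal scheme, with formal smoothness furnished by Grothendieck--Messing theory. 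Finally, your appeal to Theorem~\ref{thm:UnivDefGLn} silently imports the hypothesis $p>2$, which the statement does not carry; prorepresentability of $\Def_\BX$ by a power series ring is Illusie's result \cite[Corollaire~4.8(i)]{Illusie:DeforBT} and holds for all $p$.
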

\begin{proof}
The representability of $\RZ_\BX$ is proved in \cite[Theorem~2.16]{RapoportZink:RZspace}. It is clear that $\RZ_\BX(h)$ is an open and closed formal subscheme of $\RZ_\BX$. 
The assertion on the irreducible components of $(\RZ_\BX)_{\red}$ is proved in \cite[Proposition~2.32]{RapoportZink:RZspace}. 
\end{proof}

For each $h\in\Z$, we can write $\RZ_\BX(h)$ as the direct limit of subfunctors representable by closed  schemes, as follows. Let $(X_{\RZ_\BX(h)},\iota_{\RZ_\BX(h)})$ denote the universal $p$-divisible group up to height-$h$ quasi-isogeny.

\begin{defn}\label{def:RZab}
We fix a $W$-lift $\wt \BX$ of $\BX$, and view $\RZ_\BX(R)$ as a set of $\set{(X,\tilde\iota:\wt\BX_R\dra X)}/\cong$ (\emph{cf.} Remark~\ref{rmk:RZ}).
Then for any $m>0$ and $n\in \Z$, we define $\RZ_\BX(h)^{m,n}\subset \RZ_\BX(h)$ to be the subfunctor defined as follows:  for any $W/p^m$-algebra $R$, $\RZ_\BX(h)^{m,n}(R)$ is the set of $(X,\tilde\iota)\in\RZ_\BX(h)(R)$ such that $p^n\tilde\iota:\wt\BX_R\dra X$ is an isogeny of $p$-divisible group. (We set $\RZ_\BX(h)^{m,n}(R)=\emptyset$ if $p^mR\ne0$.)
 \end{defn}

As explained in \cite[\S2.22]{RapoportZink:RZspace}, $\RZ_\BX(h)^{m,n}$ can be realised as a closed subscheme of certain grassmannian, hence it can be represented by a projective scheme over $W/p^m$. And it is a closed subscheme of $\RZ_\BX(h)$ such that $\RZ_\BX(h) = \varinjlim_{m,n}\RZ_\BX(h)^{m,n}$.  

We now recall the deformation-theoretic interpretation of the completed local ring of $\RZ_\BX$.
\begin{lemsub}\label{lem:LocMod}
For $x=(X_x,\iota_x) \in\RZ_\BX(\kappa)$, the formal completion $(\RZ_\BX)\wh{_x}$ at $x$  represents the functor $\Def_{X_x}$, using the notation above.
\end{lemsub}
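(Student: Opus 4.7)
The plan is to identify the functor on $\art W$ represented by $(\RZ_\BX)\wh{_x}$ with the deformation functor $\Def_{X_x}$, and the whole argument rests on Drinfeld's rigidity of quasi-isogenies already recalled in \eqref{eqn:DrinfeldRigidity}. Since $\RZ_\BX$ is locally noetherian (being locally formally of finite type over $W$), for any $R\in\art W$ with residue field $\kappa$ we have
\[
(\RZ_\BX)\wh{_x}(R) \;=\; \set{f\colon\Spec R\to \RZ_\BX \text{ sending the closed point to } x}.
\]
By Definition~\ref{def:RZ}, such an $f$ is the datum of a $p$-divisible group $X$ over $R$ together with a quasi-isogeny $\iota\colon\BX_{R/p}\dra X_{R/p}$, such that the pull-back of $(X,\iota)$ along $\Spec\kappa\hookrightarrow\Spec R$ recovers $(X_x,\iota_x)$. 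In particular the reduction of $X$ is canonically identified with $X_x$.

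Next I would define the forgetful map
\[
(\RZ_\BX)\wh{_x}(R)\;\lra\;\Def_{X_x}(R),\qquad (X,\iota)\longmapsto X,
\]
and construct its inverse. Given a deformation $X\in\Def_{X_x}(R)$, note that $R$ is Artin local with residue field $\kappa$, so the surjection $R/p\twoheadrightarrow\kappa$ has nilpotent kernel. By the rigidity isomorphism \eqref{eqn:DrinfeldRigidity}, the natural restriction map
\[
\Qisg_{R/p}(\BX_{R/p},X_{R/p})\;\riso\;\Qisg_{\kappa}(\BX_{\kappa},X_x)
\]
is bijective. Thus the given quasi-isogeny $\iota_x$ lifts uniquely to a quasi-isogeny $\tilde\iota\colon\BX_{R/p}\dra X_{R/p}$, and the assignment $X\mapsto (X,\tilde\iota)$ gives the required inverse. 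This produces a bijection on $R$-points for every $R\in\art W$, and functoriality in $R$ is immediate because the lift $\tilde\iota$ is canonical.

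There is essentially no obstacle beyond bookkeeping: one just needs to check that passing to $R/p$ preserves the relevant data, which is automatic because $p$-divisible groups and their quasi-isogenies in the isogeny category are insensitive to nilpotent thickenings, exactly the content of \eqref{eqn:DrinfeldRigidity}. Since both functors are pro-representable on $\art W$ (the left by the completed local ring of $\RZ_\BX$ at $x$, and the right because $\Def_{X_x}$ is prorepresentable by \cite[Corollaire~4.8(i)]{Illusie:DeforBT}) this bijection of functors extends uniquely to an isomorphism of formal spectra, proving the lemma.
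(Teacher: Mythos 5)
Your proof is correct and follows essentially the same route as the paper: both maps are constructed via the forgetful functor and Drinfeld rigidity \eqref{eqn:DrinfeldRigidity} to lift the quasi-isogeny uniquely. The only (harmless) difference is in how you finish: you check directly that the two maps are mutually inverse on $R$-points for all $R\in\art W$, whereas the paper checks one composite and then invokes that both functors are pro-represented by formal power series rings of the same dimension (via \cite[Proposition~3.33]{RapoportZink:RZspace}).
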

\begin{proof}
We have a morphism $(\RZ_\BX)\wh{_x}\ra\Def_{X_x}$ given by forgetting the quasi-isogeny.
By rigidity of quasi-isogenies (\ref{eqn:DrinfeldRigidity}), we have a natural morphism of functors $\Def_{X_x} \ra (\RZ_\BX)\wh{_x}$ defined by sending $X\in\Def_{X_x}(R)$ to $(X,\iota)\in\RZ_\BX(R)$ where $\iota: \BX_{R/p}\ra X_{R/p}$ is the unique quasi-isogeny that lifts $\iota_x$. It also follows from rigidity that the composition $(\RZ_\BX)\wh{_x} \ra \Def_{X_x}\ra (\RZ_\BX)\wh{_x}$ is an identity morphism. To finish the proof, note that  both are representable by formal power series rings over $W$ with same dimension. (The dimension of $(\RZ_\BX)\wh{_x}$ can be obtained from \cite[Proposition~3.33]{RapoportZink:RZspace}.)
\end{proof}

\subsection{}\label{subsec:RZHodge}
Let us return to the setting of \S\ref{subsec:FilGIsoc}. Let $(G,b)$ and $\Lambda$ be as in Definition~\ref{def:HodgeAssump}, so we have a $p$-divisible group $\BX:=\BX_b^\Lambda$ with the contravariant Dieudonn\'e module $\bM^\Lambda_b$. We choose finitely many tensors $(s_\alpha)\subset \Lambda^\otimes$ which defines $G$ as a subgroup of $\GL(\Lambda)$; \emph{cf.} Proposition~\ref{prop:Chevalley}.
We also choose a $W$-lift $\wt \BX(=\wt\BX^\Lambda_b)$ of $ \BX$ as in Remark~\ref{rmk:X0}.
\begin{defn}\label{def:t}
Let $(X,\tilde\iota)\in \RZ_\BX(R)$ with $R\in\Nilp_W$, where $\tilde\iota:\wt\BX_R\dra X$ is a quasi-isogeny; \emph{cf.} Remark~\ref{rmk:RZ}. We define $s_{\alpha,\DD}:\triv \ra \DD(X)^\otimes[\ivtd p]$ to be the composition
\[ \xymatrix@1{\triv \ar[rr]^-{1\mapsto 1\otimes s_\alpha} && \DD(\wt\BX_R)^\otimes[\ivtd p] \ar[rr]_-{\sim}^-{\DD(\tilde\iota)\iv}&& \DD(X)^\otimes[\ivtd p]}, \]
where the first morphism is the pull-back of the map $\triv\ra\DD(\wt \BX)^\otimes$ which induces  $1\mapsto1\otimes s_\alpha$ on the $W$-sections.
\end{defn}
Note that $s_{\alpha,\DD}:\triv \ra \DD(X)^\otimes[\ivtd p]$ only depends on $(X,\iota)\in \RZ_\BX(R)$ but not on the choice of $\wt \BX$. Indeed, the morphism
\[ \xymatrix@1{\triv \ar[rr]^-{1\mapsto 1\otimes s_\alpha} && \DD( \BX_{R/p})^\otimes[\ivtd p] \ar[rr]_-{\sim}^-{\DD(\iota)\iv}&& \DD(X_{R/p})^\otimes[\ivtd p]} \]
uniquely determines $s_{\alpha,\DD}$.

It is clear that each  $s_{\alpha,\DD}$ is Frobenius-equivariant, but it may \emph{not} come from a morphism of crystals $t_\alpha:\triv\ra\DD(X)^\otimes$. Even if it does, such a morphism $t_\alpha$ of (integral) crystals  may not be uniquely determined by $s_{\alpha,\DD}$ due to the existence of non-zero $p$-torsion morphism when the base ring $R$ is not nice enough. (See Appendix in \cite{MR700767} for such an example.) To deal with this problem, we work only with a morphism of (integral) crystals $t_\alpha:\triv\ra\DD(X)^\otimes$ giving rise to $s_{\alpha,\DD}$ which is ``liftable'' in some suitable sense that we now describe.

Let $\Nilp_W^\sm$ denote the category of formally smooth formally finitely generated $W/p^m$-algebras $A$ for some $m$, endowed with the $J$-adic topology where  $J$ is the Jacobson radical of $A$. For example, $ (W/p^m)[[u_1,\cdots,u_m]][v_1,\cdots,v_n]\in\Nilp_W^\sm$, and we give the $(u_1,\cdots,u_m)$-adic topology. (We emphasise here that $R\in\Nilp_W$ is always given a discrete topology, while $A\in \Nilp_W^\sm$ is equipped with the $J$-adic topology.)

\begin{rmksub}\label{rmk:RZGsm}
Let $A$ be a formally smooth  formally finitely generated algebra over either $W$ or $W/p^m$. In this remark, we give a convenient moduli interpretation of the set of $A$-points of $\RZ_\BX$. Recall that we have
\[\Hom_W(\Spf A, \RZ_\BX)=\varinjlim_n\RZ_\BX(A/J^n),\]
so we may view $\Hom_W(\Spf A,\RZ_\BX)$ as the set of isomorphism classes of $(X,\iota)$, where $X$ is a $p$-divisible group over $A$ and $\iota$ is a quasi-isogeny defined over $\Spf A/p$.  By rigidity of quasi-isogenies (\ref{eqn:DrinfeldRigidity}), giving such $\iota$ is equivalent to giving an quasi-isogeny $\iota_{A/J}:\BX_{A/J}\dra X_{A/J}$ for some ideal of definition $J\subset A$ containing $p$. (Recall that $J/J^n$ is nilpotent and killed by some power of $p$.)

We remind that there is a natural equivalence of categories between the categories of $p$-divisible groups over $\Spec A$ and $\Spf A$, so we may view the $p$-divisible group $X$ either over $\Spec A$ or $\Spf A$. On the other hand, the quasi-isogeny $\iota$ over $\Spf A/p$ may \emph{not} be defined over $\Spec A/p$, and we only require $\iota$ to be defined over $\Spf A/p$, not over $\Spec A/p$. And we have just shown that quasi-isogenies over $\Spf A/p$ are uniquely determined by their restriction to $\Spec A/J$ for some ideal of definition $J$ containing $p$.
\end{rmksub}

Recall that there exists a $p$-adically separated and complete formally smooth $W$-algebra $\wt A$ which lifts $A$; indeed, we apply \cite[Lemma~1.3.3]{dejong:crysdieubyformalrigid} to obtain a $p$-adic $W$-lift $\wt A$ of $A/p$, which is formally smooth over $W$ by construction, so we may view $\wt A$ as a lift of $A$. By formal smoothness over $W$, any two such lifts are (not necessarily canonically) isomorphic. 

\begin{defn}\label{def:RZG}
For any $A\in\Nilp_W^\sm$, we define
\[\RZ\com{s_\alpha}_{\BX,G}(A)\subset\Hom_W(\Spf A, \RZ_\BX)
\]
as follows: Let $f:\Spf A \ra \RZ_\BX$ be a morphism, and $X$ a $p$-divisible group over $\Spec A$ which pulls back to  $f^*X_{\RZ_\BX}$ over $\Spf A$. Then we have $f\in \RZ\com{s_\alpha}_{\BX,G}(A)$ if and only if there exist morphisms of integral crystals (over $\Spec A$)
\[
t_\alpha:\triv\ra\DD(X)^\otimes
\]
such that
\begin{enumerate}
\item\label{def:RZG:qisog}
For some ideal of definition $J$ of $A$ (or equivalently by Lemma~\ref{Lem:Dwork}, for any ideal of definition $J$),
the pull-back of $t_\alpha$ over $A/J$ induces the map of isocrystals $s_{\alpha,\DD}:\triv\ra\DD(X_{A/J})^\otimes[\ivtd p]$.
\item \label{def:RZG:P}
We choose a formally smooth $p$-adic $W$-lift $\wt A$ of $A$, endowed with the standard PD structure on $\ker (\wt A\thra A)=p^m\wt A$ for some $m$. Let $(t_\alpha(\wt A))$ denote the $\wt A$-section of $(t_\alpha)$ (\emph{cf.} Definition~\ref{def:sections}). Then the $\wt A$-scheme
\[\cP_{\wt A}:=\nf\Isom_{\wt A}\big[(\DD(X)(\wt A), (t_\alpha(\wt A))],[\wt A\otimes_{\Zp}\Lambda^*,(1\otimes s_\alpha)]\big),\]
classifying isomorphisms matching $(t_\alpha(\wt A))$ and $(1\otimes s_\alpha)$, is a $G$-torsor. (\emph{Cf.} (\ref{eqn:P}).) Note that this condition is independent of the choice of $\wt A$, since any choices of $\wt A$ are isomorphic.
\item\label{def:RZG:Kottwitz}
The Hodge filtration $\Fil^1_X\subset \DD(X)(A)$ is a $\set\mu$-filtration with respect to  $(t_\alpha(A))\subset \DD(X)(A)^\otimes$, where $\set\mu$ is the unique $G(W)$-conjugacy class of cocharacters such that $b\in G(W)p^{\sig^*\mu\iv}G(W)$.
\end{enumerate}
We thus obtain a functor $\RZ\com{s_\alpha}_{\BX,G}:\Nilp_W^\sm\ra\Sets$. For $(X,\iota)\in\RZ\com{s_\alpha}_{\BX,G}(A)$, we call $(t_\alpha)$ as above \emph{crystalline Tate tensors} or \emph{Tate tensors} on $X$.

If $\wt A$ is formally smooth and formally finitely generated over $W$, we write
\[\RZ\com{s_\alpha}_{\BX,G}(\wt A):=\varprojlim_m\RZ\com{s_\alpha}_{\BX,G}(\wt A/p^m) \subset \Hom_W(\Spf \wt A,\RZ_\BX) = \varinjlim_n\RZ_\BX(\wt A/\wt J^n),\]
where $\wt J$ is the Jacobson radical of $\wt A$. When $\tilde f\in \RZ\com{s_\alpha}_{\BX,G}(A)$ and $\wt X$ is the $p$-divisible group over $\wt A$ given by $\tilde f^*X_{\RZ_\BX}$, then we have a morphism $\tilde t_\alpha:\triv\to\DD(\wt X)^\otimes$ of crystals over $\Spf (\wt A,(p))$ satisfying the same conditions (\ref{def:RZG:qisog})--(\ref{def:RZG:Kottwitz}), where $(A,J,(t_\alpha))$ is replaced by $(\wt A,\wt J,(\tilde t_\alpha))$ everywhere.\footnote{Recall that we define $\DD(\wt X)$ over $\Spf (\wt A,(p))$ to be the projective system $\{\DD(\wt X_{\wt A/p^m})\}$. On the other hand, note that for any $m\geqslant1$  we have $\DD(\wt X_{\wt A/p^m})(\wt A) = \DD(\wt X_{\wt A/p})(\wt A)$ with extra structure, so we indeed have $\DD(\wt X)(\wt A) = \DD(\wt X_{\wt A/p})(\wt A)$ and all the objects appearing in the definition of $\tilde f\in\RZ^{(s_\alpha)}_{\BX,G}$ except the Hodge filtration $\Fil^1_{\wt X}\subset \DD(\wt X)(\wt A)$  depends only on $\wt X_{\wt A/p}$. }
\end{defn}
We give more motivations for the definition in Remark~\ref{rmk:RZGjustification}.
In the meantime, let us  emphasise that $(t_\alpha)$ are required to be morphisms of crystals over $\Spec A$ (\emph{not} just over $\Spf A$), while we work with the quasi-isogeny $\iota$ defined only over $\Spf A$ (or equivalently by Remark~\ref{rmk:RZGsm}, the quasi-isogeny defined over $\Spec A/J$).  In particular, we do \emph{not} claim that an arbitrary quasi-isogeny $\iota$ (defined only over $\Spf A$) induces an $F$-equivariant map $\triv\to \DD(X)^\otimes[\ivtd p]$ over $\Spec A$.
We will still show later (in Lemma~\ref{lem:UniqTensors}) that \emph{if} the crystalline Tate tensors $(t_\alpha)$ in Definition~\ref{def:RZG} \emph{do} exist (which is an assumption), then $(t_\alpha)$ are  uniquely determined by $f\in \Hom_W(\Spf A,\RZ_\BX)$; in other words, $(t_\alpha)$ are uniquely determined by the quasi-isogeny $\iota:\BX_{A/J}\to X_{A/J}$ defined over $\Spec A/J$.  

\begin{exasub}\label{exa:RZ}
If $G = \GL(\Lambda)$, we may choose $(s_\alpha)$ to be the empty set. Then we claim that $\RZ_\BX$ represents $\RZ^\emptyset_{\BX,G}$.  Indeed, we only need to check Definition~\ref{def:RZG}(\ref{def:RZG:Kottwitz}), which is clear since for any $(X,\iota)\in\RZ_\BX(R)$ the dimension of $X$ is constant on $R$ and consistent with $\set\mu$ associated to $(G,b)$; \emph{cf.} Remark~\ref{rmk:muFilGLn}.

In \S\ref{subsec:ELnPEL}, we will recall how to attach $(G,b)$ and $(s_\alpha)$ to an EL (respectively, PEL) datum, and compare $\RZ\com{s_\alpha}_{\BX,G}$ with the moduli functor considered by Rapoport and Zink. Roughly speaking, we will verify the following in the (P)EL case (\emph{cf.} Proposition~\ref{prop:ELnPEL}):
\begin{enumerate}
\item  The existence of the crystalline Tate tensors $(t_\alpha)$ satisfying Definition~\ref{def:RZG}(\ref{def:RZG:P}) corresponds to the existence of an action of the given semi-simple $\Qp$-algebra $B$ on $X$ up to isogeny (and the quasi-polarisation  with certain properties in the PEL case).
\item Definition~\ref{def:RZG}(\ref{def:RZG:qisog}) means that the quasi-isogeny $\iota$ is ``$B$-linear'' (and commutes with the quasi-polarisation in the PEL case).
\item Definition~\ref{def:RZG}(\ref{def:RZG:Kottwitz}) corresponds to the ``Kottwitz determinant condition''.
\end{enumerate}
Note that the tensors $(t_\alpha)$ coming from endomorphisms or quasi-polarisations over $\Spf A$ automatically give Tate tensors of crystals defined over $\Spec A$; \emph{cf.} \cite[Proposition~2.4.8]{dejong:crysdieubyformalrigid}.
\end{exasub}

\begin{rmksub}\label{rmk:RZGjustification}
Let us now explain the motivations and intuitions for the conditions in Definition~\ref{def:RZG} defining $\RZ\com{s_\alpha}_{\BX,G}(A)\subset\Hom_W(\Spf A, \RZ_\BX)$ for $A\in\Nilp_W^\sm$.
We consider $f:\Spf A\ra\RZ_{\BX}$, which corresponds to $(X,\iota)$ as in Remark~\ref{rmk:RZGsm}.

Let us begin with the existence of $(t_\alpha)$ satisfying Definition~\ref{def:RZG}(\ref{def:RZG:qisog}). Recall that $t_\alpha:\triv\ra\DD(X)^\otimes$ is a morphism of crystals over $\Spec A$, not over $\Spf A$. Since the quasi-isogeny $\iota$ may not be defined over $\Spec A$, we may not have an isomorphism between $\DD(X)[\ivtd p]$ and $\DD(\BX_{A/p})[\ivtd p]$. In particular, it is unclear whether for any $f:\Spf A\ra\RZ_{\BX}$ corresponding to $(X,\iota)$, the tensors $(s_{\alpha,\DD})$ induce Frobenius-equivariant morphisms $\triv\ra \DD(X)^\otimes[\ivtd p]$.\footnote{If the tensor $s_{\alpha,\DD}$ corresponds to a quasi-polarisation or an endomorphism of $\BX$, then any $f:\Spf A\ra\RZ_{\BX}$ corresponding to $(X,\iota)$ gives rise to $t_\alpha:\triv\ra\DD(X)^\otimes[\ivtd p]$ defined over $\Spec A$ by \cite[Proposition~2.4.8]{dejong:crysdieubyformalrigid}. On the other hand, the author does not know whether such a statement should hold for more general tensors $(s_{\alpha,\DD})$.} Therefore,  the existence of $(t_\alpha)$ satisfying Definition~\ref{def:RZG}(\ref{def:RZG:qisog}) can be seen as \emph{integrality} and \emph{algebraisability} of $(s_{\alpha,\DD})$ in the following sense: for any ideal of definition $J$ containing $p$ the tensors $(s_{\alpha,\DD})$ are \emph{integral} with respect to the $F$-crystal lattice $\DD(X_{A/J})^\otimes $ of $\DD(\BX_{A/J})^\otimes[\ivtd p]$ (defined via $\iota$), and $t_\alpha:\triv\ra\DD(X)^\otimes$ is an \emph{algebraisation} of $s_{\alpha,\DD}:\triv\to\DD(X_{A/J})^\otimes$ in the sense that $t_\alpha$ is a common lift of $s_{\alpha,\DD}$ independent of the choice of $J$.

It is natural to expect such integrality and algebraisability condition for tensors $(s_{\alpha,\DD})$ to appear in the definition of $\RZ\com{s_\alpha}_{\BX,G}$. To explain, assume that $(\BX,(s_{\alpha,\DD}))$ arise from a mod~$p$ point of Hodge-type Shimura varieties with good reduction embedded in some Siegel modular variety. Once we represent $\RZ\com{s_\alpha}_{\BX,G}$ by a formally smooth formal scheme (\emph{cf.} Theorem~\ref{thm:RZHType}) and the Rapoport-Zink uniformisation (\emph{cf.} \cite{Kim:Unif}),  then for any  $(X,\iota)\in\RZ\com{s_\alpha}_{\BX,G}(A)$ there exists an $A$-valued point in the integral canonical model such that $X$ is the $p$-divisible group associated to the pull-back of the universal abelian variety and $(t_\alpha)$ can be obtained by pulling back the universal  de~Rham tensors on universal abelian scheme on the integral canonical model (obtained in \cite[Corollary~2.3.9]{Kisin:IntModelAbType}).

Now, we choose a formally smooth $W$-lift $\wt A$ of $A$.
Definition~\ref{def:RZG}(\ref{def:RZG:P}) is needed for defining $\{\mu\}$-filtrations in $\DD(X)(\wt A/p^m)$ for any $m\geqslant1$; in particular, in $\DD(X)(A)$. We need to obtain the $G$-torsor $\cP_{\wt A}$ over $\wt A$, not just over $A$, because we would like any point in $\RZ\com{s_\alpha}_{\BX,G}(A)$ to be liftable over $\wt A/p^m$ for $m\gg1$. Finally,  Definition~\ref{def:RZG}(\ref{def:RZG:Kottwitz}) assert that the Hodge filtration $\Fil^1_X\subset \DD(X)(A)$ should be a $\{\mu\}$-filtration.
\end{rmksub}

\begin{lemsub}
\label{Lem:Dwork}
Let $R\in\Nilp_W$ and $I$ be a nilpotent ideal of $R$. Let $X$ be a $p$-divisible group and consider Frobenius-equivariant morphisms of isocrystals
\[t,t':\triv\ra\DD(X)^\otimes[\ivtd p].\]
Then we have $t = t'$ if and only if the equality holds over $R/I$.
\end{lemsub}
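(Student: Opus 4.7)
Set $v := t - t'$; the goal is to show that a Frobenius-equivariant morphism of isocrystals $v\colon \triv \to \DD(X)^\otimes[\ivtd p]$ on $R$, vanishing upon restriction to $R/I$, is itself zero.

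My plan is to evaluate $v$ on a single judiciously chosen PD thickening that dominates both $R$ and $R/I$, exploiting the fact that PD envelopes become trivial after inverting $p$. First, choose a surjection $\wt R \thra R$ from a $p$-adic formally smooth $W$-algebra $\wt R$ (for instance a power series algebra over $W$). Set $J_0 := \ker(\wt R \to R)$ and $J := \ker(\wt R \to R/I) = J_0 + \wt I$, where $\wt I \subset \wt R$ is a lift of $I$. Form the PD envelopes $D_0 := D_{\wt R}(J_0)$ (a PD thickening of $R$) and $D := D_{\wt R}(J)$ (a PD thickening of $R/I$); the inclusion $J_0 \subset J$ induces a natural map $D_0 \to D$.

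The crucial observation is that $n!$ is invertible in $\wt R[\ivtd p]$ for every $n$, so the formula $\gamma_n(x) = x^n/n!$ already supplies compatible divided powers on every ideal of $\wt R[\ivtd p]$. Consequently $D_0[\ivtd p] = D[\ivtd p] = \wt R[\ivtd p]$, and for the crystal $\F := \DD(X)^\otimes$ the base-change map $\F(D_0)[\ivtd p] \to \F(D)[\ivtd p]$ induced by $D_0 \to D$ is an isomorphism by the crystal property. The hypothesis $v|_{R/I} = 0$, evaluated on the PD thickening $D \to R/I$, gives $v(D) = 0$ in $\F(D)[\ivtd p]$; under the isomorphism we obtain $v(D_0) = 0$ in $\F(D_0)[\ivtd p]$.

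Finally, since the restriction of the isocrystal $\F[\ivtd p]$ to the formally smooth $\Qp$-algebra $D_0[\ivtd p] \cong \wt R[\ivtd p]$ is a module-with-connection, and morphisms of isocrystals $\triv \to \F[\ivtd p]$ correspond to horizontal sections of this module, $v(D_0) = 0$ forces $v = 0$. The main technical point is the identification $D_0[\ivtd p] = D[\ivtd p]$; notably, Frobenius-equivariance of $v$, while part of the hypothesis, is not essentially used, and the argument applies verbatim to any morphism of isocrystals vanishing on $R/I$.
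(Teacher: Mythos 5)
Your route is genuinely different from the paper's, but it has a gap at its central step. The identity $D_0[\ivtd p]=D[\ivtd p]=\wt R[\ivtd p]$ is only valid for the \emph{uncompleted} PD envelopes (PD envelopes trivialise after $\otimes\,\Q$). However, $\DD(X)^\otimes$ can only be evaluated on objects of the crystalline site, where $p$ is nilpotent, so what your argument actually uses are the $p$-adically completed hulls $\wh D_0$ and $\wh D$ (equivalently the pro-systems $D_0/p^n$, $D/p^n$; note that $(D_0/p^n)[\ivtd p]=0$, so the isocrystal information lives only on the completed/pro objects). After completion the claimed identification fails: already for $\wt R=W[[u]]$ with $J_0=(p,u^2)\subset J=(p,u)$, the completed hulls are different rings (one is spanned by the $\gamma_n(u^2)=u^{2n}/n!$, the other by the $\gamma_n(u)$, i.e.\ functions on discs of different radii), so the base-change map $\DD(X)^\otimes(\wh D_0)[\ivtd p]\ra \DD(X)^\otimes(\wh D)[\ivtd p]$ is not an isomorphism. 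What your argument really needs is injectivity of $\wh D_0[\ivtd p]\ra\wh D[\ivtd p]$ together with control of the $p$-torsion of completed PD hulls of the (generally non-regular) ideals $J_0\subset J$ for an arbitrary $R\in\Nilp_W$; this is unproven in your write-up and is exactly the ``torsion of PD hulls'' subtlety that the paper goes to considerable lengths to circumvent in \S\ref{subsec:LiftablePD} (liftable PD thickenings, flat closures $S^{\fl}$). So as written the proof does not go through.

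For comparison, the paper's proof is a one-line Dwork-type argument: reduce to $pR=0$ (the ideal $pR$ carries the standard divided powers, so crystals over $R$ and $R/p$ have the same Homs), then use Frobenius-equivariance to replace $X$ by $\sig^{n*}X$, which depends only on $X_{R/I}$ once $\sig^n(I)=0$ (possible since $I$ is nilpotent), following the proof of de Jong's Corollary~5.1.2; since $F$ is an isomorphism on isocrystals, $t$ and $t'$ are recovered from their Frobenius pullbacks. Your closing remark that Frobenius-equivariance is ``not essentially used'' should therefore be treated with caution: the paper's mechanism relies on it, and it is what makes the statement robust (e.g.\ for nil-ideals, de Jong's setting). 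One could plausibly avoid Frobenius for a \emph{nilpotent} $I$ by d\'evissage through the square-zero thickenings $R/I^{k+1}\thra R/I^k$, each of which carries the trivial divided powers, and then invoking rigidity of crystals under PD closed immersions (the fact the paper cites from de Jong, Lemma~2.1.4); but that is a different argument from yours, and your PD-envelope computation does not substitute for it.
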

In particular, Definition~\ref{def:RZG}(\ref{def:RZG:qisog}) for some ideal of definition $J$ implies Definition~\ref{def:RZG}(\ref{def:RZG:qisog}) for any ideal of definition $J'$; indeed, if $J'\subset J$ then we apply the lemma to $R:=A/J'$ and $I:=J/J'$.
\begin{proof}
We may assume that $pR=0$. Then by Frobenius-equivariance, one can replace $X$ by $\sig^{n*}X$ for some $n$, while $\sig^{n*}X$ only depends on $X_{R/I}$ if $\sig^n(I) = 0$; \emph{cf.} the proof of \cite[Corollary~5.1.2]{dejong:crysdieubyformalrigid}.
\end{proof}

\begin{lemsub}\label{lem:UniqTensors}
Let $X$ be a $p$-divisible group over  $A\in\Nilp^\sm_W$. Then given any morphisms of isocrystals $t,t':\triv\to\DD(X)^\otimes[\ivtd p]$, we have $t=t'$ if and only if they restrict to the same morphism of isocrystals over $A/J$ for some ideal of definition $J\subset A$.

In particular, if $(X,\iota)\in\RZ\com{s_\alpha}_{\BX,G}(A)$ for some $A\in\Nilp^\sm_W$ (using the convention as in Remark~\ref{rmk:RZGsm}), 
then the Tate tensors $t_\alpha:\triv\ra\DD(X)^\otimes$ in Definition~\ref{def:RZG} are uniquely determined by $(X,\iota)$, and induce Frobenius-equivariant morphisms on the isocrystals.
\end{lemsub}
Note that this lemma holds if we replace $A$ with a  formally smooth formally finitely generated $W$-algebra $\wt A$; indeed, for a $p$-divisible group $\wt X$ over $\wt A$, $t:\triv\to\DD(\wt X)^\otimes$ only depends on its restriction $t:\triv\to\DD(\wt X_{\wt A/p})^\otimes$ over $\Spec \wt A/p$.
\begin{proof}
Let us choose a $p$-adic $W$-flat lift $\wt A$ of $A$, and a lift of Frobenius endomorphism $\sig:\wt A\to\wt A$. Given an ideal of definition $J\subset A$, let $D\thra A/J$ be the $p$-adically completed PD hull of $\wt A\thra A/J$. Then $\sigma$ extends to a lift of Frobenius of $D$. Using Lemma~\ref{lem:CrysConn} (together with Remarks~\ref{rmk:isoc} and \ref{rmk:Fisoc}), the first claim can be reduced to the injectivity of the natural map
\[\wt A[\ivtd p] \to D[\ivtd p].\]
To see the injectivity, note that $D[\ivtd p]$ can be embedded into some affinoid algebra containing $\wt A[\ivtd p]$; \emph{cf.}
 \cite[\S5.5]{dejong:crysdieubyformalrigid}. 

To show the rest of the lemma, we first note that the Tate tensors $(t_\alpha)$ are uniquely determined by the map it induces on the isocrystals $t_\alpha:\triv\to\DD(X)^\otimes[\ivtd p]$. (Using the ``dictionary'' given by Lemma~\ref{lem:CrysConn} and Remark~\ref{rmk:isoc}, the claim follows from the $W$-flatness of $\wt A$.) Therefore, if $(t_\alpha)$ and $(t_\alpha')$ are two sets of Tate tensors for $(X,\iota)\in\RZ\com{s_\alpha}_{\BX,G}(A)$ (using the notation of Remark~\ref{rmk:RZGsm}), then $t_\alpha$ and $t_\alpha'$ induce the same map on the isocrystals $\triv\to\DD(X_{A/J})^\otimes$ by Definition~\ref{def:RZG}(\ref{def:RZG:qisog}). By the first part of the lemma, we obtain $t_\alpha = t'_\alpha$ for any $\alpha$.
The Frobenius equivariance of $(t_\alpha)$ can be similarly obtained by applying the first part of the lemma to $(t_\alpha)$ and $ (F(\sigma^*t_\alpha))$, where $F$ is the crystalline Frobenius map $F:\sigma^*\DD(X)^\otimes[\ivtd p]\riso \DD(X)^\otimes[\ivtd p]$.
\end{proof}

\subsection{Example: unramified EL and PEL cases}\label{subsec:ELnPEL}
To a (not necessarily unramified) EL or PEL datum, Rapoport and Zink formulated a suitable moduli problem of $p$-divisible groups with extra structure, and constructed a representing formal  scheme \cite[Theorem~3.25]{RapoportZink:RZspace}.\footnote{See also \cite{Fargues:AsterisqueLLC} for the exposition that is just focused on the unramified case of type A and C.}
In this section, we show that when $(G,b)$ comes from an unramified EL or PEL datum, the formal moduli schemes  constructed by Rapoport and Zink represents $\RZ\com{s_\alpha}_{\BX,G}$ for some suitable choice of $(\Lambda,(s_\alpha))$.

Let us first recall the setting of \cite[Ch.3]{RapoportZink:RZspace} in the unramified case. Let $\fo_B$ be a product of matrix algebras over finite unramified extensions of $\Zp$, and $\Lambda$ be a faithful $\fo_B$-module which is finite flat over $\Zp$.
We consider the following data (\emph{cf.} \cite[\S1.38]{RapoportZink:RZspace}, \cite[Ch.2]{Fargues:AsterisqueLLC})\footnote{To make the comparison with Definition~\ref{def:RZG} more direct, we work over $\Zp$ instead of over $\Qp$ as in the aforementioned references.}:
\begin{description}
 \item[unramified EL case] $(\fo_F,\fo_B,\Lambda, G)$, where $(\fo_B,\Lambda)$ is as before, $\fo_F$ is the centre of $\fo_B$, and $G = \GL_{\fo_B}(\Lambda)$ is a reductive group over $\Zp$.
 \item[unramified PEL case] $(\fo_F,\fo_B,*,\Lambda, (,), G)$, where $(\fo_F,\fo_B,\Lambda, G)$ is an unramified EL-type Rapoport-Zink datum, $(,)$ is a \emph{perfect} alternating $\Zp$-bilinear form on $\Lambda$, $*:a\mapsto a^*$ is an involution on $\fo_B$ such that $(av,w) = (v,a^*w)$ for any $v,w\in \Lambda$, and
 \[G=\mathrm{GU}_{\fo_B}(\Lambda,(,))):= \GL_{\fo_B}(\Lambda)\cap \mathrm{GU}_{\Zp}(\Lambda,(,))\]
 where the (scheme-theoretic) intersection takes place inside $\GL_{\Zp}(\Lambda)$.
\end{description}
For $G$ as above, consider $b\in G(K_0)$ such that we have a $p$-divisible group $\BX:=\BX^\Lambda_b$  (\emph{cf.} Definition~\ref{def:HodgeAssump}), and choose a conjugacy class $\set\mu$ so that $b\in G(W)p^{\sig^*\mu\iv}G(W)$ (\emph{cf.} Definition~\ref{def:LocShData}). In the PEL case, we additionally assume that $\ord_p(c(b)) = -1$, where $c:G\ra \Gm$ is the similitude character.
This assumption is to ensure that the pairing $(,)$ induces a quasi-polarisation of $\BX$ via crystalline Dieudonn\'e theory and  $a\mapsto a^*$ corresponds to the Rosati involution; \emph{cf.} \cite[\S3.20]{RapoportZink:RZspace}.\footnote{Note that our choice of $b$ is the transpose-inverse of the Frobenius matrix (\emph{cf.} the remark above Definition~\ref{def:HodgeAssump}), so in our convention we have $\ord_p(c(b)) = -1$, which differs by sign from \cite[\S3.20]{RapoportZink:RZspace}.}

Under this setting, Rapoport and Zink formulated a concrete moduli problem for $p$-divisible groups, and constructed a formal moduli scheme $\breve\M:=\breve\M_{G,b}$, which turns out to be a formally smooth closed formal subscheme of $\RZ_\BX$. See \cite[Definition~3.21, Theorem~3.25, \S3.82]{RapoportZink:RZspace} for more details. 

For the unramified EL  case, we choose a $\Zp$-basis $(s_\alpha)$ of $\fo_B$ and view them as elements in $\Lambda\otimes\Lambda^* \subset \Lambda^\otimes$. For the unramified PEL-type case, we additionally include a tensor  $s_0\in\Lambda^{\otimes2}\otimes\Lambda^{*\otimes2} \subset \Lambda^\otimes$ associated to the pairing $(,)$ up to similitude, as in  Example~\ref{exa:GSp}.
Now  we consider the subfunctor $\RZ\com{s_\alpha}_{\BX,G}$ of $\RZ_\BX$ using this choices $(\Lambda,(s_\alpha))$; \emph{cf.} Definition~\ref{def:RZG}.
\begin{propsub}\label{prop:ELnPEL}
Assume that  $p>2$.  In the unramified EL and PEL cases, the subfunctor $\RZ\com{s_\alpha}_{\BX,G}$ of $\RZ_\BX$ can be represented by the formal moduli scheme $\breve\M$ constructed by Rapoport and Zink (\emph{cf.}  \cite[Theorem~3.25, \S3.82]{RapoportZink:RZspace}). 
\end{propsub}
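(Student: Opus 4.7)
The plan is to exhibit a natural isomorphism between the functor represented by $\breve\M$ and $\RZ\com{s_\alpha}_{\BX,G}$ by translating the ``extra structure'' in the Rapoport--Zink moduli problem (the $\fo_B$-action and, in the PEL case, the quasi-polarization) into the language of Tate tensors via crystalline Dieudonn\'e theory, and then matching the Kottwitz determinant condition on $\Lie X$ with the $\set\mu$-filtration condition of Definition~\ref{def:RZG}(\ref{def:RZG:Kottwitz}). Since both functors are (to be) represented by formal schemes locally formally of finite type over $W$, it suffices to compare them on $\Nilp_W^\sm$, and in fact on completed local rings at closed points where Lemma~\ref{lem:LocMod} and Theorem~\ref{thm:FaltingsDef} apply directly.

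In the unramified EL case, fix $A\in\Nilp_W^\sm$ with a formally smooth $p$-adic $W$-lift $\wt A$, and $(X,\tilde\iota)\in\RZ_\BX(A)$. Given an $\fo_B$-action on $X$ that is compatible with $\tilde\iota$ in the Rapoport--Zink sense, functoriality of crystalline Dieudonn\'e theory turns each basis element $s_\alpha\in\fo_B\subset\End_{\Zp}(\Lambda)$ into a morphism of integral crystals $t_\alpha:\triv\ra\DD(X)^\otimes$; the compatibility with $\tilde\iota$ is exactly condition~(\ref{def:RZG:qisog}) of Definition~\ref{def:RZG}. The $G$-torsor condition~(\ref{def:RZG:P}) is then automatic, because $\fo_B$ is a product of matrix algebras over unramified extensions of $\Zp$ (so $\fo_B\otimes\wt A$ is Morita-equivalent to a product of unramified DVR extensions of $\wt A$) and the $\fo_B$-module $\DD(X)(\wt A)$ is projective of the same type as $\Lambda$ by the Rapoport--Zink conditions. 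Conversely, any collection $(t_\alpha)$ satisfying Definition~\ref{def:RZG}(\ref{def:RZG:qisog})--(\ref{def:RZG:P}) defines, by Lemma~\ref{Lem:UniqTensors} and fully faithfulness of crystalline Dieudonn\'e theory over the formally smooth bases in play, a unique $\fo_B$-action on $X$.

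In the unramified PEL case, the extra tensor $s_0$ corresponds up to $\Zp^\times$-scalar to the perfect alternating form $(,)$ by Example~\ref{exa:GSp}. Via the duality $\DD(X^\vee)\cong\DD(X)^*(-1)$ together with the assumption $\ord_p(c(b))=1$ (which ensures that the pairing induces a quasi-polarization of $\BX$ of height zero), giving a Tate tensor $t_0:\triv\ra\DD(X)^\otimes$ lifting $s_{0,\DD}$ is equivalent to giving a quasi-polarization $X\dra X^\vee$ compatible with $\tilde\iota$ up to $\Zp^\times$-scalar, which is precisely the polarization datum of $\breve\M$. The resulting $G$-torsor condition~(\ref{def:RZG:P}) for $G=\mathrm{GU}_{\fo_B}(\Lambda,(,))$ is again automatic. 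Finally, in both EL and PEL cases, the Kottwitz determinant condition imposed in the definition of $\breve\M$ cuts out, inside the flag variety of $\GL(\Lambda)$, exactly the closed subscheme $\Flag^{\DD(X)_A,(t_\alpha(A))}_{G,\set\mu}$ of Lemma~\ref{lem:muFil}; by Remark~\ref{rmk:Zhu} this is the $\set\mu$-filtration condition~(\ref{def:RZG:Kottwitz}), and uniqueness of $\set\mu$ (Lemma~\ref{lem:X0}) guarantees the two conditions define the same closed subfunctor.

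The main obstacle I expect is the careful handling of the possibly non-smooth test rings in $\RZ_\BX$: Definition~\ref{def:RZG} is only given on $\Nilp_W^\sm$, while $\breve\M$ is defined on all of $\Nilp_W$. I would address this by first building the natural transformation $\breve\M\ra \RZ\com{s_\alpha}_{\BX,G}$ on $\Nilp_W^\sm$ as above, then using the fact that $\breve\M$ is a closed formal subscheme of $\RZ_\BX$ (hence already known to be representable and formally smooth) to extend the identification by Yoneda, and finally confirming equality of the formal subschemes of $\RZ_\BX$ by checking on completed local rings at closed points $x\in\RZ_\BX(\kappa)$, where Theorem~\ref{thm:FaltingsDef} together with Proposition~\ref{prop:LiftingTate} identifies both with the Faltings deformation $\Def_{X_x,G}$.
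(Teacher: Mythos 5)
Your overall route---translating the $\fo_B$-action (and, in the PEL case, the polarisation) into crystalline Tate tensors via full faithfulness of crystalline Dieudonn\'e theory, and matching the Kottwitz determinant condition with the $\set\mu$-filtration condition---is the same as the paper's, and your EL discussion is essentially the paper's argument. The genuine gaps are in the PEL half. First, the polarisation datum of $\breve\M$ is \emph{not} compatibility with $\tilde\iota$ up to a $\Zp\starr$-scalar: in \cite[Definition~3.21]{RapoportZink:RZspace}, and in diagram (\ref{eqn:polar}) of the paper, $\tilde\iota^\vee\lambda\tilde\iota$ is required to agree with $\lambda_0$ only up to a Zariski-locally constant $\Qp\starr$-valued function. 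This is forced by the construction: the tensor of Example~\ref{exa:GSp} is insensitive to scaling the pairing, including by powers of $p$ (because of the twist by $\triv(-1)$), so matching $t_0$ with $t_\lambda$ under $\DD(\tilde\iota)$ can only control the polarisation compatibility up to $\Qp\starr$; and proving that the discrepancy is indeed a Zariski-locally constant element of $\Qp\starr$ is itself a step, carried out in the paper by identifying it with an automorphism of the $F$-isocrystal $\triv(-1)$ (\cite[Main~Theorem~2]{dejong:crysdieubyformalrigid}) together with a transpose computation relating $\DD(\lambda_0\iv\tilde\iota^\vee\lambda)$ to $\DD(\tilde\iota)$. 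With your $\Zp\starr$ normalisation the dictionary matches the tensor condition with a strictly smaller moduli problem than $\breve\M$ (already visible on $\kappa$-points whose quasi-isogeny has nonzero similitude valuation), so the claimed identification of functors would fail as stated, and the local-constancy argument is missing altogether.

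Second, the assertion that the $\mathrm{GU}_{\fo_B}(\Lambda,(,))$-torsor condition of Definition~\ref{def:RZG}(\ref{def:RZG:P}) is ``again automatic'' skips the one genuinely nontrivial verification in the PEL case: one must show that $\big(\DD(X)(\wt A),(t_\alpha(\wt A)),t_\lambda(\wt A)\big)$ is, \'etale-locally over $\wt A$, isomorphic to $\big(\wt A\otimes_{\Zp}\Lambda,(1\otimes s_\alpha),1\otimes s_0\big)$, i.e.\ that the $\fo_B$-linear alternating form induced by the principal polarisation is isomorphic to the standard one. Unlike the EL case, projectivity plus a rank count does not give this; the paper first uses formal smoothness of $\breve\M$ to lift $(X,\iota)$ over a formally smooth $\wt A$ and then invokes \cite[Theorem~3.16]{RapoportZink:RZspace}, and some such input is needed. (Two smaller points: in the converse EL direction you also need the Grothendieck--Messing step to lift the $\fo_B$-action from $A/p$ to $A$, using that the tensors lie in $\Fil^0$---Lemma~\ref{Lem:UniqTensors} alone does not do this; and the worry in your last paragraph is unnecessary, since the proposition only concerns the functor on $\Nilp^\sm_W$. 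Moreover, a purely completed-local comparison cannot by itself produce the global tensors over $\Spec A$ demanded by Definition~\ref{def:RZG}---that globalisation is the content of \S\ref{sec:fpqcDesc}---whereas here one should use the globally defined $\fo_B$-action and polarisation, exactly as you do in your second and third paragraphs.)
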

\begin{proof}
Let us first handle the unramified EL case.  By considering simple factors of $B$ and applying the Morita equivalence, it suffices to handle the case when $\fo_B=\fo_F=W(\kappa_0)$ and  $\Lambda = \fo_F^n$, where $\kappa_0$ is a finite extension  of $\Fp$. Let us assume this.

Let $(X,\iota)$ denote a $p$-divisible group over $A\in\Nilp^\sm_W$ with quasi-isogeny defined over $A/J$ for some (or any) ideal of definition $J$ containing $p$. 
We choose $(s_\alpha)$ corresponding to a $\Zp$-basis of $\fo_B$.
If $(X,\iota)$ corresponds to $\Spf A\ra\breve\M$, then the induced $\fo_B$-action on $X$ gives rise to Tate tensors $(t_\alpha)$. If we have $(X,\iota)\in\RZ\com{s_\alpha}_{\BX,G}(A)$, then the Tate tensors $(t_\alpha)$ do correspond to an $\fo_B$-action  by the full faithfulness of the Dieudonn\'e theory over $A/p$ (\emph{cf.} \cite{dejong:crysdieubyformalrigid}) and the Grothendieck-Messing deformation theory.

From now on, we assume that $X$ is equipped with an $\fo_B$-action corresponding to a certain set of morphisms of crystals $t_\alpha:\triv\to\DD(X)^\otimes$, and we will translate Definition~\ref{def:RZG} in terms of the $\fo_B$-action and Kottwitz determination condition.

Firstly, Definition~\ref{def:RZG}(\ref{def:RZG:qisog}) is equivalent to the $B$-linearity of the quasi-isogeny $\iota$ over $A/J$ for any ideal of definition containing $p$ by full faithfulness up to isogeny of the crystalline Dieudonn\'e functor over $A/J$ (\emph{cf.} \cite[Corollary~5.1.2]{dejong:crysdieubyformalrigid}). 

We next show that under   Definition~\ref{def:RZG}(\ref{def:RZG:qisog}) (or equivalently, under the $B$-linearity of $\iota$ over $A/J$)
 the scheme $\cP_{\wt A}$ (with the notation as in Definition~\ref{def:RZG}(\ref{def:RZG:P})) is a $\GL_{\fo_B}(\Lambda)$-torsor. 
Since $\fo_B\otimes_{\Zp} \wt A = \prod_{\tau:\kappa_0\hra \Fpbar}\wt A$,
it follows that $P_{\wt A}$ is a $\GL_{\fo_B}(\Lambda)$-torsor if and only if  $\DD(X)(\wt A)$ is a free $\fo_B\otimes_{\Zp} \wt A$-module with the same rank as $\Lambda$. On this other hand, the rank can be computed after the scalar extension to $D[\ivtd p]$, where $D\thra A/J$ is the $p$-adically completed PD hull of $\wt A\thra A/J$ (since $\wt A[\ivtd p]\to D[\ivtd p]$ is injective by the proof of Lemma~\ref{lem:UniqTensors}),
the existence of $B$-linear quasi-isogeny $\iota$ over $A/J$ implies the desired freeness and rank equality.

Let us
show that  $\Fil^1_X$ is a $\set\mu$-filtration with respect to $(t_\alpha(A))$ (Definition~\ref{def:RZG}(\ref{def:RZG:Kottwitz})) if and only if  the ``Kottwitz determinant condition'' holds for  $X_{A/J^n}$ for each $n$ \cite[Definition~3.21(iv)]{RapoportZink:RZspace}. For this, it suffices to show that for a fixed $n$ and $R:=A/J^n$, $\Fil^1_{X_R}$ is a $\set\mu$-filtration if and only if the ``Kottwitz determination condition'' holds. Since the claim is \'etale-local on $\Spec R$,\footnote{The notion of $\set\mu$-filtration is \'etale-local on the base. The Kottwitz determinant condition can be phrased in terms of the ranks of certain vector bundles, and ranks can be computed \'etale-locally.} we may assume that the torsor $\cP_R$ is trivial.
Since $\fo_B =W(\kappa_0)$ for some finite extension  $\kappa_0$ of $\Fp$, we can decompose
\begin{align*}
  \DD(X_R)(R) &= \prod_{\tau\in\Hom(\kappa_0,\Fpbar)} \DD(X_R)(R)_\tau\\
  \Fil^1_{X_R} &= \prod_{\tau\in\Hom(\kappa_0,\Fpbar)} \Fil^1_{X_R,\tau}.
\end{align*}
It follows that the $G(R)$-conjugacy classes of (minuscule) cocharacters $\mu$ exactly correspond to certain integer tuples $(a_\tau)_\tau$ with $a_\tau\in [0, \rk_{R\otimes_{\Zp}\fo_B}\DD(X_R)(R)_\tau]$, 
and $\Fil^1_{X_R}$ is a $\set\mu$-filtration if and only if
$a_\tau = \rk_R\Fil^1_{X_R,\tau}$, which is exactly the Kottwitz determinant condition. 

Let us turn to the  unramified PEL case. Since $\ord_p(c(b)) = 1$, there exists $u\in W\starr$ such that $c(b)= p\iv\sig(u)\iv u$. Then we obtain an $F$-equivariant perfect pairing
\[u(,):\bM^\Lambda_b\otimes \bM^\Lambda_b \xra{(,)} \bM_{c(b)\iv} \xra u \bM_p =  \triv(-1),\]
where $\bM_z$ for $z\in\Gm(K_0)$ denotes the  $F$-crystal on $W$ with $F$ given by multiplication by $z$. (Recall that $\bM^\Lambda_b = W\otimes_{\Zp}\Lambda^*$, so the similitude character for the pairing $(,)$ on $\bM^\Lambda_b$ is $c\iv$.) Then $u(,)$ induces a principal  quasi-polarisation $\lambda_0:\BX \ra \BX^\vee$, and  $\Zp\starr\cdot\lambda_0$ is well defined independent of the choice of $u$.

Conversely, given an $\fo_B$-linear principal quasi-polarisation  $\lambda:X\ra X^\vee$ of a $p$-divisible  group over $A\in\Nilp^\sm_W$, we obtain a tensor $t_\lambda:\triv\ra\DD(X)^{\otimes2}\otimes\DD(X)^{*\otimes2}$ only depending on $\Zp\starr\cdot\lambda$; \emph{cf.} Example~\ref{exa:GSp}. By the full faithfulness result as before, the existence of a principal quasi-polarisation  $\lambda:X\ra X^\vee$ is equivalent to the existence of a certain Tate tensor $t_\lambda$.
If $s_0\in\Lambda^\otimes$ is the tensor corresponding to $\Zp\starr\cdot(,)$ and  $t_0$ the Tate tensor on $\BX$ corresponding to $s_0$, then we have $t_0 = t_{\lambda_0}$.

We choose a $W$-lift $\wt \BX$ of $\BX$ which lifts the $\fo_B$-action and $\lambda_0$. We  let $\lambda_0$ and $t_0$  also denote their lifts over $W$. For any $(X',\iota')\in\RZ_\BX(R')$ with $R'\in\Nilp_W$, we choose the unique lift $\tilde \iota':\BX_{R'}\dra X'$.

We now claim the following are equivalent:
\begin{enumerate}
\item\label{prop:ELnPEL:RZ}
For any $n$,  the  quasi-isogeny $\tilde \iota:\wt\BX_{A/J^n}\dra X_{A/J^n}$ matches the Tate tensors $t_0$ for $\wt\BX$ and  $t_\lambda$ for $X$. Furthermore, the scheme $\cP_{\wt A}$, constructed as in Definition~\ref{def:RZG}(\ref{def:RZG:P}) using $(t_\alpha(\wt A))$ and $t_\lambda(\wt A)$,  is a $\mathrm{GU}_{\fo_B}(\Lambda)$-torsor;
\item\label{prop:ELnPEL:WK}
The quasi-polarisation $\lambda$ is $\fo_B$-linear, and for any $n$ the following diagram commutes up to the multiple by some Zariski-locally constant function $c:\Spec A/J^n\ra \Qp\starr$:
\begin{equation}\label{eqn:polar}
 \xymatrix{
 X_{A/J^n} \ar[rr]^-{\lambda} & &
 X^\vee_{A/J^n} \ar@{-->}[d]_{\tilde\iota^\vee}\\
  \wt\BX_{A/J^n} \ar[rr]^-{\lambda_0} \ar@{-->}[u]_{\tilde\iota} & &
  \wt\BX_{A/J^n}^\vee
}.
\end{equation}
\end{enumerate}
Granting this claim, it follows that $\breve\M$ represents $\RZ\com{s_\alpha}_{\BX,G}$ in the unramified PEL case; indeed, the ``Kottwitz determinant condition'' and Definition~\ref{def:RZG}(\ref{def:RZG:Kottwitz}) can be matched in the identical way as in the unramified EL case.

Let us first show that (\ref{prop:ELnPEL:RZ})$\Rightarrow$(\ref{prop:ELnPEL:WK}).
For this, we may replace $\Spf A$ by some \'etale  covering (of formal schemes) to assume that the torsor $\cP_{\wt A}$ is trivial.
Then the $\fo_B$-linearity of $\lambda$ follows from  the full faithfulness of the Dieudonn\'e theory over $A/p$ (\emph{cf.} \cite{dejong:crysdieubyformalrigid}) and Grothendieck-Messing deformation theory.

Now, let $(,)_0:\DD(\wt \BX)^{\otimes2} \ra \triv(-1)$ and $(,)_\lambda:\DD(X)^{\otimes2} \ra \triv(-1)$ denote the perfect symplectic $F$-equivariant pairing induced by the principal quasi-polarisations.
To simplify the notation, set $R:=A/J^n$.
By definition of $t_0$ and $t_\lambda$, the quasi-isogeny $\tilde\iota$ over $R$  matches $t_0$ and $t_\lambda$ if and only if there exists a Zariski-locally constant function $c:\Spec R\ra \Qp\starr$ such that  $c(,)_{\lambda,R}=(,)_{0,R}\circ(\DD(\tilde \iota)^{\otimes2})$ as pairings on $\DD(X_R)[\ivtd p]$. (Indeed, $c$ is $\Qp\starr$-valued because the automorphism of the $F$-isocrystal $\triv(-1)$ over a finite-type $\kappa$-scheme is $\Qp\starr$ -- a very degenerate case of \cite[Main~Theorem~2]{dejong:crysdieubyformalrigid}.) On the other hand, $\DD(\lambda_0\iv\tilde\iota^\vee\lambda):\DD(\wt\BX_R)[\ivtd p]\ra\DD(X_{R})[\ivtd p]$ is the ``transpose'' of $\DD(\tilde\iota)$ in the sense that
\[(,)_{0,R}\circ(\DD(\tilde\iota)\otimes\id) = (,)_{\lambda,R}\circ(\id\otimes \DD(\lambda_0\iv\tilde\iota^\vee\lambda)):\DD(X_{R})[\ivtd p]\otimes\DD(\wt\BX_R)[\ivtd p] \ra \triv(-1).\]
Therefore, we have $c(,)_{\lambda,R}=(,)_{0,R}\circ(\DD(\tilde \iota)^{\otimes2})$ if and only if we have $\DD(\lambda_0\iv\tilde\iota^\vee\lambda\tilde\iota) = c\id$. By full faithfulness of Dieudonn\'e theory up to isogeny over $R$, this is equivalent to $\tilde\iota^\vee\lambda_R\tilde\iota = c\lambda_{0,R}$. This shows that (\ref{prop:ELnPEL:RZ})$\Rightarrow$(\ref{prop:ELnPEL:WK}).

To show (\ref{prop:ELnPEL:WK})$\Rightarrow$(\ref{prop:ELnPEL:RZ}), it remains to show that (\ref{prop:ELnPEL:WK}) implies that $\cP_{\wt A}$ is a $\mathrm{GU}_{\fo_B}(\Lambda)$-torsor. Since $\breve\M$ is formally smooth, we may lift $(X,\iota)$ so that $A = \wt A$ is formally smooth and formally finitely generated over $W$. Then, it suffices to show that $\cP_{A/J^n}$ is a $\mathrm{GU}_{\fo_B}(\Lambda)$-torsor for each $n$, which follows from \cite[Theorem~3.16]{RapoportZink:RZspace}.
\end{proof}

\subsection{``Closed points'' and deformation theory for $\RZ\com{s_\alpha}_{\BX,G}$}
We choose $(G,b)$, $\Lambda$, and $(s_\alpha)\subset\Lambda^\otimes$ as before. Let $(G,[b],\set{\mu\iv})$ denote the unramified Hodge-type local Shimura datum associated to $(G,b)$; \emph{cf.} Definition~\ref{def:LocShData}. In this section, we show that the moduli functor  $\RZ\com{s_\alpha}_{\BX,G} \subset \RZ_\BX$ interpolates $X^G(b)\subset X^{\GL(\Lambda)}(b)$ on $\kappa$-points and $\Def_{X_x,G}\subset \Def_{X_x}$ on ``formal neighbourhoods''. 
\begin{rmksub}\label{rmk:fpqcclosed}
This remark is to justify why we restrict our focus on the set of \emph{closed} points and the formal neighbourhoods thereof.
Since the underlying reduced scheme $\RZ_{\BX}^\red$ of $\RZ_\BX$ is Jacobson (being locally of finite type over $\kappa$), any closed subscheme  $Z\subset \RZ_{\BX}^\red$ is determined by the subset $Z(\kappa)\subset \RZ_\BX(\kappa)$ of closed points. 

Now, given a formal closed subscheme $\ZZ\subset \RZ_\BX$, we can recover the completed local ring $\wh\OO_{\ZZ,\eta}$ at any $\eta\in\ZZ^\red$ (not necessarily a closed point) from the closed point $z\in\overline{\{\eta\}}$ and the quotient $\wh\OO_{\ZZ,z}$ of $\wh\OO_{\RZ_\BX,z}$ as follows. Choosing a preimage $\hat\eta\in\Spec\wh\OO_{\ZZ,z} \subset \Spec \wh\OO_{\RZ_\BX,z}$ of $\eta$, we can recover $\wh\OO_{\ZZ,\eta}$ as the image of the  map $\wh\OO_{\RZ_\BX,\eta} \hra (\wh\OO_{\RZ_\BX,z})\wh{_{\hat\eta}}\thra (\wh\OO_{\ZZ,z})\wh{_{\hat\eta}}$.
\end{rmksub}

 By Proposition~\ref{prop:LR}, we have a natural bijection
\begin{equation}\label{eqn:RZG:ADL}
X^G(b)\cong \RZ\com{s_\alpha}_{\BX,G}(\kappa).
\end{equation}
Recall that $X^G(b)$ satisfies functorial properties with respect to $(G,b)$ (Lemma~\ref{lem:FunctAffDL}). We also have the following cartesian diagram:
\begin{equation}
\xymatrix{
X^G(b) \ar@{^{(}->}[r] \ar[d]^\cong & X^{\GL(\Lambda)}(b) \ar[d]^\cong\\
\RZ\com{s_\alpha}_{\BX,G}(\kappa) \ar@{^{(}->}[r] &\RZ_\BX(\kappa)
},
\end{equation}
where the vertical isomorphisms are given by Proposition~\ref{prop:LR}, and the horizontal map on the bottom row is the forgetful map. 

Consider $(X_x,\iota_x)\in \RZ\com{s_\alpha}_{\BX,G}(\kappa)$ corresponding to a closed point $x\in\RZ_\BX(\kappa)$. We define the ``formal completion'' $(\RZ\com{s_\alpha}_{\BX,G})\wh{_x}$ of $\RZ\com{s_\alpha}_{\BX,G}$ at $x$ to be the set-valued functor on the category of formal power series rings over $W/(p^m)$ (for some $m$) defined as follows: for any formal power series ring $A$ over $W/(p^m)$, $(\RZ\com{s_\alpha}_{\BX,G})\wh{_x}(A)$ is the subset of elements in $\RZ\com{s_\alpha}_{\BX,G}(A)$ which lift $x$. Equivalently, we have the following description
 \begin{equation}
(\RZ\com{s_\alpha}_{\BX,G})\wh{_x} \cong\RZ\com{s_\alpha}_{\BX,G} \times_{\RZ_\BX} (\RZ_\BX)\wh{_x},
 \end{equation}
 where the right hand side is viewed as the fibre product of functors on the category of formal power series rings over $W/(p^m)$.
If $\RZ\com{s_\alpha}_{\BX,G}$ can be represented by a formal scheme formally smooth and locally formally of finite type over $W$, then $(\RZ\com{s_\alpha}_{\BX,G})\wh{_x}$ can be represented by its formal completion at $x$.

Choose a coset representative $g_x\in G(K_0)$ of $x\in X^G(b)\subset G(K_0)/G(W)$, and write $b_x:=g_x\iv b \sig(g_x)$ (so that we have $(X_x, (t_{\alpha,x}))\cong (\BX^\Lambda_{b_x}, (1\otimes s_\alpha))$). Then for a suitable choice of $\mu\in\set\mu$,  the universal deformation with Tate tensors $(X _{G,x},(\hat t^\univ_{\alpha,x}))$ of $(X_x,(t_{\alpha,x}))$, together with the quasi-isogeny $\iota_x:\BX\dra X_x$, defines an element $\RZ\com{s_\alpha}_{\BX,G}(A _{G,x})$, where $A _{G,x}$ was defined in \S\ref{subsec:FaltingsDeforConstr}. (This can be verified by the explicit construction of $\bM _{G,x}$. Note that we have added the subscript $x$ to the notation of \S\ref{subsec:FaltingsDeforConstr} to indicate that we are deforming $X_x$.) Now, Theorem~\ref{thm:FaltingsDef} shows that the isomorphism $\Def_{X_x}\riso(\RZ_\BX)\wh{_x}$ (\emph{cf.} Lemma~\ref{lem:LocMod}) induces the following isomorphism
\begin{equation}\label{eqn:RZG:FaltingsDefor}
\Def_{X_x,G}\riso (\RZ\com{s_\alpha}_{\BX,G})\wh{_x}
\end{equation}
of functors on formal power series rings over $W/(p^m)$; note that the deformations coming from $\Def_{X_x,G}$ automatically satisfy the condition on the Hodge filtrations (Definition~\ref{def:RZG}(\ref{def:RZG:Kottwitz})) thanks to the explicit construction of the universal deformation over $\Def_{X_x,G}$. In particular, $(\RZ\com{s_\alpha}_{\BX,G})\wh{_x}$ can be pro-represented by a formal power series ring over $W$.
If $\RZ\com{s_\alpha}_{\BX,G}$ can be represented by a formal scheme which is formally smooth and locally formally of finite type over $W$, then the above isomorphism gives an identification of the formal completion at a closed point $x$ with $\Def_{X_x,G}$.

For  $A\in\Nilp^\sm_W$, let $f:\Spf A\to\RZ_\BX$ such that
$f\in\RZ\com{s_\alpha}_{\BX,G}(A)$. Then for any closed point $x$ in $\Spf A$, $f$ induces
\[
\hat f_x:\Spf \wh A_x \to\Def_{X_x,G}
\]
by (\ref{eqn:RZG:FaltingsDefor}).
If we let $X$ denote the $p$-divisible group over $A$ corresponding to $f$, then we have a  morphism of crystals $t_\alpha:\triv\to\DD(X)^\otimes$ for each $\alpha$, which exists by the definition of $\RZ\com{s_\alpha}_{\BX,G}$; \emph{cf.} Definition~\ref{def:RZG}.
\begin{lemsub}\label{lem:FormalNbdTensors}
The pull-back of $t_\alpha$  over $\Spec \wh A_x$ (for a closed point $x$ in $\Spf A$) coincides with the pull-back $(\hat f_x)^*(\hat t^\univ_{\alpha,x})$ of the universal deformation of Tate tensors. The same statement holds if we replace $A$ with a formally smooth formally finitely generated $W$-algebra $\wt A$.
\end{lemsub}
\begin{proof}
This  follows from Lemma~\ref{lem:UniqTensors} applied to $\hat f_x\in\RZ^{(s_\alpha)}_{\BX,G}(\hat A_x)$.
\end{proof}

Let us record some functorial properties that  $\Def_{X_x,G}$ enjoys. To explain, let $(G',b')$ and $\Lambda'$ be as in Definition~\ref{def:HodgeAssump}. (We do \emph{not} assume the existence of any ``equivariant'' morphism $\Lambda\ra\Lambda'$.) We write $\BX':=\BX^{\Lambda'}_{b'}$.

For any $x'\in X^{G'}(b')$ and the corresponding element $(X'_{x'},\iota)\in\RZ_{\BX'}(\kappa)$ (via the embedding $X^{G'}(b') \hra X^{\GL(\Lambda')}(b')$,
we have a natural isomorphism
\begin{equation}
\Def_{X_x,G}\times\Def_{X_{x'},G'}\riso \Def_{X_x\times X_{x'},G\times G'},
\end{equation}
defined by taking the product of deformations.
This isomorphism is compatible with the morphism $\RZ_\BX\times\RZ_{\BX'}\ra \RZ_{\BX\times\BX'}$ defined by taking the product of $p$-divisible groups and quasi-isogenies.

Let $f:G\ra G'$ be a homomorphism over $\Zp$ which takes $b$ to $b'$, and consider the map  $X^G(b)\ra X^{G'}(b')$ associated to $f$ by Lemma~\ref{lem:FunctAffDL}. We choose $x\in X^G(b)$ and let $x'\in X^{G'}(b')$ denote its image by this natural map. We choose a coset representative $g_x\in G(K_0)$ of $x \in X^G(b)\subset G(K_0)/G(W)$, and write $b_x:=g_x\iv b \sig(g_x)$ and $b'_{x'}:= f(b_x)$. Then by the choice of $g_x$ (and $f(g_x)$), we obtain  isomorphisms
\[\Def_{X_x,G}\cong \Def_{\BX^\Lambda_{b_x},G},\quad \Def_{X'_{x'},G'}\cong \Def_{\BX^{\Lambda'}_{b'_{x'}},G'}.
 \]
 So by Proposition~\ref{prop:FunctDefor} we obtain a morphism
\begin{equation}\label{eqn:FunctDefor}
 \Def_{X_x,G}\ra \Def_{X'_{x'},G'}
\end{equation}
By Remark~\ref{rmk:CosetRep} it follows that the morphism above does not depend on the choice of the coset representative $g_x$ of $x \in X^G(b)\subset G(K_0)/G(W)$.

\subsection{Main Statements}
We are ready to state the main result, which asserts that the subset $\RZ\com{s_\alpha}_{\BX,G}(\kappa)\subset\RZ_\BX(\kappa)$ and the subspaces $(\RZ\com{s_\alpha}_{\BX,G})\wh{_x}\subset(\RZ_\BX)\wh{_x}$ for $x\in\RZ\com{s_\alpha}_{\BX,G}(\kappa)$ patch to give a closed formal subscheme $\RZ^\Lambda_{G,b}\subset\RZ_\BX$, and the functorial properties enjoyed by $X^G(b)\cong \RZ\com{s_\alpha}_{\BX,G}(\kappa)$ and $\Def_{X_x,G}\cong (\RZ\com{s_\alpha}_{\BX,G})\wh{_x}$ patch to give the corresponding functorial properties for $\RZ^\Lambda_{G,b}$.
\begin{thmsub}\label{thm:RZHType}
Let $p>2$. Then there exists a closed formal subscheme $\RZ^\Lambda_{G,b}\subset \RZ_\BX$ which is formally smooth over $W$ and represents the functor $\RZ\com{s_\alpha}_{\BX,G}$ for any choice of  $(s_\alpha)\subset\Lambda^\otimes$ with pointwise stabiliser  $G$. More precisely, for any $s_\alpha$ there exist ``universal Tate tensors''
\[
t_\alpha^\univ:\triv\ra\DD((X_{\RZ_\BX})|_{\RZ^\Lambda_{G,b}})^\otimes,
\]
such that for $A\in\Nilp^\sm_W$, a map $f:\Spf (A,J)\ra \RZ_\BX$ factors through $\RZ^\Lambda_{G,b}$ if and only if $f\in\RZ\com{s_\alpha}_{\BX,G}(A)$, in which case  $(t_\alpha)$ as in Definition~\ref{def:RZG} recovers $(f^*t_\alpha^\univ)$.

For another pair $(G',b')$ and $\Lambda'\in\prep(G')$ that give rise to a $p$-divisible group (as in Definition~\ref{def:HodgeAssump}), we consider the  closed formal subscheme $\RZ^{\Lambda'}_{G',b'}\subset\RZ_{\BX'}$ which was just constructed. Then the following properties hold:
\begin{enumerate}
\item\label{thm:RZHType:Prod}
The morphism $\RZ_{\BX}\times_{\Spf W} \RZ_{\BX'}\ra \RZ_{\BX\times\BX'}$, defined by the product of $p$-divisible groups with quasi-isogeny, induces an isomorphism
\[\RZ^\Lambda_{G,b}\times_{\Spf W} \RZ^{\Lambda'}_{G',b'}\riso \RZ^{\Lambda\times\Lambda'}_{G\times G',(b,b')} \]
such that it induces the product decomposition of affine Deligne-Lusztig sets (Lemma~\ref{lem:FunctAffDL}) and the deformation spaces (Proposition~\ref{prop:FunctDefor})  via the isomorphisms (\ref{eqn:RZG:ADL}) and (\ref{eqn:RZG:FaltingsDefor}), respectively.
\item\label{thm:RZHType:Funct}
Let $f:G\ra G'$ be a homomorphism which takes $b$ to $b'$. Then there exists a (necessarily unique) morphism $\RZ^\Lambda_{G,b}\ra \RZ^{\Lambda'}_{G',b'}$ which induces the maps that fit in the following cartesian diagrams:
\[\xymatrix@R=12pt{
X^G(b) \ar[r]^-{\text{Lem~\ref{lem:FunctAffDL}}} \ar[d]^{\text{\eqref{eqn:RZG:ADL}}}_\cong &
X^{G'}(b') \ar[d]^{\text{\eqref{eqn:RZG:ADL}}}_\cong &
\Def_{X_x,G} \ar[r]^-{\text{\eqref{eqn:FunctDefor}}} \ar[d]^{\text{\eqref{eqn:RZG:FaltingsDefor}}}_\cong
& \Def_{X_{x'},G'} \ar[d]^{\text{\eqref{eqn:RZG:FaltingsDefor}}}_\cong\\
\RZ^\Lambda_{G,b}(\kappa) \ar[r] &
\RZ^{\Lambda'}_{G',b'}(\kappa) &
(\RZ^\Lambda_{G,b})\wh{_x} \ar[r] &
(\RZ^{\Lambda'}_{G',b'})\wh{_{x'}}
},\]
where $x'\in\RZ^{\Lambda'}_{G',b'}(\kappa)$ is the image of $x$, and the arrows on the top row are the natural maps induced by $f$.

Furthermore, if $G' = \GL(\Lambda)$, then the natural inclusion $(G,b)\ra (\GL(\Lambda),b)$ induces the natural inclusion $\RZ^\Lambda_{G,b} \ra \RZ^\Lambda_{\GL(\Lambda),b} = \RZ_\BX$ (\emph{cf.} Example~\ref{exa:RZ}). 
\end{enumerate}
\end{thmsub}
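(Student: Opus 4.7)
The plan is to construct $\RZ^\Lambda_{G,b}$ by gluing Faltings's local deformation spaces $\Def_{X_x,G}$ at each candidate closed point and then verifying that the resulting formal algebraic space is a closed formal subscheme of $\RZ_\BX$. The candidate set of closed points is $\RZ\com{s_\alpha}_{\BX,G}(\kappa)$, which by Proposition~\ref{prop:LR} is identified with $X^G(b) \subset X^{\GL(\Lambda)}(b) = \RZ_\BX(\kappa)$. At each such $x$, the ``formal completion'' $(\RZ\com{s_\alpha}_{\BX,G})\wh{_x}$ is pro-represented by the formally smooth formal power series ring $\Def_{X_x,G}$, sitting as a closed formal subscheme of $(\RZ_\BX)\wh{_x} \cong \Def_{X_x}$ by the explicit identification~\eqref{eqn:RZG:FaltingsDefor}. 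These two pieces of data provide both the underlying point set and the infinitesimal structure of $\RZ^\Lambda_{G,b}$, and the goal is to globalise them.

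First I would work at finite level: recall that $\RZ_\BX(h) = \varinjlim_{m,n} \RZ_\BX(h)^{m,n}$ is a direct limit of projective $W/p^m$-schemes. On each such $\RZ_\BX(h)^{m,n}$, Lemma~\ref{Lem:UniqTensors} shows that the auxiliary tensors $(t_\alpha)$ are uniquely determined when they exist, so the subfunctor defined by conditions~(\ref{def:RZG:qisog})--(\ref{def:RZG:Kottwitz}) of Definition~\ref{def:RZG} embeds in $\RZ_\BX(h)^{m,n}$ as a monomorphism. I would apply Artin's criterion to show that this subfunctor is representable by a closed subscheme, whose $\kappa$-points are the required subset of $X^G(b)$ and whose completions match the Faltings spaces $\Def_{X_x,G}$. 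Taking the limit over $(m,n)$ and then over $h$ then yields $\RZ^\Lambda_{G,b}$ as an ind-closed formal subscheme of $\RZ_\BX$; formal smoothness is inherited from the formal smoothness of each Faltings deformation space via a standard Grothendieck--Messing lifting argument (this is where $p>2$ intervenes, to lift through the PD-thickening $R \thra R/p$).

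The main obstacle is the input needed for Artin's criterion, namely two descent/extension properties for Tate tensors (Propositions~\ref{prop:descent} and~\ref{prop:extension} in the paper):
\begin{enumerate}
\item[(a)] the condition in Definition~\ref{def:RZG} is fpqc-local on $A \in \Nilp^\sm_W$, so that Tate tensors defined after faithfully flat extension descend, and
\item[(b)] if $(t_\alpha)$ exists on a closed subscheme cut out by a nilpotent ideal, they extend to an \'etale neighbourhood.
\end{enumerate}
The descent step uses faithfully flat descent for crystals on formally smooth bases together with Lemma~\ref{Lem:UniqTensors}. The extension step is the heart of the matter: starting from Tate tensors at a closed point $x$ coming from the Faltings description~\eqref{eqn:tuniv}, one uses horizontality of $t_\alpha^\univ$ and the integrable connection on $\DD(X)$ to parallel-transport them across an infinitesimal neighbourhood, while Proposition~\ref{prop:LiftingTate} controls the Hodge filtration condition~(\ref{def:RZG:Kottwitz}) along successive square-zero thickenings; algebraisation of the resulting formal tensors then yields Artin's effectivity condition.

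Finally, the functoriality statements~(\ref{thm:RZHType:Prod}) and~(\ref{thm:RZHType:Funct}) should follow formally from the construction. Indeed, the morphism $\RZ_\BX \times \RZ_{\BX'} \to \RZ_{\BX\times\BX'}$ induces the product decomposition on $\kappa$-points (Lemma~\ref{lem:FunctAffDL}(ii)) and on formal completions (Proposition~\ref{prop:FunctDefor}); since a closed formal subscheme of $\RZ_\BX$ is determined by its closed points and their formal completions, and both sides of the claimed isomorphism embed into $\RZ_{\BX\times\BX'}$, uniqueness of such a subscheme forces them to coincide. The same reasoning, applied to the morphism of Rapoport--Zink spaces coming from a quasi-isogeny-preserving homomorphism $f\colon G \to G'$, produces the functorial map $\RZ^\Lambda_{G,b} \to \RZ^{\Lambda'}_{G',b'}$ with the required compatibilities on points and completions. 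The final assertion for $G' = \GL(\Lambda)$ is then immediate from Example~\ref{exa:RZ}.
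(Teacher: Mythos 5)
Your overall architecture coincides with the paper's: Artin's criterion applied to the finite-level pieces $\RZ^\Lambda_{G,b}(h)^{m,n}$, patching the Faltings spaces $\Def_{X_x,G}$ through the candidate point set $X^G(b)$, and functoriality by comparing $\kappa$-points and formal completions of closed formal subschemes (for a general $f:G\ra G'$ note there is no ambient map $\RZ_\BX\ra\RZ_{\BX'}$, so one must pass through the graph embedding $(1,f):G\hra G\times G'$ and the product decomposition, as in (\ref{eqn:FunctRZG})). However, the two technical pillars are described in a way that would not go through. The descent step (Proposition~\ref{prop:descent}) is \emph{not} ``faithfully flat descent for crystals on formally smooth bases'': the bases there are arbitrary finite-type $W/p^m$-algebras, and the entire difficulty is $p$-torsion in ($p$-adically completed) PD hulls, which makes integral sections of $\DD(X)^\otimes$ ill-behaved and is exactly the gap the referee flagged in an earlier version. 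The paper filters the base by square-zero thickenings that are \emph{liftable} PD thickenings (the f-semiperfect and $W_{\mathrm{PD},1}$ machinery of \S\ref{subsec:LiftablePD} and Corollary~\ref{cor:fsperf-red}), works over the $\Zp$-flat closure $S^\fl_i$ of the completed PD hull, proves integrality of $1\otimes s_\alpha$ by a topological fpqc argument over the completions $\wh S^\fl_{i,x}$ (Lemma~\ref{lem:DescStep}), and only then pushes forward along $S^\fl_i\thra A_{i+1}$; horizontality plus Lemma~\ref{Lem:UniqTensors} alone does not produce the integral morphisms $t_\alpha$ over such bases. Likewise the effectivity step (Proposition~\ref{prop:extension}) is not ``parallel transport across an infinitesimal neighbourhood plus algebraisation'': the content is to smear the tensors out from $\Spf\wh R$ to a finitely generated subring $R\subset\wh R$ and then verify integrality and the $G$-torsor condition at closed points of $\Spec R$ which are not seen by $\Spf\wh R$, done via perfections and $W(\wt R)$-integrality, a limit/flatness argument and noetherian induction (Lemma~\ref{lem:PtWiseSmearOut}, Proposition~\ref{prop:SmearOutRed}), and again the liftable-PD filtration in the non-reduced case.

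The second genuine gap is closedness. Artin's criterion only yields that $\RZ^\Lambda_{G,b}(h)^{m,n}$ is a separated algebraic space locally of finite type, mapping monomorphically to $\RZ_\BX(h)^{m,n}$; nothing in your outline shows that this monomorphism is a closed immersion, nor that the space is a scheme. The paper needs three further inputs: schemeness via Laumon--Moret-Bailly (a separated, locally quasi-finite monomorphism is a scheme); quasi-compactness of $\RZ^\Lambda_{G,b}(h)^{m,n}$, proved by a non-formal argument combining de Jong's alterations with the generic smearing-out statement Proposition~\ref{prop:Alteration}; and the valuative criterion Lemma~\ref{lem:ValCrit}, whose proof requires extending a $G$-torsor over the punctured spectrum of a two-dimensional regular base (Colliot-Th\'el\`ene--Sansuc). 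Without these, ``taking the limit over $(m,n)$ and $h$'' does not give a \emph{closed} formal subscheme of $\RZ_\BX$, which is what the theorem asserts.
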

%

We will prove this theorem in \S\ref{sec:fpqcDesc} and \S\ref{sec:RepPf}. 

\begin{rmksub}
When the pair $(G,b)$ and the choice of $(\Lambda,(s_\alpha))$ correspond to the unramified EL or PEL case (\emph{cf.} \S\ref{subsec:ELnPEL}), then $\RZ^\Lambda_{G,b}\subset\RZ_\BX$ coincides with the formal moduli subscheme $\breve\M\subset\RZ_\BX$ constructed by Rapoport and Zink, often referred to as an EL or PEL Rapoport-Zink space.
\end{rmksub}

\begin{rmksub}
Just as in the (P)EL case, we will construct (in \S\ref{subsec:Tower}) coverings of the rigid generic fibre $(\RZ^\Lambda_{G,b})^\rig$ by adding suitable level structure, and obtain a rigid analytic tower with Hecke $G(\Qp)$-action. We will verify in \S\ref{sec:ExtraStr} that this covering has the extra structure expected for the
``local Shimura variety'' $\{\mathbb{M}(G,[b],\{\mu\iv\})^{\KK}\}_{\KK\subset G(\Zp)}$ associated to $(G_{\Qp},[b],\{\mu\iv\})$ as stated in \cite[\S5.1]{RapoportViehmann:LocShVar}. In this viewpoint, we can view Theorem~\ref{thm:RZHType} as a construction of a (formally smooth) formal model of the local Shimura variety $\mathbb{M}(G,[b],\{\mu\iv\})^{G(\Zp)}$ with ``hyperspecial level structure'', when the local Shimura datum is of Hodge type.

Formal smoothness of  $\RZ^\Lambda_{G,b}$ is compatible with our expectation that the conjectural local Shimura variety $\mathbb{M}(G,[b],\{\mu\iv\})^\KK$ should admit a formally smooth formal model if $G$ is an unramified reductive group over $\Qp$  and $\KK\subset G(\Qp)$ is hyperspecial. (This is a local analogue of good reduction of Shimura varieties).
\end{rmksub}

\begin{rmksub}
%
The argument to ``algebraise and glue'' the deformation spaces $\Def_{X_x,G}$ for $x\in X^G(b)$ in  \S\ref{sec:fpqcDesc} and \S\ref{sec:RepPf} makes use of the functor $\RZ^{(s_\alpha)}_{\BX,G}$ using $\Nilp^\sm_W$ as ``test rings'' (\emph{cf.}  Definition~\ref{def:RZG}). Implicit in this approach is  the expectation that $\RZ^\Lambda_{G,b}$ should be formally smooth (as explained in the previous remark). 
\end{rmksub}

\begin{rmksub}
\label{rmk:RZHTypeCor:b}
Choose $g\in G(K_0)$ such that $b':=g\iv b\sig(g)$ also defines a $p$-divisible group $\BX':=\BX^\Lambda_{b'}$. We let $\iota_g:\BX\dra \BX'$ denote the quasi-isogeny induced by $g:\bM^\Lambda_{b'}[\ivtd p] \riso\bM^\Lambda_b[\ivtd p]$. Then we have a natural isomorphism $\RZ\com{s_\alpha}_{\BX',G}\riso \RZ\com{s_\alpha}_{\BX,G}$ by post-composing $\iota_g$ (or rather, the suitable base change of it). 
In particular, we obtain an isomorphism $\RZ^\Lambda_{G,b'}\riso \RZ^\Lambda_{G,b}$.
\end{rmksub}
\begin{rmksub}
\label{rmk:RZHTypeCor:Lambda}
Applying Theorem~\ref{thm:RZHType}(\ref{thm:RZHType:Funct}) to $f=\id:(G,b)\ra(G,b)$, we obtain a canonical isomorphism $\RZ^\Lambda_{G,b}\riso \RZ^{\Lambda'}_{G,b}$ which respects the identifications of $\kappa$-points with $X^G(b)$ and the formal completion at a $\kappa$-point $x$ with $\Def^\Lambda_{G,b_x}$.

Combining this with Remark~\ref{rmk:RZHTypeCor:b}, it follows that $\RZ^{\Lambda}_{G,b}$ only depends on the associated unramified Hodge-type local Shimura datum $(G,[b],\set{\mu\iv})$ up to (usually non-canonical) isomorphism. Furthermore, if $(G',[b'],\set{\mu^{\prime-1}})$ is another unramified Hodge-type local Shimura datum, then for any map $f:(G,[b],\set{\mu\iv}) \ra (G',[b'],\set{\mu^{\prime-1}})$,  we have a morphism $\RZ^\Lambda_{G,b}\ra\RZ^{\Lambda'}_{G',b'}$ for some suitable choice of $(b,\Lambda)$ and $(b',\Lambda')$.
\end{rmksub}
\begin{rmksub}
\label{rmk:RZHTypeCor:Funct}
In the setting of Theorem~\ref{thm:RZHType}(\ref{thm:RZHType:Funct}), the association
\[ [(G,b)\ra (G',b')]\rightsquigarrow [\RZ^\Lambda_{G,b}\ra\RZ^{\Lambda'}_{G',b'}] \]
respects compositions and products of morphisms (in the obvious sense), since the analogous statement is true for $X^G(b)$ and $\Def^\Lambda_{G,b_x}$.
\end{rmksub}
\begin{rmksub}
If $f:G\ra G'$ is a closed immersion (so we write $b' = b$), then the associated map $\RZ^\Lambda_{G,b}\ra\RZ^{\Lambda'}_{G',b}$ is a closed immersion. Indeed, by Remark~\ref{rmk:RZHTypeCor:Lambda} we may take $\Lambda = \Lambda'$, and the natural inclusion $\RZ^\Lambda_{G,b}\hra\RZ_\BX$ factors as $\RZ^\Lambda_{G,b} \ra \RZ^\Lambda_{G',b}\hra\RZ_\BX$ by Remark~\ref{rmk:RZHTypeCor:Funct}.
\end{rmksub}

\begin{rmksub}\label{rmk:ExcIsom}
If $(G,b)$ comes from an unramified EL or PEL datum, then it follows from Proposition~\ref{prop:ELnPEL} that our construction of $\RZ^\Lambda_{G,b}$ is compatible with the original construction of Rapoport and Zink.
On the other hand, the ``functoriality'' aspect of Theorem~\ref{thm:RZHType} produces morphisms between EL and PEL Rapoport-Zink spaces which cannot be constructed as a formal consequence of the (P)EL moduli problem. For example, consider an exceptional isomorphism $\GSp(4)\cong\GSpin(3,2)$ of split reductive $\Z_{(p)}$-groups, and set $G:=\GSp(4)_{\Zp}$. Let  $(G,[b],\set{\mu\iv})$ be a unramified Hodge-type local Shimura datum, coming from a (global) Shimura datum for $\GSp(4)_{/\Q}$ as in Example~\ref{exa:ShVar}. Recall that we have another faithful $G$-representation $\Lambda$; namely, the even Clifford algebra  for a split rank-$5$ quadratic space over $\Zp$. It turns out that there exists a perfect alternating form $\psi$ such that the natural $G$-action on $\Lambda$ induces a closed immersion $f:G=\GSpin(3,2)\hra\GSp(\Lambda,\psi)=:G'$ and the local Shimura datum $(G',[f(b)],\set{f\circ\mu\iv})$ is of PEL type. 
Then Theorem~\ref{thm:RZHType} produces a closed immersion $\RZ_{G,b}\hra\RZ_{G',f(b)}$.
\end{rmksub}

\begin{rmksub}
Since the first version of this paper became available, there has been a lot of progress in understanding $\RZ_{G,b}^\Lambda$. For example, given the formal closed subscheme $\RZ_{G,b}^\Lambda\subset\RZ_\BX$  as in Theorem~\ref{thm:RZHType}, then we can describe $\RZ_{G,b}^\Lambda(\kappa')$ group-theoretically, where $\kappa'$ is either a perfect field extension of $\kappa$ (\emph{cf.} \cite[Proposition~3.11]{Zhu:Satake}) or a 
finitely generated extension of $\kappa$ (\emph{cf.} \cite[Theorem~2.4.10]{HowardPappas:GSpin}).
\end{rmksub}

\section{Descent and Extension of Tate tensors from a complete local ring to a global base}\label{sec:fpqcDesc}
In this section,  we prove the technical results (especially, Propositions~\ref{prop:descent} and \ref{prop:extension}) which allow us to ``globalise'' the Faltings deformation spaces.

Let $\Nilp_W^\ft$ be the category of  finitely generated $W/p^m$-algebras for some $m$. We first define the following subfunctor $\RZ^\Lambda_{G,b}$ of $\RZ_\BX$, where $\BX= \BX^\Lambda_b$. (So we have $\RZ_\BX = \RZ^{\Lambda}_{\GL(\Lambda),b}$.) We will show in \S\ref{sec:RepPf} that this subfunctor can be represented by a closed formal scheme of $\RZ_\BX$ which satisfies the desired properties stated in Theorem~\ref{thm:RZHType}.

\begin{defn}\label{def:RZGloc}
In the setting of \S\ref{subsec:RZHodge}, we define a functor $\RZ^\Lambda_{G,b}:\Nilp_W^\ft\ra\Sets$ as follows: for any  $R\in\Nilp^\ft_W$, $\RZ^\Lambda_{G,b}(R)\subset \RZ_\BX(R)$ consists of  $(X,\iota)\in\RZ_\BX(R)$ such that  for any $x:\Spec\kappa\ra\Spec R$, we have $(X_x,\iota_x)\in\RZ\com{s_\alpha}_{\BX,G}(\kappa)$ and the map $\Spf \wh R_x\ra\Def_{X_x}$, induced by $X_{\wh R_x}$, factors through $\Def_{X_x,G}$.
\end{defn}

For $(X,\iota)\in\RZ_{G,b}^\Lambda(R)$  with $R\in\Nilp^\ft_W$, we set
\begin{equation}\label{eqn:tx}
\hat t_{\alpha,x}:\triv\ra\DD(X_{\wh R_x})^\otimes
\end{equation}
to be the pull-back of the universal Tate tensors $\hat t_{\alpha,x}^\univ$ (\ref{eqn:tuniv}) via $\Spf \wh R_x\ra\Def_{X_x,G}$.

From the definition of $\RZ^\Lambda_{G,b}$, it is not clear whether $\RZ^\Lambda_{G,b}$ is formally smooth, and whether we have $\RZ\com{s_\alpha}_{\BX,G}(A)\cong \varprojlim_n\RZ^\Lambda_{G,b}(A/J^n)$  for $A\in\Nilp^\sm _W$ with ideal of definition $J$, where $\RZ\com{s_\alpha}_{\BX,G}$ is defined in Definition~\ref{def:RZG} (\emph{cf.} Proposition~\ref{prop:descent}(\ref{prop:descent:devissage})). Also it is unclear how to rule out the possibility that $\RZ^\Lambda_{G,b}$ is the disjoint union of $\Def_{X_x,G}$ when the closure of $\RZ_{G,b}^\Lambda(\kappa)$ in $\RZ_{\BX}^\red$ is positive-dimensional (\emph{cf.} Proposition~\ref{prop:extension}).
These issues will be resolved by the technical results proved in this section.

Let us first state our descent result:
\begin{prop}\label{prop:descent}
Assume that  $p>2$.
\begin{enumerate}
\item\label{prop:descent:t} Let $(X,\iota)\in\RZ^\Lambda_{G,b}(R)$ for $R\in\Nilp^\ft_W$. Then for each $\alpha$,  there exists a unique morphism of crystals
\[t_\alpha:\triv\ra\DD(X)^\otimes\]
which induces $s_{\alpha,\DD}$ on the isocrystals (\emph{cf.} Definition~\ref{def:t}), and pulls back to $\hat t_{\alpha,x}$ for each  closed point $x$  in $\Spec R$ (\emph{cf.} (\ref{eqn:tx})).
\item\label{prop:descent:liftability}
Let $R\in\Nilp^\ft_W$ and choose a presentation $A/J\cong R$ where $A$ is formally smooth formally finitely generated over $W$ and $J$ is an ideal of definition. 
Let $(X,\iota)\in \RZ^\Lambda_{G,b}(R)$ and let $f:\Spec R\to\RZ_\BX$ denote the corresponding morphism.
Then there exists a lift $\tilde f:\Spf A\to\RZ_\BX$ of $f$  such that $\tilde f\in\RZ\com{s_\alpha}_{\BX,G}(A)$. Furthermore, if we let $\wt X$ denote the $p$-divisible group over $A$ obtained by pulling back the universal $p$-divisible group via $\tilde f$, then for any $\alpha$ the Tate tensor $\tilde t_\alpha:\triv\to\DD(\wt X)^\otimes$ (which exists by the definition of $\RZ\com{s_\alpha}_{\BX,G}$; \emph{cf.} Definition~\ref{def:RZG}) pulls back to $t_\alpha$ constructed in (\ref{prop:descent:t}) via restriction.
\item\label{prop:descent:devissage} Let  $A$ be a formally smooth formally finitely generated $W$-algebra with ideal of definition  $J$.  We consider a projective system $(X\com i,\iota\com i)\in\RZ_\BX(A/J^i)$, which corresponds to a map $\tilde f:\Spf (A,J) \ra\RZ_\BX$. Then we have $(X\com i,\iota\com i)\in\RZ^\Lambda_{G,b}(A/J^i)$ for each $i$ if and only if   $\tilde f\in \RZ\com{s_\alpha}_{\BX,G}(A)$. The same holds if we replace $A$ with $ A/p^m\in\Nilp^\sm_W$.
\end{enumerate}
\end{prop}

We will prove this proposition in \S\ref{subsec:LiftablePD}--\S\ref{subsec:Pfdescent}, and for now we  remark that the claims (\ref{prop:descent:t})--(\ref{prop:descent:devissage}) will be proved simultaneously; we first prove a weaker analogue of (\ref{prop:descent:t}) (\emph{cf.} Proposition~\ref{prop:DescentDevissage}), from which we deduce  (\ref{prop:descent:devissage}) (\emph{cf.} Proposition~\ref{prop:Pfdescent}, Lemma~\ref{lem:Pfdescent}), and using (\ref{prop:descent:devissage}) we prove the rest of Proposition~\ref{prop:descent} (\emph{cf.} Lemma~\ref{lem:PfdescentLiftability}).

Let us now begin the proof of Proposition~\ref{prop:descent}.

\subsection{Preparation: Liftable PD thickenings}\label{subsec:LiftablePD}
\begin{defnsub}\label{def:LiftablePD}
Let $B\thra R$ be a PD thickening  of $\Z/p^m$-alegbras for some $m\geqslant1$. We call $B$ a \emph{liftable} PD thickening of $R$ if there exist a $\Zp$-flat $p$-adic PD thickening $D^\fl\thra R$ and a surjective PD morphism $D^\fl\thra B$.

The most important class of liftable PD thickening to us is \emph{square-zero liftable PD thickenings}, by which we mean a square-zero thickening $B\thra R$ equipped with the ``square-zero PD structure'' on $\ker (B\thra R)$, such that $B\thra R$ is lifable as a PD thickening. (If we assume that $p>2$, then any square-zero PD structure is compatible with the standard PD structure on $p\Zp$.)
\end{defnsub}
The following lemma is obvious from the definition.
\begin{lemsub}\label{lem:LiftablePDRefine}
Let $R''\thra R'\thra R$ be square-zero thickenings of $\Z/p^m$-algebras for some $m\geqslant 1$. If $R''\thra R$ is a square-zero liftable PD thickening, then so are $R''\thra R'$ and $R'\thra R$. 
\end{lemsub}

Note that there exists a ring $R$ of characteristic $p$ which does not admit a $\Zp$-flat $p$-adic PD thickening (so the trivial thickening $R\xra{=} R$ is not liftable for such $R$). See \cite[Remark~4.1.6]{ScholzeWeinstein:RZ} for such an example, which is the quotient of a perfect $\Fp$-algebra by an infinitely generated ideal. On the other hand, the trivial thickening of a reduced $\kappa$-algebra is liftable, as $W(R)\thra R$ is a $W$-flat $p$-adic  PD thickening. 

\begin{lemsub}\label{lem:PDtorsion}
Let $R'\thra R$ be a square-zero thickening of finitely generated $W/p^m$-algebras for some $m\geqslant 1$. We choose a formally smooth $W$-algebra $A \thra R'$ and let $D\thra R$ denote the $p$-adically completed PD hull of $A\thra R$.

Then $R'\thra R$ is a square-zero liftable PD thickening if and only if the natural PD morphism $D\thra R'$ factors through the image $D^\fl$ of $D$ in $D[\ivtd p]$.
\end{lemsub}
The author knows whether $D\thra R$ factors through $D^\fl$ if $R$ is finitely generated over $W/p^m$ for some $m$, or if there is any counterexample.
\begin{proof}
Let $D_{\tor}:= \ker(D\thra D^\fl)$. If $D\thra R'$ factors through $D^\fl$, then we have $D_{\tor}\subset \ker(D\thra R)$, and it is a PD subideal  as the PD structure preserves the $p$-power torsion elements. Therefore, $D^\fl$ is the $W$-flat $p$-adic  PD thickening, which admits a PD surjection onto $R'$.

Conversely, if $R'\thra R$ is liftable, then by assumption there exist a $W$-flat $p$-adic PD thickening $D'\thra R$ and a surjective PD morphism $D'\thra R'$. By choosing a lift $A\to D'$ of $A\thra R'$, we can natural extending it to a PD morphism $D\to D'$. Therefore  $D\thra R'$ factors through $D^\fl$, as it factors through a flat $W$-algebra $D'$.
\end{proof}

The aim of this section is to show that ``sufficiently many'' rings in characteristic~$p$ admit lots of liftable PD thickenings  (although we cannot handle all finitely generated $\kappa$-algebras). In general, it seems quite hard to produce a $p$-torsion free PD thickening, and our approach is to embed our finitely generated $\kappa$-algebra into some f-semiperfect ring and use some construction of $p$-torsion free PD thickening available for some f-semiperfect rings.

Recall that a ring $\wt R$ of characteristic $p$ is called \emph{semiperfect} if the Frobenius map $\sig:\wt R\ra \wt R$ is surjective. To a semiperfect ring $\wt R$ we can form a \emph{inverse perfection}
\begin{equation}
\wt R^\flat:= \varprojlim_\sig\wt  R,
\end{equation}
complete with respect to the natural projective limit topology. Let $\wt J\subset \wt R^\flat$ denote the kernel of the natural projection $\wt R^\flat \thra \wt R$. 

The following definition is from \cite[Definition/Proposition~4.1.2]{ScholzeWeinstein:RZ} and \cite{Lau:DieudonneSemiperfect}:
\begin{defnsub}
A semiperfect ring $\wt R$ is called \emph{f-semiperfect} if there is a finitely generated ideal $\wt J_0\subset \wt R^\flat$ such that for some $n\gg0$ we have $\sig^n(\wt J_0)\subset \wt J\subset \wt J_0$.  

A semiperfect ring $\wt R$ is \emph{balanced} if $\big(\ker(\sigma)\big)^p=0$. This is equivalent to require that  $\wt J:=\ker (\wt R^\flat\thra \wt R)$ satisfies $\wt J^p = \sigma(\wt J)$; \emph{cf.} \cite[Lemma~4.6]{Lau:DieudonneSemiperfect}.
\end{defnsub}

Note that any f-semiperfect ring $\wt R_0$ admits a surjective map $\psi:\wt R_0\thra \wt R$ to a balanced f-semiperfect ring $\wt R$ such that $\sigma^N(\ker(\psi))=0$ for some $N\gg0$; \emph{cf.} \cite[Definition/Proposition~4.1.2]{ScholzeWeinstein:RZ}.

\begin{lemsub}\label{lem:fslift}
Let $\wt R$ Be a balanced semiperfect ring. Then there exists a $\Zp$-flat quotient $\wt W(\wt R)$ of $W(\wt R^\flat)$  such that $\wt W(\wt R)/p = \wt R$.
\end{lemsub}
Note that $\wt W(\wt R)$ was denoted as $W_{\mathrm{PD},0}(\wt R)$ in \cite[proof Lemma~4.1.7]{ScholzeWeinstein:RZ}. We will see from the construction that we have $W(\wt R^\flat) \thra \wt W(\wt R) \thra W(\wt R)$.
\begin{proof}
This lemma can be extracted from \cite[proof Lemma~4.1.7]{ScholzeWeinstein:RZ}. (See also  \cite[proof of Lemma~4.6]{Lau:DieudonneSemiperfect}.) To explain, let $\wt J:=\ker(\wt R^\flat\thra\wt R)$, and  define
\[
[\wt J]:=\{\sum_{i\geqs0}[r_i]p^i=(r_0,r_1^p,\cdots, r_i^{p^i},\cdots) |\ r_i\in \wt J\} \subset W(\wt J) \subset W(\wt R^\flat),
\]
which turns out to be an ideal an ideal of $W(\wt R^\flat)$ since we have $\sig(\wt J)=\wt J^p$ by balancedness of $\wt R$. (The non-trivial part is to show that  $[\wt J]$ is stable under  addition if we have $\sig(\wt J)=\wt J^p$. See  \cite[proof of Lemma~4.6]{Lau:DieudonneSemiperfect} for the verification.) 

Now, we set $\wt W(\wt R):=W(\wt R^\flat)/[\wt J]$. Clearly (from the definition of $[\wt J]$), there is no non-zero $p$-torsion in $\wt W(\wt R)$, and we have $\wt W(\wt R)/p = \wt R^\flat/\wt J = \wt R$.
\end{proof}

\begin{lemsub}\label{lem:fsperf}
Let $\psi:\wt R'\thra \wt R$ be a nilpotent thickening of balanced f-semiperfect rings such that we have $(\ker (\psi))^p = 0$. We give a PD structure on $\ker(\psi)$ determined by $f^{[p]} = 0$ for any $f\in\ker(\psi)$.

Then there exists a unique PD structure on $\ker (\wt W(\wt R')\thra \wt R)$ such that the natural projection $\wt W(\wt R')\thra \wt R'$ is a PD morphism. In particular, $\psi:\wt R'\thra \wt R$ is a liftable PD thickening (\emph{cf.} Definition~\ref{def:LiftablePD}).
\end{lemsub}
\begin{proof}
 Since $\sigma(\ker(\psi)) \subset \ker(\psi)^p = 0$ (i.e., $\psi$ factors $\sigma:\wt R\to \wt R$), it follows that $\psi$ induces an isomorphism $\wt R'^\flat\riso \wt R^\flat$. We let $\wt J':=\ker (\wt R^\flat\thra \wt R')$ and $\wt J:=\ker (\wt R^\flat\thra \wt R)$. Since both $\wt R$ and $\wt R'$ are balanced, the ideals $[\wt J],[\wt J']\subset W(\wt R)$ make sense (\emph{cf.} proof of  Lemma~\ref{lem:fslift}), and we have 
$\wt J \supset \wt J' \supset \wt J^p = \sigma(\wt J)$. 

We write $\fa:=\ker (\wt W(\wt R')\thra \wt R)$, which is generated by $p$ and the image of $[\wt J]$.
Since $\wt W(\wt R'):=W(\wt R^\flat)/[\wt J']$ is $\Zp$-flat, there exists at most one PD structure on $\fa$, and the only candidate of the PD structure is the restriction of the unique divided power of $\fa$ in   $\wt W(\wt R')[\ivtd p]=(W(\wt R^\flat)/[\wt J'])[\ivtd p]$. Therefore, to show that $\fa$ has a PD structure, it suffices to show that for any $\alpha\in (p,[\wt J]) \subset W(\wt R^\flat)$ we have $\frac{\alpha^p}{p!} \in \fa + [\wt J'][\ivtd p]\subset W(\wt R^\flat)[\ivtd p]$. (Note that the same claim for $\frac{\alpha^n}{n!}$ with $n\geqslant p$ follows from the case with $n=p$.) 

Since any $\alpha\in(p,[\wt J])$ can be written as $\alpha = [r_0] + p\beta$, for $r_0\in \wt J$ and $\beta\in W(\wt R^\flat)$, we have 
\[
\frac{\alpha^p}{p!} = \frac{([r_0]+p\beta)^p}{p!} = \sum_{i=0}^p\frac{[r_0^i]}{i!}\frac{p^{p-i}}{(p-i)!}\beta^{p-i}\equiv \frac{[\sigma(r_0)]}{p!}\bmod{pW(\wt R^\flat)}.
\]
Since we have  $\sigma(\wt J)\subset \wt J'$ by assumption,
$\frac{[\sigma(r_0)]}{p!}$ maps to zero in $\wt W(\wt R')[\ivtd p]$, and we have $\frac{\bar\alpha^p}{p!}\in p \wt W(\wt R')$ for any $\bar\alpha\in\fa$. Therefore, $\fa\subset \wt W(\wt R')$ is a PD ideal and the mod~$p$ reduction $\wt W(\wt R')\thra\wt R'$ is a PD morphism.
\end{proof}

\begin{corsub}\label{cor:fsperf-red}
Let $\kappa$ be an algebraically closed field of characteristic~$p$, and let $B$ be a \emph{reduced} ring which is formally finitely generated over $\kappa$ (\emph{cf.} Remark~\ref{rmk:ForFType}). Let $J\subset B$ be an ideal contained in an ideal of definition for $B$ such that $R:=B/J$ is reduced. (Note that we have $B = \varprojlim_i B/J^i$.)

Then there exists a sequence of square-zero liftable PD thickenings (in the sense of Definition~\ref{def:LiftablePD}):
\[B\thra \cdots \thra B_{i+1} \thra B_i \thra \cdots \thra B_0 = R\]
such that $B = \varprojlim_i B_i$. 
\end{corsub}
In practice, $B$ will be either formally smooth over $\kappa$ (with $J$ some suitable closed subideal of some ideal of definition), or a reduced complete local noetherian $\kappa$-algebra with residue field $\kappa$ (with maximal ideal $J$, so $R = \kappa$).
\begin{proof}
\begin{subequations}
We write $\wt B:=\varinjlim_\sig B$, and $\wt R_0:=\wt B/J\wt B$ (which is clearly f-semiperfect). Then we have
\begin{equation}\label{eqn:R0flat}
\wt R_0^\flat :=\varprojlim_\sigma \wt R_0 \cong \varprojlim_n \wt B/\sigma^n(J)\wt B \cong \varprojlim_m \wt B/J^m\wt B,
\end{equation}
where the last isomorphism follows since for any $n$, we have $\sigma^{n+N}(J)\wt B\subset J^{p^{n+N}}\wt B\subset \sigma^n(J)\wt B$, if $J$ can be generated by $p^N$ elements; \emph{cf.} \cite[proof of Proposition~4.1.2]{ScholzeWeinstein:RZ}. In particular, $B$ injects into $\wt R^\flat_0$.

Let $\wt J:= \bigcup_n \sig^{-n}(J^{p^n}\wt R^\flat_0)$, which an ideal of $\wt R_0^\flat$ because it is a rising union of ideals. Then we have $\wt J^p = \sigma(\wt J)$ and $J\wt R_0^\flat \subset \wt J \subset \sigma^{-N}(J\wt R_0^\flat)$, if $J$ can be generated by $p^N$ elements. We set $\wt R = \wt R_0/\wt J$. Note that $\wt R$ is a balalnced f-semiperfect ring, and we have 
\begin{equation}\label{eqn:Rflat}
\wt R_0^\flat = \wt R^\flat = \varprojlim_n \wt R^\flat/\sigma^n(\wt J) = \varprojlim_n \wt R^\flat/(\wt J)^{p^n} ;\text{  \emph{cf.} \cite[Proposition~4.1.2]{ScholzeWeinstein:RZ}.}
\end{equation}
\end{subequations}

We next show that $R$ injects into $\wt R$. Indeed, the kernel of $B \thra R \ra \wt R$ consists of elements $f\in B$ such that $\sig^n(f)=f^{p^n}\in J^{p^n}$ for some $n$, but this condition forces $f\in J$ as $J$ is its own radical.

Observe that $\set{(\wt J)^i}$ is a fundamental system of neighbourhoods of $0$ in $\wt R^\flat$, as it is cofinal with $\set{J^i\wt R^\flat}$; \emph{cf.} (\ref{eqn:R0flat}) and (\ref{eqn:Rflat}).  We set $\wt B_i:=\wt R^\flat/\wt J^i$,  and let $B_i\subset \wt B_i$ denote the image of $B$. Note that $\wt B_i$ is balanced for any $i$ since $\sigma((\wt J)^i) = (\sigma(\wt J))^i = (\wt J)^{pi}$. As $B$ injects into $\wt R^\flat$, we have $B=\varprojlim_i B_i$. (Recall that $B$ is $J$-adically complete, and $\{J^i\wt R^\flat\}$ and $\{(\wt J)^i\}$ are cofinal.) 

It remains to show that for any $i$  the PD thickening $B_{i+1}\thra B_i$ (with respect to the ``square-zero PD structure'') is liftable, which we deduce from the liftability of the square-zero PD thickening of balanced f-semiperfect rings $\wt B_{i+1}\thra \wt B_i$ (\emph{cf.} Lemma~\ref{lem:fsperf}).
We choose a formally smooth formally finitely generated $W$-algebra $A$ surjecting onto $B$ and $\ker(A\thra R)$ is an ideal of definition. (Indeed, we choose a polynomial algebra $A_0$ over $W$ which surjects onto $B/J^2$, and let $A$ denote the completion of $A_0$ with respect to $\ker(A_0\thra R)$.) Let $D_i\thra B_i$ denote the $p$-adically completed  PD hull of $A\thra B\thra B_i$, then we obtain a natural PD morphism $D_i\thra B_{i+1}$. By Lemma~\ref{lem:PDtorsion}, it suffices to show that $D_i\ra B_{i+1}$ factors through the image $D_i^\fl$ of $D_i$ in $D_i[\ivtd p]$.

We choose a lift $A\ra W(\wt R^\flat)$ of $A\thra B\hra\wt R^\flat$. Then by the universal property of $D_i$, the map $A\to W(\wt R^\flat)\thra \wt W(\wt B_{i+1})$ naturally extends to a PD morphism $D_i \to \wt W(\wt B_{i+1})$; \emph{cf.} Lemma~\ref{lem:fsperf}. Now, we consider the following diagram
\begin{equation}\label{eqn:fsperf-red}
\xymatrix{
& A \ar@{->>}[d] \ar[r]\ar@{.>}[ld]
& W(\wt R^\flat) \ar@{->>}[r]
& \wt W(\wt B_{i+1}) \ar@{->>}[d] \\
D_i\ar@{.>>}[r] \ar@{.>}[urrr]
& B \ar@{->>}[r] 
& B_{i+1} \ar@{^{(}->}[r]
& \wt B_{i+1}
}.
\end{equation}

Note that the two possible solid arrows $A\to\wt B_{i+1}$ coincide since the composed map $A\to W(\wt R^\flat)\thra \wt B_i$ factors through $B$ and $B_{i+1}$ is the image of $B$ in $\wt B_{i+1}$. Now the universal property of $D_i$ applied to the two possible solid arrows $A\to \wt B_{i+1}$ shows that two possible maps $D_i\to \wt B_{i+1}$ coincides; i.e., $D_i\to \wt W(\wt B_{i+1}) \thra \wt B_{i+1}$ factors through $B_{i+1}$. Furthermore, the natural map $D_i\to\wt B_{i+1}$ factors through the $\Zp$-flat closure $D_i^\fl$ (as $D_i\to \wt W(\wt B_{i+1})$ factors through $D_i^\fl$ by $\Zp$-flatness of $\wt W(\wt B_{i+1})$). This shows that $\ker(D_i\thra B_{i+1})$ contains the $p$-power torsion of $D_i$.
\end{proof}
\begin{rmksub}
Unfortunately, Corollary~\ref{cor:fsperf-red} does not show that any finitely generated $\kappa$-algebra $R$ admits a $p$-adic $W$-flat PD thickening, as there are examples of $R$ that cannot occur as one of $B_i$'s as in Corollary~\ref{cor:fsperf-red}.
\end{rmksub}
\begin{corsub}\label{cor:fsperf-appl}
Let $W:=W(\kappa)$ for an algebraically closed field $\kappa$ of characteristic $p>2$.
Let $A$ be a formally smooth formally finitely generated  $W$-algebra, and let  $J\subset A$  be an ideal contained in an ideal of definition for $A$, such that $R:=A/J$ is a reduced $\kappa$-algebra. (Note that we have $A = \varprojlim_iA/J^i$.)  Then  there exists a sequence of square-zero liftable PD thickenings $\set{A_{m,i}}$ indexed by $(m,i)\in\Z_{\geqslant1}^2$, where each $A_{m,i}$ is a nilpotent thickening of $A_{1,1}:=R$ killed by $p^m$,  such that  $A_{m+1,i}\thra A_{m,i}$ and $A_{m,i+1}\thra A_{m,i}$ are liftable square-zero PD thickenings for any $(m,i)$, and we have $A/p^m = \varprojlim_i A_{m,i}$ for any $m\geqslant 1$.  
\end{corsub}
\begin{proof}
We set $B=A/p$, which is a reduced $\kappa$-algebra (\emph{cf.} \cite[Lemma~1.3.3]{dejong:crysdieubyformalrigid}),
 so by Corollary~\ref{cor:fsperf-red} we obtain $A_{1,i}:=B_i$.
Furthermore, we construct $\wt R^\flat$, $\wt B_i$, etc, as in the proof of Corollary~\ref{cor:fsperf-red}, and we have the commutative diagram (\ref{eqn:fsperf-red}).

For the chosen lift $A\to W(\wt R^\flat)$ of $A/p\hra \wt R^\flat$, we set 
$A_{m,i}$ to be the image of $A$ in $\wt W(\wt B_i)/p^m$. Then we have $A/p^m = \varprojlim_i A_{m,i}$ for any $m\geqslant1$. (Injectivity is clear from $A/p^m\hra W(\wt R^\flat)/p^m = \varprojlim_i \wt W(\wt B_i)/p^m$, and surjectivity can be checked modulo~$p$; \emph{cf.} Corollary~\ref{cor:fsperf-red}.)

Finally, the liftability of square-zero PD thickenings $A_{m',i'}\thra A_{m,i}$ with $(m',i')\in\{(m+1,i),(m,i+1)\}$  can be verified via diagram chasing analogous to (\ref{eqn:fsperf-red}). To explain, let $D_{m,i}\thra A_{m,i}$ denote the $p$-adically completed PD hull of $A\thra A_{m,i}$, so we obtain the PD morphisms $D_{m,i}\to A_{m',i'}$ for $(m',i')\in\{(m+1,i),(m,i+1)\}$, where the targets are given the ``square-zero PD structure'' on the kernel of the projection onto $A_{m,i}$. And we have the following diagrams
\[
\xymatrix{
A \ar@{->>}[dr] \ar[r]\ar@{.>}[d]
& W(\wt R^\flat)  \ar@{->>}[r]
& \wt W(\wt B_{i}) \ar@{->>}[d] \\
D_{m,i}\ar@{.>>}[r] \ar@{.>}[urr]
& A_{m+1,i} \ar@{^{(}->}[r]
& \wt W(\wt B_{i})/p^{m+1}
}
\xymatrix{
A \ar@{->>}[dr] \ar[r]\ar@{.>}[d]
& W(\wt R^\flat)  \ar@{->>}[r]
& \wt W(\wt B_{i+1}) \ar@{->>}[d] \\
D_{m,i}\ar@{.>>}[r] \ar@{.>}[urr]
& A_{m,i+1} \ar@{^{(}->}[r]
& \wt W(\wt B_{i+1})/p^m
}
.
\]
By the same diagram chasing as in the proof of Corollary~\ref{cor:fsperf-red}, it follows that the map $D_{m,i}\to A_{m',i'}$ for $(m',i')\in\{(m+1,i),(m,i+1)\}$ factors through the $\Zp$-flat closure $D_{m,i}^\fl$ by $\Zp$-flatness of $\wt W(
\wt B_{i})$ and $\wt W(\wt B_{i+1})$, which implies the liftability of $A_{m',i'}\thra A_{m,i}$ for $(m',i')\in\{(m+1,i),(m,i+1)\}$ by Lemma~\ref{lem:PDtorsion}.
\end{proof}

\subsection{Proof of Proposition~\ref{prop:descent}}\label{subsec:Pfdescent}
Proving the following proposition is the key step of the proof of Proposition~\ref{prop:descent}:
\begin{propsub}\label{prop:DescentDevissage}
Let $R'\thra R$ be a square-zero thickening of finitely generated $W/p^m$-algebras for some $m$. We give the square-zero PD structure on $\ker (R'\thra R)$. Let $(X,\iota)\in\RZ^\Lambda_{G,b}(R)$.

Then there exist a unique section $t_\alpha(R')\subset\DD(X)(R')^\otimes$ for each $\alpha$, such that its image in $\DD(X_{\wh R_x})(\wh R'_x)^\otimes$ coincides with $\hat t_{\alpha,x}(\wh R'_x)$ for any  closed point $x$  in $\Spec R$, where $(\hat t_{\alpha,x})$ is as in (\ref{eqn:tx})).
\end{propsub}

Before we prove the proposition, let us record the following lemma:
\begin{lemsub}\label{lem:fpqcdescent}
Let $R'$ be a noetherian ring. Then the product $\prod_x\wh R'_x$ over all closed points $x\in \Spec R'$ is faithfully flat over $R'$.
\end{lemsub}
\begin{proof}
By \cite[Theorem~2.1]{Chase:DirectProducts}, any \emph{direct product} of flat modules over a noetherian ring is again flat, which shows that $\prod_x \wh R'_x$ is flat over $R'$ when $R'$ is noetherian. Now, given any $\eta\in\Spec R'$ and a closed point  $\m$ specialising $\eta$, there exists $\hat\eta\in\Spec\wh R_x$ mapping to $\eta$, and we can view $\hat\eta$ as a point in  $\Spec (\prod_x \wh R_x)$. This shows the desired faithful flatness.
\end{proof}
Let us  outline the strategy to prove Proposition~\ref{prop:DescentDevissage}. By Lemma~\ref{lem:fpqcdescent}, we need to show that the collection $\{\hat t_{\alpha,x}(\wh R'_x)\}_x$ respects the descent datum with respect to $R'\to\prod_x\wh R'_x$, where $x$ runs over all  closed points of $\Spec R$. We want to obtain this compatibility from the quasi-isogeny $\iota:\BX_{R/p}\to X_{R/p}$, which is only possible if $R'\thra R$ is a \emph{liftable} PD thickening in the sense of Definition~\ref{def:LiftablePD}. In particular, we can prove Proposition~\ref{prop:DescentDevissage} if both $R$ and $R'$ are one of $A_{m,i}$ as in Corollary~\ref{cor:fsperf-appl}. In general, we use some fibre product construction to reduce to the square-zero thickenings appearing in Corollary~\ref{cor:fsperf-appl}.

\begin{lemsub}\label{lem:DescentDevissage-Liftable}
Proposition~\ref{prop:DescentDevissage} holds if $R'\thra R$ is a liftable PD thickening.
\end{lemsub}
\begin{proof}
Let us set up the notation. We choose a formally smooth formally finitely generated $W$-algebra $A\thra R'$.
 Let $D\thra R$ be the $p$-adically completed PD hull of $A\thra R'\thra R$. By liftability, the natural PD morphism $D\thra R'$ factors through the $W$-flat closure $D^\fl$ of $D$; \emph{cf.} Lemma~\ref{lem:PDtorsion}.

For a closed point $x$ in $\Spec R$, we consider the square-zero thickening $\wh R'_x\thra \wh R_x$ (with the ``square-zero PD structure). The completion $\wh A_x$ (viewing $x$ as a closed point of $\Spf A$) surjects onto $\wh R'_x$, and if we set 
\begin{equation}\label{eqn:PDFormalNbhd}
D^\fl_x := \varprojlim_m D^\fl/p^m\otimes_{A/p^m} \wh A_x/p^m
\end{equation}
turns out to be a $W$-flat $p$-adic PD thickening of $\wh R_x$, admitting a PD surjection onto $\wh R'_x$. (Indeed, 
 since $A/p^m\to\wh A_x/p^m$ is flat for any $m$, the PD surjection $D^\fl/p^m\thra R'$ for $m\gg1$ naturally extends to a PD surjection $D^\fl/p^m\otimes_{A/p^m} \wh A_x/p^m \thra R'\otimes_{A/p^m} \wh A_x/p^m = \wh R'_x$; \emph{cf.} \cite[Corollary~3.22]{Berthelot-Ogus}. The $W$-flatness of $D^\fl_x$ follows from the $W$-flatness of $D^\fl$.) 

Let us choose a finite-rank direct factor  $\E\subset \DD(X)^\otimes$ where all $(t_\alpha)$ factor through, so that $\E(D^\fl)$ is a finitely generated projective  $D^\fl$-module. Then  $\prod_x(\hat t_{\alpha,x}(D^\fl_x))$ defines an element in $\prod_x\E(D^\fl_x)\cong \E(D^\fl)\otimes_{D^\fl}(\prod_x D^\fl_x)$, and its image in $\E(R')\otimes_{R'}(\prod_x\wh R'_x)$ is given by $\prod_x(\hat t_{\alpha,x}(\wh R'_x))$. Here, $x$ runs over all closed points $x$ in $\Spec R$. 
Now, by faithfully flat descent and Lemma~\ref{lem:fpqcdescent}, we have a left exact sequence
\begin{equation}\label{eqn:equaliser}
\xymatrix@1{
\E(R') \ar[r] & \E(R')\otimes_{R'}(\prod_x\wh R'_x) \ar@<-.3ex>[r] \ar@<.3ex>[r] & \E(R')\otimes_{R'}(\prod_x\wh R'_x)^{\otimes2}
}.
\end{equation}
So to prove the lemma, it suffices to show that $\prod_x(\hat t_{\alpha,x}(\wh R'_x))$  is in the equaliser of the double arrows. We verify this using its lift $\prod_x(\hat t_{\alpha,x}(D^\fl_x))\in \E(D^\fl)\otimes_{D^\fl}(\prod_x D^\fl_x)$.

We first claim that $\prod_x(\hat t_{\alpha,x}(D^\fl_x))$ lies in the image of
\[
\E(D^\fl)[\ivtd p]\to \E(D^\fl)[\ivtd p]\otimes_{D^\fl}(\prod_x D^\fl_x).
\]
In fact, recall that the quasi-isogeny $\iota:\BX_{R/p}\to X_{R/p}$ induces the  morphism of $F$-isocrystals
$s_{\alpha,\DD}:\triv\to\DD(X_{\wh R_x})^\otimes[\ivtd p]$ (\emph{cf.} Definition~\ref{def:t}).
We claim that $\hat t_{\alpha,x}:\triv\to\DD(X_{\wh R_x/p})^\otimes$ induces $s_{\alpha,\DD}$ on the isocrystals. 
Indeed,  by \cite[Lemma~4.3]{deJong-Messing} any map of $F$-isocrystals $\triv\to\DD(X_{\wh R_x})^\otimes[\ivtd p]$ is determined by its fibre at the closed point $x$,\footnote{Although \cite[Lemma~4.3]{deJong-Messing} is stated for isocrystals over $\wh R_x/p$, we can apply it to isocrystals over $\wh R_x$ since  the restriction by $\Spec \wh R_x/p\hra\Spec \wh R_x$ is an equivalence of categories on the categories of (iso)crystals.} and $s_{\alpha,\DD}$ and $\hat t_{\alpha,x}$ coincide at any closed point $x$ in $\Spec R$ by definition of $\RZ^\Lambda_{G,b}$ (\emph{cf.} Definition~\ref{def:RZGloc}).
This shows that  after inverting $p$, $\prod_x(\hat t_{\alpha,x}(D_x^\fl))$ coincides with the image of  ``$s_{\alpha,\DD}(D^\fl[\ivtd p])$''$\in\E(D^\fl)[\ivtd p]$, so   we have
\[
\prod_x(\hat t_{\alpha,x}(D_x^\fl)) \in \ker \left(\E(D^\fl)\otimes_{D^\fl}(\prod_x D_x^\fl) \rightrightarrows \E(D^\fl)\otimes_{D^\fl}(\prod_x D^\fl_x)^{\otimes2}\right).
\]

We now conclude since the natural map induced by the  reduction
\begin{multline*}\label{eqn:equaliserR}
 \ker \left(\E(D^\fl)\otimes_{D^\fl}(\prod_x D_x^\fl) \rightrightarrows \E(D^\fl)\otimes_{D^\fl}(\prod_x D^\fl_x)^{\otimes2}\right)\\
 \stackrel{\sigma^{n*}}{\longrightarrow}  \ker \left(\E(R')\otimes_{R'}(\prod_x \wh R'_x) \rightrightarrows \E(R')\otimes_{R'}(\prod_x \wh R'_x)^{\otimes2}\right) \liso \E(R'),
\end{multline*}
sends $\prod_x(\hat t_{\alpha,x}(D_x^\fl))$ to $\prod_x(\hat t_{\alpha,x}(\wh R'_x))$, so we obtain the desired tensor  $t_\alpha(R')\in\E(R')$ from $\prod_x(\hat t_{\alpha,x}(\wh R'_x))$ by faithfully flat descent theory (\emph{cf. }(\ref{eqn:equaliser})).
\end{proof}

To prove Proposition~\ref{prop:DescentDevissage} in general, we need more lemmas on lifting $(X,\iota)\in\RZ^\Lambda_{G,b}(R)$ under nilpotent thickenings (\emph{cf.} Lemmas~\ref{lem:ForSmFT}, \ref{lem:SchlessingerRimFType}).
\begin{lemsub}\label{lem:ForSmFT}
Let $R'\thra R$ and $(X,\iota)\in\RZ^\Lambda_{G,b}(R)$ be as in Proposition~\ref{prop:DescentDevissage}, and \emph{assume} that   there exists $t_\alpha(R')\in\DD(X)(R')^\otimes$ for each $\alpha$ glueing $\hat t_{\alpha,x}(\wh R'_x)$ for any closed point $x$ in $\Spec R$. (In other words, we assume that the conclusion of Proposition~\ref{prop:DescentDevissage} holds for $(X,\iota)$ and $R'\thra R$.)
\begin{enumerate}
\item\label{lem:ForSmFT:red}
The following $R'$-scheme is a $G$-torsor:
 \[
 \cP_{R'}:=\nf\Isom_{R'}\big([\DD(X)(R'), (t_\alpha(R'))], [R'\otimes_{\Zp}\Lambda^*,(1\otimes s_\alpha)] \big),
 \]
\item \label{lem:ForSmFT:Fil}
The Hodge filtration $\Fil^1_X\subset \DD(X)(R)$ is a $\set\mu$-filtration with respect to the image $(t_\alpha(R))\subset \DD(X)(R)^\otimes$ of $(t_\alpha(R'))$.
\item \label{lem:ForSmFT:GM}
Let $(X',\iota')\in\RZ_\BX(R')$ be a lift of $(X,\iota)$. Then we have $(X',\iota')\in\RZ^\Lambda_{G,b}(R')$ if and only if the Hodge filtration $\Fil^1_{X'}\subset \DD(X')(R')\cong\DD(X)(R')$ is a $\set\mu$-filtration with respect to $(t_\alpha(R'))$.
\end{enumerate}
Furthermore, $(X,\iota)$ can be lifted to some $(X',\iota')\in\RZ^\Lambda_{G,b}(R')$.
\end{lemsub}
By Lemma~\ref{lem:DescentDevissage-Liftable}, the assumption of the proposition is satisfied if $R'\thra R$ is a liftable PD thickening.
\begin{proof}
To show (\ref{lem:ForSmFT:red}),  it suffices to show that $\cP_{R'}\times_{\Spec R'}\Spec \wh R'_x$  is a (necessarily trivial) $G$-torsor for any closed point $x$. Let $\hat f_x:\Spf \wh R_x \to \Def_{X_x,G} = \Spf A_{G,x}$ be the map corresponding to the deformation $X_{\wh R_x}$ of $X_x$, and let $[\bM_{G,x},(\hat t^{\univ}_{\alpha,x}(A_{G,x}))]$ be the universal deformation of $[\DD(X_x)(W),(t_{\alpha,x}(W))]$; \emph{cf.} \S\ref{subsec:FaltingsDeforConstr}. Then for any lift $\hat f'_x:\Spf \wh R'_x\to\Spf A_{G,x}$ of $\hat f_x$, we have an isomorphism $\DD(X_{\wh R_x})(\wh R'_x) \cong \hat f_x'^*\bM_{G,x}$ preserving the tensors, and by construction we have an isomorphism $\bM_{G,x} \cong A_{G,x}\otimes_{\Zp}\Lambda$ matching $(\hat t^{\univ}_{\alpha,x}(A_{G,x}))$ with $(1\otimes s_\alpha)$; \emph{cf.} (\ref{eqn:UnivDefCrys}), (\ref{eqn:tuniv}).

By  (\ref{lem:ForSmFT:red}), we have a notion of $\set\mu$-filtration on $\DD(X)(R)$ and $\DD(X)(R')$ compatible with the notion of $\set\mu$-filtration on the formal neighbourhood of a closed point $x$ in $\Spec R$ defined by $(\hat t_{\alpha,x})$. Note that $\wh R_x\otimes_R\Fil^1_X = \hat f_x^*(\Fil^1\bM_{G,x})$ is a $\{\mu\}$-filtration with respect to $(\hat t_{\alpha,x}(\wh R_x))$ for any closed point $x$ in $\Spec R$ (\emph{cf.} (\ref{eqn:UnivDefCrys})), so we obtain (\ref{lem:ForSmFT:Fil}) by Corollary~\ref{cor:muFilviaCompl}.

By Corollary~\ref{cor:muFilviaCompl} and the definition of $\RZ^\Lambda_{G,b}$ (\emph{cf.} Definition~\ref{def:RZGloc})), (\ref{lem:ForSmFT:GM}) is equivalent to the claim that for any closed point $x$ in $\Spec R$, $X'_{\wh R_x'}$ comes from a lift $\hat f'_x:\Spf \wh R'_x\to \Spf A_{G,x}$ if and only if its Hodge filtration is a $\set\mu$-filtration, which is proved in Proposition~\ref{prop:LiftingTate}.

Since $\Fil^1_X\subset\DD(X)(R)$ can be lifted to a $\set\mu$-filtration of $\DD(X)(R')$ by Lemma~\ref{lem:muFil}, the existence of lift $(X',\iota')\in\RZ^\Lambda_{G,b}(R')$ of $(X,\iota)$ now follows from (\ref{lem:ForSmFT:GM}) and the Grothendieck-Messing deformation theory.
\end{proof}

Let $R$, $R'$, and $B$ be finitely generated $W/p^m$-algebras for some $m\geqslant 1$, and we have morphisms $B\to R$ and $R'\to R$ of $W/p^m$-algebras.  Assume that $B\to R$ is surjective with the kernel $\bb$ killed by the nil-radical of $B$, and $R'_{\red}\thra R_{\red}$ is surjective. We set $B':=B\times_R R'$, which is finitely generated over $W/p^m$. By assumption, we can regard any closed point $x\in\Spec R$ as a point of $\Spec R'$, $\Spec B$ and $\Spec B'$. Since $\RZ_\BX$ is representable by a formal scheme, we have a natural bijection (\emph{cf.} \cite[Corollary~5.4]{Artin:VersalDefAlgStack}).
\begin{equation}\label{eqn:SchlessingerRimRZ}
\RZ_\BX(B')\riso\RZ_\BX(B)\times_{\RZ_\BX(R)}\RZ_\BX(R').
\end{equation}
\begin{lemsub}\label{lem:SchlessingerRimFType}
In the above setting, the natural map $\RZ^\Lambda_{G,b}(B')\ra\RZ^\Lambda_{G,b}(B)\times_{\RZ^\Lambda_{G,b}(R)}\RZ^\Lambda_{G,b}(R')$, obtained by restricting the isomorphism~(\ref{eqn:SchlessingerRimRZ}),  is a bijection.
\end{lemsub}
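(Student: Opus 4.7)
The plan is to use the bijection (\ref{eqn:SchlessingerRimRZ}) at the level of $\RZ_\BX$ and only verify that the extra condition defining $\RZ^\Lambda_{G,b}\subset\RZ_\BX$ is preserved under the fibre product decomposition. Injectivity is immediate: $\RZ^\Lambda_{G,b}$ is a subfunctor of $\RZ_\BX$, so two elements of $\RZ^\Lambda_{G,b}(B')$ sharing the same images in $\RZ^\Lambda_{G,b}(B)$ and $\RZ^\Lambda_{G,b}(R')$ already agree in $\RZ_\BX(B')$ by (\ref{eqn:SchlessingerRimRZ}). The real content is surjectivity.

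For surjectivity, take $(X_B,\iota_B)\in\RZ^\Lambda_{G,b}(B)$ and $(X_{R'},\iota_{R'})\in\RZ^\Lambda_{G,b}(R')$ with matching images in $\RZ^\Lambda_{G,b}(R)$, and let $(X',\iota')\in\RZ_\BX(B')$ be the unique element from (\ref{eqn:SchlessingerRimRZ}) mapping to them. I would check Definition~\ref{def:RZGloc} for $(X',\iota')$ one closed point $x$ at a time. The hypotheses that $\bb$ is killed by $\mathrm{nil}(B)$ and that $R'_{\red}\thra R_{\red}$ is surjective let us identify closed points of $\Spec B'$, $\Spec B$, $\Spec R$, and $\Spec R'$, and the corresponding fibres of $X'$, $X_B$, $X_{R'}$ coincide as $(X'_x,\iota'_x)$. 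The closed-point condition $(X'_x,\iota'_x)\in\RZ\com{s_\alpha}_{\BX,G}(\kappa)$ is then inherited from $(X_B,\iota_B)\in\RZ^\Lambda_{G,b}(B)$.

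The remaining step is to show that the map $\Spf\wh{B'}_x\ra\Def_{X'_x}$ induced by $X'$ factors through the closed formal subscheme $\Def_{X'_x,G}$. Writing $\Def_{X'_x}\cong\Spf A^\mu_{\GL}$ and $\Def_{X'_x,G}\cong\Spf A^\mu_G=\Spf A^\mu_{\GL}/I$, this amounts to showing that the ring map $\varphi\colon A^\mu_{\GL}\ra\wh{B'}_x$ kills the ideal $I$. By hypothesis, the two composites $A^\mu_{\GL}\ra\wh{B'}_x\ra\wh B_x$ and $A^\mu_{\GL}\ra\wh{B'}_x\ra\wh{R'}_x$ already kill $I$, so $\varphi(I)$ lies in $\ker(\wh{B'}_x\ra\wh B_x)\cap\ker(\wh{B'}_x\ra\wh{R'}_x)$. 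It therefore suffices to show the natural map $\wh{B'}_x\ra\wh B_x\oplus\wh{R'}_x$ is injective.

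The main (mild) technical step is the last injectivity. Setting $J_1:=\ker(B'\ra B)$ and $J_2:=\ker(B'\ra R')$, we have $J_1\cap J_2=0$ in $B'$ since $B'=B\times_R R'$ embeds diagonally into $B\oplus R'$. Because $B'$ is noetherian (being finitely generated over $W/p^m$), completion at the maximal ideal of $x$ is flat, hence exact on the kernel-of-direct-sum exact sequence, yielding $J_1\wh{B'}_x\cap J_2\wh{B'}_x=(J_1\cap J_2)\wh{B'}_x=0$. Combined with the Artin--Rees identification $\ker(\wh{B'}_x\ra\wh B_x)=J_1\wh{B'}_x$ (and analogously for $\wh{R'}_x$), this gives the required injectivity. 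I expect the minor bookkeeping around Artin--Rees and the identifications $\wh B_x\cong B\otimes_{B'}\wh{B'}_x$, $\wh{R'}_x\cong R'\otimes_{B'}\wh{B'}_x$ to be the only non-formal part of the argument.
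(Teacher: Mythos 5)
Your proof is correct, and it takes a more explicit route than the paper's one-line argument. The paper invokes the representability of $\Def_{X_x,G}$ by a formal scheme and the standard isomorphism $\wh B'_x \riso \wh B_x\times_{\wh R_x}\wh R'_x$ of completed local rings (which holds by exactness of completion for Noetherian rings), deducing the fibre-product bijection for $\Def_{X_x,G}$ directly and transporting it to $\RZ^\Lambda_{G,b}$. You instead reduce to showing that the ring map $A^\mu_{\GL}\to\wh{B'}_x$ kills the defining ideal $I$, and do so by arguing that $J_1\wh{B'}_x\cap J_2\wh{B'}_x=0$ via flatness of completion and $J_1\cap J_2=0$. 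Your route only needs injectivity of $\wh{B'}_x\to\wh B_x\oplus\wh{R'}_x$ rather than the full fibre-product isomorphism, so it is marginally more elementary, at the cost of being longer. One small imprecision to correct: since $B\thra R$ has nilpotent kernel, $|\Spec B'|=|\Spec R'|$, and a closed point $x$ of $\Spec B'$ need not lie in $\Spec R$ (because $R'_{\red}\thra R_{\red}$ is only surjective, not injective on spectra). For such $x$, the fibre of $X_B$ does not exist, and the closed-point condition $(X'_x,\iota'_x)\in\RZ^{(s_\alpha)}_{\BX,G}(\kappa)$ is inherited from $(X_{R'},\iota_{R'})$, not from $(X_B,\iota_B)$ as you wrote. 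Your kernel argument still goes through in that case (one has $\wh B_x=0$ and $\wh B'_x\cong\wh R'_x$), but the exposition should make this case split explicit.
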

\begin{proof}
The lemma follows from the isomorphism 
\[\Def_{X_x,G}(\wh B'_x) \riso \Def_{X_x,G}(\wh B_x)\times_{\Def_{X_x,G}(\wh R_x)}\Def_{X_x,G}(\wh R'_x),\]
which holds since $\Def_{X_x,G}$ is a formal scheme and we have  $\wh B'_x\riso \wh B_x\times_{\wh R_x}\wh R'_x$.
\end{proof}

\begin{proof}[Proof of Proposition~\ref{prop:DescentDevissage}]
Let $R'\to R$ be a square-zero thickening of finitely generated $W/p^{m_0}$-algebras for some $m_0\geqslant1$. We choose a formally smooth formally finitely generated $W$-algebra $A$ surjecting onto $R'$ with $\ker(A\thra R')$ an ideal of definition. (To obtain this, we choose a polynomial algebra $A_0$ over $W$ surjecting onto $R'$ and let $A$ be the completion of $A_0$ for $\ker (A_0\thra R')$; \emph{cf.} \cite[\S1.3]{dejong:crysdieubyformalrigid}.)

Let us choose a sequence of square-zero liftable PD thickenings $\{A_{m,i}\}$ of $A$ as in Corollary~\ref{cor:fsperf-appl}. 
Since we have $A/p^{m_0} = \varprojlim_i A_{m_0,i}$ by construction of $A_{m_0,i}$ (\emph{cf.}  Corollary~\ref{cor:fsperf-appl}), 
 there exist $(m_0,i_0)$ such that $A_{m_0,i_0}\thra R'\thra R \thra R_\red = A_{1,1}$. We consider the following sequence of square-zero liftable PD thickenings
 \[
 A_N:=A_{m_0,i_0} \thra \cdots\thra A_{j+1}\thra A_j\thra \cdots A_0 = R_\red
 \]
obtained by refining the sequence $A_{m_0,i_0} \thra A_{m_0,i_0-1}\thra \cdots A_{m_0,1}\thra A_{m_0-1,1}\thra \cdots \thra R_{\red}$ 
so that $\ker (A_{j+1}\thra A_j)$ is killed by the nil-radical for any $j$. Since any refinement of  square-zero \emph{liftable} PD thickening is still liftable (\emph{cf.} Lemma~\ref{lem:LiftablePDRefine}), $A_{j+1}\thra A_j$ is a liftable square-zero PD thickening for any $j$.
 
 Recall that Proposition~\ref{prop:DescentDevissage} is already proved for the thickenings $A_{j+1}\thra A_j$ by liftability (\emph{cf.} Lemma~\ref{lem:DescentDevissage-Liftable}). To utilise this special case  to obtain the general case of Proposition~\ref{prop:DescentDevissage},  we will show that any $(X,\iota)\in\RZ^\Lambda_{G,b}(R)$ can be lifted to $(X_{A_N},\iota)\in\RZ^\Lambda_{G,b}(A_{N})$ by $A_N\thra R$.

We write $J:=\ker (A\thra R)$. For any $j$, we set $I_{j}:=\ker (A\thra A_{j})$ and define $R_j:=A/(J+I_{j})$, which is a simultaneous quotient of $R$ and $A_{j}$. Let $(X_{R_{j}},\iota)\in \RZ^\Lambda_{G,b}(R_{j})$ denote the pull-back of $(X,\iota)\in\RZ^\Lambda_{G,b}(R)$ by $R\thra R_{j}$. (To ease the notation, we do not keep track of the base ring of $\iota$ under nilpotent thickenings as $\iota$'s uniquely lift by rigidity of quasi-isogenies.) 
 
 Assuming that there exists $(X_{A_{j}},\iota)\in \RZ^\Lambda_{G,b}(A_{j})$ lifting $(X_{R_{j}},\iota)$ for some $j$, we want to construct a simultaneous lift $(X_{A_{j+1}},\iota)\in \RZ^\Lambda_{G,b}(A_{j+1})$  of $(X_{A_{j}},\iota)$ and $(X_{R_{j+1}},\iota)$. (Note that the induction hypothesis is automatic for $j=0$ since $A_{0} = R_{0} = R_\red$.)
 
We define
\[
B_{j+1}:= A/((J\cap I_{j})+I_{j+1}) \riso R_{j+1} \times_{R_{j}}A_{j} = A/(J+ I_{j+1})\times_{A/(J+ I_{j})}A/I_{j};
\]
indeed, for any ring $A$ and ideals $\fa, \fa'\subset A$ the diagonal map $A/(\fa\cap\fa') \ra A/\fa\times_{A/(\fa+\fa')}A/\fa'$ is an isomorphism,\footnote{The injectivity of the map $A/(\fa\cap\fa') \ra A/\fa\times_{A/(\fa+\fa')}A/\fa'$  is clear. It remains to show that for $a,a'\in  A$ such that $a\equiv a'\bmod{\fa+\fa'}$,  the element $(a\bmod\fa,a'\bmod{\fa'})$ is in the image of $A/(\fa\cap\fa')$. For this, we may replace $(a,a')$ with $(0,a'-a)$, where $a'-a\in\fa+\fa'$. Now the claim follows from the isomorphism $(\fa+\fa')/\fa' \cong \fa/(\fa\cap\fa')$. \label{footnote:FibreProd}} and we apply it to $\fa=J+I_{j+1}$ and $\fa'= I_{j}$. Let us give a diagram of the rings that appeared thus far:
\begin{equation}\label{eqn:DiagramLiftablePD}
\xymatrix{
A_N\ar@{->>}[r]\ar@{->>}[dr]\ar@{->>}[d]
&\cdots\ar@{->>}[r]& 
A_{j+1}\ar@{->>}[r] \ar@{->>}[dr]&
B_{j+1} = R_{j+1} \times_{R_{j}}A_{j} \ar@{->>}[r]\ar@{->>}[d] &
A_j \ar@{->>}[d]\\
R'\ar@{->>}[r]&R=R_N \ar@{->>}[r] &\cdots\ar@{->>}[r]&
R_{j+1} \ar@{->>}[r] &
R_j}
\end{equation}
And we have $(X_{R_{j+1}},\iota)\in\RZ^\Lambda_{G,b}(R_{j+1})$ and $(X_{A_{j}},\iota)\in \RZ^\Lambda_{G,b}(A_{j})$, both of which lift $(X_{R_{j}},\iota)\in \RZ^\Lambda_{G,b}(R_j)$.

Since $\ker(A_{j+1}\thra A_j)$ is killed by the nil-radical by assumption, we can apply Lemma~\ref{lem:SchlessingerRimFType} to obtain  $(X_{B_{j+1}},\iota)\in\RZ^\Lambda_{G,b}(B_{j+1})$ which simultaneously lifts  $(X_{R_{j+1}},\iota)$ and $(X_{A_j},\iota)$. Now, since $A_{j+1}\thra A_j$ is a square-zero liftable PD thickening, so is $A_{j+1}\thra B_{j+1}$ (by applying Lemma~\ref{lem:LiftablePDRefine} to the top row of (\ref{eqn:DiagramLiftablePD}). Therefore, by Lemmas~\ref{lem:ForSmFT} and \ref{lem:DescentDevissage-Liftable}, there exists a lift $(X_{A_{j+1}},\iota)\in\RZ^\Lambda_{G,b}(A_{j+1})$ of $(X_{B_{j+1}},\iota)$. This completes the induction claim.

By induction on $j$, we eventually obtain a lift $(X_{A_N},\iota)\in\RZ^\Lambda_{G,b}(A_N)$ of $(X,\iota)\in\RZ^\Lambda_{G,b}(R)$. (Since $A_N\thra R$, we have $R=R_N $.) 
Note that the trivial thickening $A_{N}\xra{\id}A_{N}$ is liftable in this case (\emph{cf.} Corollary~\ref{cor:fsperf-appl}, Lemma~\ref{lem:LiftablePDRefine}), so  we get $(t_\alpha(A_N))\subset\DD(X_{A_N})(A_{N})^\otimes$ glueing $\{\hat t_{\alpha,x}(\wh A_{N,x})\}_x$ by  Lemma~\ref{lem:DescentDevissage-Liftable} (applied to the trivial thickening $A_{N}\xra{\id}A_{N}$). Since we chose $A_N$ to also surject onto $R'$, we have $R'\otimes_{A_N}\DD(X_{A_N})(A_N)\cong \DD(X)(R')$. Therefore, for any $\alpha$ the image $t_{\alpha}(R')\in \DD(X)(R')^\otimes$ of $t_{\alpha}(A_N)$ glues $\{\hat t_{\alpha,x}(\wh R'_x)\}_x$, as claimed in Proposition~\ref{prop:DescentDevissage}.
\end{proof}

To sum up, we have proved Proposition~\ref{prop:DescentDevissage} for any square-zero thickenings of $\Nilp_W^\ft$ (not just for square-zero liftable PD thickenings), so it follows from Lemma~\ref{lem:ForSmFT} that $\RZ^\Lambda_{G,b}$ is formally smooth as a functor on $\Nilp_W^\ft$.

Let us now prove Proposition~\ref{prop:descent}(\ref{prop:descent:devissage}).
\begin{propsub}\label{prop:Pfdescent}
Let $A$ be a formally smooth formally finitely generated $W$-algebra with ideal of definition $J$. For any $\tilde f:\Spf(A,J)\to \RZ_\BX$,
we have $\tilde f\in \RZ^{(s_\alpha)}_{\BX,G}(A)$ if and only if  $\tilde f|_{\Spec A/J^i}\in\RZ_\BX(A/J^i)$ lies in $\RZ^\Lambda_{G,b}(A/J^i)$ for any  $i\geqslant1$.
\end{propsub}
This proposition shows Proposition~\ref{prop:descent}(\ref{prop:descent:devissage}) for  formally smooth $W$-algebras. Later, we will deduce Proposition~\ref{prop:descent}(\ref{prop:descent:devissage}) for $\Nilp^\sm_W$ from this case.
\begin{proof}
The ``only if'' direction is trivial, and we show the ``if'' direction now.
Assume that we have $(X^{(i)},\iota^{(i)})\in\RZ^\Lambda_{G,b}( A/J^i)$, where $(X^{(i)},\iota^{(i)})$ corresponds to $\tilde f|_{\Spec A/J^i}$.
To show that $\tilde f\in\RZ^{(s_\alpha)}_{\BX,G}(A)$, we need to produce morphisms $\tilde t_\alpha:\triv\to \DD(\wt X)^\otimes$ of crystals over $\Spf (A,(p))$ satisfying Definition~\ref{def:RZG}. (Recall that this morphism is determined by its restriction $\tilde t_\alpha:\triv\to\DD(\wt X_{A/p})^\otimes$ over $\Spec A/p$.) Now, by Lemma~\ref{lem:CrysConn} (applied to morphisms of crystals over $\Spec A/p$), giving a morphism $\tilde t_\alpha:\triv\to \DD(\wt X)^\otimes$ of crystals over $\Spf (A,(p))$ is equivalent to giving a horizontal section $\tilde t_\alpha(A)\in \DD(\wt X)(A)$ with respect to the crystalline connection $\nabla$ (\ref{eqn:CrysConn}).

Let us first construct a candidate for $\tilde t_\alpha(A)$. For each $(X^{(i)},\iota^{(i)})\in\RZ^\Lambda_{G,b}(A/J^i)$, we obtain $(t_\alpha(A/J^i))\subset\DD(X^{(i)})(A/J^i)$ by Proposition~\ref{prop:DescentDevissage}. We claim that  $t_\alpha(A/J^i)$'s are compatible with respect to the natural projections induced by $A/J^{i+1}\thra A/J^i$, so we get a section
\[
\tilde t_\alpha(A):=\varprojlim_i t_\alpha( A/J^i) \in \varprojlim_i\DD(X^{(i)})(A/J^i) \cong \DD(\wt X)(A).
\]
Indeed, $t_\alpha(A/J^i)$'s are determined by $\{\hat t_{\alpha,x}((A/J^i)\wh{_x})\}$'s where $x$ runs over closed points of $\Spf A$ (\emph{cf.} Proposition~\ref{prop:DescentDevissage}), and these formal local tensors are compatible with varying $i$ by the construction of $\hat t_{\alpha,x}$; \emph{cf.}  (\ref{eqn:tx}). Now, we obtain the desired compatibility with varying $i$ as $t_\alpha(A/J^i)$'s are uniquely determined by $\{\hat t_{\alpha,x}((A/J^i)\wh{_x})\}_x$ for all closed points $x$ in $\Spec A/J^i$.

We have just obtained a section $\tilde t_\alpha(A)\in \DD(\wt X)(A)$. Furthermore,  the image of $\tilde t_\alpha(A)$ in $\DD(\wt X_{\wh A_x})(\wh A_x)$ coincides with the pull-back $\hat{\tilde f}_x^*(\hat t^{\univ}_{\alpha,x})$ of the universal Tate tensor $\hat t^{\univ}_{\alpha,x}$ (\emph{cf.} (\ref{eqn:tuniv}), (\ref{eqn:tx})) via the map $\hat{\tilde f}_x:\Spf \wh A_x \to\Def_{X_x,G}$ induced by $\tilde f$.

 We next claim that for any $\alpha$,  $\tilde t_\alpha(A)$ is horizontal with respect to the crystalline connection $\nabla$ (i.e., $\nabla(\tilde t_\alpha(A))=0$). Since the crystalline connection $\nabla$ is $J$-adically continuous (\emph{cf.} Remark~\ref{rmk:JadicConn}), we can check the vanishing of $\nabla(\tilde t_\alpha(A))=0$ on the formal neighbourhood of any closed point $x$ in $\Spf A$.
 On the other hand, since the image of $\tilde t_\alpha(A)$ in $\DD(\wt X_{\wh A_x})(\wh A_x)^\otimes$ coincides with $(\hat{\tilde f}_x^*(\hat t^{\univ}_{\alpha,x})(\wh A_x)$, which is horizontal (\emph{cf.} (\ref{eqn:tuniv})),
we conclude that $\tilde t_\alpha(A)$ is horizontal.

By horizontality of $\tilde t_\alpha(A)\in \DD(\wt X)(A)$,
we obtain a morphism of crystals $\tilde t_\alpha:\triv\to\DD(\wt X)^\otimes$ over $\Spf(A,(p))$ associated to  $\tilde t_\alpha(A)$. Now, we verify Definition~\ref{def:RZG} for $\tilde f:\Spf A\to\RZ_\BX$ and $(\tilde t_\alpha)$.\footnote{I.e., we verify  Definition~\ref{def:RZG} for $\tilde f_m:\Spf A/p^m\to\RZ_{\BX}$ induced by $\tilde f$ and $(\tilde t_\alpha)$ for any $m$.}
We first note that (\ref{def:RZG:P}) and  (\ref{def:RZG:Kottwitz}) of Definition~\ref{def:RZG} can be verified for $\hat{\tilde f}_x$ and $(\tilde t_\alpha|_{\wh A_x})$
for each closed point $x$ of $\Spf A$. (Indeed, the $A$-scheme $\cP_{A}$ is a $G$-torsor if and only if this holds over any $\wh A_x$. Likewise, Corollary~\ref{cor:muFilviaCompl} show that Definition~\ref{def:RZG}(\ref{def:RZG:Kottwitz}) can be verified for $\wh A_x$ at each  $x$.) On the other hand, since $\hat{\tilde f}_x:\Spf \wh A_x \to\Def_{X_x,G}$ and $\tilde t_\alpha|_{\wh A_x}=\hat{\tilde f}_x^*(\hat t^{\univ}_{\alpha,x})$, it follows that $\hat{\tilde f}_x$ and $(\tilde t_\alpha|_{\wh A_x})$ satisfy
(\ref{def:RZG:P}) and  (\ref{def:RZG:Kottwitz}) of Definition~\ref{def:RZG}; \emph{cf.} (\ref{eqn:RZG:FaltingsDefor}).

It remains to show that show that both $\tilde t_\alpha$ and $s_{\alpha,\DD}$ define the same map of $F$-isocrystals $\triv\to\DD(X_{R/p})^\otimes[\ivtd p]$ (\emph{cf.} Definition~\ref{def:RZG}(\ref{def:RZG:qisog})).
Let $D\thra R/p$ denote the $p$-adically completed PD hull of $A\thra R/p$. Then by \cite[Proposition~3.21]{Berthelot-Ogus}, the $p$-adic completion of $D\otimes_A\wh A_x$ (for any closed point $x$ in $\Spf A$) is the $p$-adically completed PD hull of $\wh A_x\thra \wh R_x/p$. Since the maps $\triv\to\DD(X_{R/p})^\otimes[\ivtd p]$ induced by $\tilde t_\alpha$ and $s_{\alpha,\DD}$ are determined by their sections over $D$, they can be compared at the formal neighbourhood at each closed point $x$ in $\Spf A$. But since any map of isocrystals over excellent local ring is determined by the fibre at the closed point (\emph{cf.} \cite[Lemma~4.3]{deJong-Messing}), it suffices to show that $\tilde t_\alpha$ and $s_{\alpha,\DD}$ induce the same map $\triv\to\DD(X_x)^\otimes[\ivtd p]$ on the fibre of any closed point $x$ in $\Spf A$. But this is clear from the definition of $\RZ^\Lambda_{G,b}(R)$; \emph{cf.} Definition~\ref{def:RZGloc}.
 \end{proof}

The following lemma handles the remaining case of Proposition~\ref{prop:descent}(\ref{prop:descent:devissage}).
\begin{lemsub}\label{lem:Pfdescent}
Let $B\in\Nilp^{\sm}_W$  with  the maximal ideal of definition $J_B$. 
Then for any $f:\Spf(B,J_B)\to \RZ_\BX$,
we have $f\in \RZ^{(s_\alpha)}_{\BX,G}(B)$ if and only if  $f|_{\Spec B/J_B^i}\in\RZ_\BX(B/J_B^i)$ lies in $\RZ^\Lambda_{G,b}(B/J_B^i)$ for any  $i\geqslant1$.
\end{lemsub}
\begin{proof}
The ``only if'' direction is trivial. To show the ``if'' direction, we choose a formally smooth $p$-adic $W$-lift $A$ of $B$ (so that we have $A/p^m = B$ for some $m$), and set $J:=\ker (A\thra B/J_B)$, which is the maximal ideal of definition of $A$. Assume that we have $(X^{(i)},\iota^{(i)})\in\RZ^\Lambda_{G,b}(B/J_B^i)$, where $(X^{(i)},\iota^{(i)})$ corresponds to $f|_{\Spec B/J_B^i}$. By Proposition~\ref{prop:Pfdescent}, it suffices to find a lift $(\wt X^{(i)},\iota^{(i)})\in\RZ^\Lambda_{G,b}(A/J_B^i)$ of $(X^{(i)},\iota^{(i)})$  so that $\{(\wt X^{(i)},\iota^{(i)})\}_i$ forms a projective system. Granting this, the projective system $\{(\wt X^{(i)},\iota^{(i)})\}_i$ corresponds to a map $\tilde f:\Spf A\to \RZ_\BX$ lifting $f$, and by Proposition~\ref{prop:Pfdescent} we have $\tilde f\in  \RZ^{(s_\alpha)}_{\BX,G}(A)$. Therefore we have $f\in\RZ^{(s_\alpha)}_{\BX,G}(B)$.

It remains to construct a lift $(\wt X^{(i)},\iota^{(i)})\in\RZ^\Lambda_{G,b}(A/J^i)$ inductively on $i$. In other words, given  a lift $(\wt X^{(i)},\iota^{(i)})$ of $(X^{(i)},\iota^{(i)})$ for some $i$, we want to find $(\wt X^{(i+1)},\iota^{(i+1)})\in\RZ^\Lambda_{G,b}(A/J^{i+1})$ simultaneously lifting $(X^{(i+1)},\iota^{(i+1)})$ and $(\wt X^{(i)},\iota^{(i)})$. For this, we consider
\[
R_i:= A/(J^i\cap(p^m,J^{i+1})) \riso A/J^i \times_{B/J_B^i} B/J_B^{i+1} = A/J^i \times_{A/(p^m,J^i)}A/(p^m,J^{i+1}),
\]
where the isomorphism follows since  $(p^m, J^i)=(p^m,J^{i+1})+J^i$. (\emph{Cf.} footnote \#\ref{footnote:FibreProd} on p.~\pageref{footnote:FibreProd}.)
By Lemma~\ref{lem:SchlessingerRimFType}, there is a unique element $(X_{R_i},\iota_{R_i/p})\in \RZ^\Lambda_{G,b}(R_i)$ simultaneously lifting $(X^{(i+1)},\iota^{(i+1)})$ and $(\wt X^{(i)},\iota^{(i)})$. 

Now, by the formal smoothness property of $\RZ^\Lambda_{G,b}$ applied to $ A/J^{i+1}\thra R_i$, we can find a lift $(\wt X^{(i+1)},\iota^{(i+1)})\in \RZ^\Lambda_{G,b}( A/J^{i+1})$ of $(X_{R_i},\iota_{R_i/p})$. By repeating this for all $i$, we obtain a desired  projective system $\{(\wt X^{(i)},\iota^{(i)})\in\RZ^\Lambda_{G,b}(A/J^i)\}$ lifting $\{(X^{(i)},\iota^{(i)})\in\RZ^\Lambda_{G,b}(B/J_B^i)\}$.
\end{proof}

We are now ready to conclude the proof of Proposition~\ref{prop:descent}::
 \begin{lemsub}\label{lem:PfdescentLiftability}
 Proposition~\ref{prop:Pfdescent} implies Proposition~\ref{prop:descent}(\ref{prop:descent:t}) and (\ref{prop:descent:liftability}).
 \end{lemsub}
 \begin{proof}
Let $(X,\iota)\in \RZ^\Lambda_{G,b}(R)$ for some finitely generated $W/p^m$-algebra $R$ for some $m$. We want to deduce the following claims from  Proposition~\ref{prop:Pfdescent}:
\begin{description}
\item[Proposition~\ref{prop:descent}(\ref{prop:descent:t})]
For any $\alpha$, there exists a unique morphism of crystals
\[t_\alpha:\triv\to\DD(X)^\otimes\]
that  glues $\{\hat t_{\alpha,x}\}$ (\ref{eqn:tx}) and induces $s_{\alpha,\DD}:\triv\to\DD(X)^\otimes[\ivtd p]$ on the $F$-isocrystals.
\item[Proposition~\ref{prop:descent}(\ref{prop:descent:liftability})]
Let $f:\Spec R\to \RZ_\BX$ be the map corresponding to $(X,\iota)$. Then for any  formally smooth formally finitely generated $W$-algebra $A$ surjecting onto $R$, there exists a lift $\tilde f:\Spf A\to\RZ_\BX$ with
$\tilde f\in\RZ^{(s_\alpha)}_{\BX,G}(A)$.
Furthermore, if we let $\wt X$ denote the $p$-divisible group over $ A$ corresponding to $\tilde f$, then  the unique tensor $t_\alpha:\triv\to\DD(X)^\otimes$ (as in Proposition~\ref{prop:descent}(\ref{prop:descent:t})) lifts to a map $\tilde t_\alpha:\triv\to \DD(\wt X)^\otimes$ (as in  Definition~\ref{def:RZG}).
\end{description}

Given any $R\in\Nilp^\ft_W$, we choose a formally smooth formally finitely generated $W$-algebra $A$ so that $A/J\cong R$ for some ideal of definition $J$.  (For example, $A$ can be constructed as a completion of some polynomial algebra.) Let $(X,\iota)\in\RZ^\Lambda_{G,b}$, which corresponds to a map $f:\Spec R\to\RZ_\BX$. 
By the formal smoothness of $\RZ^\Lambda_{G,b}$ as a functor on $\Nilp^\ft_W$ (\emph{cf.} Lemma~\ref{lem:ForSmFT}, Proposition~\ref{prop:DescentDevissage}), we obtain a projective system of lifts $\{(X^{(i)},\iota^{(i)})\in \RZ^\Lambda_{G,b}(\wt A/\wt J^i)\}$ of $(X,\iota)\in\RZ^\Lambda_{G,b}(R)$. Then by Proposition~\ref{prop:Pfdescent}, such a projective system gives rise to $\tilde f\in\RZ^{(s_\alpha)}_{X,G}(A)$, and furthermore, if we let $\wt X$ denote the pull-back of the universal $p$-divisible group by $\tilde f$, then we have Tate tensors $\tilde t_\alpha:\triv\to\DD(\wt X)^\otimes$ satisfying Definition~\ref{def:RZG}. By  Definition~\ref{def:RZG}(\ref{def:RZG:qisog}) and Lemma~\ref{lem:FormalNbdTensors}, the restriction of the Tate tensors $\tilde t_\alpha:\triv\to\DD(\wt X)^\otimes$ over $\Spec R$ satisfies  the requirements for $(t_\alpha)$.

Now, it remains to show the uniqueness of $t_\alpha:\triv\to\DD(X)^\otimes$.
Let $t_\alpha':\triv\to\DD(X)^\otimes$ be another morphism satisfying the requirements for $t_\alpha$ as in Proposition~\ref{prop:descent}(\ref{prop:descent:t}). 
Let $D\thra R$ denote the $p$-adically completed PD hull of $A\thra R$, and let $D_x$ be the $p$-adic completion of $D\otimes_{A}\wh A_x$ for any closed point $x$ in $\Spec R$, which is a $p$-adic PD thickening of $\wh R_x$. (The argument is identical for $D^\fl_x$ in the proof of Lemma~\ref{lem:DescentDevissage-Liftable}.)  Then by the compatibility with $s_{\alpha,\DD}$, it follows that $u_\alpha:=t_\alpha(D)-t'_\alpha(D)\in\DD(X)(D)^\otimes$ is killed by some power of $p$, and its image in $\DD(X_{\wh R_x})(D_x)^\otimes$ is zero for any closed point $x$. We choose a finitely generated $p$-power torsion $ A$-submodule $M\subset \DD(X)(D)^\otimes$ containing all $u_\alpha$'s, then $M\otimes_{A} \wh A)_x\subset \DD(X_{\wh R_x})(D_x)^\otimes$. Therefore, $u_\alpha\in M$ has the property that its image in $M\otimes_{A} \wh A_x$ is zero for any closed point $x$ in $\Spec R$. This forces $u_\alpha = 0$ since $M$ injects into
\[
M\otimes_{A}(\prod_x \wh A_x) \cong \prod_x (M\otimes_{A}\wh A_x)
\]
by faithful flatness of $A\to \prod_x \wh A_x$; \emph{cf.} Lemma~\ref{lem:fpqcdescent}. (We have the isomorphism in the displayed equation  since $M$ is finitely presented over $A$.) This shows the desired uniqueness of $t_\alpha$ by Lemma~\ref{lem:CrysConn}.
 \end{proof}
 \begin{proof}[Proof of Proposition~\ref{prop:descent}]
 Proposition~\ref{prop:descent}(\ref{prop:descent:devissage}) follows from Proposition~\ref{prop:Pfdescent} and Lemma~\ref{lem:Pfdescent}. This implies the rest of  Proposition~\ref{prop:descent} by Lemma~\ref{lem:PfdescentLiftability}.
 \end{proof}

\subsection{}
We extend $\RZ^\Lambda_{G,b}$ to a functor on $\Nilp_W$ as follows.
\begin{defnsub}\label{def:RZGcolim}
For any $R'\in\Nilp_W$,  $(X',\iota')\in\RZ_\BX(R')$ lies in $\RZ^\Lambda_{G,b}(R') $ if and only if for some finitely generated $W$-subalgebra $R\subset R'$, there exists $(X,\iota)\in\RZ^\Lambda_{G,b}(R)$ which descends $(X',\iota')$. 
\end{defnsub}

\begin{lemsub}\label{lem:lpf}
The functor $\RZ^\Lambda_{G,b}$ on $\Nilp_W$ commutes with  filtered direct limits.
\end{lemsub}
\begin{proof}
Let $\set{R_\xi}$ be a filtered direct system of rings in $\Nilp_W$, and set $R:=\varinjlim_\xi R_\xi$. We want to show that for any $(X,\iota)\in\RZ^\Lambda_{G,b}(R)$  there exists $(X_\xi,\iota_\xi)\in\RZ^\Lambda_{G,b}(R_\xi)$ which pulls back to $(X,\iota)$. Such $(X_\xi,\iota_\xi)$ is essentially unique in the sense that if there is another point $(X_{\xi'},\iota_{\xi'})\in \RZ^\Lambda_{G,b}(R_{\xi'})$ for some $\xi'$, there exists $\xi''\geqslant \xi, \xi'$ such that both $(X_\xi,\iota_\xi)$ and $(X_{\xi'},\iota_{\xi'})$ map to the same point in $\RZ^\Lambda_{G,b}(R_{\xi''})$; indeed, this property can be checked for $\RZ_\BX$ as $\RZ^\Lambda_{G,b}$ is a subfunctor of $\RZ_\BX$, and $\RZ_\BX$ commutes with  filtered direct limits in $\Nilp_W$ as it is locally formally of finite type over $W$.

Let us first handle the case when $R$ and $R_\xi$ are finitely generated over $W$. In that case, there exists $\xi$ such that the natural map $R_\xi\ra R$ is surjective and admits a section $R\ra R_\xi$.\footnote{Indeed, we can choose $\xi_0$ such that  $R_{\xi_0}$ surjects onto $R$. Let $I:=\ker( R_{\xi_0}\thra R)$, and choose finitely many generators $f_i\in I$. Then since $R=\varinjlim_\xi R_\xi$, each $f_i$ should map to $0$ in  $R_{\xi_i}$ for some $\xi_i\geqslant \xi_0$. Therefore, there exists $\xi\geqslant\xi_0$ so that the transition map $R_{\xi_0}\to R_\xi$ factors through $R_{\xi_0}/I\cong R$.} Now, we set $(X_\xi,\iota_\xi)\in\RZ^\Lambda_{G,b}(R_\xi)$ to be the pull-back of $(X,\iota)$, which has the desired property (by the essential uniqueness of $(X_\xi,\iota_\xi)$).

The general case can be reduced to this special case; indeed, we may replace $R$ with some finitely generated $W$-subalgebra  $R_0\subset R$, and work with a filtered direct system of finitely generated $W$-subalgebras of $R_\xi$'s whose direct limit is $R_0$.
\end{proof}

\begin{defnsub}\label{def:RZGt}
Let $R\in\Nilp_W$, and choose a finitely generated $W$-subalgebra $R_0\subset R$ $(X,\iota)\in\RZ^\Lambda_{G,b}(R)$  descends to $(X_{R_0},\iota)\in\RZ^\Lambda_{G,b}(R_0)$. Then  we define
\[
t_\alpha:\triv\ra\DD(X)^\otimes
\]
by pulling back $t_\alpha:\triv\ra\DD(X_{R_0})^\otimes$, constructed in Proposition~\ref{prop:descent}(\ref{prop:descent:t}). By the uniqueness part of Proposition~\ref{prop:descent}(\ref{prop:descent:t}) (and Lemma~\ref{lem:lpf}), it follows that $t_\alpha$ on $X$ is independent of the choice of $R_0\subset R$ and $(X_{R_0},\iota)$.
\end{defnsub}

\begin{corsub}\label{cor:deformation}
The functor $\RZ^\Lambda_{G,b}$ on $\Nilp_W$ is formally smooth. Furthermore, the statement of Lemma~\ref{lem:ForSmFT} holds even when $R'\thra R$ be a square-zero thickening of (not necessarily finitely generated) $W/p^m$-algebras for some $m$.
\end{corsub}
\begin{proof}
Given  $(X,\iota)\in\RZ^\Lambda_{G,b}(R)$ and a square-zero thickening $R'\thra R$ of $W/p^m$-algebras, we can find a  finitely generated $W/p^m$-subalgebra $R'_0\subset R'$ such that $(X,\iota)$ descends to  $(X_{R_0},\iota_{R_0/p})\in\RZ^\Lambda_{G,b}(R_0)$, where $R_0\subset R$ is the image of $R'_0$. By Lemma~\ref{lem:ForSmFT} and Proposition~\ref{prop:DescentDevissage}, there is an $(X_{R_0'},\iota_{R_0'/p})\in \RZ^\Lambda_{G,b}(R_0')$ lifting $(X_{R_0},\iota_{R_0/p})$, and its base change via $R_0'\to R'$ is an $R'$-lift of $(X,\iota)$ .

The rest of the statements of Lemma~\ref{lem:ForSmFT} can be similarly generalised for any square-zero thickening $R'\thra R$ of $W/p^m$-algebras as all the objects involved can be descended over some finitely generated $W/p^m$-subalgebras of $R'$ and $R$.
\end{proof}

Let us now move on to the other main result of this section on ``effectiveness'':
\begin{prop}\label{prop:extension}
Let $S$ be a  complete local noetherian $W/p^m$-algebra with residue field $\kappa$, and let $(X_{S},\iota_{S/p})\in\RZ_\BX(S)$. (In particular, $\iota_{S}$ is defined over $\Spec S/p$.)
Then we have $(X_{S},\iota_{S/p})\in\RZ^\Lambda_{G,b}(S)$ if and only if $(X_{S/\m_S^i}, \iota_{S/ (p,\m_S^i)})\in \RZ^\Lambda_{G,b}(S/\m_S^i)$ for any $i$.
\end{prop}
The ``only if'' direction  is trivial, and we will now prove the ``if'' direction for now on. Conceptually, this proposition  asserts that formal-locally defined Tate tensors extend over some finite-type base ring if the quasi-isogeny does.

For the Artin representability theorem (\cite[Corollary~5.4]{Artin:VersalDefAlgStack}), we  need to generalise this proposition for complete local noetherian $W/p^m$-algebras $S$ with residue field finitely generated over $\kappa$. We obtain such a generalisation in Lemma~\ref{lem:eff}, using the idea in Remark~\ref{rmk:fpqcclosed}

\subsection{Preparation}\label{subsec:SmearOut}

For any $x\in X^G(b) = \RZ^\Lambda_{G,b}(\kappa)$, we write $A_{\GL,x}:= \widehat\OO_{\RZ_\BX,x}$, which can be viewed as a universal deformation ring of $X_x$, and we let $A_{G,x}$  denote the quotient of $A_x$  pro-representing $\Def_{X_x,G}$.

We consider $(X_{S},\iota_{S/p})\in\RZ_\BX(S)$ given by a (necessarily local) map $f_S:A_{G,x}\to S$ for some $x\in \RZ^\Lambda_{G,b}(\kappa)$. In particular, $X_S$ is the pull-back of the universal deformation of $(X_x,(t_{\alpha,x}))$ and the universal deformation of Tate tensors pull back to give morphisms of crystals $\hat t_{\alpha,x}:\triv\ra\DD(X_S)^\otimes$ over $\Spec S$; \emph{cf.} (\ref{eqn:tx}). Therefore, by construction of the universal deformation in \S\ref{subsec:FaltingsDeforConstr}, the following $S$-scheme 
\[
 \cP_{S}:=\nf\Isom_{S}\big([\DD(X_{S})(S), (\hat t_{\alpha,x}(S))], [S\otimes_{\Zp}\Lambda^*,(1\otimes s_\alpha)] \big),
 \]
is a trivial $G$-torsor, and the Hodge filtration $\Fil^1_{X_S}\subset \DD(X_{S})(S)$ is a $\set\mu$-filtration with respect to $(\hat t_{\alpha,x}(S))$. 

To prove Proposition~\ref{prop:extension}, we need to construct $(X,\iota)\in\RZ^\Lambda_{G,b}(R)$ for some finitely generated $W/p^m$-subalgebra $R\subset S$, such that $(X,\iota)$ pulls back to $(X_{S},\iota_{S/p})$. As a starting point, we begin with a finitely generated $W$-subalgebra $R\subset S$ with the following properties, which exists by standard argument:
\begin{enumerate}
\item
There exists  $(X,\iota)\in\RZ_\BX(R)$ which pulls back to $(X_{S},\iota_{S/p})$ over $S$; this is possible since $\RZ_\BX$ is locally formally of finite type over $W$.
\item\label{subsec:SmearOut:tS}
The (finitely many) tensors $(\hat t_{\alpha,x}(S))\subset\DD(X_{S})(S)^\otimes$ lie in the image of $\DD(X)(R)^\otimes$; indeed, this can  be arranged by considering a finite-rank direct factor of $\DD(X_{S})(S)^\otimes$ containing $(\hat t_{\alpha,x}(S))$, and possibly by increasing $R$ by adjoining finitely many elements in $S$. We let 
\[
(t_\alpha(R))\subset\DD(X)(R)^\otimes
\]
 denote the tensors which (injectively) map to $(\hat t_{\alpha,x}(S))$.
\item\label{subsec:SmearOut:P}
The following $R$-scheme is a $P_G(\mu)$-torsor for some $\mu\in\set\mu$:
\[
 \cP_{\Fil^\bullet_X}:=\nf\Isom_R\big([\DD(X)(R), (t_\alpha(R)),\Fil^1_X], [R\otimes_{\Zp}\Lambda^*,(1\otimes s_\alpha),\Fil^1_\mu] \big);
\]
in other words, the Hodge filtration $\Fil^1_X\subset\DD(X)(R)$ is a $\set\mu$-filtration with respect to $(t_\alpha(R))$.
In fact, the natural $P_G(\mu)$-action on $\cP_{\Fil^\bullet_X}$ is already transitive, 
and by increasing $R$ if necessary we can ensure that $\cP_{\Fil^\bullet_X}$ is smooth with non-empty fibre everywhere.  (\emph{Cf.} \cite[EGA~IV{$_{\text{4}}$}, Proposition~17.7.8, Th\'eor\`eme~8.10.5]{GD:EGA}.)

This assumption also ensures that the following $R$-scheme is a $G$-torsor:
\[\cP_{R}:=\nf\Isom_R\big([\DD(X)(R), (t_\alpha(R))], [R\otimes_{\Zp}\Lambda^*,(1\otimes s_\alpha)] \big).\] 
\end{enumerate}
If $R\subset S$ satisfies the above conditions, then  any finitely generated $W$-subalgebra of $S$ containing $R$ satisfies the same conditions.

To prove Proposition~\ref{prop:extension}, it suffices to prove the following proposition.
\begin{prop}\label{prop:SmearOut}
There exists a finitely generated $W$-subalgebra $R\subset S$ such that the properties listed in \S\ref{subsec:SmearOut} are satisfied and we have $(X,\iota)\in\RZ^\Lambda_{G,b}(R)$. 
\end{prop}
As a biproduct of the proof, we will also obtain that $(t_\alpha(R))$ coincides with the $R$-sections of $(t_\alpha)$ constructed in Proposition~\ref{prop:descent}.

\subsection{Proof of Proposition~\ref{prop:SmearOut}}
Assume that $S$ is \emph{reduced} for the moment. Since the $\kappa$-subalgebra $R\subset S$ is also reduced, we can consider the perfections $\wt R:=\varinjlim_\sig R$ and $\wt S:=\varinjlim_\sig S$. We  extend the injective map $R\hra S$ to an injective map $\wt R\hra \wt S$.
 
We consider $(1\otimes s_\alpha)\subset W(\wt R)[\ivtd p]\otimes_{\Zp}\Lambda^\otimes\cong \DD(X_{\wt R})(W(\wt R))^\otimes[\ivtd p]$, where the isomorphism is induced by the quasi-isogeny $\iota_{\wt R}:\BX_{\wt R}\dra X_{\wt R}$. 

\begin{lemsub}\label{lem:IntPerf}
In the above setting, we have $(1\otimes s_\alpha)\subset\DD(X_{\wt R})(W(\wt R))^\otimes$. In particular, $s_{\alpha,\DD}$ for each $\alpha$ (\emph{cf.}  Definition~\ref{def:t}) comes from a unique morphism of integral crystals, which we  also denote by $s_{\alpha,\DD}:\triv\ra\DD(X_{\wt R})^\otimes$.
Furthermore, the $\wt R$-sections  $(s_{\alpha,\DD}(\wt R))\in\DD(X_{\wt R})(\wt R)^\otimes$ of $(s_{\alpha,\DD})$ coincide with the image of $(t_\alpha(R))$ in $\DD(X_{\wt R})(\wt R)^\otimes$.
\end{lemsub}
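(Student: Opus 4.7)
The plan is to construct $t_\alpha:\triv\ra\DD(X_{\wt R})^\otimes$ by verifying that $1\otimes s_\alpha \in \DD(X_{\wt R})(W(\wt R))^\otimes[\ivtd p]$ is already integral. Since $\wt R$ is a perfect $\kappa$-algebra, $W(\wt R)$ is its canonical $p$-adic $\Zp$-flat PD thickening, and $\DD(X_{\wt R})(W(\wt R))^\otimes$ is a finite projective (hence $\Zp$-flat) $W(\wt R)$-module. A Frobenius-invariant element of this module (which $1\otimes s_\alpha$ manifestly is) determines a morphism of integral crystals, giving the desired $t_\alpha$; uniqueness in assertion~2 is immediate because morphisms of integral crystals over the perfect ring $\wt R$ are determined by their $W(\wt R)$-sections, and $s_{\alpha,\DD}$ forces this section (after inverting $p$) to equal $1\otimes s_\alpha$.

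To prove integrality, I would test at the closed points of $\Spec\wt R$. Since $R$ is a reduced finite-type $\kappa$-algebra with $\kappa$ algebraically closed, $R$ is Jacobson with $\bigcap_x \m_x = 0$; because the integral inclusion $R\hra\wt R$ preserves residue fields (all equal to $\kappa$ by perfectness) and a Frobenius power of any element of $\bigcap_x \m_x^{\wt R}$ lands in $\bigcap_x \m_x^R = 0$, the reducedness of $\wt R$ yields $\bigcap_x \m_x^{\wt R}=0$ and thus $\wt R\hra\prod_x\kappa$. Applying the Witt vector functor (which preserves injections of perfect rings componentwise on Witt coordinates) gives $W(\wt R)\hra\prod_x W$, and finite projectivity produces injections $\DD(X_{\wt R})(W(\wt R))^\otimes/p^n \hra \prod_x \DD(X_x)(W)^\otimes/p^n$ for every $n$. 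At each closed point $x=g_xG(W)\in X^G(b)$, the specialization of $1\otimes s_\alpha$ is obtained by translating by $g_x\in G(K_0)$, which stabilizes $s_\alpha$, hence equals the Faltings Tate tensor $t_{\alpha,x}(W)\in\DD(X_x)(W)^\otimes$ of Lemma~\ref{lem:CrysTateOverW}. Pointwise integrality combined with the above injections forces $1\otimes s_\alpha$ into $\DD(X_{\wt R})(W(\wt R))^\otimes$.

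Finally, assertion~3 follows from the same closed-point argument modulo $p$. The $\wt R$-section $t_\alpha(\wt R)$, being the mod-$p$ reduction of $1\otimes s_\alpha$, specializes at each $x$ to $t_{\alpha,x}(\kappa)$; the image of $t_\alpha(R)$ in $\DD(X_{\wt R})(\wt R)^\otimes$ specializes at $x$ to the same $t_{\alpha,x}(\kappa)$, since $t_\alpha(R)$ was chosen to lift $\hat t_\alpha(\wh R)$ and the latter specializes to the Faltings tensor at $x$. The injection $\DD(X_{\wt R})(\wt R)^\otimes\hra\prod_x \DD(X_x)(\kappa)^\otimes$ (the mod-$p$ analogue of the injection above, again using finite projectivity) then forces these two tensors to coincide. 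The main technical obstacle is the descent of integrality from closed-point specializations up to the level of Witt vectors and crystal values, which rests on the Witt vector functor preserving injections of perfect rings and on finite projectivity of the relevant crystal modules allowing base-change injectivity at each $p^n$-torsion level; once this is in place, the rest of the verification is formal.
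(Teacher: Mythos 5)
Your strategy has a circularity problem that undermines the key step. The integrality of the specialization of $1\otimes s_\alpha$ at a closed point $x\in\Spec R$ requires knowing that $(X_x,\iota_x)\in\RZ\com{s_\alpha}_{\BX,G}(\kappa)$, i.e.\ that $x$ corresponds to an element of $X^G(b)$ and that the $G$-torsor at $x$ over $W$ (not merely over $\kappa$) is actually a torsor — this is precisely what Definition~\ref{def:RZG}(\ref{def:RZG:P}) demands. At the stage of Lemma~\ref{lem:IntPerf}, this is only known for closed points of $\Spec R$ lying in the image of $\MaxSpec\wh R$ (from the hypothesis $(X_{\wh R/I^i},\iota)\in\RZ^\Lambda_{G,b}(\wh R/I^i)$). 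Establishing it for \emph{all} closed points of $\Spec R$ is the substance of the subsequent Lemma~\ref{lem:PtWiseSmearOut}, whose proof itself rests on Lemma~\ref{lem:IntPerf} (it needs the scheme $P_{W(\wt R)}$ built from the integral sections $t_\alpha(W(\wt R))$). So invoking $x=g_xG(W)\in X^G(b)$ for every closed point is not available to you here.

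Nor can you repair the argument by restricting to the closed points coming from $\MaxSpec\wh R$. Those points need not satisfy $\bigcap_x\m^R_x=0$: already for $R=\kappa[T]\hra\wh R=\kappa[[T]]$ the image of $\MaxSpec\wh R$ is the single point $(T)$, and the intersection is $(T)\ne 0$, so the injection $\wt R\hra\prod_x\kappa$ fails and your product embedding does not exist. The same obstruction infects your proof of the third assertion.

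The paper's proof sidesteps both issues by staying in the formal direction rather than the algebraic one. Over $\wh R$ the Tate tensors $\hat t_\alpha:\triv\ra\DD(X_{\wh R})^\otimes$ are already known to be morphisms of \emph{integral} crystals (they come from the lift $(X_{\wh A},\iota)\in\RZ\com{s_\alpha}_{\BX,G}(\wh A)$ constructed at the start of \S\ref{subsec:SmearOut}), so $\hat t_\alpha(W((\wh R)\tilde{\ }))\in\DD(X_{\wh R})(W((\wh R)\tilde{\ }))^\otimes$ is integral. The quasi-isogeny identifies this with the image of $1\otimes s_\alpha$ in $W((\wh R)\tilde{\ })[\ivtd p]\otimes_{\Zp}\Lambda^\otimes$, and then the elementary identity $W(\wt R)=W(\wt R)[\ivtd p]\cap W((\wh R)\tilde{\ })$ inside $W((\wh R)\tilde{\ })[\ivtd p]$ (a consequence of $\wt R\hra(\wh R)\tilde{\ }$ being an injection of perfect rings, read off Witt-coordinatewise) forces $1\otimes s_\alpha$ into $\DD(X_{\wt R})(W(\wt R))^\otimes$, using finite projectivity of the crystal module. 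Assertion~3 is then verified directly by comparing images in $\DD(X_{(\wh R)\tilde{\ }})(\, (\wh R)\tilde{\ }\,)^\otimes$, avoiding closed-point specialization entirely.
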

\begin{proof}
Let us first show that the isomorphism
\[
\DD(\iota_{\wt S}):\DD(X_{\tilde S})(W(\wt S))[\ivtd p] \riso W(\wt S)[\ivtd p]\otimes_W\DD(\BX)(W)
\]
 matches $(t_\alpha(W(\wt S)) \subset \DD(X_{\wt S})(W(\wt S))^\otimes$ with
$(1\otimes s_\alpha)\subset W(\wt S)[\ivtd p]\otimes_{\Zp}\Lambda^\otimes$.
Recall that any $F$-equivariant homomorphism between \emph{constant} $F$-isocrystals over $\wt S$ (i.e., $F$-isocrystals obtained as a scalar extension from $F$-isocrystals over $\kappa$) is defined over $\kappa$ (\emph{cf.}
\cite[Lemma~3.9]{RapoportRichartz:Gisoc}). Therefore, to show that $\DD(\iota_{\wt S})$ preserves the tensors, it suffices to check the claim for $\DD(\iota_{\kappa})$; i.e., after reducing modulo $W(\m_{\tilde S})$. And the claim for $\DD(\iota_{\kappa})$ follows from the assumption that $x\in \RZ^\Lambda_{G,b}(\kappa)$.

This shows $\DD(\iota_{\wt S})$ matches $(\hat t_{\alpha,x}(W(\wt S)) $ with
$(1\otimes s_\alpha)$. Since we have $W(\wt R) = W(\wt R)[\ivtd p]\cap W(\wt S)$, it follows the that each of $1\otimes s_\alpha$  lies in $\DD(X_{\wt R})(W(\wt R))^\otimes$. The integrality claim for $(s_{\alpha,\DD})$ now follows from the  standard dictionary  (\emph{cf.} \cite[IV~\S4]{Grothendieck:BTMontreal}).
To verify the last claim, we may compare the images of both in $\DD(X_{\wt S})(\wt S)^\otimes$, where the claim is obvious from the construction; \emph{cf.} \S\ref{subsec:SmearOut}(\ref{subsec:SmearOut:tS}).
\end{proof}

For any  closed point $y\in\Spec R$ (which may not lie in the image of $\Spec S \ra \Spec R$), we have a natural map $R\hra \wt R\thra \kappa(y)=\kappa$. By taking the image of $t_\alpha(W(\wt R))$ in $\DD(X_{\wt R})(W(\wt R))\otimes_{W(\wt R)}W \cong \DD(X_{y})(W)$, we  obtain
\begin{equation}
(t_{\alpha,y})\subset\DD(X_{y})(W)^\otimes,
\end{equation}
which induces $(s_{\alpha,\DD})$ on the isocrystals.

\begin{lemsub}\label{lem:PtWiseSmearOut}
For any closed point $y$ in $\Spec R$, we have $(X_{x'},\iota_{x'})\in\RZ\com{s_\alpha}_{\BX,G}(\kappa)$. Furthermore, the images of $(t_{\alpha,y})$ and $(t_\alpha(R))$ in $\DD(X_{y})(\kappa)$ coincide.
\end{lemsub}
\begin{proof}
The only assertion which may not directly follow from the construction is to verify Definition~\ref{def:RZG}(\ref{def:RZG:P}) for $(X_{y},\iota_{y})$.
We consider the following scheme over $W(\wt R)$:
\[
 \cP_{W(\wt R)}:=\nf\Isom_{W(\wt R)}\big([\DD(X_{\wt R})(W(\wt R)), (t_\alpha(W(\wt R)))], [W(\wt R)\otimes_{\Zp}\Lambda^*,(1\otimes s_\alpha)] \big).
\]
Note that its mod~$p$ fibre $\cP_{\wt R}$ is a $G$-torsor since it is the pull-back of $\cP_R$ (\emph{cf.} \S\ref{subsec:SmearOut}(\ref{subsec:SmearOut:P})), and  $\cP_{W(\wt R)[\frac{1}{p}]}$ is a trivial $G$-torsor by the quasi-isogeny $\iota_{\wt R}$.  The  assumption on $(X_{\wt R},\iota_{\wt R})$ implies that the base change  $\cP_{W(\wt S)}$ is a $G$-torsor.  (To see this, note that $[\DD(X_{\wt R})(W(\wt R)), (t_\alpha(W(\wt R)))]$ pulls back to $[\DD(X_{\wt S})(W(\wt S)), (\hat t_{\alpha,x}(W(\wt S)))]$, which is in turn the pull-back of the the  crystalline Dieudonn\'e modules with Tate tensors $[\bM_{G,x},(\hat t^\univ_{\alpha,x}(A_{G,x}))]$ by any lift $A_{G,x}\to W(\wt S)$ of $A_{G,x}\xra{f_S} S \to \wt S$ (where $f_S$ is the map corresponding to the deformation $X_S$). Now we can pull back the isomorphism $[\bM_{G,x},(\hat t^\univ_{\alpha,x}(A_{G,x}))] \cong [A_{G,x}\otimes_{\Zp}\Lambda,(1\otimes s_\alpha)]$ over $W(\wt S)$, which exists by construction (\emph{cf.} \S\ref{subsec:FaltingsDeforConstr}).

For any closed point $y$ in $\Spec \wt R$, we set $\cP_{W(y)}$ to be the fibre of $\cP_{W(\wt R)}$ by $W(y):W(\wt R)\to W$ (induced by $y$). We want to show that $\cP_{W(y)}$ is a $G$-torsor over $W$ for any closed point $y$ of $\Spec R$. Since each fibre of $\cP_{W(y)}$ is known to be a $G$-torsor, it suffices to show the $W$-flatness of $\cP_{W(y)}$, for which it suffices to show that $\cP_{W(y)}\times_{\Spec W} \Spec W/p^m$ is flat over $W/p^m$ for any $m\geqslant1$. Therefore, it suffices to show that $\cP_{W_m(\wt R)}$ is flat over $W_m(\wt R):=W(\wt R)/p^m$ for any $m\geqslant 1$. This can be shown by local flatness criterion.

We consider the following map for any $m\geqslant 0$
\[
\gamma_m: \OO_{\cP_{\wt R}} \thra p^m\OO_{\cP_{W(\wt R)}}/p^{m+1}\OO_{\cP_{W(\wt R)}},
\]
induced by the surjective map $\OO_{\cP_{W(\wt R)}}\thra p^m\OO_{\cP_{W(\wt R)}}$ given by multiplication by $p^m$. By local flatness criterion (\emph{cf.} \cite[Theorem~22.3]{matsumura:crt}), $\cP_{W_m(\wt R)}$ is flat over $W_m(\wt R)$ if and only if $\gamma_{m'}$ is an isomorphism for any $m'<m$ and $P_{\wt R}$ is flat over $\wt R$. 
Since we already have the $\wt R$-flatness of $P_{\wt R}$, it remains to show the injectivity of $\gamma_m$ for any $m\geqslant 0$. Now, let us consider the following commutative diagram
\[\xymatrix{
\OO_{\cP_{\wt R}} \ar@{->>}[d]_-{\gamma_m} \ar@{^{(}->}[r] &
\OO_{\cP_{\wt S}} \ar[d]^-{\cong}_-{\wt S\otimes\gamma_m}\\
p^m\OO_{\cP_{W(\wt R)}}/p^{m+1}\OO_{\cP_{W(\wt R)}} \ar[r] &
p^m\OO_{\cP_{W(\wt S)}}/p^{m+1}\OO_{\cP_{W(\wt S)}}
}.\]
The right vertical arrow is an isomorphism since $\cP_{W(\wt S)}$ is flat over $W(\wt S)$, and it coincides with $\wt S\otimes\gamma_m$ since $\OO_{\cP_{W(\wt S)}} = W(\wt S)\otimes_{W(\wt R)}\OO_{\cP_{W(\wt R)}}$. 
The top arrow is injective since it is the scalar extension of the injective map $\wt R\hra \wt S$ by a flat ring extension $\wt R\to \OO_{\cP_{\wt R}}$. By this diagram, $\ker (\gamma_m)\subset \OO_{\cP_{\wt R}}$ maps injectively into the kernel of the isomorphism $\wt S\otimes\gamma_m$, which shows that $\gamma_m$ should also be injective.
\end{proof}

\begin{propsub}\label{prop:SmearOutRed}
Proposition~\ref{prop:SmearOut} holds when $S$ is reduced.
\end{propsub}
\begin{proof}
We work in the setting of \S\ref{subsec:SmearOut} with $S$ reduced. 
For any closed point $y\in\Spec R$ (corresponding to a maximal ideal $\m_y\subset R$), we have $(X_y,\iota_y)\in\RZ^\Lambda_{G,b}(\kappa)$ by Lemma~\ref{lem:PtWiseSmearOut}. Now 
we want to show that the map $\hat f_y:\Spf \wh R_y\ra\Def_{X_y}$, induced by $X_{\wh R_y}$, factors through $\Def_{X_y,G}$. 

Applying Corollary~\ref{cor:fsperf-red} to $(B,J):=(\wh R_y,\m_y)$, we obtain a sequence of square-zero liftable  PD thickenings of artin local rings  (in the sense of Definition~\ref{def:LiftablePD}):
\[
\wh R_y \thra \cdots \thra R_{y,i+1}\thra R_{y,i} \thra \cdots \thra R_{y,0}=\kappa.
\]
We choose a formal power series ring $A$ over $W$ with a surjection $A\thra \wh R_y$, and let $D_{y,i}\thra R_{y,i}$ denote the $p$-adically completed PD hull of $\wt A\thra \wh R_y\thra R/\m_y^i$. We chose $\{R_{y,i}\}$ so that the natural PD morphism  $D_{y,i}\to R_{y,i+1}$ factors through the $\Zp$-flat closure $D_{y,i}^\fl$ of $D_{y,i}$, which is also a PD thickening of $R_{y,i}$ (\emph{cf.} Corollary~\ref{cor:fsperf-red}, Lemma~\ref{lem:PDtorsion}).  

We set $X\com i_y:=X_{R_{y,i}}$, and let $(t_\alpha(R_{y,i}))\subset \DD(X\com i_y)(R_{y,i})^\otimes$ denote the image of $(t_\alpha(R))$. (Note that we have not constructed a map of crystals $t_\alpha:\triv\to\DD(X)^\otimes$, and we only have a section $t_\alpha(R)$.) To prove the proposition, we need to prove that for any $i$ we have $X\com i_y\in\Def_{X_y,G}(R_{y,i})$, and $(t_\alpha(R_{y,i})) \in \DD(X\com i_y)(R_{y,i})^\otimes$  coincides with the $R_{y,i}$-section of $\hat t^{(i)}_{\alpha,y}:\triv\ra\DD(X\com i_y)^\otimes$,  where $\hat t^{(i)}_{\alpha,y}=f_y^{(i)*}(\hat t^\univ_{\alpha,y})$ is the pull-back of the  universal Tate tensor $\hat t^\univ_{\alpha,y}$ by $f_y^{(i)}:\Spec R_{y,i}\to\Def_{X_x,G}$.  

We show this claim by induction on $i$. The base case with $i=0$ is exactly Lemma~\ref{lem:PtWiseSmearOut}. Now, we assume the claim for $i$ (i.e.,  $X\com i_y\in\Def_{X_y,G}(R_{y,i})$ and $t_\alpha(R_{y,i}) = \hat t^{(i)}_{\alpha,y}(R_{y,i})$) and want to deduce the claim for $i+1$. Since $R_{y,i+1}\thra R_{y,i}$ is liftable, we have $(\hat t^{(i)}_{\alpha,y}(D_{y,i}^\fl))\subset \DD(X^{(i)}_y)(D_{y,i}^\fl)^\otimes$ lifting $(\hat t^{(i)}_{\alpha,y}(R_{y,i+1}))$.

Let us consider the following commutative diagram:
\begin{equation}\label{eqn:SmearOutDiagram}
\xymatrix@R=3pt@C=7pt{
1\otimes s_\alpha &
\ar@{|->}[l]_-{\text{Lemma~\ref{lem:IntPerf}}}^-{\DD(\iota_{\wt R})} t_\alpha(W(\wt R)) \ar@{|->}[r] &
t_\alpha(\wt R) &
\ar@{|->}[l]_-{\text{Lemma~\ref{lem:IntPerf}}} t_\alpha(R)\\
 W(\wt R)[\ivtd p]\otimes_{\Zp}\Lambda^\otimes &
\ar@{_{(}->}[l] \DD(X_{\wt R})(W(\wt R))^\otimes  \ar@{->>}[r]&
\DD(X_{\wt R})(\wt R)^\otimes &
\ar@{_{(}->}[l] \DD(X_R)(R)^\otimes
\ar@{->>}[dddd]_{t_\alpha(R)\mapsto t_{\alpha}(R_{y,i+1})}\\
 & & &\\
 & & & \\
  & & & \\
\Lambda^\otimes \ar[uuuu]\ar[r]&
D^\fl_{y,i}[\ivtd p]\otimes_{\Zp}\Lambda^\otimes &
\ar@{_{(}->}[l]\DD(X\com i_y)(D^\fl_{y,i})^\otimes \ar[r]&
\DD(X\com i_y)(R_{y,i+1})^\otimes \\
s_\alpha \ar@{|->}[r] & 1\otimes s_\alpha&
\ar@{|->}[l]_-{\text{Lem~\ref{Lem:Dwork}}}^-{\DD(\iota_{y,i})} \hat t^{(i)}_{\alpha,y}( D^\fl_{y,i}) \ar@{|->}[r] &
\hat t^{(i)}_{\alpha,y}(R_{y,i+1})
}.
\end{equation}
The commutativity of this diagram follows from the fact that $\iota_{\wt R}:\BX_{\wt R}\dra X_{\wt R}$ descends to $\iota_R:\BX_R\dra X_R$, which lifts
  $ \iota_{R_{y,i}}:\BX_{R_{y,i}}\dra X_{R_{y,i}}$.

Indeed, the diagram shows that $t_\alpha(R_{y,i+1})$ (respectively, $\hat t^{(i)}_{\alpha,y}(R_{y,i+1})$) is uniquely determined by $s_\alpha$ by chasing the top row and the right vertical arrow (respectively, by chasing the bottom row), so we have $t_\alpha(R_{y,i+1}) = \hat t^{(i)}_{\alpha,y}(R_{y,i+1})$.

Now Proposition~\ref{prop:LiftingTate} shows that $X\com{i+1}_y\in\Def_{X_y,G}(R_{y,i+1})$ since the Hodge filtration of $X\com{i+1}_y$ corresponds to a $\set\mu$-filtration with respect to $(t_\alpha(R_{y,i+1}))$ by assumption (\emph{cf.} \S\ref{subsec:SmearOut}). The equality $\hat t^{(i+1)}_{\alpha,y}(R_{y,i+1}) = t_\alpha (R_{y,i+1})$ follows from  $\hat t^{(i)}_{\alpha,y}(R_{y,i+1}) = t_\alpha (R_{y,i+1})$ and the fact that $\hat t^{(i+1)}_{\alpha,y}$ lifts $\hat t^{(i)}_{\alpha,y}$ (where $\hat t^{(i+1)}_{\alpha,y}$ is the pull-back of $\hat t^\univ_{\alpha,y}$). 

By induction on $i$, we have just shown that $X^{(i)}_y\in\Def_{X_y,G}(R_{y,i})$ for any $i\geqslant 0$. so we obtain $\hat f_y:\Spf \wh R_y\to\Def_{X_y,G}$, as desired.
\end{proof}
We are ready to prove Proposition~\ref{prop:SmearOut}.
\begin{proof}[Proof of Proposition~\ref{prop:SmearOut}] 
 Let us first handle the case when $S$ admits the following sequence of square-zero \emph{liftable} PD thickenings (in the sense of Definition~\ref{def:LiftablePD}):
\begin{equation} \label{eqn:fsperf-SmearOut}
S:= A_N \thra \cdots\thra A_{j+1}\thra A_j\thra \cdots A_0 = S/\n,
\end{equation}
where $\n$ is the nilradical of $S$.

We choose $R\subset S$, $(X,\iota)\in\RZ_\BX(R)$, and $(t_\alpha(R))\subset\DD(X)(R)^\otimes$ as in \S\ref{subsec:SmearOut}. 
or $0\leqslant j\leqs N$, we set $R_j$ to be the image of $R$ in $A_j$. We choose a polynomial ring $A_0$ over $W$ which surjects onto $R$ (so it surjects onto $R_j$ for each $j$). By adding more variables if necessary, we also assume that the completion $A_S$ of $A_0$ with respect to the kernel of $A_0\thra R\ra S\thra \kappa$ surjects onto $S$. For any $0\leqslant j\leqslant N$ we define $D_j\thra R_j$  to be the $p$-adically completed PD hull of $A_0\thra R_j$, and $D_{S,j}\thra A_j$ to be the $p$-adically completed PD hull of $A_S\thra S\thra A_j$. By the assumpiton on $A_j$ (\emph{cf.} Corollary~\ref{cor:fsperf-appl}), the natural PD surjection $D_{S,j}\thra A_{j+1}$ factors through the flat closure $D_{S,j}^\fl$ of $D_{S,j}$. Now, since the map $D_j\thra R_{j+1}\hra A_{j+1}$ factors as  $D_j \to D_{S,j}^\fl\thra A_{j+1}$, the $p$-power torsion of $D_j$ lies in the kernel of $D_j\thra R_{j+1}$ and we get $D_j^\fl\thra R_{j+1}$. (In other words, $R_{j+1}\thra R_j$ is a square-zero liftable PD thickening for any $0\leqslant j<N$; \emph{cf.} Lemma~\ref{lem:PDtorsion}.) We set $D_S:=D_{S,N} \thra A_N = S$, which also factors through $D_S^\fl$ (by considering $D_S\to D_{S,N-1}^\fl\thra A_N=S$).

By Proposition~\ref{prop:SmearOutRed}, when $j=0$ we have $(X_{R_0},\iota)\in\RZ^\Lambda_{G,b}(R_0)$, and the image $t_\alpha(R_0)\in\DD(X_{R_0})(R_0)^\otimes$ of $t_\alpha(R)$ (as in \S\ref{subsec:SmearOut}) coincides with the section of $t\com{0}_\alpha:\triv\ra\DD(X_{R_0})^\otimes$ constructed in Proposition~\ref{prop:descent}.

We now proceed inductively on $j$. Let us assume (as the induction hypothesis) that we have  $(X^{(j)},\iota)\in\RZ^\Lambda_{G,b}(R_j)$ and $t\com j_\alpha(R_j) = t_\alpha(R_j)$ in $\DD(X_{R_j})(R_j)^\otimes$, where  $t_\alpha(R_j)$ is the image of  $t_\alpha(R)$ and  $t_\alpha\com j:\triv\ra\DD(X_{R_j})^\otimes$ is as constructed in Proposition~\ref{prop:descent}(\ref{prop:descent:t}). We first  claim that the natural isomorphism  $R_{j+1}\otimes_R \DD(X)(R)\cong \DD(X_{R_j})(R_{j+1})$ matches $(t_\alpha(R_{j+1}))$ and $(t^{(j)}_\alpha(R_{j+1}))$, by the following diagram:
\[
\xymatrix@R=3pt@C=7pt{
1\otimes s_\alpha & 
\ar@{|->}[l]_-{\DD(\iota_{S/p})} \hat t_{\alpha,x}(D_{S}^\fl) \ar@{|->}[r] & 
\hat t_{\alpha,x}(S) &
t_\alpha(R) \ar@{|->}[l]_-{\text{\S\ref{subsec:SmearOut}(\ref{subsec:SmearOut:tS})}}
\\
D_{S}^\fl[\ivtd p]\otimes_{\Zp}\Lambda^\otimes & 
\ar@{_{(}->}[l] \DD(X_{S})(D^\fl_{S})^\otimes  \ar@{->>}[r]&
\DD(X_{S})(S)^\otimes &
\DD(X_R)(R)^\otimes \ar@{_{(}->}[l]\ar@{->>}[dddd]
\\
 & & &\\
 & & & \\
  & & & \\
\Lambda^\otimes \ar[uuuu]\ar[r]&
D^\fl_{j}[\ivtd p]\otimes_{\Zp}\Lambda^\otimes &
\ar@{_{(}->}[l]\DD(X_{R_j})(D^\fl_{j})^\otimes \ar[r]&
\DD(X_{R_j})(R_{j+1})^\otimes 
\\
s_\alpha \ar@{|->}[r] & 1\otimes s_\alpha&
\ar@{|->}[l]_-{\DD(\iota_{R_j/p})} \hat t^{(j)}_{\alpha}( D^\fl_{j}) \ar@{|->}[r] &
\hat t^{(j)}_{\alpha}(R_{j+1})
},
\]
where $D_S\thra S$ is the PD hull of $A_S\thra S$, and $D_S^\fl$ is the $\Zp$-flat closure of $S$, and we used the notations from \S\ref{subsec:SmearOut}. 
Note that $\iota_{S/p}:\BX_{S/p}\dra X_{S/p}$ matches $\hat t_{\alpha,x}:\triv\to\DD(X_S)^\otimes$ and $s_{\alpha ,\DD}$ by \cite[Lemma~4.3]{deJong-Messing}, and $\iota_{R_j/p}:\BX_{R_j/p}\dra X_{R_j/p}$ matches $t_{\alpha}^{(j)}:\triv\to \DD(X_{R_j})^\otimes$ and $s_{\alpha,\DD}$  because $(X_{R_j},\iota_{R_j/p})$ and $(t^{(j)}_\alpha)$ can be lifted over some $A\in\Nilp_W^\sm$ (\emph{cf.} Proposition~\ref{prop:descent}(\ref{prop:descent:liftability})), where the desired compatibility follows from Definition~\ref{def:RZG}(\ref{def:RZG:qisog}).

By the diagram, the image of $t_\alpha(R)$ in $\DD(X_{R_j})(R_{j+1})$ coincides with $t^{(j)}_\alpha(R_{j+1})$. Since the Hodge filtration $\Fil^1_{X_R}$ is a $\set\mu$-filtration for $(t_\alpha(R))$ by assumption (\emph{cf.} \S\ref{subsec:SmearOut}(\ref{subsec:SmearOut:P})), it follows that $(X_{R_{j+1}}, \iota_{R_{j+1}/p})\in\RZ^\Lambda_{G,b}(R_{j+1})$ by Lemma~\ref{lem:ForSmFT}(\ref{lem:ForSmFT:GM}). Therefore, by induction on $j$ and the assumption that $S=A_N$ (so $R=R_N$), we conclude that $(X,\iota) = (X^{(N)},\iota^{(N)})\in\RZ^\Lambda_{G,b}(R)$.

Now we handle the general case of the proposition, without assuming the existence of (\ref{eqn:fsperf-SmearOut}).
Let $A_S$ be a formal power series ring over $W$ surjecting onto $S$, and we set $J:=\ker (A_S\thra S/\n)$. Applying Corollary~\ref{cor:fsperf-appl} to $(A_S,J)$, we obtain a sequence square-zero liftable PD thickenings $\{A_{m,i}\}$ indexed by $(m,i)\in\Z_{\geqslant1}^2$ with $A_{1,1} = S/\n$.  Since we have $A/p^{m} = \varprojlim_i A_{m,i}$ for any $m\geqslant 1$  by construction of $A_{m_0,i}$,  there exists $(m',i')$ such that we have $A_{m',i'}\thra S\thra S/\n$. Let $S':=A_{m',i'}$.

Now, given $(X_S,\iota_{S/p})\in\RZ_\BX(S)$ where $X_S$ is given by $f_S:A_{G,x}\to S$, we can choose a lift $f'_{S'}:A_{G,x}\to S':=A_{m_0,i_0}$ of $f_S$, which corresponds to some $p$-divisible group $X'_{S'}$ over $S'$. Since $S'\thra S$ is a nilpotent thickening, the quasi-isogeny $\iota_{S/p}$ uniquely lifts to a quasi-isogeny $\iota'_{S'/p}$ over $S'/p$, so we have $(X'_{S'},\iota'_{S'/p})\in\RZ^\Lambda_{G,b}(S')$ lifting $(X_S,\iota_{S/p})$. Since $S':=A_{m',i'}$ admits a sequence of thickenings as in (\ref{eqn:fsperf-SmearOut}) where $A_j:=A_{m_j,i_j}$ for some suitable $(m_j,i_j)$ for any $j$, we have just shown the existence of $(X',\iota')\in\RZ^\Lambda_{G,b}(R')$ that pulls back to $(X'_{S'},\iota'_{S'/p})$, where $R'\subset S'$ is a finitely generated $W/p^{m'}$-subalgebra. By setting $R\subset S$ to be the image of $R'$, we obtain $(X,\iota):=(X'_R,\iota'_{R/p})\in\RZ^\Lambda_{G,b}(R)$, which pulls back to $(X_S,\iota_{S/p})$.
\end{proof}

\section{Construction of the moduli of $p$-divisible groups with Tate tensors}\label{sec:RepPf}

 We give a proof of Theorem~\ref{thm:RZHType} in this section, applying the technical results proved in \S\ref{sec:fpqcDesc}.

\subsection{Artin representability theorem}\label{subsec:Artin}
Let $p>2$.  We choose a $W$-lift $\wt \BX$ of $ \BX$ as in Remark~\ref{rmk:X0}, and define $\RZ^\Lambda_{G,b}(h)^{m,n}:=\RZ^\Lambda_{G,b}\times_{\RZ_\BX}\RZ_\BX(h)^{m,n} $ as a functor on the category of $W/p^m$-algebras; i.e., for a $W/p^m$-algebra $R$, we set $\RZ^\Lambda_{G,b}(R):=\RZ^\Lambda_{G,b}(R)\cap\RZ_\BX(h)^{m,n}(R)$. Similarly, we define $\RZ^\Lambda_{G,b}(h):=\RZ^\Lambda_{G,b}\times_{\RZ_\BX}\RZ_\BX(h)$ as a functor on $\Nilp_W$.

We first prove (Theorem~\ref{thm:ArtinRepTh}) that $\RZ^\Lambda_{G,b}(h)^{m,n}$ is a separated algebraic space locally of finite type over $W/p^m$ using the Artin representability theorem  \cite[Corollary~5.4]{Artin:VersalDefAlgStack}\footnote{See also \cite{Conrad-deJong:Approx} for some clarifications.}. See \cite[\S2]{Knutson:AlgSp} for the definition of algebraic spaces.

To apply the Artin representability theorem as stated in \cite[Corollary~5.4]{Artin:VersalDefAlgStack}, it suffices to verify the following conditions\footnote{The conditions that  we state here are slightly stronger than the ones given in \cite[Corollary~5.4]{Artin:VersalDefAlgStack}.}, some of  which will be more precisely stated when they are verified:
\begin{enumerate}
\item $\RZ^\Lambda_{G,b}(h)^{m,n}$ is separated; i.e., the diagonal map
\[\RZ^\Lambda_{G,b}(h)^{m,n} \ra \RZ^\Lambda_{G,b}(h)^{m,n}\times\RZ^\Lambda_{G,b}(h)^{m,n}\]
is representable by a closed immersion. More concretely, for any  $W/p^m$-algebra $R$ and given two points $x,y\in\RZ^\Lambda_{G,b}(h)^{m,n}(R)$, the locus over which $x$ and $y$ coincide is a closed subscheme of $\Spec R$. This follows since $\RZ^\Lambda_{G,b}(h)^{m,n}$ is a subfunctor of a separated scheme $\RZ_\BX(h)^{m,n}$.
\item $\RZ^\Lambda_{G,b}(h)^{m,n}$ commutes with filtered direct limits of $W/p^m$-algebras (i.e., locally of finite presentation over $W/p^m$); indeed, this follows because both $\RZ_\BX(h)^{m,n}$ and $\RZ^\Lambda_{G,b}$ commute with  filtered direct limits (\emph{cf.} Lemma~\ref{lem:lpf}).
\item $\RZ^\Lambda_{G,b}(h)^{m,n}$ is an fppf sheaf; \emph{cf.} Lemma~\ref{lem:fppf}.
\item\label{subsec:Artin:Eff} $\RZ^\Lambda_{G,b}(h)^{m,n}$  satisfies the ``effectivity property''; namely,
for any  complete local noetherian $W/p^m$-algebra $(R,\m_R)$ such that its residue field is a finitely generated field extension of $\kappa$, the following natural map is bijective:
\[ \RZ^\Lambda_{G,b}(h)^{m,n}(R) \ra \varprojlim_i\RZ^\Lambda_{G,b}(h)^{m,n}(R/\m_R^i);\]
\emph{cf.} Lemma~\ref{lem:eff}.
\item $\RZ^\Lambda_{G,b}(h)^{m,n}$ satisfies some suitable generalisation of Schlessinger's criterion (i.e., Conditions (S1$'$) and (S2) in \cite[\S2]{Artin:VersalDefAlgStack}); namely, Lemma~\ref{lem:SchlenssingerRim} holds and the tangent space at any $x\in\RZ^\Lambda_{G,b}(h)^{m,n}(\kappa)$ is finite dimensional over $\kappa$. Finiteness of tangent spaces is obvious since $\RZ^\Lambda_{G,b}(h)^{m,n}$ is a subfunctor of $\RZ_\BX(h)^{m,n}$.
\item If $n\gg0$ then for any $m\geqs1$ there exists an obstruction theory for $\RZ^\Lambda_{G,b}(h)^{m,n}$ in the sense of \cite[(2.6), (4.1)]{Artin:VersalDefAlgStack}. Indeed, we verify some variant of this (exploiting the flexibility to increase $n$), which would still imply the representability of $\RZ^\Lambda_{G,b}(h)^{m,n}$; \emph{cf.} Lemma~\ref{lem:ObsThy}, the proof of Theorem~\ref{thm:ArtinRepTh}.
\end{enumerate}
We have already verified the first two conditions, so it remains to verify the remaining four conditions.

To show that $\RZ^\Lambda_{G,b}(h)^{m,n}$ is an fppf sheaf, it suffices to show that $\RZ^\Lambda_{G,b}$ is an fppf sheaf (as the height of a quasi-isogeny can be computed fppf-locally, and whether $p^n\tilde\iota$ is an isogeny can be verified fppf-locally). Since $\RZ^\Lambda_{G,b}$ is a subfunctor of $\RZ_\BX$, which is an fppf sheaf (as it can be represented by a formal scheme), the following lemma shows that $\RZ^\Lambda_{G,b}$ is an fppf sheaf.
\begin{lemsub}\label{lem:fppf}
Let $(X,\iota)\in\RZ_\BX(R)$ for $R\in\Nilp_W$. Assume that for a faithfully flat $R$-algebra $R'$ the pull-back $(X_{R'},\iota_{R'/p})$ lies in $\RZ^\Lambda_{G,b}(R')$. Then we have $(X,\iota)\in\RZ^\Lambda_{G,b}(R)$.
\end{lemsub}
\begin{proof}
We may assume that $R$ is finitely generated. Since whether $(X,\iota)\in\RZ^\Lambda_{G,b}(R)$ is decided by the pull-back over all artinian quotients of $R$, we may assume that  $R\in \art W$. Then $(X,\iota)$ defines a map $\Spec R \ra (\RZ_\BX)\wh{_x}\cong \Def_{X_x}$ for some closed point $x\in\RZ_\BX(\kappa)$. This map factors through $\Def_{X_x,G}$ if and only if it does after pre-composing with a faithfully flat map $\Spec R'\ra\Spec R$, since $\Def_{X_x,G}$ is a closed formal subscheme of $\Def_{X_x}$. 
\end{proof}

Next, we will verify the ``effectivity property'' (\ref{subsec:Artin:Eff}).
\begin{lemsub}\label{lem:eff}
The functor $\RZ^\Lambda_{G,b}(h)^{m,n}$  satisfies the ``effectivity property'', as stated above in \S\ref{subsec:Artin}(\ref{subsec:Artin:Eff}).
\end{lemsub}
\begin{proof}
Let $(R,\m_R)$ be a complete local noetherian $W/p^m$-algebra  such that its residue field is a finitely generated field extension of $\kappa$. We want to show that the following natural map is bijective:
\[ \RZ^\Lambda_{G,b}(h)^{m,n}(R) \ra \varprojlim_i\RZ^\Lambda_{G,b}(h)^{m,n}(R/\m_R^i).\]
Note that this property holds for $\RZ_\BX(h)^{m,n}$  (\emph{cf.} \cite[\S2.22]{RapoportZink:RZspace}), which shows the injectivity. It remains to show the surjectivity.

Let $(X,\iota)\in \RZ_\BX(h)^{m,n}(R)$, and assume that over $R/\m_R^i$ for each $i$ we have $(X_{R/\m_R^i},\iota)\in \RZ^\Lambda_{G,b}(R/\m_R^i)$. We want to show that we have  $(X,\iota)\in\RZ^\Lambda_{G,b}(R)$. (By the effectivity property for $\RZ_\BX(h)^{m,n}$, this implies the desired surjectivity. Also recall that $\RZ^\Lambda_{G,b}(h)^{m,n}(R):= \RZ^\Lambda_{G,b}(R) \cap \RZ_\BX(h)^{m,n}(R)$.)

Let $\eta\in |\RZ_\BX(h)^{m,n}|$ denote the point in the underlying topological space where $(X_{R/\m_R},\iota_{R/\m_R})\in\RZ_\BX(h)^{m,n}(R/\m_R)$ is supported. We claim that the closure $\overline{\{\eta\}}$ of $\eta$ contains a closed point $x$ in $\RZ^\Lambda_{G,b}(\kappa)$. Indeed, by assumption there exists a finitely generated $\kappa$-subdomain $\overline R_0\subset R/\m_R$ where the $R/\m_R$-point $(X,\iota)$ extends to $(X_{\overline R_0},\iota)\in\RZ^{\Lambda}_{G,b}(\overline R_0)$. Then the corresponding map of schemes $\Spec \overline R_0 \to \RZ_\BX(h)^{m,n}$ sends the generic point to $\eta$. Now, we may choose $x$ in the image of a closed point in $\Spec \overline R_0$.

By Lemma~\ref{lem:fppf}, we have $(X,\iota)\in\RZ^\Lambda_{G,b}(R)$ if and only if for some faithfully flat $R$-algebra $R'$ we have $(X_{R',}\iota_{R'/p})\in\RZ^\Lambda_{G,b}(R')$.
Let us write $A_x:=\widehat \OO_{\RZ_\BX,x}$ for $x$ as in the previous paragraph. Let us now construct a faithfully flat $R$-algebra $R'$ such that the natural map $\Spec R'\to\RZ_\BX(h)^{m,n}$ factors through $\Spec A_x$.
We choose a preimage $\hat\eta\in\Spec A_x$ of $\eta$, and let $\kappa'$ be the compositum of $R/\m_R$ and the residue field $\kappa(\hat\eta)$ of $A_x$ at $\hat\eta$. Let  $C$ and $C'$  be  Cohen rings $R/\m_R$ and  $\kappa'$, respectively, and we choose a $C$-algebra structure on $R$ and $C'$ (lifting the natural one mod~$p$).
Then for the completion  $R'$ of $R\otimes_CC'$,  the natural map $\Spec R'\to\RZ_\BX(h)^{m,n}$ factors through $\Spec (A_x)\wh{_{\wh\eta}}$.

Let $A_{G,x}$ denote the quotient of $A_x$ corresponding to $\Def_{X_x,G}$.
By construction we have  $(X_{R'/\m_{R'}^i},\iota)\in\RZ^\Lambda_{G,b}(R'/\m_{R'}^i)$ for all $i$ (i.e., the map $A_x\to R'/\m_{R'}^i$ factors through $A_{G,x}$ for any $i$), so $A_{x}\to R'$ factors through $A_{G,x}$. Let $S\subset R'$ to be the image of $A_{G,x}$. Then the residue field of $S$ is $\kappa$, and  the $R'$-point $(X_{R'},\iota)\in \RZ_\BX(h)^{m,n}(R')$ is actually defined over $S$. By Proposition~\ref{prop:extension}, this $S$-point lies in $\RZ^\Lambda_{G,b}(S)$, which in turn shows that $(X,\iota)\in\RZ_\BX(h)^{m,n}(R)$ also lies in $\RZ^\Lambda_{G,b}(R)$.
\end{proof}
\begin{lemsub}[\emph{Cf.} Condition (S1$'$) in {\cite[(2.2)]{Artin:VersalDefAlgStack}}]\label{lem:SchlenssingerRim}
Assume that $p>2$, and consider $B,R, R'\in\Nilp_W$ such that $B\thra R$ is a square-zero thickening with the kernel annihilated by the nilradical of $B$, and $R'\ra R$ is a $W$-algebra map that induces a surjective map $R'\ra R_{\red}$. Set $B':=B\times_R R'$. Let $\cF$ be one of $\RZ^\Lambda_{G,b}$, $\RZ^\Lambda_{G,b}(h)$ and $\RZ^\Lambda_{G,b}(h)^{m,n}$, where in the last case we assume $p^mB=0$ and $p^mR'=0$. Then the natural map
\begin{equation}\label{eqn:SchlenssingerRim}
\cF(B') \ra \cF(B)\times_{\cF(R)}\cF(R')
\end{equation}
is a bijection. 
\end{lemsub}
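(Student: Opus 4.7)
My plan is to bootstrap from Lemma~\ref{lem:SchlessingerRimFType} via a filtered direct limit argument. I will first treat $\cF = \RZ^\Lambda_{G,b}$ and then deduce the two variants.

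Since $\RZ_\BX$, $\RZ_\BX(h)$, and $\RZ_\BX(h)^{m,n}$ are each representable by (formal) schemes, and $\Spec B'$ is the pushout of $\Spec B \leftarrow \Spec R \to \Spec R'$ in the category of affine schemes, the analogue of~\eqref{eqn:SchlenssingerRim} is automatically a bijection for each of these functors. As $\cF$ sits inside each as a subfunctor, this already yields injectivity of~\eqref{eqn:SchlenssingerRim}. Moreover, the height-$h$ condition and the condition that $p^n\tilde\iota$ be an actual isogeny can each be verified after pulling back to $\Spec B$ and $\Spec R'$, which jointly cover $\Spec B'$; so the cases $\cF = \RZ^\Lambda_{G,b}(h)$ and $\cF = \RZ^\Lambda_{G,b}(h)^{m,n}$ reduce to $\cF = \RZ^\Lambda_{G,b}$.

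For the surjectivity with $\cF = \RZ^\Lambda_{G,b}$, let $(X_B, \iota_B) \in \RZ^\Lambda_{G,b}(B)$ and $(X_{R'}, \iota_{R'}) \in \RZ^\Lambda_{G,b}(R')$ have common image in $\RZ^\Lambda_{G,b}(R)$, and let $(X_{B'}, \iota_{B'}) \in \RZ_\BX(B')$ denote the element they determine. Lemma~\ref{lem:lpf} lets me descend the given elements to finitely generated $W$-subalgebras $B_0 \subset B$ and $R'_0 \subset R'$. By an iterative enlargement---using surjectivity of $B \thra R$ to lift any finitely generated subring of $R$ back into $B$, and surjectivity of $R' \thra R_\red$ to lift reduced subrings back into $R'$---I arrange that, setting $R_0 := \im(B_0 \thra R)$, we have $\im(R'_0 \to R) \subseteq R_0$ and $(R'_0)_\red \thra (R_0)_\red$ surjective. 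A final enlargement of $B_0$ absorbs the nilpotent images in $R$ contributed by the new lifts added to $R'_0$, and since $(R_0)_\red$ is unchanged by this absorption the process stabilises after finitely many rounds. The triple $(B_0, R_0, R'_0)$ then satisfies the hypotheses of Lemma~\ref{lem:SchlessingerRimFType}: the kernel $\bb_0 := \ker(B_0 \thra R_0)$ is square-zero (being contained in $\bb$) and is annihilated by $\nil(B_0) \subseteq \nil(B)$.

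The two elements $(X_{B_0}, \iota)$ and $(X_{R'_0}, \iota)$ have matching images in $\RZ_\BX(R_0)$: they agree after further pullback to $R$, and since $R_0 \hookrightarrow R$ is injective with $\RZ_\BX$ separated, they already agree over $R_0$, hence in the subfunctor $\RZ^\Lambda_{G,b}(R_0)$. Lemma~\ref{lem:SchlessingerRimFType} then produces $(X_{B'_0}, \iota) \in \RZ^\Lambda_{G,b}(B'_0)$ for $B'_0 := B_0 \times_{R_0} R'_0$, which is a finitely generated $W$-subalgebra of $B'$ by Ferrand's theorem on pushouts of affine schemes along closed immersions. Pulling back along $B'_0 \hookrightarrow B'$ and invoking the direct-limit extension in Definition~\ref{def:RZGloc} yields the desired element of $\RZ^\Lambda_{G,b}(B')$; its pullbacks to $B$ and to $R'$ recover the original data by construction. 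The hardest part will be the iterative enlargement just described: it is delicate because $R' \to R$ is only surjective onto $R_\red$, so nilpotent elements of $R$ need not lift to $R'$ and the lifting of nilpotents (via $B \thra R$) must be carefully decoupled from that of reduced elements (via $R' \thra R_\red$). The remaining ingredients---invariance of the $\RZ^\Lambda_{G,b}$-condition under restriction along inclusions of finitely generated subalgebras, and the ``agreement on the nose'' from separatedness---are routine consequences of Definition~\ref{def:RZGloc} and the representability of $\RZ_\BX$.
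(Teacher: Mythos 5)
Your proposal is correct and takes essentially the same route as the paper: injectivity from representability of the ambient functors $\RZ_\BX$, $\RZ_\BX(h)$, $\RZ_\BX(h)^{m,n}$, surjectivity for $\RZ^\Lambda_{G,b}$ by descending to finitely generated subalgebras (Definition~\ref{def:RZGloc}, Lemma~\ref{lem:lpf}) and applying Lemma~\ref{lem:SchlessingerRimFType}, and the $(h)$ and $(m,n)$ variants by noting the height and isogeny conditions are inherited over the fibre product --- the paper states this last point as ``a cofibre product of quasi-isogenies of height $h$ is again of height $h$, and a cofibre product of isogenies is again an isogeny'', which is the precise form of your (slightly loosely phrased) ``can be verified after pulling back to $\Spec B$ and $\Spec R'$''. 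The only real difference is that you spell out the reduction to the finitely generated case, which the paper compresses into ``we may assume that both $B$ and $R'$ are finitely generated over $W$''.
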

\begin{proof}
The map (\ref{eqn:SchlenssingerRim}) is injective, because the analogous maps for  $\RZ_\BX$, $\RZ_\BX(h)$, and $\RZ_\BX(h)^{m,n}$ are bijections.

To show the surjectivity of (\ref{eqn:SchlenssingerRim}) for $\RZ^\Lambda_{G,b}$, we may assume that both $B$ and $R'$ are finitely generated over $W$, in which case the claim follows from Lemma~\ref{lem:SchlessingerRimFType}.
To show the surjectivity of (\ref{eqn:SchlenssingerRim}) for  $\RZ^\Lambda_{G,b}(h)$ and $\RZ^\Lambda_{G,b}(h)^{m,n}$, we observe that a cofibre product of quasi-isogenies of height $h$ is again of height $h$, and that a cofibre product of isogenies is again an isogeny.
\end{proof}

\begin{lemsub}[``Obstruction theory'']\label{lem:ObsThy}
Let $\mathfrak{U}\subset \RZ_\BX(h)$ be a quasi-compact open formal subscheme, and choose an integer $n$ large enough so that  the natural map $\wh\Omega_{\mathfrak{U}/W}|_{\mathfrak{U}\cap\RZ_\BX(h)^{1,n}} \ra \Omega_{\mathfrak{U}\cap\RZ_\BX(h)^{1,n}/\kappa}$ is an isomorphism.\footnote{Such $n$ exists as $\mathfrak{U}$ is noetherian; indeed, as $\mathfrak{U}\times_{\Spf W}\Spec \kappa = \varinjlim_n(\mathfrak{U}\cap\RZ_\BX(h)^{1,n})$, we may choose $n$ so that $\mathfrak{U}\cap\RZ_\BX(h)^{1,n}$ contains the closed subscheme of $\mathfrak{U}\times_{\Spf W}\Spec \kappa$ cut out by the square of the maximal ideal of definition. Note that $\RZ_\BX(h)$ may not be quasi-compact.}
Let $B\thra R$ be any square-zero thickening such that its kernel $\bb$ is killed by the nilradical of $B$, and let $(X,\tilde\iota)\in \mathfrak{U}(R)\cap \RZ^\Lambda_{G,b}(h)^{m,n}(R)$. 

Then there exists a $B$-point $(X_B,\tilde\iota)\in \RZ^\Lambda_{G,b}(h)^{m,n}(B)$ lifting $(X,\tilde\iota)$ if and only if there exists a $B$-point $(X_B,\tilde\iota)\in \RZ_\BX(h)^{m,n}(B)$ lifting $(X,\tilde\iota)$.
\end{lemsub}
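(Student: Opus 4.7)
The ``only if'' direction is immediate since $\RZ^\Lambda_{G,b}(h)^{m,n}\subseteq\RZ_\BX(h)^{m,n}$. For the ``if'' direction, the plan is to produce a lift in $\RZ^\Lambda_{G,b}(B)$ using formal smoothness of $\RZ^\Lambda_{G,b}$, and then to use the condition on $n$ to show that this lift automatically satisfies the isogeny condition defining $\RZ_\BX(h)^{m,n}$.

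First I reduce to the case where $B\thra R$ is a small square-zero thickening with kernel $\bb\cong\kappa$: since $p^mB=0$ forces $p$ to lie in the nilradical of $B$, the hypothesis gives $p\bb=0$, so $\bb$ is a $\kappa$-vector space, and I may filter $\bb$ into a descending chain of sub-$\kappa$-vector spaces with one-dimensional successive quotients to obtain a tower $B=B_k\thra B_{k-1}\thra\cdots\thra B_0=R$ of small thickenings; the argument is then applied inductively using the intermediate pushforwards of $(X^{(0)}_B,\tilde\iota^{(0)})$ at each stage. By formal smoothness of $\RZ^\Lambda_{G,b}$ over $W$ (Corollary~\ref{cor:deformation}), there exists a lift $(X^{(1)}_B,\tilde\iota^{(1)})\in\RZ^\Lambda_{G,b}(B)$ of $(X,\tilde\iota)$. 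Since $\mathfrak{U}\subset\RZ_\BX(h)$ is open and $B\thra R$ is a nilpotent thickening, $X^{(1)}_B$ automatically lies in $\mathfrak{U}(B)$; it remains to check that $X^{(1)}_B\in\RZ_\BX(h)^{m,n}(B)$.

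For the key tangent-space computation, set $Z':=\mathfrak{U}\cap\RZ_\BX(h)^{m,n}$, a scheme over $W/p^m$, and let $Z_1:=\mathfrak{U}\cap\RZ_\BX(h)^{1,n}$ be its mod-$p$ fibre. Let $\xi:\Spec R\to\mathfrak{U}$ be the morphism given by $(X,\tilde\iota)$, which factors through $Z'$. Lifts of $\xi$ in $\mathfrak{U}(B)$ form a torsor under $T^{\mathfrak{U}}:=\Hom_R(\xi^*\wh\Omega_{\mathfrak{U}/W},\bb)$, and the subset lying in $Z'(B)$ is a sub-torsor under $T^{Z'}:=\Hom_R(\xi^*\Omega_{Z'/W},\bb)$. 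Because $p\bb=0$ and $\xi\bmod p$ factors through $Z_1$, each of these Hom-modules rewrites as a Hom-module over $R/p$ of the pullback along $\xi\bmod p:\Spec R/p\to Z_1$ of a sheaf on $Z_1$: one has $T^{\mathfrak{U}}=\Hom_{R/p}((\xi\bmod p)^*(\wh\Omega_{\mathfrak{U}/W}|_{Z_1}),\bb)$ and $T^{Z'}=\Hom_{R/p}((\xi\bmod p)^*\Omega_{Z_1/\kappa},\bb)$. By the hypothesis on $n$, $\wh\Omega_{\mathfrak{U}/W}|_{Z_1}\cong\Omega_{Z_1/\kappa}$, so $T^{\mathfrak{U}}=T^{Z'}$. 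Consequently the sub-torsor $Z'(B)$ is either empty or equal to the full torsor of lifts in $\mathfrak{U}(B)$; since $X^{(0)}_B$ witnesses non-emptiness, every lift of $\xi$ to $\mathfrak{U}(B)$ lies in $Z'(B)\subseteq\RZ_\BX(h)^{m,n}(B)$, and in particular so does $X^{(1)}_B$.

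The main technical obstacle is the careful bookkeeping in the tangent-module identifications -- tracking which ring each module is over ($R$ vs $R/p$, $W$ vs $W/p^m$ vs $\kappa$), and justifying that the hypothesis on $n$, though formulated over $\kappa$, indeed controls the $R$-to-$B$ lifting problem. The essential point is that $\bb$ is killed by $p$, which reduces everything to the mod-$p$ fibre where the condition on $n$ applies directly.
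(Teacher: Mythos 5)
Your core argument is the paper's own: using the hypothesis on $n$, identify the module governing square-zero lifts of $(X,\tilde\iota)$ inside $\mathfrak{U}$ with the module governing lifts inside $\mathfrak{U}\cap\RZ_\BX(h)^{m,n}$ (both computed through the mod-$p$ fibre, which is legitimate because $\bb$ is killed by $p$), so that the nonempty set of lifts in $\RZ_\BX(h)^{m,n}(B)$ is a torsor under the \emph{same} module as the full set of lifts in $\mathfrak{U}(B)$ and hence exhausts it; then formal smoothness of $\RZ^\Lambda_{G,b}$ (Corollary~\ref{cor:deformation}) supplies a lift in $\RZ^\Lambda_{G,b}(B)$, which is therefore automatically a point of $\RZ^\Lambda_{G,b}(h)^{m,n}(B)$. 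The paper performs the two steps in the opposite order and phrases the torsor over $R_{\red}$ rather than $R/p$; these differences are cosmetic.

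The one blemish is your opening ``reduction to small thickenings with $\bb\cong\kappa$'': as stated it does not work, since an arbitrary $\kappa$-subspace of $\bb$ need not be an ideal of $B$ (you would need $B$-submodules, equivalently $R_{\red}$-submodules, of $\bb$), and $\bb$ need not be finite-dimensional over $\kappa$, so a finite filtration with one-dimensional quotients need not exist. Fortunately the reduction is never used: your tangent-module comparison is valid for arbitrary $\bb$ killed by the nilradical (all that is needed is $p\bb=0$ and that $\xi\bmod p$ factors through $\mathfrak{U}\cap\RZ_\BX(h)^{1,n}$), exactly as in the paper. Simply delete that step.
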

Note that $\RZ_\BX(h)^{m,n}$ has an obstruction theory that satisfies the conditions in \cite[(2.6), (4.1)]{Artin:VersalDefAlgStack} (given by the theory of cotangent complex, for example). The lemma shows that any obstruction theory for the $\mathfrak{U}\cap\RZ_\BX(h)^{m,n}$ (which exists) also provides an obstruction theory for $\mathfrak{U}\cap\RZ^\Lambda_{G,b}(h)^{m,n}$ satisfying  \cite[(2.6), (4.1)]{Artin:VersalDefAlgStack} if $n$ is large enough (depending on $\mathfrak{U}$).
\begin{proof}
It suffices to prove the ``if'' direction. 
Let $(X,\tilde\iota)\in\mathfrak{U}(R)$, and assume that there exists a $B$-point $( X_B,\tilde\iota)\in \RZ_\BX(h)^{m,n}(B)$ lifting $(X,\tilde\iota)$. Set $R_0:=R_{\red}$, and write $(X_{R_0},\tilde\iota)\in\mathfrak{U}(R_0)$ denote the pull-back.

Let $f_0:\Spec R_0\ra\mathfrak{U}$ denote the map induced by $(X_{R_0},\tilde\iota)$.
Then  the set of $B$-points of $\RZ_\BX(h)^{m,n}$ lifting $(X,\tilde\iota)$, which is non-empty by assumption, is a torsor under the $R_0$-module $f_0^*(\wh\Omega_{\mathfrak{U}/W}^*)\otimes_{R_0}\bb$ by the assumption on $n$ in the statement. (Recall that any lift of $(X,\tilde\iota)$ lies in $\mathfrak{U}$.)
Therefore, any $B$-lift $(X'_B,\tilde\iota')\in\RZ_\BX(B)$ of $(X,\tilde\iota)$ actually lie in $\RZ_\BX(h)^{m,n}(B)$, as the set of such $B$-lifts is also a torsor under the same $R_0$-module $f_0^*(\wh\Omega_{\mathfrak{U}/W}^*)\otimes_{R_0}\bb$. Now if we also have $(X,\tilde\iota)\in\RZ^\Lambda_{G,b}(R)$, then  Corollary~\ref{cor:deformation} produces a $B$-point
\[(X_B,\tilde\iota)\in \RZ^\Lambda_{G,b}(B)\cap\RZ_\BX(h)^{m,n}(B) = \RZ^\Lambda_{G,b}(h)^{m,n}(B)\]
lifting $(X,\tilde\iota)$, as desired.
\end{proof}

We are ready to prove the following:
\begin{thmsub}\label{thm:ArtinRepTh}
The functor $\RZ^\Lambda_{G,b}(h)^{m,n}$ can be represented by  a separated  scheme locally of finite type over $\Spec W/p^m$.
\end{thmsub}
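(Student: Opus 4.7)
The plan is to apply Artin's representability criterion \cite[Corollary~5.4]{Artin:VersalDefAlgStack} to the functor $\RZ^\Lambda_{G,b}(h)^{m,n}$ on the category of $W/p^m$-algebras. The preceding discussion has already verified most of the required hypotheses: separatedness follows from $\RZ^\Lambda_{G,b}(h)^{m,n}\hra \RZ_\BX(h)^{m,n}$ being a subfunctor of the projective scheme $\RZ_\BX(h)^{m,n}$; the functor is locally of finite presentation by Lemma~\ref{lem:lpf}; the effectivity property follows from \S\ref{subsec:Artin}(\ref{subsec:Artin:Eff}) via Proposition~\ref{prop:extension}; the functor is an fppf sheaf by Lemma~\ref{lem:fppf}; Schlessinger's conditions (S1$'$) and (S2) follow from Lemma~\ref{lem:SchlenssingerRim}; and the tangent space at each $\kappa$-point is finite-dimensional since $\RZ^\Lambda_{G,b}(h)^{m,n}(\kappa[\ep]/\ep^2)\subset\RZ_\BX(h)^{m,n}(\kappa[\ep]/\ep^2)$.

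The main subtlety is the obstruction theory. Lemma~\ref{lem:ObsThy} supplies an obstruction theory only after restricting to a quasi-compact open $\mathfrak{U}\subset \RZ_\BX(h)$ and increasing $n$ sufficiently (depending on $\mathfrak{U}$) so that the natural map $\wh\Omega_{\mathfrak U/W}|_{\mathfrak U\cap\RZ_\BX(h)^{1,n}}\ra\Omega_{\mathfrak U\cap\RZ_\BX(h)^{1,n}/\kappa}$ becomes an isomorphism. The plan is therefore to fix $m$, pick a quasi-compact open $\mathfrak{U}\subset\RZ_\BX(h)$ containing $\RZ_\BX(h)^{m,n}$ (which is possible since $\RZ_\BX(h)^{m,n}$ is quasi-compact), and then choose $n'\geqs n$ large enough that Lemma~\ref{lem:ObsThy} applies to $\mathfrak U\cap \RZ_\BX(h)^{1,n'}$. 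Artin's criterion then produces $\RZ^\Lambda_{G,b}(h)^{m,n'}$ as a separated algebraic space locally of finite type over $W/p^m$.

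To recover the statement for the original $n$, I observe that $\RZ^\Lambda_{G,b}(h)^{m,n}$ is a closed subfunctor of $\RZ^\Lambda_{G,b}(h)^{m,n'}$, because the condition that $p^n\tilde\iota$ be an actual isogeny (rather than merely a quasi-isogeny) cuts out a closed subscheme inside $\RZ_\BX(h)^{m,n'}$ and this closed condition pulls back to the finer moduli problem.

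Finally, to upgrade from an algebraic space to a scheme, I will argue that the natural monomorphism $\RZ^\Lambda_{G,b}(h)^{m,n}\ra\RZ_\BX(h)^{m,n}$ is a locally closed immersion. The key input is the identification (\ref{eqn:RZG:FaltingsDefor}) of the completion $(\RZ^\Lambda_{G,b})\wh{_x}$ at a closed point with the Faltings deformation space $\Def_{X_x,G}$, which is a formally smooth closed formal subscheme of $\Def_{X_x}\cong(\RZ_\BX)\wh{_x}$. This forces the monomorphism to be formally unramified, hence unramified, and together with the closed-immersion property on completed local rings a standard argument promotes it to a locally closed immersion into the projective scheme $\RZ_\BX(h)^{m,n}$. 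This endows $\RZ^\Lambda_{G,b}(h)^{m,n}$ with a canonical scheme structure. The hardest step is simply bookkeeping the dependence of $n$ on the quasi-compact open $\mathfrak{U}$ in Lemma~\ref{lem:ObsThy}; once that is handled, Artin's theorem and the formal-local comparison do all the heavy lifting.
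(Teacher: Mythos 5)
Your assembly of the Artin criterion hypotheses and your handling of the obstruction theory (restricting to a quasi-compact open $\mathfrak U\supset\RZ_\BX(h)^{m,n}$, increasing $n$ to some $n'\gg n$, invoking Lemma~\ref{lem:ObsThy}, and then descending back to $\RZ^\Lambda_{G,b}(h)^{m,n}$ via the closed fibre product $\RZ^\Lambda_{G,b}(h)^{m,n}=\RZ^\Lambda_{G,b}(h)^{m,n'}\times_{\RZ_\BX(h)^{m,n'}}\RZ_\BX(h)^{m,n}$) match the paper's argument in substance; the minor caveat is that Artin's criterion produces $\mathfrak U\cap\RZ^\Lambda_{G,b}(h)^{m,n'}$, not the full $\RZ^\Lambda_{G,b}(h)^{m,n'}$, but since $\RZ_\BX(h)^{m,n}\subset\mathfrak U$ the fibre product is unaffected, so this is only an imprecision of phrasing.

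The genuine gap is in your final step, where you upgrade from algebraic space to scheme. You claim that the monomorphism $\RZ^\Lambda_{G,b}(h)^{m,n}\hra\RZ_\BX(h)^{m,n}$ is unramified and induces closed immersions on completed local rings, and that ``a standard argument promotes it to a locally closed immersion.'' No such standard argument exists: an unramified monomorphism of finite type between noetherian schemes that is a closed immersion on every completed local ring need \emph{not} be a locally closed immersion. A concrete counterexample is $U\sqcup\{p\}\to\mathbb A^2_\kappa$, where $U=\mathbb A^2\setminus\{xy=0\}$ and $p=(0,0)$: this is a separated finite-type monomorphism, at each point of $U$ the map on completions is an isomorphism and at $p$ it is $\kappa[[x,y]]\thra\kappa$, yet the image $U\cup\{p\}$ is not locally closed. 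What the hypothesis you have (a monomorphism locally of finite type, hence separated and locally quasi-finite) actually buys you is precisely what the paper uses: by \cite[Th\'eor\`eme~(A.2)]{Laumon-MoretBailly}, a separated locally quasi-finite algebraic space over a scheme is itself a scheme. That step delivers the statement without asserting anything about being a locally closed immersion. Indeed, showing that the inclusion $\RZ^\Lambda_{G,b}(h)^{m,n}\hra\RZ_\BX(h)^{m,n}$ is a (locally) closed immersion is a substantially harder fact, proved separately in the paper (Theorem~\ref{thm:q-cpct}) via the valuative criterion for properness and a quasi-compactness argument using alterations; your proposal tries to smuggle that conclusion in at this earlier stage with an argument that does not go through.
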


\begin{proof}
Choose a quasi-compact open $\mathfrak{U}\subset\RZ_\BX(h)$ which contains $\RZ_\BX(h)^{m,n}$. (Recall that $\RZ_\BX(h)^{m,n}$ is quasi-compact.) Then we have verified the criterion in \cite[Corollary~5.4]{Artin:VersalDefAlgStack} to show that  $\mathfrak{U}\cap\RZ^\Lambda_{G,b}(h)^{m,n'}$ is a separated algebraic space locally of finite type over $W/p^m$ for any $n'\gg n$. Since we have
\[\RZ^\Lambda_{G,b}(h)^{m,n} = (\mathfrak{U}\cap\RZ^\Lambda_{G,b}(h)^{m,n'})\times_{\RZ_\BX(h)^{m,n'}}\RZ_\BX(h)^{m,n},\]
it follows that $\RZ^\Lambda_{G,b}(h)^{m,n}$ is also a separated algebraic space locally of finite type over $W/p^m$.

Since the natural inclusion $\RZ^\Lambda_{G,b}(h)^{m,n}\hra \RZ_\BX(h)^{m,n}$ is a monomorphism which is locally of finite type, it is separated and  locally quasi-finite. (By looking at \'etale charts, this claim reduces to the  case of schemes, which is standard.) By \cite[Th\'eor\`eme~(A.2)]{Laumon-MoretBailly}, $\RZ^\Lambda_{G,b}(h)^{m,n}$ is a scheme.
\end{proof}

\subsection{Closedness}
\begin{thmsub}\label{thm:q-cpct}
The natural monomorphism $\RZ^\Lambda_{G,b}(h)^{m,n}\hra \RZ_\BX(h)^{m,n}$ is a closed immersion of schemes for any $m$, $n$, and $h$. Also, the functor $\RZ^\Lambda_{G,b}$ is representable by a formal scheme locally formally of  finite type over $W$, and the natural inclusion $\RZ^\Lambda_{G,b}\hra \RZ_\BX$ is a  closed immersion of formal schemes.
\end{thmsub}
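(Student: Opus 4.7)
The plan is to establish the closed-immersion claim at each finite level $\RZ^\Lambda_{G,b}(h)^{m,n} \hookrightarrow \RZ_\BX(h)^{m,n}$, and then to patch the resulting closed immersions into a closed formal subscheme of $\RZ_\BX$. By Theorem~\ref{thm:ArtinRepTh} the source is a separated $W/p^m$-scheme of finite type, while by \cite[\S2.22]{RapoportZink:RZspace} the target is projective over $W/p^m$. The natural inclusion is a finite-type monomorphism, hence quasi-finite; since a proper quasi-finite morphism is finite and a finite monomorphism is a closed immersion, it suffices to show that $\RZ^\Lambda_{G,b}(h)^{m,n}$ is proper over $W/p^m$.

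I would verify this via the existence part of the valuative criterion. Let $V$ be a DVR essentially of finite type over $W/p^m$, with fraction field $K$, uniformiser $\pi$, and residue field $k$ (which may be taken algebraically closed after a faithfully flat extension, using the fpqc-sheaf property of Lemma~\ref{lem:fppf} together with Noetherian approximation). Given a $K$-point $(X_K, \iota_K) \in \RZ^\Lambda_{G,b}(h)^{m,n}(K)$, the projectivity of $\RZ_\BX(h)^{m,n}$ extends the underlying $K$-point uniquely to some $(X_V, \iota_V) \in \RZ_\BX(h)^{m,n}(V)$; the content of the valuative criterion is then that $(X_V, \iota_V) \in \RZ^\Lambda_{G,b}(V)$. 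After Noetherian approximation, Proposition~\ref{prop:extension} reduces this to showing that the map $\Spf V \to \Def_{X_k}$ induced by $(X_V, \iota_V)$ factors through the closed formal subscheme $\Def_{X_k,G} \subset \Def_{X_k}$.

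To establish this factorisation, I would descend the generic-fibre data to a finitely generated $W$-subalgebra. By Definition~\ref{def:RZGloc}, $(X_K, \iota_K)$ arises from an element of $\RZ^\Lambda_{G,b}(R')$ for some finitely generated $W$-subalgebra $R' \subset K$; by enlarging, one may suppose $(X_V, \iota_V)$ descends to $(X_R, \iota) \in \RZ_\BX(R)$ over a finitely generated $W$-subalgebra $R \subset V$ with $R[1/\pi] \supset R'$, so that $(X_{R[1/\pi]}, \iota) \in \RZ^\Lambda_{G,b}(R[1/\pi])$. Proposition~\ref{prop:descent} then produces integral Tate tensors $t_\alpha : \triv \to \DD(X_{R[1/\pi]})^\otimes$. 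To transport these across the closed fibre, I would adapt the spread-out machinery of \S\ref{subsec:SmearOut} (especially Propositions~\ref{prop:SmearOutRed} and \ref{prop:SmearOut}) to the complete-DVR setting: the Frobenius-invariant isocrystal maps $s_{\alpha,\DD}$ are intrinsic to $(X_V, \iota_V)$, and a combination of fpqc descent along $\Acris$-type PD thickenings with the closedness of the $G$-torsor condition Definition~\ref{def:RZG}(\ref{def:RZG:P}) under specialisation yields integral extensions $t_\alpha : \triv \to \DD(X_V)^\otimes$ satisfying a $\set\mu$-filtration condition at the special point. Corollary~\ref{cor:deformation}(\ref{cor:deformation:GM}) then gives the desired factorisation.

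Finally, the closed immersions $\RZ^\Lambda_{G,b}(h)^{m,n} \hookrightarrow \RZ_\BX(h)^{m,n}$ are compatible under the transition maps for varying $h, m, n$, so they patch into a closed formal subscheme $\RZ^\Lambda_{G,b} \hookrightarrow \RZ_\BX$ locally formally of finite type over $W$; Lemma~\ref{lem:lpf} and Proposition~\ref{prop:extension} together identify this formal subscheme with the functor of Definition~\ref{def:RZGloc}. The main obstacle will be the integral extension of Tate tensors from the generic to the special fibre of the DVR: the morphism of isocrystals $s_{\alpha,\DD}$ encodes a Frobenius-equivariant map only after inverting $p$, and one must ensure it lifts to a morphism of integral crystals over $V$. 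This is precisely the point where the $G$-torsor condition Definition~\ref{def:RZG}(\ref{def:RZG:P}) and the liftable-PD-thickening techniques of \S\ref{subsec:LiftablePD} are essential.
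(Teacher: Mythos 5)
Your high-level architecture matches the paper's: show the monomorphism $\RZ^\Lambda_{G,b}(h)^{m,n}\hra\RZ_\BX(h)^{m,n}$ is proper, conclude closed immersion, and patch. But there are two genuine gaps.

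First, you do not address quasi-compactness of $\RZ^\Lambda_{G,b}(h)^{m,n}$. Theorem~\ref{thm:ArtinRepTh} gives only that the source is separated and \emph{locally} of finite type over $W/p^m$, not of finite type; in particular it is not a priori quasi-compact, so you cannot speak of it being proper (nor can you check the valuative criterion of properness for the morphism without first knowing the morphism is quasi-compact). This is the harder half of the argument in the paper: Corollary~\ref{cor:q-compImg} establishes quasi-compactness by a Noetherian induction on the closure of the image inside $\RZ_\BX(h)^{m,n}_\red$, using de~Jong alterations and Proposition~\ref{prop:Alteration} (a ``spreading out from a dense set of closed points'' statement for the $G$-torsor and $\set\mu$-filtration conditions). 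Nothing in your sketch substitutes for this.

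Second, the valuative-criterion step itself is off target. Invoking Proposition~\ref{prop:extension} is the wrong move: that proposition passes from formal-local Tate tensors (given at completions at all closed points) to a global statement on a finitely generated ring, whereas here you need to produce the condition at the special point starting only from the generic fibre of a DVR. What is actually needed (Lemma~\ref{lem:ValCrit} in the paper) is a direct construction: choose a $W$-flat $p$-adic lift $A$ of the DVR with $A/p=R$, use $A = A[\ivtd p]\cap\wh A_{(p)}$ to show the generically defined integral Tate tensors are $p$-integral over $A$, and then — crucially — extend the resulting $G$-pretorsor across the closed point of a $2$-dimensional regular base by Colliot-Th\'el\`ene--Sansuc (\cite[Th\'eor\`eme~6.13]{Colliot-Thelene-Sansuc:QuadFiber}). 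The ``fpqc descent along $\Acris$-type PD thickenings'' and the ``spread-out machinery of~\S\ref{subsec:SmearOut}'' that you gesture at are the tools for Proposition~\ref{prop:extension}, not for the valuative criterion; the key input you are missing is the purity/extension theorem for $G$-torsors over a $2$-dimensional regular local ring.
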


Note that $\RZ^\Lambda_{G,b}\to \RZ_\BX$ is a closed immersion if and only if $\RZ^\Lambda_{G,b}(h)^{m,n}\hra \RZ_\BX(h)^{m,n}$ is a closed immersion for any $m$ and $n$. Since a proper monomorphism is a closed immersion, we want to show that  $\RZ^\Lambda_{G,b}(h)^{m,n}\ra\RZ_\BX(h)^{m,n}$ is  proper. For this, we need to show that $\RZ^\Lambda_{G,b}(h)^{m,n}$ is quasi-compact  (\emph{cf.} Corollary~\ref{cor:q-compImg}), and verify the valuative criterion for properness (Lemma~\ref{lem:ValCrit}). 

\begin{lemsub}\label{lem:ValCrit}
Let $R$ be a $\kappa$-algebra which is a discrete valuation ring,  and $L:=\Frac(R)$. 
Let $(X,\iota)\in\RZ_\BX(h)^{m,n}(R)$ be such that  $(X_L, \iota_L)\in \RZ^\Lambda_{G,b}(h)^{m,n}(L)$. Then we have  $(X,\iota)\in\RZ^\Lambda_{G,b}(h)^{m,n}(R)$.
\end{lemsub}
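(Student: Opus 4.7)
The plan is to reduce the valuative criterion to verifying the conditions of Definition~\ref{def:RZG} for $(X,\iota)$ over $R$. By the usual reductions I may assume that $R$ is a complete DVR with algebraically closed residue field $\kappa$, and for definiteness I take $m=1$ so $R=\kappa[[t]]$, $L=\kappa((t))$ (the case $m>1$ is handled analogously via a perfectoid-style replacement of $R$ adapted to its $W_m$-structure). Since such $R$ is formally smooth and formally finitely generated over $W/p^m$, combining Corollary~\ref{cor:tLiftability} with Proposition~\ref{prop:extension} identifies $\RZ^\Lambda_{G,b}(R)$ with $\RZ\com{s_\alpha}_{\BX,G}(R)$; thus it suffices to produce integral Tate tensors $t_\alpha:\triv\to\DD(X)^\otimes$ over $R$ inducing $s_{\alpha,\DD}$, to verify that $P_{\wt R}$ is a $G$-torsor over the formally smooth $p$-adic lift $\wt R:=W[[t]]$, and to verify that the Hodge filtration is a $\set\mu$-filtration with respect to $(t_\alpha)$.

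The core technical step is the construction of the integral tensors via a perfection argument in the spirit of Lemma~\ref{lem:IntPerf}. Let $\widetilde R:=\bigcup_n \kappa[[t^{1/p^n}]]$ be the perfection of $R$ with $\widetilde L:=\Frac \widetilde R$; then $R\to\widetilde R$ is faithfully flat and $W(\widetilde R)$ is an integral domain injecting into $W(\widetilde L)$. By the direct-limit definition of $\RZ^\Lambda_{G,b}$ on $\Nilp_W$, the hypothesis $(X_L,\iota_L)\in \RZ^\Lambda_{G,b}(L)$ gives a descent $(X_1,\iota_1)\in \RZ^\Lambda_{G,b}(R_1)$ to some finitely generated $W$-subalgebra $R_1\subset L$; Proposition~\ref{prop:descent} then yields integral tensors $t_{\alpha,1}:\triv\to\DD(X_1)^\otimes$ whose $W(\widetilde L)$-sections coincide with $1\otimes s_\alpha$ under the identification induced by $\iota$. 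The flatness argument of Lemma~\ref{lem:IntPerf} (using $W(\widetilde R)\hookrightarrow W(\widetilde L)$ and freeness of $\DD(X_{\widetilde R})(W(\widetilde R))$) forces $1\otimes s_\alpha\in \DD(X_{\widetilde R})(W(\widetilde R))^\otimes$, producing $\widetilde t_\alpha:\triv\to\DD(X_{\widetilde R})^\otimes$. Since both pullbacks of $\widetilde t_\alpha$ along $\widetilde R\otimes_R\widetilde R\rightrightarrows\widetilde R$ are given by the constant section $1\otimes s_\alpha$, fpqc descent for morphisms of crystals produces $t_\alpha:\triv\to\DD(X)^\otimes$ over $R$, automatically inducing $s_{\alpha,\DD}$ on isocrystals.

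The $\set\mu$-filtration condition follows from closedness of $\Flag^{\DD(X),(t_\alpha)}_{G,\set\mu}\subset\Flag^{\DD(X)}_{\set\mu}$ (Lemma~\ref{lem:muFil}): the Hodge filtration defines a section $\Spec R\to\Flag^{\DD(X)}_{\set\mu}$, the generic fiber factors through the closed subscheme by $(X_L,\iota_L)\in\RZ^\Lambda_{G,b}(L)$, and since $\Spec R$ is irreducible the whole section lands in the closed subscheme. For the torsor condition on $P_{\wt R}$, smoothness of $G$ over $\wt R$ together with completeness of $\wt R$ reduces the verification to non-emptiness of the fiber at the unique closed point of $\Spec \wt R$, equivalently that $\DD(X_{x_0})(W)$ lies in the $G(K_0)$-orbit of $W\otimes\Lambda$ inside $K_0\otimes\Lambda$. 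This I obtain from the closed immersion $\Gr_G\hookrightarrow\Gr_{\GL(\Lambda)}$ of (Witt-vector) affine Grassmannians: the family of lattices $\DD(X_{\widetilde R})(W(\widetilde R))$ defines a map $\Spec W(\widetilde R)\to\Gr_{\GL(\Lambda)}$ whose generic fiber $\Spec W(\widetilde L)$ factors through $\Gr_G$ by the $L$-hypothesis, so by irreducibility of $\Spec W(\widetilde R)$ the whole map factors through $\Gr_G$; specialising to $\Spec W\hookrightarrow\Spec W(\widetilde R)$ gives the desired $G$-orbit statement, and combining with the $\set\mu$-filtration at $x_0$ Lemma~\ref{lem:X0} delivers $(X_{x_0},\iota_{x_0})\in X^G(b)$.

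The hard part is the combination of the fpqc descent of crystalline morphisms from $\widetilde R$ to $R$ (which relies on the constancy of $1\otimes s_\alpha$ under the two projections) and the closedness of the affine Grassmannian, which together transfer the $L$-level integrality to an integral $G$-structure on the closed fiber. Once all three conditions of Definition~\ref{def:RZG} are in place, Corollary~\ref{cor:tLiftability} gives $(X,\iota)\in\RZ\com{s_\alpha}_{\BX,G}(R)=\RZ^\Lambda_{G,b}(R)$, and together with the hypothesis $(X,\iota)\in\RZ_\BX(h)^{m,n}(R)$ this yields $(X,\iota)\in\RZ^\Lambda_{G,b}(h)^{m,n}(R)$, as required.
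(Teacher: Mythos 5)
Your proposal breaks down at the torsor condition, which is the real content of the lemma. You assert that ``smoothness of $G$ over $\wt R$ together with completeness of $\wt R$ reduces the verification to non-emptiness of the fiber at the unique closed point of $\Spec \wt R$''; this is not true. Being a $G$-torsor requires in particular that the isomorphism scheme $P_{W[[t]]}$ be (faithfully) flat over $W[[t]]$, and knowing that its fibres are nonempty at the closed point, over $W[[t]][\ivtd p]$, and even that it is a torsor over $\wh A_{(p)}$ (which is what the hypothesis at $L$ gives), does not imply flatness at the closed point: the tensor-matching isomorphism scheme can perfectly well fail to be flat exactly there, and no infinitesimal-lifting argument with the smooth group $G$ applies because $P$ itself is not yet known to be smooth or flat. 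This is precisely why the paper's proof, after showing that $P$ is a $G$-torsor over the punctured spectrum of $A'=W(\kappa')[[u]]$, must invoke the Colliot-Th\'el\`ene--Sansuc extension theorem, triviality of $G$-torsors over the strictly henselian $A'$, and a Hartogs/normality argument to extend a trivialising section across the closed point; nothing in your argument substitutes for this purity step. Moreover, the input you do bring in --- a closed immersion $\Gr_G\hra\Gr_{\GL(\Lambda)}$ of Witt-vector affine Grassmannians --- is a heavy external theorem nowhere established in the paper (and your map should be an $\wt R$-valued point of the Grassmannian, not a morphism from $\Spec W(\wt R)$); even granting it, all you extract is the pointwise statement that the lattice at the closed point lies in the $G(K_0)$-orbit, i.e.\ non-emptiness of the special fibre of $P$, which, as just explained, is not enough. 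To make a Grassmannian route work one would have to use the full $\wt R$-point of $\Gr_G$ (a $G$-torsor over $W(\wt R)$ trivialised after inverting $p$) and then descend torsor-ness along $W[[t]]\ra W(\wt R)$; your write-up does neither.

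There are also looser ends in the reductions. The lemma is stated for an arbitrary DVR $\kappa$-algebra $R$, and by Definition~\ref{def:RZGloc} membership of $(X,\iota)$ in $\RZ^\Lambda_{G,b}(R)$ for such (non-finitely-generated) $R$ means descent to a finitely generated $W$-subalgebra of $R$ on which the conditions hold at \emph{all} closed points; verifying Definition~\ref{def:RZG} over the completion $\kappa[[t]]$ and applying Corollary~\ref{cor:tLiftability} and Proposition~\ref{prop:extension} only controls the single closed point of $\Spec R$ and does not by itself produce such a model --- this spreading-out is exactly why the paper works with a smooth finitely generated model $R_0\subset R$ with $\Frac R_0=L$ and shrinks it at the end using smoothness of $P_{R_0}$ after pullback to $R$. (For the properness application one may restrict to complete DVRs, but then the spreading-out must still be supplied.) The aside about a ``perfectoid-style replacement for $m>1$'' is vacuous, since $R$ is a $\kappa$-algebra whatever $m$ is; and the appeal to ``fpqc descent for morphisms of crystals'' along $R\ra\wt R$ is asserted rather than proved --- the paper instead obtains integrality and horizontality of the tensors from the intersection $A=A[\ivtd p]\cap\wh A_{(p)}$ inside a $W$-flat lift, which is the cleaner (and justified) way to run that step.
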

\begin{proof}
It suffices to show that $(X,\iota)\in\RZ^\Lambda_{G,b}(R)$ in the setting of the statement.
As both $\RZ_\BX$ and $\RZ^\Lambda_{G,b}$ commute with filtered direct limits in $\Nilp_W$, we may assume that $L$ is a finitely generated field extension of $\kappa$, and $(X,\iota)\in\RZ_\BX(R)$ extends to $(X_{R_0},\iota_{R_0})\in\RZ_\BX(R_0)$ for some \emph{smooth} $\kappa$-subalgebra $R_0\subset R$ with $L=\Frac R_0$.  By smoothness, there exists a $p$-adic topologically smooth $W$-algebra $A_0$ with $A_0/p = R_0$. 
By localising $A_0$ at the ideal corresponding to the closed point of $\Spec R$ and $p$-adically completing it, we obtain a $p$-adic flat $W$-algebra $A$ with $A/p=R$. 

By the assumption on $(X, \iota)$, the map of isocrystals $s_{\alpha,\DD}:\triv\ra\DD(X)^\otimes[\ivtd p]$ comes from a unique map of integral  crystals $t_\alpha:\triv\ra \DD(X_L)^\otimes$ on the generic fibre. So we obtain the following horizontal section for any $\alpha$:
\[ (t_\alpha(\wh A_{(p)})) \subset \DD(X_L)(\wh A_{(p)})^\otimes \cap  \DD(X)(A)^\otimes[\ivtd p] = \DD(X)(A)^\otimes, \]
since we have $A = A[\ivtd p]\cap \wh A_{(p)} $. We rename this section as $t_\alpha(A)\in\DD(X)(A)^\otimes$.

Since $(X,\iota)$ is defined over $R_0$, it follows that  $t_\alpha(A)\in\DD(X)(A)^\otimes[\ivtd p]$ lies in the image of  $\DD(X_{R_0})(A_0)^\otimes[\ivtd p]$. Since we also have $A_0 = A_0[\ivtd p] \cap A$ (as $A_0/p = R_0 \hra A/p = R$ by assumption), it follows that $t_\alpha(A)$ is the image of  $t_\alpha(A_0)\in\DD(X_{R_0})(A_0)^\otimes$. Since $(t_\alpha(A_0))$ are horizontal (as they are after inverting $p$), it follows that $(t_\alpha)$ on $X_L$ extend to maps of crystals $t_\alpha:\triv \ra \DD(X_{R_0})^\otimes$ by Lemma~\ref{lem:CrysConn}.

Now consider the following finitely generated $A_0$-scheme
\[\cP_{A_0}:=\nf\Isom_{A_0}\big[(\DD(X_{R_0})(A_0), (t_\alpha(A_0))],[A_0\otimes_{\Zp}\Lambda^*,(1\otimes s_\alpha)]\big).\]
By construction, $\cP_{A_0}$ restricts to a trivial $G$-torsor over $A_0[\ivtd p]$ since the quasi-isogeny over $R_0$ gives a splitting.

Let us now  show that the pull-back $\cP_A$ of $\cP_{A_0}$ is a $G$-torsor over $A$. It suffices to show that its pull-back $\cP_{A'}$ is a $G$-torsor for some suitable faithfully flat $A$-algebra $A'$. To construct such $A'$, let $R'$ be a complete discretely valued  $R$-algebra whose residue field is an algebraic closure $\kappa'$ of the residue field of $R$, and we identify $R'=\kappa'[[u]]$. We set $A':=W(\kappa')[[u]]$ and choose a lift $A\ra A'$ of $R\ra R'$.

We now show that $\cP_{A'}$ is a (necessarily trivial) $G$-torsor. We already have that $\cP_{A'[1/p]}$ is a  trivial $G$-torsor. Since $(X_L,\iota_L)\in\RZ^\Lambda_{G,b}(L)$, it follows that $\cP_{W(\bar L')}$ is a $G$-torsor where $L':=\Frac(R')$, so $\cP_{\wh A'_{(p)}}$ is a $G$-torsor. (Note that $\wh A'_{(p)}$ is a $p$-adic discrete valuation ring with $\wh A'_{(p)}/p = L'$, so  we have a faithfully flat map $\wh A'_{(p)}  \ra W(\ol L')$.) Therefore, we have that $\cP_{U'}$ is a $G$-torsor, where $U'\subset \Spec A'$ is the complement of the closed point.

By \cite[Th\'eor\`eme~6.13]{Colliot-Thelene-Sansuc:QuadFiber}, $\cP_{U'}$ extends to some $G$-torsor $\cP'_{A'}$ over $A'$. But since $A'$ is strictly henselian, $\cP'_{A'}$ is a trivial $G$-torsor, which implies that $\cP_{U'}$ is a trivial $G$-torsor. Therefore there exists an isomorphism of vector bundles
\[\varsigma:\OO_{U'}\otimes_{A'}\DD(X_{A'})(A') \riso \OO_U\otimes_{\Zp}\Lambda^*\]
matching $t_\alpha(A')|_{U'}$ with $1\otimes s_\alpha$. Since $\varsigma$ is defined away from a codimension-$2$ subset in a normal scheme, $\varsigma$ extends to an $A'$-section of $\cP_{A'}$ by taking the global section. This shows an isomorphism $G_{A'}\cong \cP_{A'}$.\footnote{This argument is adapted from ``Step~5'' of the proof of \cite[Proposition~1.3.4]{Kisin:IntModelAbType}.}

Furthermore, since $\cP_{A_0}$ pulls back to a smooth scheme over $A$, it has to be smooth over some open formal subscheme $\Spec A_0'\subset \Spec A_0$ containing the closed point of $\Spec R$, with non-empty fibres at any point in $\Spec A_0'$; i.e., the restriction $\cP_{A_0'}$ is a $G$-torsor. 
By replacing $A_0$ with the $p$-adic completion of $A_0'$, we may assume that $\cP_{A_0}$ is a $G$-torsor.

By Lemma~\ref{lem:muFil}, the  Hodge filtration $\Fil^1_{X_{R_0}}\subset \DD(X_{R_0})(R_0)$ is a $\set\mu$-filtration. (Indeed, $\Fil^1_{X_L}$ is a $\set\mu$-filtration so $\Fil^1_{X_{R_0}}$ is a $\set\mu$-filtration over the closure of the generic point, which is $\Spec R_0$.) This shows that $(X_{R_0},\iota_{R_0})\in\RZ\com{s_\alpha}_{\BX,G} (R_0)$, so we have $(X,\iota)\in\RZ^\Lambda_{G,b}(R)$.
\end{proof}

It remains to show the quasi-compactness of $\RZ^\Lambda_{G,b}(h)^{m,n}$. We begin with the following proposition.
\begin{propsub}\label{prop:Alteration}
Let $R$ be a smooth domain over $\kappa$, and let  $(X,\iota)\in \RZ_\BX(R)$.
Assume that there exists a dense subset of closed points $\Sigma\subset \Spec R$, such that for any $x\in\Sigma$ we have $(X_x,\iota_x)\in\RZ^\Lambda_{G,b}(\kappa)$. Then there exists a dense open subscheme $\Spec R'\subset \Spec R$ such that $(X_{R'},\iota_{R'})\in\RZ^\Lambda_{G,b}(R')$.
\end{propsub}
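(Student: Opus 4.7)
The plan is to use the quasi-isogeny $\iota$ and the integral Tate tensors at the dense set $\Sigma$ to produce a global morphism of crystals $t_\alpha:\triv\ra\DD(X_R)^\otimes$ over $\Spec R$, and then verify the remaining conditions of Definition~\ref{def:RZG} on a dense open by openness.

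First I would lift $R$ to a smooth $W$-algebra $\wt R$ with $\wt R/p = R$; such a lift exists since $R$ is smooth of finite type over $\kappa$. Giving $p\wt R$ the standard divided power structure (valid since $p>2$) makes $\wt R \thra R$ into a $p$-adic $\Zp$-flat PD thickening of $R$. The quasi-isogeny $\iota$ induces an isomorphism of isocrystals $\DD(\iota)_{\wt R}:\DD(X_R)(\wt R)^\otimes[\ivtd p] \riso \wt R[\ivtd p] \otimes_{\Zp}\Lambda^\otimes$, and I set $t_\alpha^{\wt R} := \DD(\iota)_{\wt R}^{-1}(1 \otimes s_\alpha) \in \DD(X_R)(\wt R)^\otimes[\ivtd p]$. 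For each $x \in \Sigma$, formal smoothness of $\wt R$ over $W$ allows lifting $x:R\ra\kappa$ to $\tilde x:\wt R \ra W$, and the specialization of $t_\alpha^{\wt R}$ via $\tilde x$ coincides with the integral tensor $t_{\alpha,x} \in \DD(X_x)(W)^\otimes$ granted by $(X_x,\iota_x)\in\RZ^\Lambda_{G,b}(\kappa)$.

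The crux is to prove that $t_\alpha^{\wt R}$ is already integral. Assuming $N\geqs 1$ is minimal with $v:=p^N t_\alpha^{\wt R}\in\DD(X_R)(\wt R)^\otimes$, the reduction $\bar v \in \DD(X_R)(R)^\otimes$ is nonzero, and for each $x\in\Sigma$ specializes to $p^N t_{\alpha,x} \bmod p = 0$ in $\DD(X_x)(\kappa)^\otimes$. Since $\bar v$ lies in a finite-rank direct summand of $\DD(X_R)(R)^\otimes$, and $R$ is reduced and Jacobson (being finite type over $\kappa$) with $\bigcap_{x\in\Sigma}\m_x = 0$ by density of $\Sigma$, a standard localization argument over principal opens trivializing the summand forces $\bar v = 0$, contradicting the minimality of $N$. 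Hence $t_\alpha^{\wt R}\in\DD(X_R)(\wt R)^\otimes$. Horizontality with respect to the canonical connection is then an integral condition on the $p$-torsion-free module $\DD(X_R)(\wt R)^\otimes\otimes\wh\Omega_{\wt R/W}$, hence follows from horizontality after inverting $p$; combined with Frobenius-equivariance up to isogeny, this produces a morphism of crystals $t_\alpha:\triv\ra\DD(X_R)^\otimes$ over all of $\Spec R$, inducing $s_{\alpha,\DD}$ on the level of isocrystals.

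Finally I would check the remaining conditions of Definition~\ref{def:RZG} on a dense open. The $\wt R$-scheme $P_{\wt R}$ of Definition~\ref{def:RZG}(\ref{def:RZG:P}) is smooth of relative dimension $\dim G$ with non-empty fibre at every $\tilde x$ lifting $x\in\Sigma$; these are open conditions, so $P_{\wt R}$ is a $G$-torsor over a dense open $U\subset\Spec\wt R$ containing all such $\tilde x$. Let $R'\subset R$ be the reduction of $U$; the condition that $\Fil^1_X$ is a $\set\mu$-filtration with respect to $(t_\alpha(R'))$ then cuts out a closed subscheme of $\Spec R'$ containing $\Sigma\cap\Spec R'$, hence equal to $\Spec R'$ by density and reducedness. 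Thus $(X_{R'},\iota_{R'})\in\RZ\com{s_\alpha}_{\BX,G}(R')$, and for each closed point $y\in\Spec R'$ the pull-back of $t_\alpha$ to the formal completion $\wh R'_y$ satisfies the hypothesis of Theorem~\ref{thm:FaltingsDef}, whence $\Spf\wh R'_y \ra \Def_{X_y}$ factors through $\Def_{X_y,G}$, giving $(X_{R'},\iota_{R'})\in\RZ^\Lambda_{G,b}(R')$. The principal obstacle is the integrality step; everything afterwards is a routine application of openness and the explicit deformation theory of \S\ref{sec:FaltingsDefor}.
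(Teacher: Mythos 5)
Your overall route coincides with the paper's: use the density of $\Sigma$ to show that the sections $\DD(\iota)\iv(1\otimes s_\alpha)$ over a $p$-adic formally smooth lift are $p$-integral and horizontal, hence come from morphisms of crystals $t_\alpha:\triv\ra\DD(X)^\otimes$, and then verify the remaining conditions of Definition~\ref{def:RZG} after shrinking to a dense open. Your integrality argument (minimal power of $p$; the reduction of $p^N t_\alpha$ vanishes at the dense set $\Sigma$, hence vanishes since $R$ is reduced and Jacobson) is a correct variant of the paper's graded-piece argument, and your closedness argument for the $\set\mu$-filtration condition is the same as the paper's.

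The gap is in the torsor step. You assert that $P_{\wt R}$ is ``smooth of relative dimension $\dim G$ with non-empty fibre at every $\tilde x$'' and that ``these are open conditions'', so that $P_{\wt R}$ is a $G$-torsor over a dense open $U\subset\Spec\wt R$ containing $\Sigma$. What the hypothesis actually gives at a point $x\in\Sigma$ (and at characteristic-zero points, via the quasi-isogeny) is only that the \emph{fibre} of $P_{\wt R}$ there is a trivial $G$-torsor; it gives no information about flatness (equivalently smoothness) of $P_{\wt R}$ over the base near the special fibre, and for the affine, non-proper morphism $P_{\wt R}\ra\Spec\wt R$ neither ``the fibre is non-empty'' nor ``the morphism is smooth over this base point'' is an open condition on the base. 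Note that the hypothesis of the proposition is purely fibrewise at the $\kappa$-points of $\Sigma$ --- you do not know that $\Spf\wh R_x\ra\Def_{X_x}$ factors through $\Def_{X_x,G}$, so you cannot get flatness from the formal neighbourhoods either; and $P_{\wt R}$ is just a closed subscheme of a $\GL$-torsor cut out by tensor equations, so miracle flatness is unavailable. Yet Definition~\ref{def:RZG}(\ref{def:RZG:P}) demands exactly that $P$ be a $G$-torsor over the mixed-characteristic lift $\wt A$ of $R'$, and this flatness is the technical heart of the paper's proof: it first uses semicontinuity of fibre dimension to see the fibre at the generic point of $\Spec R$ is a torsor, then generic flatness to find a localisation $R'$ with $P_{R'}$ a $G$-torsor over the special fibre, and then proves flatness of $P_{A'}$ over a lift $A'$ by the local criterion for flatness, checking that $\OO_{P_{R'}}\ra p^m\OO_{P_{A'}}/p^{m+1}\OO_{P_{A'}}$ is an isomorphism, which uses the $W$-valued lifts of the points of $\Sigma\cap\Spec R'$ together with the fact that $\OO_{P_{R'}}$ is a domain. (The same difficulty forces the appeal to Colliot-Th\'el\`ene--Sansuc in Lemma~\ref{lem:ValCrit}.) Without an argument of this kind, your proof verifies the torsor condition only fibrewise over $\Sigma$ and over the generic fibre, not over $\wt A$, so the conclusion $(X_{R'},\iota_{R'})\in\RZ\com{s_\alpha}_{\BX,G}(R')$ does not yet follow.
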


\begin{proof}
Let us choose a formally smooth $p$-adic $W$-lift $A$ of $R$.
By the standard dictionary \cite[Corollary~2.2.3]{dejong:crysdieubyformalrigid}, the morphism $s_{\alpha,\DD}:\triv\ra\DD(X)^\otimes[\ivtd p]$, constructed in Definition~\ref{def:t}, corresponds to a horizontal section $t_\alpha(A)\in \DD(X)(A)^\otimes[\ivtd p]$. Let us first show that $t_\alpha(A)\in\DD(X)(A)^\otimes$; i.e., $t_\alpha(A)$ is the $A$-section of a (unique) map of  crystals $t_\alpha:\triv\ra\DD(X)^\otimes$. 

We endow the $p$-adic filtration with $A[\ivtd p]$ and  identify the associated graded algebra $\gr^\bullet A[\ivtd p] \cong R\llpar u\rrpar$ by sending $p$ to $u$. Then we define $\set{t_\alpha\com m\in\DD(X)(R)^\otimes}_{m\in\Z}$ so that $\sum_{m\in\Z}t_\alpha\com m u^m$ is the image of $t_\alpha(A)$ via the map
\[
\DD(X)(A)^\otimes[\ivtd p] \ra \gr^\bullet \left(\DD(X)(A)^\otimes[\ivtd p]\right) \cong \bigoplus_{m\in \Z,\ m\gg-\infty} u^m\DD(X)(R)^\otimes,
\]
where $\gr^\bullet$ is with respect to the $p$-adic filtration.

Note that $t_\alpha(A)\in\DD(X)(A)^\otimes$ if and only if $t_\alpha\com m=0$ for any $m<0$. On the other hand, if $m<0$ then $t_\alpha\com m$ vanishes at a dense set of points $\Sigma$, so $t_\alpha\com m = 0$; indeed, for any $x\in\Sigma$, any map $\tilde x:A\ra W$ lifting $R\thra R/\m_x\cong \kappa$ pulls back $t_\alpha(A)$ to a ($p$-integral) tensor in  $\DD(X_x)(W)^\otimes$ because $(X_x,\iota_x)\in\RZ\com{s_\alpha}_{\BX,G}(\kappa) = \RZ^\Lambda_{G,b}(\kappa)$.

We next consider the following $A$-scheme (as in Definition~\ref{def:RZG}(\ref{def:RZG:P})):
\[ \cP_A:= \nf\Isom_A \big([\DD(X)(A),(t_\alpha(A))],[A\otimes_{\Zp}\Lambda^*, (1\otimes s_\alpha) ]\big).\]
By construction, each fibre of $\cP_A$ at a point of $\Spec A$ is either a $G$-torsor or empty. Since the fibre $\cP_x$ at $x\in\Sigma$ is a $G$-torsor and  $\cP_{A[\frac{1}{p}]}$ is a trivial $G$-torsor, it follows that the fibre $\cP_\eta$ at the generic point $\eta$ of $\Spec R \subset \Spec A$ is a $G$-torsor (by semi-continuity of fibre dimensions, for example).

By generic flatness, we find a localisation $R'$ of $R$ such that $\cP_{R'}$ is a $G$-torsor, and we choose an $A$-algebra $A'$ which lifts $R'$. (If $R' = R[1/ f]$ then we let $A'$ to be the $p$-adic completion of $A[1/\tilde f]$ where $\tilde f$ is some lift of $f$.) We want to show that $\cP_{A'}$ is a $G$-torsor. Indeed, $\cP_{R'}$ and $\cP_{A'[\frac{1}{p}]}$ are $G$-torsors so it remains to show that $\cP_{A'}$ is flat over $A'$. By local flatness criterion \cite[Theorem~22.3]{matsumura:crt} it suffices to show that the following surjective map of $\OO_{\cP_{R'}}$-modules is an isomorphism for each $m$:
\begin{equation}\label{eqn:LocFlCrit}
\OO_{\cP_{R'}} \cong (p^mA'/p^{m+1}A')\otimes_{R'}\OO_{\cP_{R'}} \thra p^m\OO_{\cP_{A'}}/ p^{m+1}\OO_{\cP_{A'}}.
\end{equation}
Let $\Sigma':=\Sigma\cap\Spec R'$, which is a dense set of closed points. For any $x'\in\Sigma'$, we choose a lift $\tilde x':\Spec W\ra\Spec A'$. By the defining condition of $\Sigma'$ it follows that the map (\ref{eqn:LocFlCrit}) pulls back to an isomorphism
\[
\OO_{\cP_{x'}} \riso p^m\OO_{\cP_{\tilde x'}}/ p^{m+1}\OO_{\cP_{\tilde x'}}.
\]
Since $\bigcup_{x'\in\Spec R'}\cP_{x'}$ is dense in $\cP_{R'}$, it follows that the kernel of (\ref{eqn:LocFlCrit}) is supported in some proper closed subset of $\cP_{R'}$. On the other hand, $\OO_{\cP_{R'}}$ is a domain, which forces  (\ref{eqn:LocFlCrit})  to be an isomorphism.

To see that $\Fil^1_{X_{R'}}$ is a $\set\mu$-filtration with respect to $(t_\alpha(R'))$, it suffices to check this  at dense set of points (namely, $\Sigma'$). This shows $(X_{R'},\iota_{R'})\in\RZ\com{s_\alpha}_{\BX,G}(R')$. 
\end{proof}

Now, Theorem~\ref{thm:q-cpct} follows from Lemma~\ref{lem:ValCrit} and the corollary below:
\begin{corsub}\label{cor:q-compImg}
The scheme $\RZ^\Lambda_{G,b}(h)^{m,n}$ is quasi-compact for any $m,n,h$.
\end{corsub}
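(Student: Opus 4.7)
The plan is to deduce quasi-compactness of $X:=\RZ^\Lambda_{G,b}(h)^{m,n}$ from closedness of its set-theoretic image in $Y:=\RZ_\BX(h)^{m,n}$. Let $f:X\hookrightarrow Y$ denote the natural monomorphism and $T:=f(|X|)\subset|Y|$ its image; here $X$ is a separated scheme locally of finite type over $W/p^m$ by Theorem~\ref{thm:ArtinRepTh}, and $Y$ is a projective, hence noetherian, $W/p^m$-scheme. The strategy is first to prove $T$ is closed in $|Y|$ by combining Proposition~\ref{prop:Alteration} with the valuative criterion of Lemma~\ref{lem:ValCrit}, and then to deduce quasi-compactness of $X$ by invoking the standard fact that a monomorphism of schemes locally of finite type is locally on the target a closed immersion.

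For the closedness of $T$, let $\bar T$ denote its closure; since $Y$ is noetherian, $\bar T$ has finitely many irreducible components, so it suffices to show each such component $Y_0$ lies in $T$. Note that $(Y_0)_{\red}$ is a reduced integral $\kappa$-scheme, since $p$ is nilpotent in $W/p^m$ and $(Y_0)_{\red}$ is reduced, forcing $p=0$ there. By perfectness of $\kappa$, generic smoothness produces a dense smooth open $U\subset(Y_0)_{\red}$ which is a smooth integral $\kappa$-scheme. Writing $T_0:=f(X(\kappa))\subset Y(\kappa)$ for the set of $\kappa$-points in $T$, the Jacobson property of $X$ (locally of finite type over the Artinian, hence Jacobson, ring $W/p^m$) combined with injectivity of $f$ on underlying spaces gives $\overline{T_0}^{|Y|}=\bar T$, so that $T_0\cap U$ is a dense set of $\kappa$-points in $U$. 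Applying Proposition~\ref{prop:Alteration} on each affine open of $U$ to the restriction of the universal $(X_Y,\iota_Y)$ then yields a dense open $V\subset U$ with $V\subset T$.

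To extend $V\subset T$ to $Y_0\subset T$, I invoke Lemma~\ref{lem:ValCrit}. For any $y\in Y_0$, choose a discrete valuation ring $R$ together with a morphism $\Spec R\to Y_0$ sending the generic point into $V$ and the closed point to $y$. Observe that $R$ is automatically a $\kappa$-algebra, since $p$ is nilpotent in $W/p^m$ and $R$ is a domain. Because $V\subset T$, the generic fiber of the pullback $(X_R,\iota_R)$ lies in $\RZ^\Lambda_{G,b}(h)^{m,n}(\Frac R)$, and Lemma~\ref{lem:ValCrit} spreads this to $\Spec R$, placing $y$ in $T$. Ranging over all irreducible components of $\bar T$ yields $T=\bar T$, so $T$ is closed.

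Finally, to conclude quasi-compactness, cover $Y$ by finitely many affine opens $\{V_\ell\}_\ell$ and set $X_\ell:=f^{-1}(V_\ell)$. For any affine open $\Spec A\subset X_\ell$, the restriction $\Spec A\to V_\ell=\Spec B_\ell$ is a monomorphism of finite type, so by Lazard's lemma (Stacks Project, Tag 04VV) the ring map $B_\ell\to A$ is a surjective epimorphism, and this restriction is a closed immersion. Consequently $|X_\ell|$ is in bijection via $f$ with the closed subset $T\cap V_\ell$ of the noetherian $V_\ell$, inherits a noetherian topology, and is quasi-compact; hence $X=\bigcup_\ell X_\ell$ is quasi-compact. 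The main obstacle is the closedness of $T$ -- specifically, verifying that $T_0$ is dense in $\bar T$ before applying Proposition~\ref{prop:Alteration}, and handling the generic smoothness and the DVR specialization uniformly across all components -- which are delicate but standard consequences of the Jacobson property of $W/p^m$ and perfectness of $\kappa$.
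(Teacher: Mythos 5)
Your first step---showing that the set-theoretic image $T$ of $\RZ^\Lambda_{G,b}(h)^{m,n}$ in $\RZ_\BX(h)^{m,n}$ is closed---is sound and uses the same ingredients as the paper: density of $\kappa$-points via the Jacobson property, Proposition~\ref{prop:Alteration} applied over a dense smooth open of each irreducible component of $\overline{T}$ (generic smoothness over the perfect field $\kappa$ is a legitimate substitute for the alteration invoked in the paper), and Lemma~\ref{lem:ValCrit} applied to a DVR joining a point of that dense open to an arbitrary point of the component. The genuine gap is in your final paragraph. A monomorphism of finite type between affine schemes is \emph{not} a closed immersion, and the corresponding ring map need not be surjective: a localization such as $k[t]\to k[t,t^{-1}]$ is a finite-type ring epimorphism whose spectrum map is an open immersion. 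Neither the Stacks Project section on ring epimorphisms nor Lazard's lemma (which concerns flat modules) gives surjectivity of $B_\ell\to A$. Consequently you have not shown that $f$ is a homeomorphism onto its image, and the purely topological conclusion also fails: an injective continuous map onto a closed subset of a noetherian space does not force the source to be quasi-compact, since the source topology may be strictly finer (an infinite discrete space maps bijectively and continuously onto the same set with the cofinite topology). And you cannot repair this by appealing to ``proper monomorphisms are closed immersions'' without circularity, since properness presupposes the quasi-compactness you are trying to prove.

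The missing step is already within reach of what you proved: Proposition~\ref{prop:Alteration} gives not merely that a dense open $V$ of each component of $\overline{T}$ lies in $T$, but an actual section $V\to\RZ^\Lambda_{G,b}(h)^{m,n}$ of $f$ over $V$; since $f$ is a monomorphism, $f^{-1}(V)\to V$ is then an isomorphism, so $f^{-1}(V)$ is a quasi-compact locally closed subspace, contained in a quasi-compact open of $\RZ^\Lambda_{G,b}(h)^{m,n}$. This is exactly how the paper argues: one iterates on the reduced complement of $V$ inside the (closed, by the valuative criterion) image, whose closed points again lie in the image, and noetherian induction produces finitely many quasi-compact opens covering the underlying space, whence quasi-compactness. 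As written, however, your deduction of quasi-compactness from closedness of $T$ rests on a false lemma and is a genuine gap.
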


\begin{proof}
We may assume $m=1$ since quasi-compactness only depends on the underlying topological space.  
Let $Z_0\subset \RZ_\BX(h)^{1,n}_\red$ denote the reduced closed subscheme whose underlying topological space is the Zariski closure of the image of $|\RZ^\Lambda_{G,b}(h)^{1,n}|$. Since $\RZ_\BX(h)^{1,n}$ is quasi-compact (\emph{cf.} \cite[\S2.22]{RapoportZink:RZspace}),  $Z_0$ is necessarily quasi-compact. Then by Proposition~\ref{prop:Alteration} (applied to the natural inclusion of the smooth locus of $Z_0$ into $\RZ_\BX$), there exists a dense open subscheme $U_0\subset Z_0$ (so $U_0$ is necessarily quasi-compact) such that the open immersion $U_0 \hra Z_0$ factors through $\RZ^\Lambda_{G,b}(h)^{1,n}_{\red}$. We can check that the map thus obtained $U_0\to \RZ^\Lambda_{G,b}(h)^{1,n}_{\red}$ is an \'etale monomorphism, so it is an open immersion. 

To show quasi-compactness of $\RZ^\Lambda_{G,b}(h)^{1,n}$, it remains to show  quasi-compactness of the (reduced) complement $\RZ^\Lambda_{G,b}(h)^{1,n}_{\red}\setminus U_0$. (Indeed, if the complement is quasi-compact, then $|\RZ^\Lambda_{G,b}(h)^{1,n}|$ can be covered by $|U_0|$ and  finitely many affine open subsets  $|U_i|$'s of $|\RZ^\Lambda_{G,b}(h)^{1,n}|$ covering the complement of $|U_0|$.)
For this, we observe that the injective map of topological spaces
\begin{equation}\label{eqn:specialisation}
|\RZ^\Lambda_{G,b}(h)^{1,n}| \to |Z_0|
\end{equation}
is bijective, which follows from the valuative criterion (Lemma~\ref{lem:ValCrit}) and the existence of  $U_0$ as above. 
Now, let $Z_1\subset Z_0$ denote the reduced complement of $U_0$. By applying  Proposition~\ref{prop:Alteration} to the smooth locus of $Z_1$ and the set of closed points $\Sigma\subset |V_1|$, there exists a dense open subscheme $U_1\subset Z_1$ (necessarily quasi-compact) such that the open immersion $V_1\hra Z_1$ can be factored by an open immersion $U_1\to \RZ^\Lambda_{G,b}(h)^{1,n}_{\red}\setminus U_0$, being an \'etale monomorphism. 
We may repeat this process to obtain a strictly decreasing sequence of closed subsets $|Z_0|\supsetneq|Z_1|\supsetneq\cdots$ such that the preimage of $|Z_0|\setminus|Z_r|$ in $|\RZ^\Lambda_{G,b}(h)^{1,n}|$ is a quasi-compact open subset for any $r\geqslant1$. Now, note that $Z_r=\emptyset$ for some $r$ since  $Z_0$ is noetherian, which shows that $|\RZ^\Lambda_{G,b}(h)^{1,n}|$ is quasi-compact.
\end{proof}

\subsection{Independence of auxiliary choices and functoriality}\label{subsec:indep}
We now finish the proof of Theorem~\ref{thm:RZHType}. We have constructed a closed formal scheme $\RZ^\Lambda_{G,b}\subset\RZ_\BX$ (Theorem~\ref{thm:q-cpct}), which enjoys the following properties:
\begin{enumerate}
\item $\RZ^\Lambda_{G,b}\subset\RZ_\BX$ represents $\RZ\com{s_\alpha}_{\BX,G}$ as in the statement of Theorem~\ref{thm:RZHType}.
 The universal tensors $t^\univ_\alpha:\triv\ra\DD(X_{\RZ_\BX}|_{\RZ^\Lambda_{G,b}})^\otimes$ can be obtained by glueing the unique Tate tensors over some affine open covering of $\RZ^\Lambda_{G,b}$. This claim follows from Proposition~\ref{prop:descent}(\ref{prop:descent:devissage}).
\item $\RZ^\Lambda_{G,b}\subset\RZ_\BX$ does not depend on the choice of $(s_\alpha)\in\Lambda^\otimes$; indeed, the subset $\RZ^\Lambda_{G,b}(\kappa)\subset\RZ_\BX(\kappa)$ and the completion at any $\kappa$-point do not depend on $(s_\alpha)$.
\item If $G=\GL(\Lambda)$ then $\RZ^\Lambda_{G,b} = \RZ_\BX$; \emph{cf.} Example~\ref{exa:RZ}.
\item For any closed reductive $\Zp$-subgroup $G'\subset G$ with $b\in G'(K_0)$, the closed formal subscheme $\RZ^\Lambda_{G',b}\subset\RZ_\BX$ is contained in $\RZ^\Lambda_{G,b}$. Indeed, this claim amounts to verifying analogous claims on the set of $\kappa$-points and the completions thereof; \emph{cf.} Lemma~\ref{lem:FunctAffDL}, Proposition~\ref{prop:FunctDefor}.
\end{enumerate}

It  remains to verify the functoriality assertions; namely, (\ref{thm:RZHType:Prod}) and (\ref{thm:RZHType:Funct})  in Theorem~\ref{thm:RZHType}. These assertions will immediately follow from Lemma~\ref{lem:ProdRZG} and Proposition~\ref{prop:FunctRZG}, hence we conclude the proof of Theorem~\ref{thm:RZHType}. 

Let  $(G',b')$ and $\Lambda'$ be another datum as in Definition~\ref{def:HodgeAssump}, and write $\BX':=\BX^{\Lambda'}_{b'}$. We have constructed a natural formal closed subscheme $\RZ^{\Lambda'}_{G',b'}\subset\RZ_{\BX'}$.

Recall that $\BX^{\Lambda\times\Lambda'}_{(b,b')} \cong \BX^\Lambda_b\times\BX^{\Lambda'}_{b'} = \BX\times\BX'$.
Then we  have a closed immersion $\RZ_\BX\times_{\Spf W}\RZ_{\BX'}\hra\RZ_{\BX\times\BX'}$, defined by the product of deformations up to quasi-isogeny, 
and a closed formal subscheme $\RZ^{\Lambda\times\Lambda'}_{G\times G',(b,b')}\subset \RZ_{\BX\times\BX'}$.

\begin{lemsub}\label{lem:ProdRZG}
We have $\RZ^{\Lambda\times\Lambda'}_{G\times G',(b,b')} = \RZ^\Lambda_{G,b}\times_{\Spf W}\RZ^{\Lambda'}_{G',b'}$ as closed formal subschemes of $\RZ_{\BX\times\BX'}$; in particular, Theorem~\ref{thm:RZHType}(\ref{thm:RZHType:Prod}) holds.
\end{lemsub}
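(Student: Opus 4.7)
The strategy is to verify the equality as closed formal subschemes of $\RZ_{\BX \times \BX'}$ by comparing $\kappa$-points and formal completions at closed points. Since $\RZ_{\BX \times \BX'}$ is locally formally of finite type over $W$ and both candidates are formally smooth closed formal subschemes, the combined data of their $\kappa$-points together with the formal completion at each $\kappa$-point inside the ambient formal scheme uniquely determines them; hence it suffices to match both invariants.

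For one inclusion, an $A$-point $((X,\iota),(X',\iota'))$ of $\RZ^\Lambda_{G,b} \times_{\Spf W} \RZ^{\Lambda'}_{G',b'}$ with Tate tensors $(t_\alpha)$ on $X$ and $(t'_\beta)$ on $X'$ determines an $A$-point $(X \times X', \iota \times \iota')$ of $\RZ_{\BX \times \BX'}$ whose natural Tate tensors (the $t_\alpha$'s, the $t'_\beta$'s, and the projection idempotents encoding the product decomposition of $\Lambda \times \Lambda'$) realize it as an $A$-point of $\RZ^{\Lambda \times \Lambda'}_{G \times G', (b,b')}$. The three conditions in Definition~\ref{def:RZG} decompose factorwise: the $G \times G'$-torsor condition splits into a $G$-torsor and a $G'$-torsor condition, and $\set{(\mu,\mu')}$-filtrations decompose into $\set\mu$- and $\set{\mu'}$-filtrations factorwise. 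This gives the inclusion $\RZ^\Lambda_{G,b} \times_{\Spf W} \RZ^{\Lambda'}_{G',b'} \subseteq \RZ^{\Lambda \times \Lambda'}_{G \times G', (b,b')}$.

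For the matching of $\kappa$-points and formal completions, I would combine (\ref{eqn:RZG:ADL}) and (\ref{eqn:RZG:FaltingsDefor}) with the product-decomposition results already established: Lemma~\ref{lem:FunctAffDL}(2) yields $X^{G \times G'}((b,b')) \cong X^G(b) \times X^{G'}(b')$ on $\kappa$-points, and Proposition~\ref{prop:FunctDefor} yields $\Def_{X_x \times X'_{x'}, G \times G'} \cong \Def_{X_x, G} \times \Def_{X'_{x'}, G'}$ on formal completions at a closed point $(x,x')$. Both identifications are compatible with the natural embeddings into $\RZ_{\BX \times \BX'}(\kappa)$ and $\Def_{X_x \times X'_{x'}}$, respectively, so the compatibility assertion in Lemma~\ref{lem:ProdRZG} is automatic from how the identifications were set up.

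The main delicate point is to justify the determinacy principle used at the outset: that a formally smooth closed formal subscheme $\YY$ of $\RZ_{\BX \times \BX'}$ is determined by $\YY(\kappa)$ together with the formal completions $\YY\wh{_y}$ for $y \in \YY(\kappa)$ (viewed as subfunctors of $(\RZ_{\BX \times \BX'})\wh{_y}$). Working on the Noetherian pieces $\RZ_{\BX \times \BX'}(h)^{m,n}$, which are schemes locally of finite type over $W/p^m$ by the analogue of Theorem~\ref{thm:ArtinRepTh}, this reduces to the standard commutative-algebra fact that a closed subscheme of a locally Noetherian scheme is determined by its underlying topological space together with the completions of the structure sheaf at closed points.
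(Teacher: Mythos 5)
Your proof takes essentially the same approach as the paper's: both reduce to comparing $\kappa$-points (via Lemma~\ref{lem:FunctAffDL}) and formal completions at closed points (via Proposition~\ref{prop:FunctDefor}), the paper stating this in a single line. Your middle paragraph verifying one inclusion directly on $A$-points is redundant once the point/completion matching is done, and your justification of the determinacy principle via the Noetherian schemes $\RZ_{\BX\times\BX'}(h)^{m,n}$ merely makes explicit a step the paper treats as obvious.
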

\begin{proof}
As both are closed formal subschemes of $\RZ_{\BX\times\BX'}$, it suffices to show the equality of the set of $\kappa$-points and the completions thereof, which follows from Lemma~\ref{lem:FunctAffDL} and Proposition~\ref{prop:FunctDefor}.
\end{proof}

\begin{propsub}\label{prop:FunctRZG}
Given a map $f:G\ra G'$ which maps $b$ to $b'$, there exists a map $\RZ^\Lambda_{G,b}\ra\RZ^{\Lambda'}_{G',b'}$ which induces the desired maps on the set of $\kappa$-points and completions thereof as described in Theorem~\ref{thm:RZHType}(\ref{thm:RZHType:Funct}).
\end{propsub}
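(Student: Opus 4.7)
The plan is to reduce the general case to the closed-immersion case (already established in \S\ref{subsec:indep}) by embedding $G$ into $G\times G'$ via the graph of $f$, and to use the product decomposition (Lemma~\ref{lem:ProdRZG}) together with an ``independence from $\Lambda$'' result. The construction proceeds in two steps.

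\emph{Step 1 (independence from the choice of representation).} For any two faithful $G$-representations $\Lambda_1,\Lambda_2$ satisfying Definition~\ref{def:HodgeAssump}, the diagonal $\Delta\colon G\hra G\times G$ is a closed immersion sending $b$ to $(b,b)$, and $\Lambda_1\oplus\Lambda_2$ restricted to $G$ via $\Delta$ is a faithful representation whose associated $p$-divisible group is $\BX^{\Lambda_1}_b\times\BX^{\Lambda_2}_b$. Combining the already established closed-immersion functoriality with Lemma~\ref{lem:ProdRZG} yields a closed immersion
\[\RZ^{\Lambda_1\oplus\Lambda_2}_{G,b}\hra \RZ^{\Lambda_1}_{G,b}\times_{\Spf W}\RZ^{\Lambda_2}_{G,b}.\]
I would then compose with either projection. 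By Lemma~\ref{lem:FunctAffDL} (applied to $\id\colon G\to G$) and Proposition~\ref{prop:FunctDefor} (also for $\id$), these compositions are bijective on $\kappa$-points and induce isomorphisms on the completed local rings at every closed point. Since both sides are formally smooth and locally formally of finite type over $W$, such a morphism must be an isomorphism, giving a canonical identification $\RZ^{\Lambda_1}_{G,b}\cong \RZ^{\Lambda_2}_{G,b}$.

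\emph{Step 2 (general $f$).} The graph $\Gamma_f\colon G\hra G\times G'$, $g\mapsto (g,f(g))$, is a closed immersion with $\Gamma_f(b)=(b,b')$. The representation $\Lambda\oplus\Lambda'$ of $G\times G'$ restricts via $\Gamma_f$ to a faithful $G$-representation with associated $p$-divisible group $\BX^\Lambda_b\times \BX^{\Lambda'}_{b'}$. Closed-immersion functoriality and Lemma~\ref{lem:ProdRZG} give
\[\RZ^{\Lambda\oplus\Lambda'}_{G,b}\hra \RZ^\Lambda_{G,b}\times_{\Spf W}\RZ^{\Lambda'}_{G',b'}.\]
Identifying $\RZ^{\Lambda\oplus\Lambda'}_{G,b}\cong \RZ^\Lambda_{G,b}$ via Step~1 (using the first projection) and then composing with the second projection yields the sought morphism $\RZ^\Lambda_{G,b}\to \RZ^{\Lambda'}_{G',b'}$. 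The required compatibilities with the maps on $\kappa$-points and on formal completions then follow by chasing Lemma~\ref{lem:FunctAffDL} and Proposition~\ref{prop:FunctDefor} through the composition of $\Gamma_f$ with the projection to the second factor, which equals $f$.

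The main technical point is the isomorphism assertion in Step~1: a morphism between locally formally of finite type, formally smooth formal $W$-schemes which is bijective on $\kappa$-points and induces isomorphisms on all completed local rings at closed points must be an isomorphism. This should follow by observing that such a morphism is formally \'etale (from the isomorphism on completions) and radicial and surjective on points (from the bijection on $\kappa$-points), after which standard formal-scheme arguments -- either via faithful flatness over local rings and Artin-Rees, or by realising the morphism as a section of a projection from the ambient product -- yield an isomorphism.
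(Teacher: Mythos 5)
Your proposal is correct and is essentially the paper's own argument: the paper likewise factors $f$ through the graph closed immersion $(1,f):G\hra G\times G'$, uses the already-established closed-immersion case together with Lemma~\ref{lem:ProdRZG} to land in $\RZ^\Lambda_{G,b}\times_{\Spf W}\RZ^{\Lambda'}_{G',b'}$, shows that the first projection restricts to an isomorphism $\RZ^{\Lambda\times\Lambda'}_{G,b}\riso\RZ^\Lambda_{G,b}$ by checking $\kappa$-points and formal completions (via Lemma~\ref{lem:FunctAffDL} and Proposition~\ref{prop:FunctDefor}), and then composes with the second projection. Your Step~1 (independence of $\Lambda$, i.e.\ the $f=\id$ case) is the same argument run first in a special case, which the paper records as a remark following the proposition rather than as a separate step.
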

The proposition in the case when $f$ is an identity map asserts that the formal scheme $\RZ^\Lambda_{G,b}$ depends only on $(G,b)$, not on the auxiliary choice of $(\Lambda,(s_\alpha))$, up to canonical isomorphism.

\begin{proof}
We follow the structure of the proof of Proposition~\ref{prop:FunctDefor}.
The case when $f$ is a closed immersion and $\Lambda=\Lambda'$ was already handled at the beginning of \S\ref{subsec:indep}. For a natural projection $\pr_2:(G\times G',(b,b'))\ra (G',b')$, the natural projection
\[
\RZ^{\Lambda\times\Lambda'}_{G\times G',(b,b')} \cong \RZ^\Lambda_{G,b}\times_{\Spf W}\RZ^{\Lambda'}_{G',b'} \thra \RZ^{\Lambda'}_{G',b'}
\]
has the desired properties on $\kappa$-points and completions thereof. (The same holds for the first projection.)

Now, let $f:(G,b)\ra(G',b')$ be any morphism, and consider the graph morphism
\[
(1,f):(G,b) \ra (G\times G',(b,b')),
\]
which is a closed immersion on the reductive $\Zp$-groups. By letting $G$ act via $(1,f)$ on the faithful $G\times G'$-representation $\Lambda\times\Lambda'$, we obtain the closed subspace $\RZ^{\Lambda\times\Lambda'}_{G,b}\subset\RZ_{\BX\times\BX'}$. We claim that we have the following commutative diagram:
\begin{equation}\label{eqn:FunctRZG}
\xymatrix{
 & & \RZ^\Lambda_{G,b}\ar@{-->}[d]^{\exists !}\\
\RZ^{\Lambda\times\Lambda'}_{G,b}\ar[r] \ar@/^/[urr]^-{\cong }& \RZ^{\Lambda\times\Lambda'}_{G\times G',(b,b')} \cong \RZ^\Lambda_{G,b}\times_{\Spf W}\RZ^{\Lambda'}_{G',b'} \ar@{->>}[ur]^-{\pr_1} \ar@{->>}[r]_-{\pr_2}& \RZ^{\Lambda'}_{G',b'}
},
\end{equation}
where the solid arrows are already defined.  By looking at the sets of $\kappa$-points and the completions thereof (\emph{cf.} Proposition~\ref{prop:FunctDefor}; especially, the diagram in the proof),  it follows that $\pr_1$ restricts to an isomorphism  $\RZ^{\Lambda\times\Lambda'}_{G,b}\riso\RZ^\Lambda_{G,b}$ as claimed in the diagram. Therefore, the broken arrow is well-defined and satisfies the desired properties on $\kappa$-points and the completions thereof.
\end{proof}

\section{Extra structures on the moduli of $p$-divisible groups}\label{sec:ExtraStr}
We assume that $p>2$, and  set $\kappa=\Fpbar$, $W=\wh\Z_p^{\ur}$, and $K_0=\wh\Q_p^{\ur}$. We fix $(G,b)$ as in Definition~\ref{def:HodgeAssump} (with associated unramified Hodge-type local Shimura datum $(G,[b],\set{\mu\iv})$). With some suitable choice of $\Lambda$ (which gives rise to $\BX:=\BX^\Lambda_b$), we construct the closed  formal subscheme $\RZ^\Lambda_{G,b}\subset \RZ_\BX$. From now on, we write $\RZ_{G,b}:=\RZ^\Lambda_{G,b}$ as it does not depend on $\Lambda$ up to canonical isomorphism.  

In this section, we define a Weil descent datum, the action of $J_b(\Qp)$, ``\'etale realisations'' of crystalline Tate tensors,  the rigid analytic tower $\set{\RZ_{G,b}^\KK}$, and the Grothendieck-Messing period map -- in other words, we construct ``local Shimura varieties'' as conjectured in Rapoport and Viehmann \cite[\S5]{RapoportViehmann:LocShVar}.  Since $\RZ_{G,b}$ is locally formally of finite type over $\Spf W$ (\emph{cf.} Theorem~\ref{thm:RZHType}),  Berthelot's construction of rigid generic fibre $\RZ_{G,b}^\rig$ can be applied; \emph{cf.} \cite{berthelot:cohorigide}, \cite[\S7]{dejong:crysdieubyformalrigid}. We then construct the period morphism on the rigid generic fibre $\RZ_{G,b}^\rig$, which is an   \'etale morphism (highly transcendental in general).
When $\RZ_{G,b}$ is an EL or PEL Rapoport-Zink space, the extra structure on $\RZ_{G,b}$ that we define is compatible with the one defined by Rapoport and Zink in \cite{RapoportZink:RZspace}.  (We leave readers to verify this, which is more or less straightforward.)

The category of rigid analytic varieties can naturally be viewed as a full subcategory of the category of adic spaces (\emph{cf.} \cite[\S1.1.11]{Huber:EtCohBook}), so we may regard all the rigid analytic varieties as adic spaces\footnote{In light of  the recent work on ``infinite-level Rapoport-Zink spaces'' in \cite{ScholzeWeinstein:RZ} and \cite[\S6]{Scholze:PerfectoidSurvey}, the theory of adic spaces is the most natural framework to study the rigid analytic tower $\set{\RZ_{G,b}^\KK}$.}.

For the  EL and PEL case, Scholze and Weinstein \cite{ScholzeWeinstein:RZ} constructed an infinite-level Rapoport-Zink space.
We construct an ``infinite-level Rapoport-Zink space'' $\RZ_{G,b}^\infty$ associated to $(G,b)$ using the infinite-level Rapoport-Zink space for $\GL_{\Qp}(\Lambda[\ivtd p])$ and  the rigid analytic tower $\set{\RZ_{G,b}^\KK}$. This construction is rather \emph{ad hoc}, and there should be a more natural construction, as alluded in the introduction of \cite{ScholzeWeinstein:RZ}.

\subsection{More notation on adic spaces and  $p$-divisible groups}\label{subsec:SW}

We will work with the notion of adic spaces in the sense of Huber. (See \cite[\S2]{ScholzeWeinstein:RZ} for basic definitions.) Although it is possible to work with classical rigid analytic geometry for most part of this section (except \S\ref{subsec:InfLevel}),  the flexibility of the theory of adic spaces could be useful (for example, to define geometric points).

For any formal scheme $\XX$ locally formally of finite type over $\Spf W$, we let $\XX^\rig$ denote the rigid analytic generic fibre constructed by Berthelot (\emph{cf.} \cite[\S7.1]{dejong:crysdieubyformalrigid}), and we implicitly view it as an adic space.

Let us recall the functorial characterisation of $\XX^\rig$; \emph{cf.} \cite[Proposition~7.1.7]{dejong:crysdieubyformalrigid}. For any adic space $\Y$ (topologically) of finite type over $K_0$, we have
\begin{equation}\label{eqn:RigGenFib}
\Hom_{K_0}(\Y,\XX^\rig) \liso \varinjlim_{\YY;\ \YY^\rig=\Y} \Hom_W(\YY,\XX),
\end{equation}
where $\YY$ runs through formal models of $\Y$. This property uniquely determines $\XX^\rig$ by the rigid analytic Yoneda lemma \cite[Lemma~7.1.5]{dejong:crysdieubyformalrigid}, so the adic space associated to Berthelot's generic fibre $\XX^\rig$ coincides with Huber's construction of the generic fibre of the adic space associated to the formal scheme $\XX$; \emph{cf.} \cite[Proposition~2.2.2]{ScholzeWeinstein:RZ}.

\begin{rmksub}\label{rmk:GenFibPt}
Let $K$ be a complete  extension of $K_0$ (with rank~$1$ valuation), and let $\fo_K$ denote its valuation. (We will often use $C$ instead of $K$ for algebraically closed complete extension of $\Qp$.) 
Then one can check without difficulty that any point $x:\Spa(K,\fo_K) \ra \XX^\rig$ comes from a unique map $x: \Spf\fo_K\ra\XX$ of formal schemes, also denoted by $x$.
\end{rmksub}

Let $X$ be a $p$-divisible group over $\XX$.
Then, $X[p^n]^\rig$ is a finite \'etale covering of  $\X:=\XX^\rig$, and it is an abelian group object in the category of adic spaces.
\begin{defnsub}\label{def:TateModSheaf}
Let $T(X)$ denote the lisse $\Zp$-sheaf on $\X$ defined by the projective system $\set{X[p^n]^\rig}$, and define $V(X)$ to be the the lisse $\Qp$-sheaf associated to $T(X)$; i.e., $T(X)$ viewed in the isogeny category.  (See \cite[Definition~8.1]{Scholze:CdR} for the definition of lisse $\Zp$-sheaf on an adic space.) 
\end{defnsub}

As lisse $\Zp$- or $\Qp$- sheaves, it is possible to form tensor products, symmetric and alternating products, and duals (so $T(X)^\otimes$ and $V(X)^\otimes$ make sense).
The formation of $T(X)$ and $V(X)$ commutes with any base change $\YY\ra\XX$ for reasonable formal scheme $\YY$. In particular, for any geometric point $\bar x:\Spa(C,\fo_C)\ra \X$ the  fibre $T(X)_{\bar x}$, as a $\Zp$-module, only depends on the pull-back $X_{\bar x}$ of $X$ by $\bar x:\Spf\fo_C\ra\XX$. (Here, we use the convention as in Remark~\ref{rmk:GenFibPt}.)

\begin{rmksub}\label{rmk:RZad}
Let us make  explicit the adic space generic fibre $\RZ_\BX^\rig$ of $\RZ_\BX$.  For any analytic space (or adic space) $\X$ topologically of finite type over $K_0$, the set $\Hom_{K_0}(\X,\RZ_\BX^\rig)$ can be interpreted as the set of equivalence classes of $f\in \Hom_W(\XX,\RZ_\BX)$ for any formal model $\XX$ of $\X$, where for any morphism $\pi:\XX'\ra\XX$ of formal models of $\X$, $f\in\Hom_W(\XX,\RZ_\BX)$ is equivalent to $f\circ\pi\in\Hom_W(\XX',\RZ_\BX)$.

If we set $X_{\RZ_\BX}$ to be the universal $p$-divisible group over $\RZ_\BX$ and $f^\rig:\X \ra \RZ_\BX^\rig$ to  the map given by $f:\XX\to\RZ_\BX$, then we have  $T(f^*X_{\RZ_\BX}) \cong f^{\rig*}(T(X_{\RZ_\BX}))$; in particular, the $\Zp$-local system $T(f^*X_{\RZ_\BX})$ on $\X$ is independent of the choice of formal model.
A similar discussion holds for $\RZ_{G,b}^\rig$ in the place of $\RZ_\BX^\rig$.
\end{rmksub}

Let $\X$ be a connected component of $\RZ^\rig_{G,b}$.
For a geometric point $\bar x$ of $\X$ (i.e., $\bar x:\Spa(C,\fo_C)\ra \X$ for some algebraically closed complete extension $C$ of $K_0$),
 let $\pi_1^{\fet}(\X,\bar x)$ denote the algebraic fundamental group\footnote{In \cite[\S3]{Scholze:CdR} the algebraic fundamental group is called the ``pro-finite fundamental group''.}  of $\X$ with base point $\bar x$ in the terminology of  \cite{deJong:FundGp}. 

Quite formally, one  obtains a natural equivalence of categories from the category of lisse $\Z_\ell$-sheaves on $\X$ to the category of finitely generated $\Z_\ell$-modules with continuous  $\pi_1^{\fet}(\X,\bar x)$-action, where the equivalence is defined by $\sF\rightsquigarrow \sF_{\bar x}$. (\emph{Cf.} \cite[\S4]{deJong:FundGp}, \cite[Proposition~3.5]{Scholze:CdR}.) Similarly, one  obtains a natural equivalence of categories from the category of lisse $\Q_\ell$-sheaves on $\X$ (i.e., lisse $\Z_\ell$ sheaves viewed up to isogeny\footnote{Lisse $\Qell$-sheaves are more restrictive objects than ``local systems of $\Qell$-vector spaces'' as in \cite[Definition~4.1]{deJong:FundGp}, which involve ``analytic \'etale coverings'' of $\X$ (not just finite \'etale coverings). In particular, the geometric fibre $\sF_{\bar x}$ of a lisse $\Qell$-sheaf has an action of $\pi_1^{\fet}(\X,\bar x)$, not just (the adic space version of) the analytic fundamental group defined in \cite[\S2]{deJong:FundGp}.}) to the category of finite-dimensional $\Q_\ell$-vector spaces with continuous  $\pi_1^{\fet}(\X,\bar x)$-action. Here, $\ell$ can be any prime (including $\ell=p$).

Let $\triv$ denote either  the constant  rank-$1$  $\Zp$- or $\Qp$- sheaf.
\begin{defnsub}
An \emph{\'etale Tate tensor} on $X$ is a morphism $t_{\et}:\triv\ra V(X)^\otimes$ of lisse $\Qp$-sheaves on $\X$. An \'etale Tate tensor is called \emph{integral} if it restricts to a map $\triv\ra  T(X)^\otimes$ of lisse $\Zp$-sheaves on $\X$.
\end{defnsub}
It  follows that when $\X$ is connected, giving an \'etale Tate tensor $t_{\et}$ is equivalent to giving an $\pi_1^{\fet}(\X,\bar x)$-invariant element $t_{\et,\bar x}\in V(X)^\otimes_{\bar x}$, and $t_{\et}$ is integral if and only if $t_{\et,\bar x}\in T(X)^\otimes_{\bar x}$ for a single geometric point $\bar x$.

We now claim that crystalline Tate tensors have `` \'etale realisations''.
\begin{thmsub}\label{thm:EtTate}
Assume that that $\XX$ is formally smooth and locally formally of finite type over $\Spf W$, and let  $t:\triv\ra\DD(X)^\otimes$ be a morphism of crystals which is Frobenius-equivariant  up to isogeny and such that $t(R)\in\Fil^0\DD(X)(R)^\otimes$.
Then there exists a unique morphism $t_{\et}:\triv \ra T(X)^\otimes$ of lisse $\Zp$-sheaves on $\X$, such that at each geometric point $\bar x$ supported at a classical point $x$ with residue field $K$, the (classical) crystalline comparison isomorphism matches  $t_{\et,\bar x}\in T(X_{\bar x})^\otimes$ with $t_x:\triv\ra\DD(X_{x})^\otimes$ (where $t_x$ is the fibre of $t$ at $x$).
\end{thmsub}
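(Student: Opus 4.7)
The plan is to reduce the theorem to the affine case $\XX=\Spf R$ with $R$ a $p$-adically separated and complete, formally smooth, formally finitely generated $W$-algebra equipped with a lift of Frobenius $\varphi\colon R\to R$; since the asserted $t_\et$ is characterised by its stalks at classical points, uniqueness on overlaps ensures that the local constructions glue to a global $t_\et$ on $\X$. In this local setting, the crystalline $R$-section $t(R)\in\DD(X)(R)^\otimes$ is a $\varphi$-invariant (up to $p$-isogeny) horizontal tensor in $\Fil^0$. The fundamental input, to be established in \S\ref{sec:Ccris}, is a relative integral crystalline comparison isomorphism
\[
T^*(X)\otimes_{\Zp}\Acris(R)\;\riso\;\DD(X)(\Acris(R))\otimes_R\Acris(R),
\]
functorial in $(R,X)$, compatible with Frobenius, Hodge filtration, and the natural $\pi_1^\fet(\X,\bar x)$-action, where $\Acris(R)$ is the Fontaine-type period ring attached to $R$ (viewed as receiving a canonical map from $R$ and carrying a pro-\'etale Galois action after passing to a suitable pro-\'etale cover of $\X$).

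Granting this, I would define $t_\et$ as follows. The section $t(\Acris(R))\in\DD(X)(\Acris(R))^\otimes[\ivtd p]$ is Frobenius-equivariant (after appropriate Tate-twist normalisation) and lies in $\Fil^0$. Under the displayed comparison it corresponds to an element of $T^*(X)^\otimes\otimes_{\Zp}\Acris(R)[\ivtd p]$ which is simultaneously $\varphi$-fixed and in $\Fil^0$. The key algebraic identity $\Bcris^+(R)^{\varphi=1}\cap\Fil^0=\Qp$, reduced to the classical Fontaine statement by base change to geometric classical points, then forces this element to come from $V^*(X)^\otimes$. Concretely, one thus produces a $\pi_1^\fet(\X,\bar x)$-invariant tensor $t_{\et,\bar x}\in V^*(X)_{\bar x}^\otimes$, i.e.\ a morphism $t_\et\colon\triv\to V^*(X)^\otimes$ of lisse $\Qp$-sheaves on each connected component of $\X$, and hence globally by uniqueness.

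For integrality, namely $t_\et\colon\triv\to T^*(X)^\otimes$, it suffices to verify membership in $T^*(X)_{\bar x}^\otimes$ at a single classical point $\bar x$ of each connected component. At such a point the statement reduces to the integrality of crystalline periods for the $p$-divisible group $X_x$ over $\fo_K$ with $K/K_0$ finite, which is known via integral $p$-adic Hodge theory (Kisin modules or Faltings's strongly divisible lattices) applied to $(t_x,\DD(X_x))$: the hypothesis $t(R)\in\Fil^0\DD(X)(R)^\otimes$ specialises to the analogous property at $x$, and the crystalline Tate tensor then corresponds to an element of $T^*(X_x)^\otimes$ fixed by $G_K$. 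Compatibility of $t_{\et,\bar x}$ with the classical crystalline comparison at $x$ is automatic from functoriality of the construction in $(R,X)$, applied to the specialisation map $\Acris(R)\to\Acris(\fo_{\bar K})$. Uniqueness is immediate: a morphism $\triv\to T^*(X)^\otimes$ of lisse $\Zp$-sheaves on a connected space is determined by its value at any single geometric point, and the value at $\bar x$ is forced by the required compatibility with the crystalline comparison.

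The principal obstacle is the construction of the relative integral comparison isomorphism in the first paragraph: the classical statement at a point uses full $p$-adic Hodge theory, and transferring it to a formally smooth base requires controlling $\DD(X)\otimes\Acris(R)$ as a $(\varphi,\Fil,\nabla)$-module and identifying its Frobenius and filtration invariants with $T^*(X)\otimes\Zp$. This is carried out in \S\ref{sec:Ccris} by combining Faltings's relative crystalline comparison with a careful analysis of the PD hull constructions used in \S\ref{sec:fpqcDesc} (in particular the $\Zp$-flat quotient of $\Acris$), exploiting the fact that at geometric points $\Spa(C,\fo_C)\to\X$ the ring $\Acris(R)$ specialises to the usual Fontaine ring so that known weak admissibility suffices to extract the étale tensor; once in place, the remainder of the argument is formal as outlined above.
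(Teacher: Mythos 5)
Your overall architecture coincides with the paper's: transport the crystalline tensor through the relative comparison to get a rational \'etale tensor, prove integrality at a single classical point by Kisin theory, and glue over an affine cover by uniqueness. However, the central descent step, as you have written it, fails. First, the ``relative integral crystalline comparison isomorphism'' over $\Acris(R)$ that you take as the fundamental input is not an isomorphism: integrally the comparison map is only injective with cokernel killed by $t$, and one only gets an isomorphism after inverting $t$, i.e.\ over $\Bcris(R)$ (Theorem~\ref{thm:Ccris}). Consequently the transported tensor has coefficients a priori only in $\Fil^0\Bcris(R)$, not in $\Bcris^+(R)$. Second, and more seriously, the ``key algebraic identity'' you invoke --- that the $\sig$-invariants of $\Fil^0$ of the relative period ring are $\Qp$ --- is false without a horizontality condition, and it certainly cannot be ``reduced to the classical Fontaine statement by base change to geometric classical points''. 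Concretely, in the paper's setting one may choose a coordinate $u\in R\starr$ with $\sig(u)=u^p$ and a compatible system $u^\flat$ of $p$-power roots of $u$ in $\ol R$; then $v:=\log([u^\flat]/u)\in\Acris(R)$ satisfies $\sig(v)=pv$ and $v\in\Fil^1$, so $v/t\in\bigl(\Fil^0\Bcris(R)\bigr)^{\sig=1}$, yet $v/t\notin\Qp$ since $\nabla(v/t)=-\tfrac{1}{t}\,\tfrac{du}{u}\neq0$. Specialisation at classical points cannot rule such elements out, and replacing this by a computation of $\pi_1$-invariants of $\Bcris(R)$ would require exactly the refined almost-\'etaleness input the paper deliberately avoids.

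The missing ingredient is the connection: because $t$ is a morphism of crystals, the section $t(\Acris(R))=1\otimes t(R)$ is horizontal, and the correct invariance statement is Lemma~\ref{lem:inv}, $\Qp=(\Fil^0\Bcris(R))^{\sig=1;\,\nabla=0}$, which is how the paper descends the transported tensor to $V^*(X)^\otimes_{\bar\eta}$; in your write-up $\nabla$ never enters the argument, so this step is a genuine gap, though one that is repairable by adding horizontality. Two smaller points: for integrality you should commit to Kisin modules rather than ``strongly divisible lattices'', since the integral Fontaine--Laffaille-type refinement fails outside Hodge--Tate range of length $p-2$ (see the paper's footnote to the proof of Theorem~\ref{thm:EtTate}, together with Corollary~\ref{cor:KisinTateTensor} and Theorem~\ref{thm:BK}); and the paper chooses the auxiliary classical point with values in $W$ (possible by formal smoothness), which is what makes the Kisin-theoretic step as stated apply directly.
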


If $\bar x$ is as in the theorem, then  $t_{\et,\bar x}\in T(X)_{\bar x}^\otimes$ is invariant under the  $\pi_1^{\fet}(\X',\bar x)$-action, not just  the $\Gal(\ol K_0/K)$-action, where $\X'\subset\X$ is the connected component containing $\bar x$. Indeed, one can see that the requirement for $t_{\et}$ in Theorem~\ref{thm:EtTate} uniquely determines $t_{\et}$.

The main idea of the proof of   Theorem~\ref{thm:EtTate} is to construct $t_{\et}$ using the (relative) crystalline comparison for $p$-divisible groups over $\XX$, and show the $p$-integrality using the theory of Kisin modules (over a $p$-adic discrete valuation ring).
Although the proof is quite ``standard'', 
it  takes a long digression to set up the notation. We give a proof in \S\ref{sec:Ccris}.

It should be possible to compare the fibres $t_{\et,\bar x}\in T(X)_{\bar x}^\otimes$ and $t_{\bar x}:\triv\ra\DD(X_{\bar x})^\otimes$ at any geometric point $\bar x$ of $\X$, using the theory of vector bundles over the Fargues-Fontaine curve. We will not work in this generality, as geometric points supported at classical points are sufficient to uniquely determine $t_{\et}$.

\subsection{The action of $J_b(\Qp)$}\label{subsec:ActionJ}
Recall from (\ref{eqn:J}) that $J_b(\Qp)$ is the group of quasi-isogenies $\gamma: \BX \dra  \BX$ which preserves the  tensors $(s_{\alpha,\DD})$. Then $\RZ_{G,b}$ has a natural left $J_b(\Qp)$-action defined as follows: for any $(X,\iota)\in\RZ_{G,b}(R)$ for $R\in\Nilp_W$ and $\gamma\in J_b(\Qp)$, we have $\gamma(X,\iota) = (X,\iota\circ \gamma\iv)\in\RZ_\BX(R)$. To see $\gamma(X,\iota)\in\RZ_{G,b}(R)$, it suffices to observe that for $R=\Fpbar$ we recovers the natural $J_b(\Qp)$-action on $X^G(b)\cong \RZ_{G,b}(\Fpbar)$ (\emph{cf.} Proposition~\ref{prop:LR}), and $\gamma$ induces $\gamma:(\RZ_\BX)\wh{_x}\riso(\RZ_\BX)\wh{_{\gamma x}}$ (as $\gamma$ does not modify the underlying $p$-divisible group.) By functoriality of adic space generic fibre, $J_b(\Qp)$ naturally acts on $\RZ_{G,b}^\rig$.

The $J_b(\Qp)$-action on $\RZ_{G,b}$ has a kind of ``continuity''  property in the sense of \cite[D\'efinition~2.3.10]{Fargues:AsterisqueLLC}; indeed, the proof of \cite[Proposition~2.3.11]{Fargues:AsterisqueLLC} works in the more general setting of ours.

\subsection{Weil descent datum}\label{subsec:WeilDesc}
Let $(G,[b],\set{\mu\iv})$ denote the unramified Hodge-type local Shimura datum associated to $(G,b)$.
The following definition is the local analogue of the reflex field for a Shimura datum. (\emph{Cf.} \cite[\S1.31]{RapoportZink:RZspace}.)
\begin{defnsub}
The \emph{(local) reflex field} or \emph{(local) Shimura field} for $(G,[b],\set{\mu\iv})$ is the subfield $E=E(\mu)\subset K_0$ which is the field of definition of the $G(K_0)$-conjugacy class of the cocharacter $\mu$. Note that $E$ is a finite unramified extension of $\Qp$ (as $\mu$ descends over some finite subextension of $\Qp$ in $K_0$).
\end{defnsub}

Put $d:=[E:\Qp]$, and let $q=p^d$ be the cardinality of the residue field of $E$.
Let $\tau = \sig^d\in\Gal(K_0/E)$ denote the $q$-Frobenius element (i.e., the lift of the $q$th power map on $\Fpbar$).

\begin{rmksub}\label{rmk:mu-int}
By the definition of $E$, we can deduce the following. For a cocharacter  $\mu\in\{\mu\}$ (defined over $W$), we have $\mu^\tau\in\set\mu$ where $\mu^{\tau}:=\tau^*\mu$.
\end{rmksub}

For any formal scheme $\XX$ over $\Spf W$, we write $\XX^\tau:=\XX\times_{\Spf W,\tau}\Spf W$. We similarly define $\X^\tau$ for an adic space over $(K_0,W)$.
\begin{defnsub}
Let $\XX$ be a formal scheme over $\Spf W$. A \emph{Weil descent datum} on $\XX$ over $\fo_E$ is an isomorphism over $\Spf W$:
\[
\Phi:\XX \riso \XX^\tau.
\]
Similarly, we  define a Weil descent datum over $E$ for a rigid analytic space $\X$ over $K_0$ as an isomorphism $\Phi:\X\riso\X^\tau$ over $K_0$.
\end{defnsub}

For any positive integer $r$, let $E_r\subset K_0$ denote the (unramified) subextension of degree $r$ over $E$. Then for any Weil descent datum $\Phi$ over $\fo_E$ for $\XX$, we can define a Weil descent datum over $\fo_{E_r}$ as follows:
\begin{equation}\label{eqn:WeilIteration}
\Phi^r:\XX\xrightarrow[\sim]{\Phi}\XX^\tau \xrightarrow[\sim]{\Phi^\tau}\cdots \xrightarrow[\sim]{\Phi^{\tau^{r-1}}}\XX^{\tau^r}. \end{equation}
The same construction works for rigid analytic spaces over $K_0$.

Let $\XX_0$ be a formal scheme over $\Spf \fo_E$, and $\XX:=\XX_0\times_{\Spf\fo_E}\Spf W$. Then there exists a natural Weil descent datum over $\fo_E$ on $\XX$. We say that a Weil descent datum $\Phi$ over $\fo_E$ is \emph{effective} if there exists a formal scheme $\XX_0$ over $\fo_E$ such that $\Phi$ is isomorphic to the one naturally associated to $\XX_0$. We similarly define effective Weil descent data over $E$ for adic spaces $\X$ over $K_0$.

Let $\XX$ be a formal scheme locally formally of finite type over $\Spf W$, equipped with a Weil descent datum $\Phi$ over $\fo_E$. Then on the adic space generic fibre $\X:=\XX^\rig$ we obtain a Weil descent datum  $\Phi^\rig:\X \riso \X^\tau$  induced by $\Phi$. If $\Phi$ is effective, then so is $\Phi^\rig$.

We  define a Weil descent datum on $\RZ_{G,b}$ over $\fo_E$  by restricting the natural Weil descent datum on $\RZ_\BX$, which we now recall. For $R\in\Nilp_W$ with the structure morphism $f:W\ra R$, we define $R^\tau\in \Nilp_W$ to be $R$ as a ring with structure morphism $f\circ\tau$. Then we have $\RZ_{G,b}^\tau(R) = \RZ_{G,b}(R^\tau)$. 
\begin{defnsub}\label{def:WeilDesc}
For any $(X,\iota)\in\RZ_\BX(R)$, we define $(X^\Phi,\iota^\Phi) \in \RZ_\BX(R^\tau)$, where $X^\Phi$ is $X$ viewed as a $p$-divisible group over $R^\tau$, and $\iota^\Phi$ is defined as follows:
\[
\iota^\Phi: \BX_{R^\tau/p} = (\tau^* \BX)_{R/p} \stackrel{\Frob^{-d} }{\dra}  \BX_{R/p} \stackrel{\iota}{\dra}X_{R/p} = X^\Phi_{R/p},
\]
where $\Frob^d:\BX \ra \tau^*\BX$ is the relative $q$-Frobenius (with $q=p^d$). This defines a Weil descent datum $\Phi:\RZ_\BX\riso\RZ_\BX^\tau$ over $\fo_E$; \emph{cf.} \cite[\S3.48]{RapoportZink:RZspace}.

Note that for $x=(X_x,\iota_x)\in\RZ_{G,b}(\Fpbar)$,  we have $x^\Phi:=(X_x^\Phi,\iota_x^\Phi)\in\RZ_{G,b}(\Fpbar^\tau)$; indeed, Definition~\ref{def:RZG}(\ref{def:RZG:Kottwitz}) is satisfied for $(X_x^\Phi,\iota_x^\Phi)$ by Remark~\ref{rmk:mu-int}.
Then it is clear from the construction that for $x\in\RZ_{G,b}(\Fpbar)$, the morphism $\Phi:(\RZ_\BX)\wh{_x}\riso(\RZ_\BX^\tau)\wh{_{x^\Phi}}$ induces $\Phi:(\RZ_{G,b})\wh{_x}\riso(\RZ_{G,b}^\tau)\wh{_{x^\Phi}}$.
Therefore we have $(X^\Phi,\iota^\Phi)\in\RZ_{G,b}(R^\tau)$ for any $R\in\Nilp_W$ by definition of $\RZ_{G,b}$ (\emph{cf.} Definitions~\ref{def:RZGloc}, \ref{def:RZGcolim}), so we get a Weil descent datum  $\Phi:\RZ_{G,b}\riso\RZ_{G,b}^\tau$  over $\fo_E$ defined by sending $(X,\iota)\in\RZ_{G,b}(R)$ to $(X^\Phi,\iota^\Phi) \in \RZ_{G,b}(R^\tau)$.
\end{defnsub}

Although the Weil descent datum for $\RZ_{G,b}$ is \emph{not} effective, it induces a natural action of the Weil group $W_E$ on the $\ell$-adic cohomology of  $\RZ_{G,b}^\rig$. Alternatively, one can ``complete'' the components of $\RZ_{G,b}$ so that the Weil descent datum would become effective (\emph{cf.} \cite[Theorem~3.49]{RapoportZink:RZspace}).

The Weil descent datum commutes with the natural action of $J_b(\Qp)$, as the relative $q$-Frobenius $\Frob^d:\BX\ra\tau^*\BX$ commutes with any quasi-isogenies. In particular, we have a $W_E\times J_b(\Qp)$-action on the $\ell$-adic cohomology of $\RZ_{G,b}^\rig$.

\subsection{\'Etale Tate tensors and rigid analytic tower}\label{subsec:Tower}
For any open compact subgroup $\KK\subset G(\Zp)$, we will construct a finite \'etale cover $\RZ_{G,b}^\KK$ of $\RZ_{G,b}^\rig$  that naturally fits into a  $G(\Qp)$-equivariant  tower $\set{\RZ_{G,b}^\KK}$ with Galois group $G(\Zp)$.

For any geometric point $\bar x$ of $\RZ_{G,b}^\rig$ we let $\pi_1^{\fet}(\RZ_{G,b}^\rig,\bar x)$ denote the algebraic fundamental group of the connected component of $\RZ_{G,b}^\rig$ containing $\bar x$.

Let $X_{G,b}$ denote the universal $p$-divisible group over $\RZ_{G,b}$. By Theorem~\ref{thm:EtTate}, we have a morphism of lisse $\Zp$-sheaves
\[t_{\alpha,\et}:\triv\ra T(X_{G,b})^\otimes\]
corresponding to each $t_\alpha$.

\begin{propsub}\label{prop:PEt}
Let $\bar x$ be a geometric point of $\RZ_{G,b}^\rig$ supported at a classical point $x$.
Then the following $\Zp$-scheme
\[\cP_{\et,\bar x}:=\nf\Isom_{\Zp} \big( [\Lambda, (s_\alpha)] ,[T(X_{G,b})_{\bar x}, (t_{\alpha,\et,\bar x})]  \big)\]
is a  trivial $G$-torsor. (Here, we view $\Lambda$ and $T(X_{G,b})_{\bar x}$ as vector bundles over $\Spec\Zp$.)
\end{propsub}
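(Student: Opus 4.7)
The plan is to reduce the proposition to showing that $P_{\et,\bar x}$ is a $G$-torsor over $\Spec\Zp$. Once this is established, triviality is automatic: for a smooth connected reductive $\Zp$-group $G$, smoothness of $G$-torsors combined with Hensel's lemma reduces the question to vanishing of $H^1(\Fp,G_{\Fp})$, which holds by Lang's theorem applied to the connected group $G_{\Fp}$.

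To establish the $G$-torsor property, I would first observe that $\bar x$ is supported at a classical point $x$ whose residue field $K$ is a finite (necessarily totally ramified) extension of $K_0$, and that $x$ corresponds to a $p$-divisible group $X_x$ over $\fo_K$ equipped with a quasi-isogeny from $\BX$. Pulling back the universal crystalline Tate tensors yields crystalline Tate tensors $(t_{\alpha,x})$ on $\DD(X_x)$; by Definition~\ref{def:RZG}(\ref{def:RZG:P}), applied to a formally smooth $W$-algebra $\wt A$ surjecting onto $\fo_K$, the scheme $P_{\wt A}$ classifying tensor-matching isomorphisms is a $G$-torsor. Moreover, Theorem~\ref{thm:EtTate} identifies the fibre $t_{\alpha,\et,\bar x}$ with $t_{\alpha,x}$ via the classical crystalline comparison isomorphism for $X_x/\fo_K$, thereby connecting the two sides.

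To transport this $G$-structure from the crystalline to the \'etale side in an integral way, I would invoke Breuil-Kisin theory. Setting $\mathfrak{S}:=W[[u]]$ with $u$ mapped to a uniformiser of $\fo_K$, Kisin's classification furnishes a Breuil-Kisin module $\mathfrak{M}=\mathfrak{M}(X_x)$ of height $\leqs 1$ attached to $X_x$; the crystalline Tate tensors $(t_{\alpha,x})$ descend to $\phi$-invariant tensors $(\mathfrak{t}_\alpha)\subset\mathfrak{M}^\otimes$, and the $\mathfrak{S}$-scheme $P_{\mathfrak{S}}$ classifying tensor-preserving isomorphisms between $(\mathfrak{M},(\mathfrak{t}_\alpha))$ and $(\mathfrak{S}\otimes_{\Zp}\Lambda,(1\otimes s_\alpha))$ is again a $G$-torsor; these integrality statements are precisely the content of \S\ref{sec:Ccris}. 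Since $\mathfrak{S}$ is a complete local ring with algebraically closed residue field, $P_{\mathfrak{S}}$ is a trivial $G$-torsor; a trivialisation, pushed forward under the $\otimes$-compatible Kisin-to-Galois functor $\mathfrak{M}\rightsquigarrow T^*(X_x)$, yields a $\Zp$-linear (and $\GK$-equivariant) isomorphism $T^*(X_x)\riso\Lambda$ that matches $(t_{\alpha,\et,\bar x})$ with $(s_\alpha)$, producing the desired $\Zp$-point of $P_{\et,\bar x}$.

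The main obstacle is the integrality statement on the Kisin-module side: that the crystalline tensors $(t_{\alpha,x})$, a priori Frobenius-invariant only up to isogeny, actually refine to genuine $\phi$-invariant tensors on the integral Breuil-Kisin module $\mathfrak{M}$, and that the resulting $\mathfrak{S}$-scheme $P_{\mathfrak{S}}$ is a $G$-torsor rather than merely a $G_{\Qp}$-torsor over $\mathfrak{S}[\ivtd p]$. This is supplied by \S\ref{sec:Ccris}; once granted, the passage to the Galois representation side is Tannakian and formal, and the final descent of a trivialisation to $\Zp$ is immediate from Lang's theorem together with Hensel's lemma, as noted above.
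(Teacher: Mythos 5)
Your opening reduction is the same as the paper's: once $P_{\et,\bar x}$ is known to be a $G$-torsor over $\Zp$, triviality follows because $\Zp$ is henselian and every torsor under the connected reductive group $G_{\Fp}$ over a finite field is trivial (Lang). The problem is with the way you transport the torsor structure from the Kisin-module side to the \'etale side. First, the statement that the $\Sig$-scheme of tensor-matching isomorphisms is a $G$-torsor is \emph{not} proved in \S\ref{sec:Ccris}: that section (in the proof of Theorem~\ref{thm:EtTate}) only shows that the tensors are integral and $\vphi$-invariant on $\gM$, i.e.\ $t_{\Sig,\bar x}\in(\gM_x^\otimes)^{\vphi=1}$; the torsor statement over $\Sig$ is precisely \cite[Proposition~1.3.4]{Kisin:IntModelAbType}, which is the external input the paper actually invokes. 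Second, and more seriously, your final step does not work as stated: a trivialisation of $P_{\Sig}$ is merely an $\Sig$-module isomorphism $\gM\riso\Sig\otimes_{\Zp}\Lambda$ matching tensors, with no $\vphi$-equivariance, so it cannot be ``pushed forward under the Kisin-to-Galois functor'' (that functor only transports $\vphi$-equivariant morphisms), and the asserted $\GK$-equivariant isomorphism $T^*(X_x)\riso\Lambda$ cannot exist in general, since $\Lambda$ carries the trivial Galois action while $T^*(X_x)$ is usually a nontrivial crystalline representation. There is also no ring map $\Sig\ra\Zp$ along which specialising recovers $T^*(X_x)$ from $\gM$; the only useful specialisation is $u\mapsto 0$ (composed with $\sig$), which by Theorem~\ref{thm:BK} lands you on the \emph{crystalline} side over $W$, not on a $\Zp$-point of $P_{\et,\bar x}$.

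The repair is essentially the paper's proof: \cite[Proposition~1.3.4]{Kisin:IntModelAbType} (whose proof is the $u\mapsto0$ specialisation of the $\Sig$-torsor you have in mind) furnishes a $W$-linear isomorphism $W\otimes_{\Zp}T^*(X_{G,b})_{\bar x}\cong\DD(X_{x_0})(W)$ matching $(1\otimes t_{\alpha,\et,\bar x})$ with $(t_{\alpha,x_0}(W))$, where $x_0$ is the special point of $x$. This identifies $P_{\et,\bar x,W}$ with the crystalline Isom-scheme $P_W$ built from $(\DD(X_{x_0})(W),(t_{\alpha,x_0}(W)))$, and the latter is a $G$-torsor because $x_0\in\RZ_{G,b}(\kappa)\cong X^G(b)$ — note that your appeal to Definition~\ref{def:RZG}(\ref{def:RZG:P}) over a lift $\wt A$ of $\fo_K$ is not quite available, since that definition is for formally smooth $W/p^m$-algebras and $\fo_K$ is ramified, so the crystalline torsor input should be taken at $x_0$ over $W$. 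Since $\Zp\ra W$ is faithfully flat, the torsor property descends to $\Zp$, and your Lang--Hensel step then gives triviality.
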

\begin{proof}
Note that any $G$-torsor over $\Zp$ is trivial; indeed, since $\Zp$ is a henselian local ring, a $G$-torsor over $\Zp$ is trivial if its special fibre is trivial. But  any $G$-torsor over a finite field is trivial if $G$ is  reductive (so its fibres are  connected by definition).

It remains to show that $\cP_{\et,\bar x}$ is a $G$-torsor. Let $K$ be the residue field at $x$, and let $X_{x_0}$ denote the pull-back of $X_{G,b}$ by $x_0:\Spec \kappa\ra \Spf \fo_K \xra x \RZ_{G,b}$. Then by \cite[Proposition~1.3.4]{Kisin:IntModelAbType}, we have a $W$-linear isomorphism
\[W\otimes_{\Zp}T(X_{G,b})_{\bar x}\cong \DD(X_{x_0})(W)^*\]
matching $(1\otimes t_{\alpha,\et,\bar x})$ and $(t_{\alpha,x_0}(W))$.  Therefore, $(\cP_{\et,\bar x})_W:=\cP_{\et,\bar x}\times_{\Spec \Zp}\Spec W $ is isomorphic to the $G$-torsor $\cP_{W}$ defined using $(\DD(X_{x_0})(W),(t_{\alpha,x_0}(W)))$.
\end{proof}

Let $\KK\com0:=G(\Zp)$ and $\KK\com i:=\ker(G(\Zp)\ra G(\Z/p^i))$ for any $i>0$. 
\begin{defnsub}
We set $\RZ_{G,b}^{\KK\com0}:=\RZ_{G,b}^\rig$.
For any $i> 0$ we define the following rigid analytic covering of $\RZ_{G,b}^\rig$:
\[\RZ_{G,b}^{\KK\com i}
= \nf\Isom_{\RZ_{G,b}^\rig}\big([\Lambda/p^i\Lambda,(s_\alpha)],[X_{G,b}[p^i]^\rig, (t_{\alpha, \et})], \big);\]
i.e., for an analytic space (or adic space) $\X$ over $K_0$, its $\X$-point $u$ classifies isomorphisms of $\Z/p^i$-local systems matching tensors after pulling back to $\X$. Here, we use the identification $X_{G,b}[p^i]^\rig\cong T(X_{G,b})/(p^i)$ to view the mod~$p^i$ reduction of $(t_{\alpha,\et})$ as tensors of $X_{G,b}[p^i]^\rig$. Since $\RZ_{G,b}^{\KK\com i} $ is an open and closed subspace of $\nf\Isom_{\RZ_{G,b}^\rig}(\Lambda/p^i\Lambda,X_{G,b}[p^i]^\rig)$, one can see that $\RZ_{G,b}^{\KK\com i} $ is a finite \'etale Galois cover of $\RZ_{G,b}^\rig$. 
When $G=\GL(\Lambda)$, this definition of level structure recovers the usual one.

We let the finite group  $G(\Z/p^i) = \KK\com0/\KK\com i$ act on the right on $\RZ_{G,b}^{\KK\com i}$ as follows: an element $g\in G(\Z/p^i)$ acts as $\varsigma^{(i)}\mapsto g\iv\circ \varsigma^{(i)}$ on sections $\varsigma^{(i)}$ of $\RZ_{G,b}^{\KK\com i}$. This makes $\RZ_{G,b}^{\KK\com i}$ an \'etale Galois cover of $\RZ_{G,b}^\rig$ with Galois group $G(\Z/p^i)$. When $G=\GL(\Lambda)$,  this action is compatible with the natural  action as defined in \cite[\S5.34]{RapoportZink:RZspace}.

For any open subgroup $\KK\subset \KK\com0$ which contains $\KK\com i$ for some $i\geqs0$, we set
\[\RZ_{G,b}^\KK = \RZ_{G,b}^{\KK\com i}/(\KK/\KK\com i).\]
This definition is independent of the choice of $i\gg0$. The  $J_b(\Qp)$-action and the Weil descent datum over $E$ on $\RZ_{G,b}^\rig$ pull back to  $\RZ_{G,b}^\KK$.
\end{defnsub}

Let us now define the ``right $G(\Qp)$-action'' of the rigid analytic tower $\set{\RZ_{G,b}^\KK}$ (i.e., Hecke correspondences). We follow \cite[\S5.34]{RapoportZink:RZspace} and \cite[\S2.3.9.3]{Fargues:AsterisqueLLC}. Let $g\in G(\Qp)$, and choose $\KK\subset G(\Zp)$ so that $g\iv \KK g \subset G(\Zp)$. For a fixed $g$, the assumption on $\KK$ can be arranged by replacing $\KK$ by some finite index open subgroup; indeed, for an open compact subgroup $\KK_0\subset G(\Zp)$, $\KK:= \KK_0\cap g \KK_0 g\iv$ satisfies this assumption. By a (right) $G(\Qp)$-action on the tower $\set{\RZ_{G,b}^\KK}$, we mean a collection of isomorphisms
\[ [g]: \RZ_{G,b}^\KK \riso \RZ_{G,b}^{g\iv \KK g}, \]
for any $g\in G(\Qp)$ and  $\KK\subset G(\Zp)$ with $g\iv \KK g \subset G(\Zp)$, which commutes with the map $\RZ_{G,b}^{\KK'}\thra \RZ_{G,b}^\KK$ for $\KK'\subset \KK$,  and we have $[g']\circ [g] = [gg']$ for any $g,g'\in G(\Qp)$ whenever it makes sense.

Let us first describe the map $[g]$ on $K$-points, where $K$ is a finite extension of $K_0$. Recall that $\RZ_{G,b}^\rig(K) = \Hom(\Spf\fo_K,\RZ_{G,b})$, so a point $u\in \RZ_{G,b}^\KK(K)$ can be interpreted as a $p$-divisible group $X_u:=u^* X_{G,b}$ over $\fo_K$, a quasi-isogeny $\iota: \BX\dra X_{u,\Fpbar}$, and a $\Gal(\Kbar/K)$-stable right coset $\tilde u \KK$ of isomorphisms
\[
\tilde u:   \Lambda\riso T(X_u)_{\bar\eta},
\]
where $\bar\eta:\Spa (\wh{\Kbar},\fo_{\wh{\Kbar}})\ra \Spa(K,\fo_K)$ is a geometric point. Since $\tilde u\KK$ is $\Gal(\Kbar/K)$-stable, the $\Gal(\Kbar/K)$-action on $\Lambda$ via $\tilde u$ has its image in $\KK$.

Since we assumed that $g\iv \KK g\subset G(\Zp)$, it follows that $g\Lambda\subset \Lambda[\ivtd p]$ is stable under the action of $\KK$, so $\tilde u(g\Lambda)\subset V(X_u)_{\bar\eta}$ is $\Gal(\Kbar/K)$-stable. This means that we can find a $p$-divisible group $X_{u\cdot g}$ over $\fo_K$ with quasi-isogeny $\jmath_g:X_u \dra X_{u\cdot g}$ such that  $g\Lambda$ is the image of the following map
\begin{equation}\label{eqn:LevelHeckeCorr}
T(X_{u\cdot g})_{\bar\eta}\hra V(X_{u\cdot g})_{\bar\eta} \xleftarrow[\jmath_g^*]{\sim}  V(X_u)_{\bar\eta} \xleftarrow[\tilde u]{\sim} \Lambda[\ivtd p].
\end{equation}
Indeed, for $n$ so that $p^n\Lambda\subset g\Lambda$,  $ g\Lambda/p^n\Lambda$
corresponds to the geometric generic fibre of some finite flat $\fo_K$-subgroup $\mathfrak{G}$ of $X_u[p^n]$. We set $X_{u\cdot g}:= X_u/\mathfrak{G}$ and
\begin{equation}
\jmath_g:X_u \stackrel{p^{-n}}{\dra}X_u \thra X_u/\mathfrak{G}=:X_{u\cdot g}.
\end{equation}
Then the pair $(X_{u\cdot g},\jmath_g)$ satisfies the desired property  (\ref{eqn:LevelHeckeCorr}). Now we obtain the following  $K$-valued point of $\RZ_\BX^\rig$:
\begin{equation}\label{eqn:BTHeckeCorr}
(X_{u\cdot g}, \jmath_{g,\Fpbar}\circ\iota)\in\Hom_W(\Spf\fo_K,\RZ_\BX)\cong \RZ_\BX^\rig(K).
\end{equation}

\begin{lemsub}\label{lem:HeckeCorrPt}
In the above setting,
let us write $(X,\iota):=(X_u,\iota)$ and $(X',\iota'):=(X_{u\cdot g},\jmath_{g,\Fpbar} \circ\iota)$; \emph{cf.}  (\ref{eqn:BTHeckeCorr}).
Then $(X', \iota')$ corresponds to  a $\Spf \fo_K$-point of $\RZ_{G,b}$.
\end{lemsub}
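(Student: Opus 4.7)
The plan is to exhibit crystalline Tate tensors $t'_\alpha:\triv\to\DD(X')^\otimes$ on $X'$ such that $(X',\iota',(t'_\alpha))$ satisfies the conditions of Definition~\ref{def:RZGloc}; granting this, the closedness of $\RZ_{G,b}\hookrightarrow\RZ_\BX$ (Theorem~\ref{thm:q-cpct}) gives the desired factorization $\Spf\fo_K\to\RZ_{G,b}$. The natural candidate is obtained by transport: from $(X_u,\iota)\in\RZ_{G,b}(\fo_K)$, Theorem~\ref{thm:RZHType} furnishes universal Tate tensors $t_\alpha:\triv\to\DD(X_u)^\otimes$, and the quasi-isogeny $\jmath_g:X_u\dra X'$ induces an isomorphism of isocrystals $\DD(\jmath_g):\DD(X')^\otimes[\ivtd p]\riso\DD(X_u)^\otimes[\ivtd p]$, so we set $t'_\alpha:=\DD(\jmath_g)^{-1}\circ t_\alpha$. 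Since $\iota'=\jmath_{g,\Fpbar}\circ\iota$, we have $\DD(\iota')(t'_\alpha)=\DD(\iota)(t_\alpha)=1\otimes s_\alpha$ in $V^\otimes$, which verifies Definition~\ref{def:RZG}(\ref{def:RZG:qisog}) for the candidate.

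The main step is showing that $t'_\alpha$ is integral, i.e., factors through $\DD(X')^\otimes\subset\DD(X')^\otimes[\ivtd p]$. I would argue via the \'etale realisation. By Theorem~\ref{thm:EtTate}, each $t_\alpha$ has an integral \'etale realisation $t_{\alpha,\et}:\triv\to T^*(X_u)^\otimes$; transport to $X'$ by setting $t'_{\alpha,\et}:=(\jmath_g^*)^{-1}(t_{\alpha,\et})\in V^*(X')^\otimes$. By the construction of $X'$ in (\ref{eqn:LevelHeckeCorr}), the composition $\tilde u':=g^{-1}\circ\tilde u\circ\jmath_g^*$ is an isomorphism $T^*(X')_{\bar\eta}\riso\Lambda$, and the crucial observation is that $g\in G(\Qp)$ fixes each $s_\alpha$, so
\[
\tilde u'(t'_{\alpha,\et,\bar\eta})=g^{-1}\bigl(\tilde u(t_{\alpha,\et,\bar\eta})\bigr)=g^{-1}(s_\alpha)=s_\alpha\in\Lambda^\otimes.
\]
This shows $t'_{\alpha,\et,\bar\eta}\in T^*(X')_{\bar\eta}^\otimes$, so $t'_{\alpha,\et}$ is globally integral. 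The main obstacle is then to promote integrality of the \'etale tensor to integrality of the crystalline tensor $t'_\alpha$; this should be handled by the same mechanism underlying Theorem~\ref{thm:EtTate}: for the $p$-divisible group $X'$ over the $p$-adic discrete valuation ring $\fo_K$, the Kisin module attached to $X'$ interpolates integral crystalline and integral Galois data, forcing crystalline integrality of $t'_\alpha$ from \'etale integrality of $t'_{\alpha,\et}$.

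With integrality in hand, I verify the remaining conditions of Definition~\ref{def:RZGloc} pointwise. At each closed point $x$ of $\Spec\fo_K/p^m$, the identity $\DD(\iota')(t'_\alpha)=1\otimes s_\alpha$ combined with integrality of $t'_\alpha$ places $(X'|_{\Fpbar},\iota'|_{\Fpbar})$ in $X^G(b)=\RZ_{G,b}(\Fpbar)$ via Proposition~\ref{prop:LR}: the required trivialisation matching tensors descends from the generic-fibre level structure $\tilde u'$ through the crystalline comparison over $\fo_K$, and the Hodge filtration is $\{\mu\}$-type because $\DD(\jmath_g)$ transports the pair $(\Fil^1_{X'},(t'_\alpha))$ isomorphically on isocrystals to $(\Fil^1_{X_u},(t_\alpha))$, so the $\{\mu\}$-property inherits from $X_u$. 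The deformation condition --- that the formal completion of $\Spf\fo_K/p^m$ at $x$ factors through $\Def_{X'|_{\Fpbar},G}$ --- follows from the analogous condition for $X_u$ together with the natural isomorphism of deformation functors $\Def_{X_u|_{\Fpbar}}\cong\Def_{X'|_{\Fpbar}}$ (by rigidity of quasi-isogenies), refined to $\Def_{X_u|_{\Fpbar},G}\cong\Def_{X'|_{\Fpbar},G}$ by transport of Tate tensors along $\jmath_g$.
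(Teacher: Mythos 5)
Your first half runs parallel to the paper: the integrality of the transported \'etale tensors $t'_{\alpha,\et}$ via the level structure $\tilde u'=g^{-1}\circ\tilde u\circ\jmath_g^*$ and the fact that $g\in G(\Qp)$ fixes the $s_\alpha$ is exactly the paper's ``by construction'' step, and invoking Kisin theory over $\fo_K$ to pass from integral \'etale tensors to integral crystalline tensors is the right mechanism. But the last, and decisive, step is where the proposal breaks. Membership in $\RZ_{G,b}(\fo_K/p^m)$ is defined (Definition~\ref{def:RZGloc}) by requiring that the induced map $\Spf(\fo_K/p^m)\to\Def_{X'_{\Fpbar}}$ factor through $\Def_{X'_{\Fpbar},G}$, and you derive this from a purported ``natural isomorphism $\Def_{X_{u,\Fpbar}}\cong\Def_{X'_{\Fpbar}}$ by rigidity of quasi-isogenies, refined to the $G$-versions by transport of Tate tensors along $\jmath_g$.'' No such isomorphism exists: rigidity only says that quasi-isogenies lift uniquely along nilpotent thickenings; it does not identify the deformation functors of isogenous $p$-divisible groups. (Concretely, in Serre--Tate coordinates an isogeny of ordinary $p$-divisible groups induces maps like $q\mapsto q^p$ on deformation spaces, which are not isomorphisms; equivalently, the completions of $\RZ_\BX$ at the two distinct points $x$ and $x'$ are not canonically identified.) Moreover, even granting some identification, $X'$ over $\fo_K$ is not obtained from the deformation $X_u$ by any such transport --- changing the lattice inside the common rational Tate module is precisely the content of the Hecke correspondence --- so the deformation condition for $X'$ is genuinely new information and cannot be formally inherited from $X_u$.

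The paper closes this gap with integral $p$-adic Hodge theory over $\fo_K$: choosing $W[u]\thra\fo_K$ with $p$-adically completed PD hull $S$, Kisin's Theorems~1.2.1 and 1.4.2 and Proposition~1.3.4 produce crystalline tensors $(t'_\alpha(S))\subset\DD(X')(S)^\otimes$ from $(t'_{\alpha,\et})$ whose pointwise stabiliser is $G_S$ and whose image over $\fo_K$ lies in $\Fil^0$; reducing gives $(X'_{\Fpbar},\iota'_{\Fpbar})\in\RZ_{G,b}(\Fpbar)$, and then Kisin's Proposition~1.5.8 is the substantive input showing that the $\fo_K$-point of $\Def_{X'_{\Fpbar}}$ factors through $\Def_{X'_{\Fpbar},G}$. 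Your sketch of ``promoting'' integrality via the Kisin module is compatible with this, but you would still need the $\Fil^0$/torsor statements over $S$ and the factorization result (or an inductive Grothendieck--Messing argument as in Proposition~\ref{prop:LiftingTate}) in place of the transport argument. Relatedly, your claim that the $\set\mu$-filtration property ``inherits from $X_u$'' because $\DD(\jmath_g)$ is an isomorphism of isocrystals only controls the situation after inverting $p$ (i.e., over the generic fibre); the integral condition at the special fibre and over $\fo_K/p^m$ again requires the integral $G$-torsor structure furnished by the Kisin-theoretic comparison, not the quasi-isogeny.
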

\begin{proof}
By construction, we have \'etale Tate tensors $(t'_{\alpha,\et})\subset T(X')^\otimes$ and an  isomorphism $\Lambda\riso T(X')_{\bar\eta}$ matching $(t'_{\alpha,\et})$ and $(s_\alpha)$. Let $S$ be the $p$-adically completed PD hull of some surjection $W[u]\thra \fo_K$. Then by Kisin theory, one associate $(t'_\alpha(S))\subset\DD(X')(S)^\otimes$ from $(t'_{\alpha,\et})$ such that its pointwise stabiliser is isomorphic to $G_S$; indeed,  $(t'_\alpha(S))$ can be constructed using Theorems~1.2.1 and 1.4.2 in \cite{Kisin:IntModelAbType}, and the assertion on the pointwise stabiliser follows from \cite[Proposition~1.3.4]{Kisin:IntModelAbType}.

By compatibility between Kisin modules and $D_{\dR}(V(X')) \cong D_{\dR}(V(X))$   \cite[Theorem~1.2.1(1)]{Kisin:IntModelAbType}, the isomorphism $\DD(X')(\fo_K)[\ivtd p]\riso \DD(X)(\fo_K)[\ivtd p]$ induced by $\jmath_g$ sends the tensor $(t'_\alpha(\fo_K))$ to $(t_\alpha(\fo_K))$, where $t'_\alpha(\fo_K)$ is the image of $t'_\alpha(S)$. This shows that the Hodge filtration $\Fil_{X'}\subset\DD(X')(\fo_K)$ is a $\{\mu\}$-filtration (via valuative criterion and Lemma~\ref{lem:muFil}). Therefore, by compatibility between Kisin modules and $D_{\cris}(V(X')) \cong D_{\cris}(V(X))$, it follows that  $x_0':=(X'_{\Fpbar},\iota'_{\Fpbar})\in\RZ_{G,b}(\Fpbar)$.
Finally, \cite[Proposition~1.5.8]{Kisin:IntModelAbType} shows that the map $\Spf \fo_K\ra(\RZ_\BX)\wh{_{x'_0}}$, defined by $(X',\iota')$, factors through $(\RZ_{G,b})\wh{_{x'_0}}$.
\end{proof}

Now we can lift  $(X_{u\cdot g}, \jmath_{g,\Fpbar}\circ\iota)\in\RZ_\BX^\rig(K)$ to $\RZ_{G,b}^{g\iv \KK g}(K)$ by adding the level structure corresponding to the right $g\iv \KK g$-coset of the isomorphism:
\begin{equation}\label{eqn:HeckeCorr}
\Lambda\xrightarrow[g]{\sim} g\Lambda\xrightarrow[\text{(\ref{eqn:LevelHeckeCorr})}]{\sim}  T(X_{u\cdot g})_{\bar\eta}  .
\end{equation}
By construction, the associated right $g\iv \KK g$-coset is $\Gal(\Kbar/K)$-stable, so we obtain a map $[g]:\RZ^\KK_{G,b}(K)\ra \RZ^{g\iv\KK g}_{G,b}(K)$.
If $g\in G(\Zp)$ then this action clearly recovers the natural ``Galois action'' of the covering.

The construction (\ref{eqn:BTHeckeCorr}) and (\ref{eqn:HeckeCorr}) can be generalised to $\X$-valued points in a functorial way for topologically finite-type $K_0$-analytic space (or adic space) $\X$. Then the $p$-divisible group $X_u$ is defined over some formal model $\XX$ of $\X$. By replacing $\XX$ with some admissible blow up if necessary, we can find a finite flat group scheme $\mathfrak{G}$ of $X_u$ whose rigid analytic generic fibre gives the local system corresponding to $g\Lambda/p^n\Lambda$; \emph{cf.} \cite{BoschLuetkebohmert:Rigid2}. Now by rigid analytic Yoneda lemma \cite[Lemma~7.1.5]{dejong:crysdieubyformalrigid}, we obtain a morphism $[g]:\RZ_{G,b}^\KK\ra\RZ_\BX^\rig$, which factors through $\RZ_{G,b}^\rig$ by considering the image of classical points (\emph{cf.} Lemma~\ref{lem:HeckeCorrPt}). And by considering the suitable generalisation of (\ref{eqn:HeckeCorr}), we  obtain a map $[g]:\RZ^\KK_{G,b}\ra \RZ^{g\iv \KK g}_{G,b}$.

Assume furthermore that $g^{\prime -1}\KK g' \subset G(\Zp)$ for some $g'\in G(\Qp)$. (This can be arranged by shrinking $\KK$ further if necessary.) Then  we can show that the map $[g']:\RZ_{G,b}^\KK \ra \RZ_{G,b}^{g^{\prime-1}\KK g'}$ is equal to the composition
\[\RZ_{G,b}^\KK \xra{[g]}\RZ_{G,b}^{g\iv \KK g} \xra{[g'g\iv]} \RZ_{G,b}^{g^{\prime-1}\KK g'}.\]
By setting $g'=\id\in G(\Qp)$, iwe see that $[g]:\RZ_{G,b}^\KK \ra \RZ_{G,b}^{g\iv \KK g}$ is an isomorphism.

Now the following proposition is immediate from the construction:
\begin{propsub}
The assignment $g\mapsto ([g]:\RZ_{G,b}^\KK \ra \RZ_{G,b}^{g\iv \KK g})$ defines a right $G(\Qp)$-action on the tower $\{\RZ_{G,b}^\KK\}$ extending the Galois action of $G(\Zp)$, which commutes with the natural $J_b(\Qp)$-action and the Weil descent datum over $E$.
\end{propsub}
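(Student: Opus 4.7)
The plan is to verify all four assertions---composition law, Galois extension, $J_b(\Qp)$-commutativity, and Weil-descent commutativity---on $K$-valued points (for $K/K_0$ a finite extension) using the explicit description (\ref{eqn:BTHeckeCorr})--(\ref{eqn:HeckeCorr}) of $[g]$, and then invoke the rigid analytic Yoneda lemma \cite[Lemma~7.1.5]{dejong:crysdieubyformalrigid} to upgrade to equalities of morphisms of adic spaces (since all the constructions were already shown to be functorial in the base). For the composition law, I would fix $g,g'\in G(\Qp)$ and $\KK\subset G(\Zp)$ small enough that both $g^{-1}\KK g$ and $g^{\prime-1}g^{-1}\KK gg'$ lie in $G(\Zp)$, take $u=(X_u,\iota,\tilde u\KK)\in\RZ_{G,b}^\KK(K)$, and compare $(X_{(u\cdot g)\cdot g'},\jmath_{g'}\circ\jmath_g)$ with $(X_{u\cdot gg'},\jmath_{gg'})$: both the quotient $p$-divisible group and the quasi-isogeny are determined by the $\Zp$-lattice $gg'\Lambda\subset V^*(X_u)_{\bar\eta}$ via (\ref{eqn:LevelHeckeCorr}), giving a canonical identification, and chasing (\ref{eqn:HeckeCorr}) matches the level trivialisations. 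Combined with the tautological equality $[e]=\id$, this yields $[g']\circ[g]=[gg']$ and forces each $[g]$ to be an isomorphism with inverse $[g^{-1}]$.

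For the extension-of-Galois claim, I would take $g\in G(\Zp)$; then $g\Lambda=\Lambda$, so the finite flat subgroup $\mathfrak{G}\subset X_u$ used to construct $X_{u\cdot g}$ is trivial, hence $\jmath_g=\id_{X_u}$, the underlying point of $\RZ_{G,b}^{\rig}$ is unchanged, and (\ref{eqn:HeckeCorr}) becomes right multiplication by $g^{-1}$ on the level structure $\tilde u\KK$, which is precisely the right $G(\Zp)$-action defined in the construction of $\RZ_{G,b}^{\KK\com i}\to\RZ_{G,b}^{\rig}$. Commutativity with the $J_b(\Qp)$-action and with the Weil descent datum $\Phi$ follows from the fact that the subgroup $\mathfrak{G}\subset X_u$, and therefore $X_{u\cdot g}$, $\jmath_g$, and the level structure (\ref{eqn:HeckeCorr}), all depend only on $X_u$ as a $p$-divisible group together with the $\Gal$-stable $\KK$-coset $\tilde u\KK\subset V^*(X_u)_{\bar\eta}$, and not on the quasi-isogeny $\iota:\BX\dra X_{u,\Fpbar}$ encoding the point of $\RZ_\BX^\rig$; whereas $\gamma\in J_b(\Qp)$ and $\Phi$ modify only $\iota$ (by post-composition with $\gamma^{-1}$, respectively with the inverse relative $q$-Frobenius on $\BX$ after reinterpreting the base via $\tau$).

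The main obstacle, which is mild, is that for general adic-space points one must produce the finite flat subgroup $\mathfrak{G}\subset X_u$ on a suitable formal model of the base, possibly after admissible blow-up in the style of \cite{BoschLuetkebohmert:Rigid2}; this was already handled in the construction of $[g]$, and all three operations $[g]$, $\gamma$, $\Phi$ can be carried out on a common refinement of formal models. Once this is in place, the point-wise identities above propagate to identities of morphisms of adic spaces by rigid analytic Yoneda, completing the proof.
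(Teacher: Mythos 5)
Your proposal is correct and follows essentially the same route as the paper: the paper defines $[g]$ exactly as you do via (\ref{eqn:LevelHeckeCorr})--(\ref{eqn:HeckeCorr}) at classical points, extends to general $\X$-valued points through formal models and admissible blow-ups, and then declares the proposition immediate from that construction; your verifications of the cocycle identity, the $G(\Zp)$-Galois compatibility (trivial $\mathfrak{G}$, $\jmath_g=\id$), and the commutation with $J_b(\Qp)$ and $\Phi$ (which only alter $\iota$, respectively the base twist, while the Hecke construction depends only on $X_u$ and the level coset) are precisely the details the paper leaves implicit.
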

This shows that on the ``$\ell$-adic'' cohomology of the tower $\set{\RZ_{G,b}^\KK}$, we have a natural action of $W_E\times J_b(\Qp) \times G(\Qp)$.

\subsection{Period morphisms}\label{subsec:Period}
Set $\E_{G,b}:=\DD(X_{G,b})_{\RZ_{G,b}}$, which is a vector bundle on $\RZ_{G,b}$ equipped with a filtration $\Fil^1_{X_{G,b}}$. From the universal Tate tensors $t_\alpha:\triv\ra \E_{G,b}^\otimes$, we get morphisms of rigid analytic $F$-isocrystals $t_\alpha^\rig:\triv\ra(\E_{G,b}^\rig)^\otimes$.  Note that the (universal) quasi-isogeny $\iota_{\red}:( \BX)_{(\RZ_{G,b})_{\red}} \dra (X_{G,b})_{(\RZ_{G,b})_{\red}}$ induces an isomorphism of vector bundles on $\RZ_{G,b}^\rig$
\[\E_{G,b}^\rig \riso \cO_{\RZ_{G,b}^\rig} \otimes_{\Zp} \Lambda \]
which matches $(t_\alpha^\rig)$ with the maps $(1\mapsto 1\otimes s_\alpha)$; indeed, the rigid analytic $F$-isocrystal $\E_{G,b}^\rig$ only depends on $\DD((X_{G,b})_{(\RZ_{G,b})_{\red}})[\ivtd p]$, as explained in \cite[\S5.3]{dejong:crysdieubyformalrigid}.

Let $\Flag_{G,\set\mu}$ denote the projective rigid analytic variety over  $K_0$ obtained from the analytification of $\Flag_{G_{K_0},\set\mu}^{K_0\otimes\Lambda^*, (1\otimes s_\alpha)}$ (\emph{cf.} \S\ref{subsec:muFil}). It follows that the Hodge filtration $(\Fil^1_{X_{G,b}})^\rig\subset\E_{G,b}^\rig$ defines a natural map
\begin{equation}
\pi: \RZ_{G,b}^\rig \ra \Flag_{G,\set{\mu}},
\end{equation}
which we call the \emph{period map}. By letting $J_b(\Qp)$ act on $\Flag_{G,\set{\mu}}$ via embedding $J_b(\Qp)\subset G(K_0)$ the period map $\pi$ is $J_b(\Qp)$-equivariant. In order to have compatibility with Weil descent data, one has to modify the target of the period map as in the case of (P)EL Rapoport-Zink spaces. To explain, the map  $\aleph:\RZ_{G,b}\ra\Delta:=\Hom_\Z(X^*(G)^{\Gal(\Qpbar/\Qp)},\Z)$ in the (P)EL case \cite[\S3.52]{RapoportZink:RZspace} can be generalised to the unramified Hodge-type case by \cite[Lemma~2.2.9]{ChenKisinViehmann:AffDL};  indeed, \emph{loc.~cit.} gives a functorial map $\Hom_W(\Spf A,\RZ_{G,b})\ra\pi_1(G)$ for formally smooth formally finitely generated $W$-algebra $A$ (noting that for a maximal torus $T\subset G_W$, the natural projection $X_*(T)\thra \pi_1(G)$ defines a map $\pi_1(G) \ra\Delta$ via evaluation).\footnote{One can explicitly describe $\aleph$ on $\Fpbar$-point as follows: it is the map that sends $gG(W)\in X^G(b)$ to the homomorphism $[\chi\mapsto \ord_p\chi(g)]$ where $\chi:G_{\Qp}\ra \Gm$ is a homomorphism over $\Qp$ and $g$ is any representative of $gG(W)$.}
 We then define a Weil descent datum on $\Flag_{G,\set{\mu}}\times\Delta$ using the same formula as in \cite[\S5.43]{RapoportZink:RZspace}. One can show that $(\pi,\aleph)$ is compatible with the Weil descent datum, generalising the (P)EL case \cite[\S5.46]{RapoportZink:RZspace}.

\begin{propsub}\label{prop:PeriodEt}
The period map $\pi$ is \'etale in the sense of \cite[\S5.9]{RapoportZink:RZspace}.
\end{propsub}

\begin{proof}[Proof of Proposition~\ref{prop:PeriodEt}]
The proof is almost identical to the proof of \cite[Proposition~5.15]{RapoportZink:RZspace}, if we use Corollary~\ref{cor:deformation} in  place of the Grothendieck-Messing deformation theory.

As in the case of schemes of finite type over a field, \'etaleness can be checked via infinitesimal lifting property for nilpotent thickenings supported at classical points by \cite[Proposition~5.10]{RapoportZink:RZspace}. To unwind this criterion, let $A'\thra A$ be a square-zero thickenings of local $K_0$-algebras which are finite dimensional over $K_0$. Let $A^\circ\subset A$ and $A^{\prime\circ}\subset A'$ respectively denote the subrings of power-bounded elements \cite[Definition~2.1.1]{ScholzeWeinstein:RZ}. Then we claim that the dotted arrow in the  commutative diagram below can be uniquely filled:
\[\xymatrix{
\Spa(A,A^\circ) \ar[r] \ar@{^{(}->}[d] &
\RZ_{G,b}^\rig \ar[d]_\pi \\
\Spa (A', A^{\prime\circ}) \ar[r] \ar@{.>}[ur]^{\exists !} &
\Flag_{G,\set\mu}
}\]
Let us translate this diagram in more concrete terms.
Let $\Fil^1_{A'}\subset A'\otimes_{\Zp}\Lambda$ be a $\set\mu$-filtration such that for a finite flat $W$-subalgebra $R_0\subset A$ there exists a map $f:\Spf R_0\ra\RZ_{G,b}$ such that the isomorphism
\begin{equation}\label{eqn:PeriodQIsog}
A\otimes_{R_0} \DD(X)(R_0) \xrightarrow[\DD(\iota)_A]{\sim} A\otimes_{\Zp}\Lambda,
\end{equation}
takes the Hodge filtration $A\otimes_{R_0}\Fil^1_{X}$ to $A\otimes_{A'}\Fil^1_{A'}$, where $(X,\iota)$ is the pull-back of the universal object $(X_{G,b},\iota)$ by $f$, and $\DD(\iota)_A$ is the isomorphism induced by $\iota$. The existence of the dotted arrow means the existence of a finite flat $W$-subalgebra $R'\subset A'$ and a map $f':\Spf R' \ra\RZ_{G,b}$ lifting $f$ in some suitable sense, such that  the Hodge filtration $A'\otimes_{R'}\Fil^1_{X'}$ corresponds to $\Fil^1_{A'}$ by the isomorphism $\DD(\iota')_{A'}$, where $(X',\iota')$ is the pull-back of $(X_{G,b},\iota)$ by $f'$. (Note that the uniqueness of the dotted arrow follows from the Grothendieck-Messing deformation theory.)

We choose a finite flat $W$-subalgebra $R'\subset A'$, and let $R\subset A$ denote the image of $R'$ in  $A$. Assume that  $R$ contains $R_0$.
Note that the pull-back  of the universal quasi-isogeny induces an isomorphism
\begin{equation}\label{eqn:PeriodQIsog2}
A'\otimes_{R'} \DD(X_R)(R') \xrightarrow[\DD(\iota)_{A'}]{\sim} A'\otimes_{\Zp}\Lambda,
\end{equation}
where we give the square-zero PD structure on $R'\thra R$.

By increasing $R'$ if necessary, we may assume that the intersection
\[\Fil^1_{R'}:=\Fil^1_{A'}\cap \DD(X_R)(R')\]
is a  $\set\mu$-filtration with respect to $(t_\alpha(R'))$, where $(t_\alpha)$ is the pull-back of the universal Tate tensors over $\RZ_{G,b}$. To see this, note that $A^{\prime\circ}$ is the preimage of the valuation ring of the residue field of $A'$. Then by valuative criterion for properness applied to the projective $R'$-scheme  $\Flag_{G,\set\mu}^{\DD(X_R)(R'), (t_\alpha(R'))}$, the $A'$-point corresponding to $\Fil^1_{A'}$ uniquely extends to an $A^{\prime\circ}$-point, which has to be defined over some finite $R'$-subalgebra $R''\subset A^{\prime\circ}$ (as $A^{\prime\circ}$ is the union of such $R''$'s). We rename $R''$ to be $R'$.
Now, the existence of $(X',\iota')$ lifting $(X_R,\iota)$ follows from Corollary~\ref{cor:deformation}.
\end{proof}
\begin{rmksub}
One defines  \'etale maps for adic spaces to be maps locally of finite presentation satisfying the usual infinitesimal lifting property for formal \'etale-ness using any affinoid $(K_0,W)$-algebras as test objects; \emph{cf.} \cite[Definition~1.6.5]{Huber:EtCohBook}. By  \cite[Example~1.6.6(ii)]{Huber:EtCohBook} and \cite[Proposition~5.10]{RapoportZink:RZspace}, this definition coincides with the definition of \'etale morphisms given in Proposition~\ref{prop:PeriodEt}. 
\end{rmksub}

\subsection{Infinite-level Rapoport-Zink spaces}\label{subsec:InfLevel}
In this section, all the rigid analytic spaces are regarded as adic spaces in the sense of \cite[Definition~2.1.5]{ScholzeWeinstein:RZ}. 

Since we will not directly work with the definitions of (pre)perfectoid spaces, we refer  to \cite[\S2.1]{ScholzeWeinstein:RZ} for basic definitions. Roughly speaking, a \emph{preperfectoid space} over $\Spa(K_0,W)$ is an adic space over $\Spa(K_0,W)$ which becomes a perfectoid space after base change over any perfectoid extension $(K,\fo_K)$ of $(K_0,W)$ and take the ``$p$-adic completion''; \emph{cf.} \cite[Definition~2.3.9]{ScholzeWeinstein:RZ}. In particular, preperfectoid spaces may be non-reduced as explained in \cite[Remark~2.3.5]{ScholzeWeinstein:RZ}.

Scholze and Weinstein  \cite[Theorem~D]{ScholzeWeinstein:RZ} constructed a preperfectoid space $\RZ_\BX^\infty$ over $\RZ_\BX^\rig$, which can be viewed as the ``infinite-level'' Rapoport-Zink space. (In \cite{ScholzeWeinstein:RZ} $\RZ_\BX^\infty$ is denoted as $\M_\infty$.) By definition, $\RZ^\infty_\BX$  parametrises $\Zp$-equivariant morphism over $\RZ_\BX^\rig$
\[ \Lambda \ra (\varprojlim X_{\RZ_\BX}[p^n])^{\ad}_{(K_0,W)} \]
which induces an isomorphism $\Lambda\riso (\varprojlim X_{\RZ_\BX}[p^n])^{\ad}_{(K_0,W)} (K,K^+)$ of $\Zp$-modules on the fibres at each point $\Spa(K,K^+)\ra\RZ_\BX^\rig$.
Here,  $(\varprojlim X_{\RZ_\BX}[p^n])^{\ad}_{(K_0,W)}$ is the generalised adic space over $(K_0,W)$ associated to the formal scheme
$\varprojlim X_{\RZ_\BX}[p^n]$, constructed in 
 \cite[\S2.2]{ScholzeWeinstein:RZ}.

For any open compact subgroup $\KK'\subset \GL_{\Zp}(\Lambda)(\Zp)$, there exists a natural projection $\RZ_\BX^\infty\ra\RZ_\BX^{\KK'}$ respecting the tower. It may not be known whether $\RZ_\BX^\infty$ represents the projective limit of $\RZ_\BX^{\KK'}$ as sheaves (or even, whether one should expect this)\footnote{Indeed, taking projective limits of adic spaces is  problematic as explained in \cite[\S2.4]{ScholzeWeinstein:RZ}.}. In other words,  although any map $\Spa(A,A^+)\ra\RZ_\BX^\infty$ gives rise to an isomorphism $\Lambda\riso T(X_{\RZ_\BX})_{(A,A^+)}$ of lisse $\Zp$-sheaves on $\Spa(A,A^+)$, obtained from the natural maps $\RZ_\BX^\infty\thra\RZ_\BX^{\KK'}$,  it is not known whether the ``converse'' holds.

Instead of working with a problematic notion of projective limit, Scholze and Weinstein \cite[Theorem~6.3.4]{ScholzeWeinstein:RZ} showed that  a weaker notion of equivalence $\RZ_\BX^\infty \sim \varprojlim \RZ_\BX^{\KK'}$ holds, where $\sim$ is defined in \cite[Definition~2.4.1]{ScholzeWeinstein:RZ}. To simplify the description of the equivalence, note that any projection  $\RZ_\BX^\infty\ra\RZ_\BX^{\KK'}$ has the property that there exists an affinoid open cover $\set{\Spa(A_\xi,A_\xi^+)}$ of $\RZ_\BX^\infty$ whose image in $\RZ_\BX^{\KK'}$ is an affinoid open cover $\set{\Spa(A_{\KK',\xi},A_{\KK',\xi}^+)}$ for each $\KK'$. (This follows from the cartesian square in the  first paragraph of the proof of \cite[Theorem~6.3.4]{ScholzeWeinstein:RZ}.) By the equivalence $\RZ_\BX^\infty \sim \varprojlim \RZ_\BX^{\KK'}$ we mean that:
\begin{itemize}
\item The natural map on the topological space $|\RZ_\BX^\infty|\ra \varprojlim|\RZ_\BX^{\KK'}|$ is a homeomorphism.
\item The image of $\varinjlim A_{\KK',\xi}$ in $A_\xi$ is dense for each $\xi$.
\end{itemize}

Let $\KK\com i\subset G(\Zp)$ and $\KK^{\prime(i)}\subset \GL_{\Zp}(\Lambda)(\Zp) $ respectively denote the kernel of reduction modulo $p^i$. We define $\RZ_{G,b}^\infty$ to be the ``projective limit'' of $\RZ_\BX^{\infty}\times_{\RZ_\BX^{\KK^{\prime(i)}}}\RZ_{G,b}^{\KK\com i}$; more concretely, we let  $\RZ_{G,b}^\infty$ be the closed adic subspace of $\RZ_\infty$ cut out by the equations defining $\RZ_\BX^{\infty}\times_{\RZ_\BX^{\KK^{\prime(i)}}}\RZ_{G,b}^{\KK\com i}$ for all $i$.

Note that the natural projection $\RZ_\BX^\infty\thra \RZ_\BX^{\KK^{\prime(i)}}$ restricts to $\RZ_{G,b}^\infty\thra \RZ_{G,b}^{\KK\com i}$, which factors as
\[\RZ_{G,b}^\infty\hra \RZ_\BX^{\infty}\times_{\RZ_\BX^{\KK^{\prime(i)}}}\RZ_{G,b}^{\KK\com i}\thra \RZ_{G,b}^{\KK\com i}. \]
Therefore, a morphism $\Spa(A,A^+) \ra \RZ_{G,b}^\infty$ gives rise to an isomorphism $\Lambda\riso T(X_{G,b})_{(A,A^+)}$ of lisse $\Zp$-sheaves on $\Spa(A,A^+)$, which matches $(s_\alpha)$ and $(t_{\alpha,\et})$, where $X_{G,b}$ is the universal $p$-divisible group over $\RZ_{G,b}$. 
\begin{propsub}
The generalised adic space $\RZ_{G,b}^\infty$ is a preperfectoid, and we have $\RZ_{G,b}^\infty\sim \varprojlim_\KK\RZ_{G,b}^\KK$. 
\end{propsub}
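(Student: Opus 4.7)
The plan is to transfer everything from the known case of $\RZ_\BX^\infty$ via the construction of $\RZ_{G,b}^\infty$ as a closed adic subspace. Set $\Y_i := \RZ_\BX^\infty \times_{\RZ_\BX^{\KK^{\prime(i)}}} \RZ_{G,b}^{\KK\com i}$. Since $\RZ_{G,b}^{\KK\com i}$ is open and closed inside $\nf\isom_{\RZ_{G,b}^\rig}(T^*(X_{G,b})/p^i, \Lambda/p^i)$ (being the locus where the tensors match), and the latter is $\RZ_\BX^{\KK^{\prime(i)}} \times_{\RZ_\BX^\rig} \RZ_{G,b}^\rig$, it follows that $\Y_i$ is an open and closed subspace of $\RZ_\BX^\infty \times_{\RZ_\BX^\rig} \RZ_{G,b}^\rig$. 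Because $\RZ_{G,b}^\rig \hookrightarrow \RZ_\BX^\rig$ is a closed immersion (as $\RZ_{G,b}\hookrightarrow\RZ_\BX$ is), each $\Y_i$ is a closed adic subspace of $\RZ_\BX^\infty$, and the sequence $\set{\Y_i}$ is decreasing with intersection $\RZ_{G,b}^\infty$. Thus $\RZ_{G,b}^\infty$ is a closed adic subspace of the preperfectoid space $\RZ_\BX^\infty$.

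For preperfectoidness, we base change to any perfectoid field $(K,K^+)$ over $(K_0,W)$ and take the associated adic space after $p$-adic completion. Since $\RZ_\BX^\infty$ becomes a perfectoid space in this procedure (by \cite[Theorem~D]{ScholzeWeinstein:RZ}), and closed adic subspaces of perfectoid spaces are preperfectoid in the sense of \cite[Definition~2.3.9]{ScholzeWeinstein:RZ} (\emph{cf.} Remark~2.3.5 of \emph{loc.\ cit.}, allowing non-reduced structures), the closed subspace cut out by the defining ideal of $\RZ_{G,b}^\infty \subset \RZ_\BX^\infty$ remains preperfectoid after this base change. This gives the first half of the proposition.

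For the equivalence $\RZ_{G,b}^\infty \sim \varprojlim_\KK \RZ_{G,b}^\KK$ we must verify two things. First, on underlying topological spaces, the natural map $|\RZ_{G,b}^\infty| \to \varprojlim_i |\RZ_{G,b}^{\KK\com i}|$ is a homeomorphism. This follows from the corresponding homeomorphism $|\RZ_\BX^\infty| \riso \varprojlim_i |\RZ_\BX^{\KK^{\prime(i)}}|$ by chasing the inverse system: both sides identify with the subset consisting of systems in $\varprojlim_i |\RZ_\BX^{\KK^{\prime(i)}}|$ whose image in each $|\RZ_\BX^{\KK^{\prime(i)}}|$ lands in the closed subspace $|\Y_i|$. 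Second, we need the density property: choose the affinoid open cover $\set{\Spa(A_\xi, A_\xi^+)}$ of $\RZ_\BX^\infty$ coming from \cite[Theorem~6.3.4]{ScholzeWeinstein:RZ}, with images $\Spa(A_{\KK',\xi}, A^+_{\KK',\xi})$ in $\RZ_\BX^{\KK^{\prime(i)}}$ for each $i$, so that $\varinjlim_i A_{\KK',\xi}$ has dense image in $A_\xi$. Pulling back along the closed immersions $\Y_i \hookrightarrow \RZ_\BX^\infty$, one gets affinoid covers of $\Y_i$ and $\RZ_{G,b}^\infty$ as quotients of the $A_\xi$ and $A_{\KK',\xi}$ by the corresponding coherent ideals, and density is preserved under such surjections.

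The main obstacle will be verifying the density step rigorously, since the defining ideal of $\RZ_{G,b}^{\KK\com i}$ inside $\RZ_\BX^{\KK^{\prime(i)}}\times_{\RZ_\BX^\rig}\RZ_{G,b}^\rig$ depends on $i$, and we need to ensure that the direct limit of the quotient rings at finite level surjects (with dense image) onto the quotient ring at infinite level. This should follow from the fact that the closed-open conditions cutting out $\RZ_{G,b}^{\KK\com i}$ stabilise compatibly in $i$ once pulled back to $\RZ_\BX^\infty$, so that the defining ideal at infinite level is the closure of the union of pullbacks of defining ideals at finite level; then exactness of completion of finitely generated ideals together with the $\RZ_\BX^\infty$-case density yields the claim.
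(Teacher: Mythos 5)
Your proof is correct and follows essentially the same route as the paper: preperfectoidness comes from the fact that a closed subspace of a preperfectoid space is preperfectoid (\cite[Proposition~2.3.11]{ScholzeWeinstein:RZ}), the homeomorphism on underlying topological spaces is checked directly, and the density condition is verified using the affinoid covers $\set{\Spa(A_\xi,A_\xi^+)}$ from \cite[Theorem~6.3.4]{ScholzeWeinstein:RZ} pushed down to the closed-subspace rings. The ``main obstacle'' you flag at the end is not actually an obstacle: since $A_\xi\thra B_\xi$ is a continuous surjection and the composite $\varinjlim_i A_{i,\xi}\ra A_\xi\ra B_\xi$ factors through $\varinjlim_i B_{i,\xi}$ (because $\RZ_{G,b}^\infty\ra\RZ_\BX^{\KK^{\prime(i)}}$ factors through $\RZ_{G,b}^{\KK\com i}$), density of the image of $\varinjlim_i A_{i,\xi}$ in $A_\xi$ already forces density of the image of $\varinjlim_i B_{i,\xi}$ in $B_\xi$, so no identification of the defining ideal at infinite level with the closure of the union of finite-level ideals is needed.
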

\begin{proof}

Since any closed subspace of a preperfectoid space is a preperfectoid space (\cite[Proposition~2.3.11]{ScholzeWeinstein:RZ}) it remains to show  $\RZ_{G,b}^\infty\sim \varprojlim_\KK\RZ_{G,b}^\KK$.

On the underlying topological space we clearly have a natural homeomorphism.
\[
|\RZ_{G,b}^\infty| \riso \varprojlim_\KK |\RZ_{G,b}^\KK|.
\]
To verify the other condition, let $\set{\Spa(A_\xi,A_\xi^+)}$ be an affinoid open covering of $ \RZ_\BX^\infty$ whose image in $\RZ_\BX^{\KK^{\prime(i)}}$ is an affinoid open cover $\set{\Spa(A_{i,\xi},A^+_{i,\xi})}$. Let $\Spa(B_\xi,B_\xi^+)\subset \RZ_{G,b}^\infty$ be the pull-back of $\Spa(A_\xi,A_\xi^+)$, and we similarly define $\Spa(B_{i,\xi},B_{i,\xi}^+)\subset \RZ_{G,b}^{\KK\com i}$. Then we have the following commutative diagram
\[
\xymatrix{
\varinjlim A_{i,\xi} \ar[r] \ar@{->>}[d] &
A_\xi  \ar@{->>}[d] \\
\varinjlim B_{i,\xi} \ar[r] &
B_\xi
},
\]
where the right vertical arrow is a quotient map and the upper horizontal arrow has a dense image. It thus follows that the lower horizontal arrow also has a dense image. This shows $\RZ_{G,b}^\infty\sim \varprojlim_\KK\RZ_{G,b}^\KK$.
\end{proof}

\begin{rmksub}
As remarked in the introduction of  \cite{ScholzeWeinstein:RZ}, it should be possible to obtain an ``infinite-level Rapoport-Zink space'' $\RZ_{G,b}^\infty$ for $(G,b)$ (or at least, an adic space equivalent to $\RZ_{G,b}^\infty$) directly without going through  finite levels, and obtain an ``explicit description'' of $\RZ_{G,b}^\infty$ using the theory of vector bundles on Fargues-Fontaine curves (in the spirit of \cite[Theorem~D]{ScholzeWeinstein:RZ}).
Such a construction should work for more general class of ``local Shimura data'' $(G,[b],\set{\mu\iv})$. 
\end{rmksub}

\section{Digression on crystalline comparison for $p$-divisible groups}\label{sec:Ccris}
The goal of this section is to prove Theorem~\ref{thm:EtTate}, for which we need to recall the basic constructions and crystalline comparison theory for $p$-divisible groups.
We will use the notation and setting as in \S\ref{subsec:SW}. and we additionally assume that $\XX = \Spf R$ is a connected formal scheme which is formally smooth and formally of finite type over $W$, $\wh\Omega_{R/W}$ is free over $R$, and one can take an $R$-basis $d u_i$ such that $u_i\in R\starr$ for all $i$. The choice of $R$ is more general than \cite{Brinon:CrisDR} (where various natural properties of (relative) period rings are proved), but one can rather easily deduce the properties of crystalline period rings that are relevant for us.\footnote{The properties of $\Bcris(R)$ that will be used can be rather easily deduced by the same proof as in \cite{Brinon:CrisDR}. More subtle properties which require refined almost \'etaleness,  such as $R[\ivtd p]$-flatness and the $\pi_1$-invariance, will not be used in this paper, although they are obtained in \cite[\S5]{Kim:ClassifFSm} by slightly  extending refined almost \'etaleness and repeating the proof of \cite{Brinon:CrisDR}.}

In this section we allow $p=2$. Although all the results hold when $\kappa$ is a perfect field (instead of an algebraically closed field) with little modification in the proofs, we continue to assume that $\kappa$ is algebraically closed for the notational simplicity.
\subsection{Crystalline period rings}
Choose a separable closure $\BE$ of $\Frac(R)$, and define $\ol R$ to be the union of normal $R$-subalgebras $R'\subset \BE$ such that $R'[\ivtd p]$ is  finite \'etale over $R[\ivtd p]$.  Set $\wh{\ol R} :=\varprojlim_n\ol R/(p^n)$. (When $R$ is a finite extension of $W$, we have $\ol R = \fo_{\Kbar_0}$.) We let $\bar\eta$ denote the geometric generic point of any of $\Spec R[\ivtd p]$, $\Spec \ol R[\ivtd p]$, and $\Spec \wh{\ol R}[\ivtd p]$. 

Let us briefly discuss the relation between the \'etale fundamental group of $\Spec R[\ivtd p]$ and the algebraic fundamental group of $(\Spf R)^\rig$. For any finite \'etale $R[\ivtd p]$-algebra $A$, let $R_A$ be the normalisation of $R$ in $A$. Since $R$ is excellent\footnote{Note that $R$ is a quotient of some completion of a polynomial algebra over $W$ by \cite[Lemma~1.3.3]{dejong:crysdieubyformalrigid}, and such a ring is known to be excellent (\emph{cf.} \cite[Theorem~9]{Valabrega:FewThms}).}, $R_A$ is finite over $R$. By construction, $(\Spf R_A)^\rig$ is finite \'etale over $(\Spf R)^\rig$, so we obtain a functor $\Spec A\rightsquigarrow (\Spf  R_A)^\rig$ from finite \'etale covers of $\Spec R[\ivtd p]$ to finite \'etale covers of $(\Spf R)^\rig$. This induces a natural map of profinite groups
\begin{equation} \label{eqn:FundGp}
\pi_1^{\fet}((\Spf R)^\rig, \bar x) \ra \pi_1^{\et}(\Spec R[1/p], \bar x),
\end{equation}
for any geometric closed point  $\bar x$ of $\Spec R[1/p]$, which  can also be viewed as a ``geometric point'' of $(\Spf R)^\rig$.

\begin{rmksub}\label{rmk:FundGp}
In the case we care about (such as $T(X)_{\bar x}$ for a $p$-divisible group $X$ over $R$), the action of $\pi_1^{\fet}((\Spf R)^\rig,\bar x)$ factors through $\pi_1^{\et}(\Spec R[1/p], \bar x)$.
\end{rmksub}

We set\footnote{Perhaps, $(\wh{\ol R})^\flat$ would be a more precise notation, but the notation $\ol R^\flat$ would cause no  confusion.}
\[\ol R^\flat:=\invlim_{x\mapsto x^p} \ol R/(p),\]
which is a perfect $\fo_{\Kbar_0}^\flat$-algebra equipped with a natural action of $\pi_1^{\et}(\Spec R[\ivtd p],\bar\eta)$.

For any $(x_n)_{n\in \Z_{\geqs0}}\in \ol R^\flat$, define
\[x^{(n)}:=\lim_{m\to\infty}(\tilde x_{m+n})^{p^m}\]
for any lift $\tilde x_{m+n}\in \wh{\ol R}$ of $x_{m+n}\in \ol R/(p)$. Note that $x^{(n)}$ is well-defined and independent of  the choices involved.
Consider the following $W$-algebra map
\begin{equation}\label{eqn:theta}
\theta:W(\ol R^\flat)\ra \wh{\ol R}\ ,\quad \theta(a_0,a_1,\cdots) := \sum_{n=0}^\infty p^n a_n^{(n)}.
\end{equation}
over the classical map $W(\fo_{\Kbar_0}^\flat) \thra \fo_{\wh\Kbar_0}$. The kernel of $\theta$ (\ref{eqn:theta}) is a principal ideal generated by an explicit element $p-[p^\flat]$, where $p^\flat = (a_n)$ with $a_n =$``$p^{1/n}\bmod p$''. This claim can be obtained from Lemma~\ref{lem:AcrisPresentation} below, since $p-[p^\flat]$ also generates the kernel of $W(\fo_{\Kbar_0}^\flat) \thra \fo_{\wh\Kbar_0}$.

Let $\theta_R:R\otimes_W W(\ol R^\flat)\thra \wh{\ol R}$ be  the $R$-linear extension of $\theta$, and define $\Acris(R)$ to be the $p$-adic completion of the PD envelop of $R\otimes_W W(\ol R^\flat)$ with respect to $\ker(\theta_R)$. We let $\Fil^1\Acris(R)$ denote the kernel of $\Acris(R)\thra \wh{\ol R}$, which is an PD ideal. 
(Note that the notation is incompatible with $\Acris(R)$ for f-semiperfect ring $R$ introduced in \S\ref{subsec:LiftablePD}. In this section, $\Acris(R)$ as in \S\ref{subsec:LiftablePD} will not appear.) 

By choosing a lift of Frobenius $\sig:R\ra R$ (which exists by the formal smoothness of $R$), one defines a lift of Frobenius $\sig$ on $R\otimes_W W(\ol R^\flat)$, which extends to $\Acris(R)$. The universal continuous connection $d:R\ra \wh\Omega_{R/W}$ extends (by  usual divided power calculus) to a $p$-adically continuous connection  $\nabla:\Acris(R)\ra\Acris(R)\otimes_R \wh\Omega_{R/W}$. Finally $\Acris(R)$ has a natural $\pi_1^{\et}(\Spec R[\ivtd p],\bar\eta)$-action, which extends the natural action on $\ol R^\flat$ and fixes $R$.

\begin{lemsub}\label{lem:AcrisPresentation}
The natural map
\[ (R\wh\otimes_W W(\ol R^\flat))\wh\otimes_{W(\fo_{\Kbar_0}^\flat)}\Acris(W) \ra\Acris(R)\]
is an isomorphism, where $\wh\otimes$ denote the $p$-adically completed tensor product.
\end{lemsub}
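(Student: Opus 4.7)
The strategy is to exhibit both sides of the claimed isomorphism as the $p$-adic completion of a single PD envelope, and to compare these realisations via flat base change of PD envelopes. Write $A_{\inf}(R) := R\wh\otimes_W W(\ol R^\flat)$ and $\xi := p - [p^\flat] \in W(\fo_{\Kbar_0}^\flat)$. By Fontaine's classical computation, $\xi$ generates $\ker(\theta_W)$ inside $W(\fo_{\Kbar_0}^\flat)$ and is a non-zero-divisor there; this is the one non-trivial fact I would import at the outset. Both $\Acris(W)$ and $\Acris(R)$ are $p$-adic completions of PD envelopes: $\Acris(W)$ of $W(\fo_{\Kbar_0}^\flat)$ along $(\xi)$, and $\Acris(R)$ of $A_{\inf}(R)$ along $\ker(\theta_R)$.

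The first key step I would carry out is to identify $\ker(\theta_R) = \xi\cdot A_{\inf}(R)$ and to check that $\xi$ is a non-zero-divisor on $A_{\inf}(R)$. The natural map $A_{\inf}(R)/(\xi) \to \wh{\ol R}$ is surjective, so it suffices to verify injectivity. Reducing modulo $p$, one obtains the map
\[
 A_{\inf}(R)/(p,\xi) \;=\; (R/p)\wh\otimes_{\F_p}\bigl(\ol R^\flat/(p^\flat)\bigr) \;\longrightarrow\; \ol R/p,
\]
which one recognises as an isomorphism via the canonical identification $\ol R^\flat/(p^\flat)\cong \ol R/p$ coming from the inverse-limit presentation $\ol R^\flat = \varprojlim_{x\mapsto x^p}\ol R/p$ and the fact that the first-coordinate projection fits into a short exact sequence that becomes an isomorphism after quotienting by $p^\flat$. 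A standard $p$-adic Nakayama argument (using that $A_{\inf}(R)$ and $\wh{\ol R}$ are both $p$-adically complete and $p$-torsion-free) then upgrades the mod-$p$ isomorphism to the integral statement and simultaneously shows $\xi$ is a non-zero-divisor.

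The second ingredient is flatness of $W(\fo_{\Kbar_0}^\flat) \to A_{\inf}(R)$, which I would factor as $W(\fo_{\Kbar_0}^\flat) \to W(\ol R^\flat) \to A_{\inf}(R)$. The first arrow is flat because $\fo_{\Kbar_0}^\flat$ is a valuation ring of rank~$1$ and $\ol R^\flat$ is torsion-free over it (a consequence of $p^\flat$ being a non-zero-divisor in $\ol R^\flat$, which reduces to $\ol R$ being $p$-torsion-free); the second arrow is flat because $W \to R$ is formally smooth, so the completed base change $W(\ol R^\flat) \to R\wh\otimes_W W(\ol R^\flat)$ is $p$-adically flat. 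With $\xi$ a non-zero-divisor on both $W(\fo_{\Kbar_0}^\flat)$ and $A_{\inf}(R)$, flat base change for PD envelopes along a principal regular ideal yields
\[
 D_{A_{\inf}(R)}\bigl((\xi)\bigr) \;\cong\; A_{\inf}(R)\otimes_{W(\fo_{\Kbar_0}^\flat)} D_{W(\fo_{\Kbar_0}^\flat)}\bigl((\xi)\bigr),
\]
and $p$-adically completing gives the lemma.

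The step I expect to be the main obstacle is the identification $\ker(\theta_R) = (\xi)$, since one must work with the possibly non-reduced ring $\ol R/p$ and control the comparison $\ol R^\flat/(p^\flat) \cong \ol R/p$ in enough generality (our hypotheses on $R$ are more permissive than Brinon's); the remaining flatness inputs, while requiring some care because of the use of completed tensor products, follow standard patterns in the theory.
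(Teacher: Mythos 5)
Your strategy at the top level (flat base change for PD envelopes, with the essential flatness input being over $W(\fo_{\Kbar_0}^\flat)$) is the same as the paper's, but the execution has a genuine gap at the step you yourself flag as the main obstacle.

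You claim $\ker(\theta_R) = \xi\cdot A_{\inf}(R)$ and propose to verify it by showing that
\[
A_{\inf}(R)/(p,\xi) \;=\; (R/p)\,\wh\otimes_{\F_p}\,\bigl(\ol R^\flat/(p^\flat)\bigr)\;\longrightarrow\; \ol R/p
\]
is an isomorphism. Even granting the identification $\ol R^\flat/(p^\flat)\cong \ol R/p$, this map becomes the multiplication map $(R/p)\otimes_{\F_p}(\ol R/p)\to \ol R/p$, which is \emph{not} injective once $R\ne W$: its kernel contains the nonzero elements $u_i\otimes 1 - 1\otimes \bar u_i$. Lifting back, $\ker(\theta_R)$ contains the elements $v_i := u_i\otimes 1 - 1\otimes[u_i^\flat]$ (indeed $\theta_R(v_i)=u_i-(u_i^\flat)^\sharp=0$), and these do not lie in $\xi\cdot A_{\inf}(R)$ since their image in $A_{\inf}(R)/(\xi)\cong R\otimes_W\wh{\ol R}$ is the nonzero diagonal element $u_i\otimes 1 - 1\otimes u_i$. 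So the identification $\ker(\theta_R)=(\xi)$ is simply false, and with it the clean ``PD envelope along a principal regular ideal'' framing on which the rest of your argument rests.

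The paper's proof does not try to compute $\ker(\theta_R)$ at all. It reduces modulo $p^m$, observes that $\Acris(R)/p^m$ is by construction the PD envelope of $(\theta_R\bmod p^m)$, and invokes \cite[Proposition~3.21]{Berthelot-Ogus} (flat base change for PD envelopes) with the single flatness input that $R\otimes_W W_m(\ol R^\flat)$ is flat over $W_m(\fo_{\Kbar_0}^\flat)$. By the local flatness criterion this reduces to flatness of $\ol R^\flat$ over $\fo_{\Kbar_0}^\flat$, which is immediate since $\fo_{\Kbar_0}^\flat$ is a rank-one valuation ring and $\ol R^\flat$ is $\fo_{\Kbar_0}^\flat$-torsion-free. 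So where you try to make the ideal principal and then base-change, the paper passes directly through the flat base-change machinery without that reduction. If you want to salvage your approach, you would at least need to identify $\ker(\theta_R)$ correctly (it is generated by $\xi$ together with the $v_i$'s, a regular sequence) and then argue why the divided powers of the $v_i$'s do not enlarge the ring — which is a nontrivial extra step and not something your current write-up addresses.
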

\begin{proof}
We want to show that the above map is an isomorphism modulo~$p^m$ for each $m$. Since $\Acris(R)/p^m$ is the PD envelop of $(\theta_R\bmod{p^m})$ over $W/p^m$, it suffices to show that $R\otimes_W W_m(\ol R^\flat)$ is flat over $W_m(\fo_{\Kbar_0}^\flat)$ for each $m$ by \cite[Proposition~3.21]{Berthelot-Ogus}. By local flatness criterion, it suffices to show that $\ol R^ \flat$ is flat over $\fo_{\Kbar_0}^\flat$. (Note that $R$ is flat over $W$.) Since $\fo_{\Kbar_0}^\flat$ is a valuation ring (of rank~$1$), $\fo_{\Kbar_0}^\flat$-flatness is equivalent to torsion-freeness, but clearly $\ol R^\flat$ has no nonzero $\fo_{\Kbar_0}^\flat$-torsion.
\end{proof}
Lemma~\ref{lem:AcrisPresentation} allows us to deduce explicit descriptions of $\Acris(R)$ from $\Acris(W)$, which is well-known; \emph{cf.} \cite[\S5]{fontaine:Asterisque223ExpII}.

Since $\Acris(R)$ is an $\Acris(W)$-algebra,  the element $t\in\Acris(W)$, which is ``Fontaine's $p$-adic analogue of $2\pi i$'', can be viewed as an element of $\Acris(R)$. We define
\[\Bcris^+(R):=\Acris(R)[\ivtd p],\quad \Bcris(R):=\Bcris^+(R)[\ivtd t] = \Acris(R)[\ivtd t]. \]

The Frobenius endomorphism $\sig$ and the connection $\nabla$ extends to $\Bcris^+(R)$ and $\Bcris(R)$. For any $r\in \Z$, we define the filtration $\Fil^r\Bcris^+(R)$ (for $r\geqs0$) to be the ideal generated by the $r$th divided power ideal of $\Acris(R)$, and set
\[\Fil^r\Bcris(R):= \sum_{i\geqs-r} t^{-i}\Fil^{i+r}\Bcris^+(R).\]

\begin{lemsub}\label{lem:inv}
We have
\begin{align*}
\Zp &= \Acris(R)^{\sig=1;\nabla=0};\\
\Qp &= (\Fil^0\Bcris(R))^{\sig=1; \nabla=0}.
\end{align*}
\end{lemsub}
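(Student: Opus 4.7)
My approach is to deduce both identities from the classical Fontaine computations $\Acris(W)^{\sig=1}=\Zp$ and $(\Fil^0\Bcris(W))^{\sig=1}=\Qp$ via a Poincar\'e-lemma for the connection $\nabla$, keyed to the coordinates $u_i\in R^\times$. First I would use that each $u_i$ is a unit in $R$ and $\ol R$ is the filtered union of the normal $R$-subalgebras of $\BE$ that are finite \'etale after inverting $p$, so one can choose a compatible system of $p$-power roots $u_i^{1/p^n}\in\ol R$; this yields $u_i^\flat=(u_i^{1/p^n}\bmod p)_n\in\ol R^\flat$ and elements $\xi_i:=u_i-[u_i^\flat]\in\Fil^1\Acris(R)$. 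The $\Acris(W)$-linear partial derivations $\partial_i$ dual to $du_i$ satisfy $\partial_i\xi_j=\delta_{ij}$ and $\partial_i|_{W(\ol R^\flat)}=0$, since by construction $\nabla$ agrees with $d$ on $R$ and kills $W(\ol R^\flat)$.

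Next, using Lemma~\ref{lem:AcrisPresentation} together with the standard description of PD envelopes in the presence of regular parameters, I would identify $\Acris(R)$ with the $p$-adic completion of the PD-polynomial algebra $\widetilde A_0\langle \xi_1,\ldots,\xi_d\rangle_{PD}$, where $\widetilde A_0$ denotes the image of the $p$-adically completed tensor product $W(\ol R^\flat)\wh\otimes_{W(\fo_{\Kbar_0}^\flat)}\Acris(W)$ in $\Acris(R)$; the point is that $\ker\theta_R$ is generated by $\ker\theta$ together with the $\xi_i$, and that $\ker\theta$ already carries its PD structure inside $\Acris(W)\subset\widetilde A_0$. Granting this, any $f\in\Acris(R)$ admits a unique $p$-adically convergent PD-Taylor expansion $f=\sum_{\alpha\in\Znneg^d}c_\alpha\xi^{[\alpha]}$ with $c_\alpha\in\widetilde A_0$; iterating $\partial_i$ (which acts as shift on the multi-index) one sees that $\nabla f=0$ forces $c_\alpha=0$ for all $\alpha\neq 0$. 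Thus $\Acris(R)^{\nabla=0}=\widetilde A_0$.

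Finally, it suffices to compute $\widetilde A_0^{\sig=1}$. The Witt-vector Frobenius on $W(\ol R^\flat)$ is induced from the $p$-th power map on the perfect ring $\ol R^\flat$, whose fixed subring is $\Fp$; hence $W(\ol R^\flat)^{\sig=1}=\Zp$. Combined with the classical identity $\Acris(W)^{\sig=1}=\Zp$ (which uses that $\kappa$ is algebraically closed), a continuity argument on the $p$-adically completed tensor product over $W(\fo_{\Kbar_0}^\flat)$ gives $\widetilde A_0^{\sig=1}=\Zp$, establishing the first assertion. For the second, the same Poincar\'e-lemma argument applies verbatim to $\Bcris(R)=\Acris(R)[\ivtd t]$ equipped with the filtration $\Fil^\bullet$, reducing the computation to $(\Fil^0\Bcris(W))^{\sig=1}=\Qp$, which is Fontaine's classical fact.

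The main technical obstacle is the PD-polynomial description of $\Acris(R)$ in the second paragraph: the $u_i$ are only a basis of $\wh\Omega_{R/W}$ and do not present $R$ as a power series ring over $W$, so one cannot directly invoke the usual structure theorem for the PD envelope of a regular immersion. One must instead combine the faithful flatness in Lemma~\ref{lem:AcrisPresentation} with the factorization $\ker\theta_R=(\ker\theta,\xi_1,\ldots,\xi_d)$ to reduce to the case of $\Acris(W)$; this is the content of the relative crystalline period ring formalism developed in Brinon's work.
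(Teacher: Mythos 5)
Your Poincar\'e-lemma reduction (unique PD-Taylor expansion in the $\xi_i$, horizontality kills all higher coefficients) is sound \emph{granted} the structure theorem you quote, and it is indeed part of the route the paper has in mind, since the paper's own argument is ``repeat Brinon's Corollaire~6.2.19''. But the decisive step of your proof is a genuine gap: the computation $\widetilde A_0^{\sig=1}=\Zp$ (and, after inverting $t$, $(\Fil^0$ of its $\Bcris)^{\sig=1}=\Qp$) does not follow from ``a continuity argument on the $p$-adically completed tensor product''. Frobenius invariants do not commute with $\wh\otimes$ in any formal way; even in the classical case $\Acris(W)^{\sig=1}=\Zp$ is proved via Fontaine's explicit expansions, and for the relative ring $W(\ol R^\flat)\wh\otimes_{W(\fo_{\Kbar_0}^\flat)}\Acris(W)$ this fixed-point computation is exactly the content of Brinon's Cor.~6.2.19, proved there by the ``$t$-adic expansion'' description (Brinon Prop.~6.2.13, Fontaine \S5.2.7) that the paper's proof cites as the main ingredient. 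In addition, your input $W(\ol R^\flat)^{\sig=1}=\Zp$ is not free: the fixed points of $\ol R^\flat$ are the roots of $T^p-T$ in $\ol R/p$, so you need connectedness of $\Spec(\ol R/p)$, which requires an argument (and in any case only treats a small subring of $\widetilde A_0$). Finally, ``the same argument applies verbatim'' to $\Fil^0\Bcris(R)$ hides real work: after inverting $t$ you must know how the filtration interacts with the PD-polynomial decomposition and that expansions remain unique ($t$-torsion-freeness), which again is Brinon's structure theory rather than a formal consequence.

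A secondary point: the PD-polynomial description $\Acris(R)\cong\widetilde A_0\langle\xi_1,\dots,\xi_d\rangle^{\wedge}_{\mathrm{PD}}$ that you use is \emph{not} obtained from Lemma~\ref{lem:AcrisPresentation} plus ``standard PD envelopes for regular parameters''. Lemma~\ref{lem:AcrisPresentation} is a flat-base-change statement for the ideal extended from $\ker\theta\subset W(\fo_{\Kbar_0}^\flat)$, whereas $\ker\theta_R$ also contains the elements $\xi_i=u_i-[u_i^\flat]$, which are not in that extended ideal; so the structure you need (freeness of the divided powers of the $\xi_i$ over the horizontal subring, and the identification of that subring with $W(\ol R^\flat)\wh\otimes_{W(\fo_{\Kbar_0}^\flat)}\Acris(W)$) is Brinon's structure theorem and must be imported as such — indeed your description is in visible tension with the lemma as literally stated, so you cannot simultaneously lean on both. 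In short: your reduction of the $\nabla=0$ condition is correct and in the spirit of the paper's (Brinon's) proof, but the $\sig$-fixed-point computation on the horizontal subring — which is the actual content of the lemma — is asserted rather than proved; to close the gap you should either reproduce the $t$-adic expansion argument or cite Brinon's Cor.~6.2.19 for that step, as the paper does.
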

\begin{proof}[Idea of the proof]
One may repeat the proof of \cite[Corollaire~6.2.19]{Brinon:CrisDR}. Indeed, the main ingredient of the proof is an explicit description of $\Acris(R)$ in terms of ``$t$-adic expansions'' \cite[Proposition~6.2.13]{Brinon:CrisDR}, which can be deduced, via  Lemma~\ref{lem:AcrisPresentation}, from the classical result on $\Acris(W)$ in \cite[\S5.2.7]{fontaine:Asterisque223ExpII}.
\end{proof}

\subsection{Crystalline comparison  for $p$-divisible groups}
Now, let $X$ be a $p$-divisible group over $R$, and let $\bar\eta: R\ra \BE$ denote the geometric generic point, where $\BE$ is the separable closure of $\Frac R$ that contains $\ol R$. We can consider $T(X)$  as a lisse $\Zp$-sheaf on either $\Spec R[\ivtd p]$ or $(\Spf R)^\rig$. (This will not lead to any serious confusion as observed in Remark~\ref{rmk:FundGp}.)

Then we have
\begin{equation}\label{eqn:TateMod}
T(X)_{\bar\eta}\cong \Hom_{\wh{\ol R}}(\Qp/\Zp, X_{\wh{\ol R}}),
\end{equation}
which defines a natural map
\begin{equation}\label{eqn:IntCrysComp}
\rho_X:T(X)_{\bar\eta} \ra \Hom(\DD(X_{\wh{\ol R}})(\Acris(R)), \Acris(R))
\end{equation}
by sending $f\in T(X)_{\bar\eta}$ to the pull-back morphism $f^*:\DD(X_{\wh{\ol R}})\ra \triv=\DD(\Qp/\Zp)$ evaluated at $\Acris(R)$. First, note that $\DD(X)(\Acris(R))$ is naturally isomorphic to $\Acris(R)\otimes_R\DD(X)(R)$, and this identification respects the Frobenius endomorphism and the connections.
So for any $f\in T(X)_{\bar\eta}$, the morphism $\rho_X(f):\DD(X)(R)\ra \Acris(R)$ respects both the Frobenius action $F$ and the connections $\nabla$, and  $\rho_X(f)$ maps the Hodge filtration $\Fil^1_X\subset \DD(X)(R)$ into $\Fil^1\Acris(R)$.
Furthermore, $\rho_X$ is equivariant under the natural $\pi_1^{\et}(\Spec R[\ivtd p], \bar\eta)$-action.

To summarise, the following map can be obtained by $\Bcris(R)$-linearly extending  $\rho_X$ and dualising it:
\begin{equation}\label{eqn:RatCrysComp}
\Bcris(R)\otimes_R\DD(X)(R) \ra \Bcris(R)\otimes_{\Qp} V(X)_{\bar\eta}^*.
\end{equation}
Furthermore, this map respects the naturally defined Frobenius-actions,  connections,  filtrations, and  $\pi_1^{\et}(\Spec R[\ivtd p], \bar\eta)$-action. (Here, we declare that $V(X)_{\bar\eta}^*$ is horizontal, is fixed by the Frobenius action, and lies in the $0$th filtration, and $\DD(X)(R)$ carries the trivial  $\pi_1^{\et}(\Spec R[\ivtd p], \bar\eta)$-action.)

Let $x:R\ra\fo_K$ be a map where $\fo_K$ is a complete discrete valuation $W$-algebra with residue field $\kappa$, 
and choose a ``geometric point'' $\bar x: R\ra\fo_K \hra \fo_{\wh\Kbar}$. We can extend $\bar x$  to $\wh{\ol R}\ra \fo_{\wh\Kbar}$, also denoted by $\bar x$.

We can repeat the construction of (\ref{eqn:RatCrysComp}) for the $p$-divisible group $X_x$ over $\fo_K$, although $\fo_K$ is not necessarily formally smooth over $W$ (i.e., absolutely unramified). Recall that we have a natural isomorphism of isocrystals $\DD(X_x)[\ivtd p]\cong \DD((X_{x,\kappa})_{\fo_K/p})[\ivtd p]$ induced by
\begin{multline*}
\DD(X_x)\cong \DD(X_{x,\fo_K/p}) \xleftarrow{\Frob^r_{\fo_K/p}}\DD(\sig^{r*}(X_{x,\fo_K/p})) \\ \cong \DD((\sig^{r*}X_{x,\kappa})_{\fo_K/p})\xrightarrow{\Frob^r_{\kappa}}\DD((X_{x,\kappa})_{\fo_K/p}),
\end{multline*}
lifting the identity map on $\DD(X_{x,\kappa})[\ivtd p]$, where $r$ is chosen so that the maximal ideal of $\fo_K/p$ is killed by $p^r$th power, and $\Frob^r_{\fo_K/p}$ and $\Frob^r_{\kappa}$  respectively denote the $r$th iterated relative Frobenius morphisms for $X_{x,\fo_K/p}$ and $X_{x,\kappa}$. With this choice of $r$, we have $\sig^{r*}(X_{x,\fo_K/p}) \cong (\sig^{r*}X_{x,\kappa})_{\fo_K/p}$. 
The resulting isomorphism $\DD(X_x)[\ivtd p]\cong \DD((X_{x,\kappa})_{\fo_K/p})[\ivtd p]$ is independent of the choice of $r$.

From this we get a natural isomorphism:
\begin{equation}\label{eqn:Isotriv2}
\DD(X_x)(\Acris(W))[\ivtd p] \cong \Bcris^+(W)\otimes_{W}\DD(X_{x,\kappa})(W).
\end{equation}
We also have a natural $\Gal(\Kbar/K)$-isomorphism
\begin{equation}\label{eqn:TateMod2}
T(X_x)_{\bar x} \cong \Hom_{\wh\Kbar}(\Qp/\Zp,X_{\bar x}).
\end{equation}
Now, by repeating the construction of the map (\ref{eqn:RatCrysComp}) we obtain
\begin{equation}\label{eqn:RatCrysComp2}
\Bcris(W)\otimes_{W}\DD(X_{x,\kappa})(W) \ra \Bcris(W)\otimes_{\Qp} V(X_x)_{\bar x}^*,
\end{equation}
which respects the naturally defined Frobenius action,  connection,  filtration, and  $\Gal(\Kbar/K)$-action.
\begin{thmsub}\label{thm:Ccris}
The maps (\ref{eqn:RatCrysComp}) and (\ref{eqn:RatCrysComp2}) are isomorphisms.
\end{thmsub}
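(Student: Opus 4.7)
The plan is to deduce (\ref{eqn:RatCrysComp2}) from the classical crystalline comparison theorem, and then to bootstrap (\ref{eqn:RatCrysComp}) from the pointwise information it provides. Since $\fo_K$ is a complete discrete valuation ring with perfect residue field $\kappa$, the identifications (\ref{eqn:Isotriv2}) and (\ref{eqn:TateMod2}) identify (\ref{eqn:RatCrysComp2}) with the classical $\Bcris$-linear crystalline comparison morphism for the $p$-divisible group $X_x$ over $\fo_K$, expressed through the Dieudonn\'e module of its special fibre. That this is an isomorphism is the Fontaine--Messing theorem for $p$-divisible groups; one simply invokes it.

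For the relative case (\ref{eqn:RatCrysComp}), I would first observe that both sides are free $\Bcris(R)$-modules of the same finite rank $h$ equal to the height of $X$: the source because $\DD(X)(R)$ is free of rank $h$ over the connected $W$-algebra $R$, and the target because $T(X)_{\bar\eta}$ is free of rank $h$ over $\Zp$ via (\ref{eqn:TateMod}) together with the fact that $X_{\wh{\ol R}}$ has height $h$. It therefore suffices to show that the map has unit determinant in $\Bcris(R)$; equivalently, that it is injective with torsion-free cokernel.

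I would verify these two assertions by reduction to the absolute case just established. For each closed point $x\colon R\to \fo_K$ of $\Spec R[1/p]$ with $\fo_K$ a finite flat $W$-algebra, choose a compatible geometric lift $\bar x\colon \wh{\ol R}\to \fo_{\wh\Kbar}$; this produces a natural $W$-algebra map $\Acris(R)\to \Acris(W)$ that respects the Frobenius, the filtration, the connection (trivial on the target), and the induced inclusion $\Gal(\Kbar/K)\hookrightarrow \pi_1^{\et}(\Spec R[1/p],\bar\eta)$. By functoriality of the constructions, the pullback of (\ref{eqn:RatCrysComp}) along $\bar x$ is identified with (\ref{eqn:RatCrysComp2}) for $X_x$, hence is an isomorphism. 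Letting $\bar x$ range over sufficiently many such points then forces both injectivity and unit-determinant for (\ref{eqn:RatCrysComp}), whence the map is itself an isomorphism.

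The main obstacle will be the last step: one must check that the collection of specializations $\Acris(R)\to \Acris(W)$ indexed by closed geometric points is rich enough to detect non-units in $\Bcris(R)$. This is a mild technical point which reduces --- via Lemma~\ref{lem:AcrisPresentation} and the construction of $\Acris(R)$ as a $p$-adically completed PD envelope --- to faithfulness of the corresponding specializations at the level of $\ol R^\flat$ and $\wh{\ol R}$, where it is manageable. A cleaner alternative, which I would use if the bookkeeping becomes heavy, is to invoke Faltings' relative crystalline comparison theorem for $p$-divisible groups over formally smooth bases directly; since our $R$ is formally smooth over $W$, his theorem applies and yields (\ref{eqn:RatCrysComp}) at once.
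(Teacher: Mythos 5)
Your treatment of (\ref{eqn:RatCrysComp2}) is fine in spirit: the paper likewise quotes the absolute comparison theorem for $p$-divisible groups over a complete discrete valuation ring (Theorem~7 in \cite[\S6]{Faltings:IntegralCrysCohoVeryRamBase}; note that Fontaine--Messing is not quite the right citation here, since $\fo_K$ is allowed to be ramified). The problem is your main route to (\ref{eqn:RatCrysComp}). You reduce it to showing that the determinant of a map of rank-$h$ projective $\Bcris(R)$-modules is a unit, and you propose to detect this by pulling back along the specialisations $\Acris(R)\ra\Acris(W)$ attached to classical points, where the map becomes (\ref{eqn:RatCrysComp2}). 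The step you defer as a ``mild technical point'' --- that an element of $\Bcris(R)$ whose image under every such specialisation is a unit must itself be a unit --- is in fact the crux, and nothing in Lemma~\ref{lem:AcrisPresentation} or the construction of $\Acris(R)$ as a completed PD envelope delivers it. The ring $\Bcris(R)$ is huge, non-noetherian and full of zero divisors; its units (and even the joint kernel of the classical-point specialisations) are not controlled by the classical points of $\Spec R[1/p]$, so there is no Jacobson-type density argument available. In effect this pointwise bootstrap assumes a statement that is at least as hard as the theorem, and no known proof of relative comparison results proceeds this way. Your fallback --- citing ``Faltings' relative crystalline comparison theorem'' outright --- is also not available in the form you want: the Faltings theorem invoked in the paper is the absolute one, and the relative statement has to be proved, not quoted (the general version is \cite[Theorem~5.3]{Kim:ClassifFSm}).

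For contrast, the paper's argument works integrally rather than with determinants: one repeats Faltings' proof in the relative setting to show that the $\Acris(R)$-linear map
\[
\Acris(R)\otimes_R\DD(X)(R) \ra \Acris(R)\otimes_{\Qp} V^*(X)_{\bar\eta}
\]
induced by $\rho_X$ is injective with cokernel killed by $t$; since $t$ is invertible in $\Bcris(R)$, the isomorphism (\ref{eqn:RatCrysComp}) follows. The key computation is done first for $X=\mu_{p^\infty}$ (via the PD completion of $\Acris(R)$, or via the Artin--Hasse exponential as in \cite[\S4.2]{ScholzeWeinstein:RZ}), and the general case is then deduced by a Cartier duality argument. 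If you want to salvage your write-up, replace the specialisation/determinant step by this integral argument (or by an explicit reference to a proved relative statement such as \cite[Theorem~5.3]{Kim:ClassifFSm}); the compatibility with classical points that you set up is still useful, but in the paper it appears afterwards (Lemma~\ref{lem:FunctorialityCrysComp}) as a consequence, not as the engine of the proof.
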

More general version of this theorem is proved in \cite[Theorem~5.3]{Kim:ClassifFSm}.
\begin{proof}[Idea of the proof]
By Theorem~7 in \cite[\S6]{Faltings:IntegralCrysCohoVeryRamBase}, it follows that (\ref{eqn:RatCrysComp2}) is an isomorphism.
To prove that  (\ref{eqn:RatCrysComp}) is an isomorphism, we repeat the proof of \cite[Theorem~7]{Faltings:IntegralCrysCohoVeryRamBase} to show that the following map
\[
\Acris(R)\otimes_R\DD(X)(R) \ra \Acris(R)\otimes_{\Qp} V(X)_{\bar\eta}^*,
\]
induced by $\rho_X$ (\ref{eqn:IntCrysComp}), is injective with cokernel killed by $t$. One first handles the case when $X=\mu_{p^\infty}$ either by considering the PD completion of $\Acris(R)$ as originally done by Faltings\footnote{See the footnote in the proof of \cite[Theorem~6.3]{Kim:ClassifFSm} for slightly more details.} or by some explicit computation with the Artin-Hasse exponential map as in \cite[\S4.2]{ScholzeWeinstein:RZ}. Now one deduces the general case from this by some  Cartier duality argument, as explained in \cite[\S6]{Faltings:IntegralCrysCohoVeryRamBase}.
\end{proof}

Let us now show that the (relative) crystalline comparison isomorphism (\ref{eqn:RatCrysComp}) interpolates the crystalline comparison isomorphisms at classical points (\ref{eqn:RatCrysComp2}). For $x$ as before, we set $\bar x: R\xra x \fo_K \hra \fo_{\wh\Kbar}$, and choose an extension $\bar x:\wh{\ol R}\ra\wh\Kbar$. (Indeed, we can lift the geometric point $R[\ivtd p]\ra \Kbar$ to $\ol R[\ivtd p] \ra \Kbar$, and $\ol R$ maps to $\fo_{\Kbar}$. We then take the $p$-adic completion.)

By (\ref{eqn:TateMod}) and (\ref{eqn:TateMod2}), we get an isomorphism
\begin{equation}\label{eqn:TateMod3}
 T(X)_{\bar\eta} \riso T(X)_{\bar x} \cong T(X_x)_{\bar x},
 \end{equation}
sending $\Qp/\Zp\ra X_{\wh{\ol R}}$ to its fibre at $\bar x:\wh {\ol R} \ra \fo_{\wh\Kbar}$.

Note  that $\bar x$ induces a map $\bar x^\flat: \ol R^\flat \ra \fo_{\wh\Kbar}^\flat$. Choose $x_0:R \ra W$ such that $x_0$ and $x$ induce the same $\kappa$-point of $R$ (which is possible as $R$ is formally smooth over $W$).
Then the map $x_0\otimes W(\bar x^\flat): R\otimes_W W(\ol R^\flat) \ra W( \fo_{\wh\Kbar}^\flat)$ extends to $\Acris(R)\ra\Acris(W)$, respecting all the extra structure possibly except $\sig$; indeed, $x_0:R\ra W$ may not respect $\sig$.

\begin{lemsub}\label{lem:FunctorialityCrysComp}
The following diagram commutes
\[\xymatrix{
\Bcris(R)\otimes_R \DD(X)(R) \ar[r]^-{\sim}_-{\text{\eqref{eqn:RatCrysComp}}} \ar[d] &
\Bcris(R)\otimes_{\Qp}V(X)_{\bar\eta}^* \ar[d]^-{\text{\eqref{eqn:TateMod3}}} \\
\Bcris(W)\otimes_{W} \DD(X_{x,\kappa})(W) \ar[r]^-{\sim}_-{\text{\eqref{eqn:RatCrysComp2}}} &
\Bcris(W)\otimes_{\Qp} V(X)_{\bar x}^*
},\]
where the left vertical arrow is induced from
\begin{multline*}
\Bcris^+(W)\otimes_{\Acris(R)}\DD(X)(\Acris(R)) \\ \cong \DD(X_x)(\Acris(W))[1/p] \cong \Bcris^+(W)\otimes_{W}\DD(X_{x,\kappa})(W).
\end{multline*}
Here, the second isomorphism is (\ref{eqn:Isotriv2}).
\end{lemsub}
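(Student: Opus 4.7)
The plan is to reduce the commutativity to the naturality of the integral crystalline comparison map $\rho_X$ (\ref{eqn:IntCrysComp}) under pullback along $\bar x$. Both horizontal isomorphisms arise by $\Bcris$-linearly extending and dualising their respective integral maps $\rho_X$ and $\rho_{X_x}$; both vertical arrows likewise come from base change along the $\Acris$-algebra map $x_0\otimes W(\bar x^\flat):\Acris(R)\to\Acris(W)$ and the specialisation map $\bar x^*:T(X)_{\bar\eta}\to T(X_x)_{\bar x}$. So it suffices to show that for each $f\in T(X)_{\bar\eta}$, the image of $\rho_X(f)\in\Hom(\DD(X)(\Acris(R)),\Acris(R))$ under base change along $\Acris(R)\to\Acris(W)$ coincides with $\rho_{X_x}(\bar x^*f)$ (under the identification of the source with $\DD(X_x)(\Acris(W))[\tfrac1p]\cong\Bcris^+(W)\otimes_W\DD(X_{x,\kappa})(W)$).

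The first step is to trace through the definitions on the integral level. Viewing $f$ as a morphism $\Qp/\Zp\to X_{\wh{\ol R}}$ via (\ref{eqn:TateMod}), the image of $f$ under (\ref{eqn:TateMod3}) is by construction the pullback $\bar x^*f:\Qp/\Zp\to X_{x,\fo_{\wh\Kbar}}$ along the map $\bar x:\wh{\ol R}\to\fo_{\wh\Kbar}$. Applying the contravariant Dieudonné crystal (which is functorial in morphisms of $p$-divisible groups and in base change of the ground ring) gives a commutative square of crystal morphisms in which $f^*:\DD(X_{\wh{\ol R}})\to\triv$ pulls back to $(\bar x^*f)^*:\DD(X_{x,\fo_{\wh\Kbar}})\to\triv$. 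Evaluating on the compatible PD thickenings $\Acris(R)\thra\wh{\ol R}$ and $\Acris(W)\thra\fo_{\wh\Kbar}$—whose compatibility is exactly encoded by the PD-morphism $x_0\otimes W(\bar x^\flat)$—produces the required commutativity for $\rho_X$ and $\rho_{X_x}$ on the nose. After $\Bcris$-linearly extending and passing to duals, this yields commutativity of the diagram in the statement.

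The main subtlety—and the principal obstacle—is the identification (\ref{eqn:Isotriv2}) implicit in the lower-left corner: $\DD(X_x)(\Acris(W))[\tfrac1p]\cong\Bcris^+(W)\otimes_W\DD(X_{x,\kappa})(W)$. The integral map $\rho_{X_x}$ is naturally defined using $\DD(X_x)$, whereas (\ref{eqn:RatCrysComp2}) uses $\DD(X_{x,\kappa})$. One must check that the isogeny category identification via iterated relative Frobenius (used to define the isocrystal isomorphism) is the one implicit in both the left vertical arrow of the diagram and in the definition of (\ref{eqn:RatCrysComp2}). Since both identifications are obtained from the same canonical isomorphism $\DD(X_x)[\tfrac1p]\cong\DD((X_{x,\kappa})_{\fo_K/p})[\tfrac1p]$ lifting the identity on $\DD(X_{x,\kappa})[\tfrac1p]$, and are moreover compatible with Frobenius, connection, and Galois action on both sides, this is ultimately a formal consequence of the rigidity of isocrystals under nilpotent thickenings (\emph{cf.}~(\ref{eqn:DrinfeldRigidity}) and \cite[Corollary~5.1.2]{dejong:crysdieubyformalrigid}). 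Once this identification is pinned down, the diagram commutes by the naturality argument of the preceding paragraph, and no further computation is needed.
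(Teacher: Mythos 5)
Your proposal is correct and is essentially the paper's own argument: the paper's proof of this lemma is literally ``Clear from the construction,'' and your unwinding — commutativity of the integral maps $\rho_X$, $\rho_{X_x}$ under pullback of $f:\Qp/\Zp\to X_{\wh{\ol R}}$ along $\bar x$ together with functoriality of crystal evaluation along $\Acris(R)\to\Acris(W)$, then $\Bcris$-linearisation and dualisation — is precisely the construction-level naturality the author has in mind. Your worry about (\ref{eqn:Isotriv2}) is resolved even more simply than you suggest: the left vertical arrow is \emph{defined} using (\ref{eqn:Isotriv2}), and (\ref{eqn:RatCrysComp2}) is built from the very same identification, so no rigidity argument is needed.
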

\begin{proof}
Clear from the construction.
\end{proof}

We extend the isomorphism  (\ref{eqn:RatCrysComp}) to the following isomorphism:
\begin{equation}\label{eqn:TensorCrysComp}
\Bcris(R)\otimes_R\DD(X)(R)^\otimes \riso  \Bcris(R)\otimes_{\Qp} (V(X)_{\bar\eta}^*)^\otimes = \Bcris(R)\otimes_{\Qp}V(X)^\otimes_{\bar\eta},
\end{equation}
respecting all the extra structures. Now, given $t:\triv\ra\DD(X)^\otimes$ as in Theorem~\ref{thm:EtTate}, the element
\[t(\Acris(R)) = 1\otimes t(R) \in \Acris(R)\otimes_R\DD(X)(R)^\otimes \subset \Bcris(R)\otimes_R\DD(X)(R)^\otimes\]
is fixed by the Frobenius and $\pi_1^{\et}(\Spec R[\ivtd p],\bar\eta)$-action, is killed by the connection, and lies in the $0$th filtration. By the isomorphism (\ref{eqn:TensorCrysComp}) and Lemma~\ref{lem:inv}, the above element $1\otimes t(R)$  corresponds to an element $t_{\et,\bar\eta}\in V(X)^\otimes_{\bar\eta}$ fixed by the $\pi_1^{\et}(\Spec R[\ivtd p],\bar\eta)$-action. Therefore, by the usual dictionary there exists a unique map of lisse $\Qp$-sheaves
\begin{equation}\label{eqn:EtTate}
t_{\et}:\triv\ra V(X)^\otimes
\end{equation}
such that it induces  the map $1\mapsto t_{\et,\bar\eta}$ on the fibre at $\bar\eta$. By Lemma~\ref{lem:FunctorialityCrysComp}, $t_{\et,\bar x}$ interpolates the \'etale Tate tensors associated to the fibre of $t$ at classical points.

It remains  to show that  $t_{\et}$ is ``integral''; i.e., we have $t_{\et}:\triv \ra T(X)^\otimes$. For this, it suffices to show that $t_{\et,\bar x}\in T(X)_{\bar x}^\otimes$ for some geometric point $\bar x$ (since  $R$ is assumed to be a domain). By formal smoothness, we may choose $\bar x$ that lies over $x:R\ra W$. In the next section, we verify the integrality claim using the theory of Kisin modules.\footnote{The integral refinement of (\ref{eqn:TensorCrysComp}) a la Fontaine-Laffaille does not work in general unless $t$ factors through a factor of $\DD(X_x)^\otimes$ with the gradings concentrated in $[a, a+p-2]$.}

\subsection{Review of Kisin theory}\label{subsec:KisinThy}
For simplicity\footnote{The rest of the discussion can be modified for $\fo_K$ that are finitely ramified.}, we assume that $\fo_K = W$.
We follow the treatment of  \S1.2 and \S1.4 in \cite{Kisin:IntModelAbType}. Let $\Sig:=W[[u]] $ and define $\sig:\Sig\ra\Sig$ by extending the Witt vectors Frobenius by $\sig(u)= u^p$.

\begin{defnsub}
By \emph{Kisin module} we mean a finitely generated free $\Sig$-module $\gM$ equipped with a $\sig$-linear map $\vphi:\gM\ra\gM[\ivtd{p-u}]$ whose linearisation induces an isomorphism $1\otimes \vphi: \sig^*\gM[\ivtd{p-u}] \ra\gM[\ivtd{p-u}]$.

For $i\in\Z$ we  define $\Fil^i(\sig^*\gM[\ivtd p]):=(1\otimes \vphi)\iv((p-u)^i\gM[\ivtd p])$. There is a good notion of subquotients, direct sums, $\otimes$-products, and duals.
\end{defnsub}

We choose $p^\flat\in \fo_{\ol K_0}^\flat$ and define $\Sig\ra W(\fo_{\ol K_0}^\flat) (\subset  \Acris(W))$ by sending $u$ to $[p^\flat]$.
The following can be extracted from the main results of  \cite{kisin:fcrys}:
\begin{thmsub}\label{thm:KisinMod}
There exists a covariant rank-preserving fully faithful exact functor $\gM:L\mapsto \gM(L)$ from the category of $\Gal(\ol K_0/K_0)$-stable $\Zp$-lattices of some crystalline representations to the category of Kisin modules, respecting $\otimes$-products and duals. Furthermore, the functor $\gM$ satisfies the following additional properties:
\begin{enumerate}
\item\label{thm:KisinMod:Dcris}
We have natural $\Gal(\ol K_0/K_0)$-equivariant isomorphisms
\begin{align*}
L & \cong  \Fil^0\big(\Bcris(W)\otimes_{\sig,W}\gM(L)/u\gM(L)\big)^{\varphi=1}\\
&\cong\Fil^0\big(\Bcris(W)\otimes_{\sig,\Sig}\gM(L)\big)^{\varphi=1},\notag
\end{align*}
which identifies $\Dcris(L[\ivtd p])\cong \sig^*(\gM(L)/u\gM(L))[\ivtd p]$.
\item\label{thm:KisinMod:DdR}
We have a natural filtered isomorphism
\[
D_{\dR}(L[1/p]) \cong (\sig^*\gM(L)[\ivtd p])/(u-p),
\]
where  on the target we take the image filtration of $\Fil^\bullet(\sig^*\gM(L)[\ivtd p])$.
\item\label{thm:KisinMod:Vcris}
For two $\Zp$-lattice crystalline $\Gal(\ol K_0/K_0)$-representations $L$ and $L'$, let $\mathfrak{f}:\gM(L)\ra\gM(L')$ be an $\vphi$-equivariant map. Then there exists at most one $\Gal(\ol K_0/K_0)$-equivariant map $f:L\ra L'$ with $\gM(f) = \mathfrak{f}$, and such $f$ exists if and only if the map
\[
\Bcris(W)\otimes_{\sig,\Sig}\gM(L) \xra{1\otimes\mathfrak{f}} \Bcris(W)\otimes_{\sig,\Sig}\gM(L')
\]
is $\Gal(\ol K_0/K_0)$-equivariant, in which case $f[\ivtd p]$ is obtained from the $\vphi$-invariance of the $0$th filtration part of the isomorphism above.
\end{enumerate}
\end{thmsub}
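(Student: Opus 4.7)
The plan is to deduce this from Kisin's classification of crystalline representations \cite{kisin:fcrys}, since no new content beyond repackaging is needed. First I would recall the construction of $\gM$: restrict the $G_K$-action on $L$ to $G_{K_\infty}$ where $K_\infty = \bigcup_n K_0(p^{1/p^n})$, and apply the classical Fontaine equivalence between continuous $\Zp$-representations of $G_{K_\infty}$ and étale $\vphi$-modules over $\sO_\mathcal{E}$ (the $p$-adic completion of $\Sig[1/u]$) to obtain an étale $\vphi$-module $M(L)$. Kisin's central theorem is that when $L[1/p]$ is crystalline (with non-negative Hodge-Tate weights; the general case reduces to this by Tate twists that correspond to inverting $p-u$ on $\vphi$) there exists a \emph{unique} $\Sig$-lattice $\gM(L)\subset M(L)$ such that $M(L) = \sO_\mathcal{E}\otimes_\Sig \gM(L)$ and such that the linearised Frobenius $1\otimes \vphi$ has image containing $(p-u)^h\gM(L)$ for some $h$. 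The association $L\mapsto \gM(L)$ is then functorial; exactness, rank-preservation, and compatibility with $\otimes$-products and duals are formal from the uniqueness of $\gM(L)$ together with the corresponding properties of $M(L)$.

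Next I would verify the comparison isomorphisms (\ref{thm:KisinMod:Dcris}) and (\ref{thm:KisinMod:DdR}). For (\ref{thm:KisinMod:Dcris}), I would use the embedding $\Sig \hookrightarrow W(\fo_{\ol K_0}^\flat)\hookrightarrow \Acris(W)$ determined by $u \mapsto [p^\flat]$, together with Kisin's construction of a $\vphi$- and $G_K$-equivariant map $\gM(L) \ra \Acris(W)\otimes_{\Zp}L$, which after inverting $p$ becomes an isomorphism after applying $\Bcris(W)\otimes_{\sig,\Sig}(-)$; the second isomorphism with $\gM(L)/u\gM(L)$ follows from the fact that $[p^\flat]\in\Fil^1\Acris(W)$, so reduction mod $u$ agrees with reduction mod $\Fil^1$ up to $\Bcris(W)$-linear isomorphism. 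For (\ref{thm:KisinMod:DdR}), I would use the map $\Sig\ra W[[u-p]]\hookrightarrow \BdR^+$ sending $u\mapsto u$, which yields a filtered isomorphism $\sig^*\gM(L)/(u-p)[1/p]\cong \DdR(L[1/p])$ once one shows that the filtration induced from $\Fil^\bullet(\sig^*\gM(L)[1/p])$ matches the Hodge filtration under this identification (this is where the definition $\Fil^i(\sig^*\gM) = (1\otimes\vphi)^{-1}((p-u)^i\gM)$ is forced).

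Finally, for (\ref{thm:KisinMod:Vcris}), the uniqueness of the Galois lift $f$ follows from full faithfulness of $\gM$ combined with the fact that $L$ embeds into $\Bcris(W)\otimes_{\sig,\Sig}\gM(L)$ via item (\ref{thm:KisinMod:Dcris}). For existence, given $\mathfrak{f}$ such that the $\Bcris$-linearisation is $G_K$-equivariant, one reads off a map $L[1/p]\ra L'[1/p]$ by taking $G_K$-invariants on the image of $L$ in $\Bcris(W)\otimes L'$; the $\vphi$- and Galois-equivariance together with the fact that a crystalline representation is determined by its image in $\Bcris\otimes V$ forces integrality of $f$ on $L$ because $\gM$ is fully faithful on integral lattices.

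The one technical obstacle worth flagging is the uniqueness and $p$-integrality of the $\Sig$-lattice $\gM(L)$ inside $M(L)$: this is the heart of \cite{kisin:fcrys} and relies on a careful analysis using Kisin's ring $\sO$ of bounded analytic functions on the open unit disk and the classification of $\vphi$-modules over $\sO$ of finite $E$-height. Once this uniqueness is granted, all remaining assertions are formal consequences and can be verified on the level of period rings without further input.
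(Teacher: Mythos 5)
Your proposal is correct and takes essentially the same route as the paper: the paper's own proof consists of citing \cite[Proposition~2.1.5]{kisin:fcrys} (see also \cite[Theorem~1.2.1]{Kisin:IntModelAbType}), and your argument simply unpacks the same ingredients from those references — Kisin's construction and uniqueness of the finite-height $\Sig$-lattice $\gM(L)$, the comparison over $\Acris(W)$ and $\BdR^+$ via $u\mapsto[p^\flat]$ and $u\mapsto p$, and full faithfulness for item (3) — treating everything beyond Kisin's theorem as formal, exactly as the paper does.
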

\begin{proof}
The theorem can be read off from the statement and proof of \cite[Propositions~2.1.5,~2.1.12]{kisin:fcrys}.
\end{proof}

We continue to assume that $\fo_K = W$, so we have $\Dcris(L[\ivtd p]) = \DdR(L[\ivtd p])$ as $K$-modules.
We clearly have that  $\gM(\triv) = (\Sig,\sig)$ (where $\triv$ denotes $\Zp$ equipped with the trivial $\Gal(\ol K_0/K_0)$-action). If there is no risk of confusion, we let $\triv$ also denote the Kisin module $(\Sig,\sig)$.
\begin{corsub}\label{cor:KisinTateTensor}
For $\gM:=\gM(L)$, the isomorphisms in Theorem~\ref{thm:KisinMod}(\ref{thm:KisinMod:Dcris}) induce
\[ L[1/p]^{\Gal(\ol K_0/K_0)} \cong \sig^*\gM [1/p]^{\vphi=1} \cong \Fil^0\DdR(L[1/p])\cap\Dcris(L[1/p])^{\vphi=1},\]
which restrict to
\[L^{\Gal(\ol K_0/K_0)} \cong (\sig^*\gM)^{\vphi=1}\cong  \Fil^0\DdR(L[1/p])\cap(W\otimes_{\sig,\Sig}\gM)^{\vphi=1}.\]
\end{corsub}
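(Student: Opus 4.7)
The plan is to first establish the rational isomorphisms
\[L[\ivtd p]^{\Gal(\ol K_0/K_0)} \cong \sig^*\gM[\ivtd p]^{\vphi=1} \cong \Fil^0\DdR(L[\ivtd p])\cap\Dcris(L[\ivtd p])^{\vphi=1}\]
and then deduce the integral version by intersecting with the appropriate lattices on each side.

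For the right-hand rational isomorphism, I will construct the map via the reduction maps $\sig^*\gM[\ivtd p] \thra \sig^*\gM/u[\ivtd p] = \Dcris(L[\ivtd p])$ (from Theorem~\ref{thm:KisinMod}(\ref{thm:KisinMod:Dcris})) and $\sig^*\gM[\ivtd p] \thra \sig^*\gM/(u-p)[\ivtd p] = \DdR(L[\ivtd p])$ (from Theorem~\ref{thm:KisinMod}(\ref{thm:KisinMod:DdR})). Well-definedness is the observation that any $\vphi$-fixed $x \in \sig^*\gM[\ivtd p]$ satisfies $(1\otimes\vphi)(x) = x$, so $x \in (1\otimes\vphi)\iv(\gM[\ivtd p]) = \Fil^0(\sig^*\gM[\ivtd p])$, whose image modulo $(u-p)$ lies in $\Fil^0\DdR$. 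Injectivity is a standard iteration argument: any $\vphi$-fixed $x\in u\cdot\sig^*\gM[\ivtd p]$ lies in $u^{p^n}\sig^*\gM[\ivtd p]$ for all $n$ (applying $\vphi^n$ and using $\vphi(u) = u^p$), forcing $x = 0$. Surjectivity will follow from the left-hand isomorphism.

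For the left-hand isomorphism $L[\ivtd p]^{\Gal(\ol K_0/K_0)} \cong \sig^*\gM[\ivtd p]^{\vphi=1}$, I will invoke Theorem~\ref{thm:KisinMod}(\ref{thm:KisinMod:Vcris}) with $L_1 = \Zp$ (trivial representation, so $\gM(\Zp) = \Sig$ with $\vphi = \sig$) and $L_2 = L$. A $\vphi$-equivariant map $\mathfrak{f}\colon \Sig \to \gM(L)$ is determined by $m := \mathfrak{f}(1) \in \gM(L)$ satisfying $\vphi(m) = m$, which corresponds to the element $1\otimes m \in \sig^*\gM(L)^{\vphi=1}$ (and analogously after inverting $p$). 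Theorem~\ref{thm:KisinMod}(\ref{thm:KisinMod:Vcris}) then produces a unique $\Gal(\ol K_0/K_0)$-equivariant map $f\colon \Qp \to L[\ivtd p]$ with $\gM(f) = \mathfrak{f}$. The $\Gal$-equivariance hypothesis of Theorem~\ref{thm:KisinMod}(\ref{thm:KisinMod:Vcris}) will be verified by noting that the $\Bcris(W)$-base change of $\mathfrak{f}$ sends $1$ to $1\otimes m$, which is $\vphi$-fixed and lies in $\Fil^0$; hence via the crystalline comparison it corresponds to an element of $(\Fil^0\Bcris(W))^{\vphi=1}\otimes_{\Qp} L[\ivtd p] = \Qp \otimes L[\ivtd p] = L[\ivtd p]$ by (the absolute case of) Lemma~\ref{lem:inv}, so it is automatically $\Gal$-fixed.

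The integral statement will then follow by restriction: $\sig^*\gM[\ivtd p]^{\vphi=1} \cap \sig^*\gM = \sig^*\gM^{\vphi=1} = (W\otimes_{\sig,\Sig}\gM)^{\vphi=1}$ after reduction modulo $u$ (noting $W\otimes_{\sig,\Sig}\gM = \sig^*\gM/u$ and using $p$-torsion freeness), while $L^{\Gal(\ol K_0/K_0)} = L[\ivtd p]^{\Gal(\ol K_0/K_0)} \cap L$; the rank-preserving full faithfulness of the functor $\gM$ from Theorem~\ref{thm:KisinMod} ensures that these integral lattices match up under the established rational isomorphisms. The main technical obstacle will be pinning down the various compatibilities between the Frobenius structures on $\sig^*\gM[\ivtd p]$, on its quotients by $u$ and by $(u-p)$, and on the $\Bcris(W)$-base change used in the crystalline comparison, but these compatibilities are built into the construction of $\gM$ recalled in Theorem~\ref{thm:KisinMod}.
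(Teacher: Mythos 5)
The step that breaks is your verification of the Galois-equivariance hypothesis of Theorem~\ref{thm:KisinMod}(\ref{thm:KisinMod:Vcris}): from the fact that the image $v$ of $1\otimes m$ under the comparison lies in $(\Fil^0\Bcris(W))^{\vphi=1}\otimes_{\Qp}L[\ivtd p]=\Qp\otimes_{\Qp}L[\ivtd p]=L[\ivtd p]$ you conclude that $v$ is ``automatically $\Gal$-fixed''. It is not: inside $\Bcris(W)\otimes_{\Qp}L[\ivtd p]$ the group $\Gal(\ol K_0/K_0)$ acts on both tensor factors, so on the subspace $\Qp\otimes L[\ivtd p]=L[\ivtd p]$ it acts through its (generally nontrivial) action on $L[\ivtd p]$; membership in that subspace gives no invariance. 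What the comparison gives for free is only invariance under $\Gal(\ol K_0/K_\infty)$, where $K_\infty=\bigcup_n K_0(p^{1/p^n})$, because the embedding $\Sig\ra W(\fo_{\ol K_0}^\flat)\subset\Acris(W)$, $u\mapsto[p^\flat]$, underlying the comparison is equivariant only for that subgroup. Upgrading $\Gal(\ol K_0/K_\infty)$-invariance to $\Gal(\ol K_0/K_0)$-invariance of $\vphi$-invariant, $\Fil^0$ elements is precisely the nontrivial content of the corollary (equivalently, of the full-faithfulness criterion packaged in Theorem~\ref{thm:KisinMod}(\ref{thm:KisinMod:Vcris})), so as written your argument assumes the crux. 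The paper avoids checking the criterion elementwise in this way: it first obtains the rational isomorphisms by taking $\Gal$-, $\vphi$- and $\Fil^0$-invariants of the comparison isomorphisms of Theorem~\ref{thm:KisinMod}(\ref{thm:KisinMod:Dcris}) themselves --- on the \'etale side $(\Fil^0\Bcris(W))^{\vphi=1}=\Qp$ (the absolute case of Lemma~\ref{lem:inv}) produces $L[\ivtd p]^{\Gal(\ol K_0/K_0)}$, while on the crystalline side $\Bcris(W)^{\Gal(\ol K_0/K_0)}=K_0$ reduces one to $\Fil^0\DdR(L[\ivtd p])\cap\Dcris(L[\ivtd p])^{\vphi=1}$ --- and only then invokes Theorem~\ref{thm:KisinMod}(\ref{thm:KisinMod:Vcris}) to descend to the integral statement.

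A second, smaller gap is the integral part. ``Intersecting with lattices'' plus ``rank-preserving full faithfulness'' does not give surjectivity of $(\sig^*\gM)^{\vphi=1}\ra\Fil^0\DdR(L[\ivtd p])\cap(W\otimes_{\sig,\Sig}\gM)^{\vphi=1}$: a priori an element of the right-hand side could lift only to a $\vphi$-invariant element of $\sig^*\gM[\ivtd p]$ that is not integral. In the paper this is where the actual work happens: Theorem~\ref{thm:KisinMod}(\ref{thm:KisinMod:Vcris}) is used to produce the integral isomorphism $L^{\Gal(\ol K_0/K_0)}\cong\gM^{\vphi=1}$, and the mod-$u$ matching of lattices is deduced from the fact that $\Sig\otimes_{\Zp}\gM^{\vphi=1}\hra\gM$ is a direct factor as a $\Sig$-module (because $L^{\Gal(\ol K_0/K_0)}\subset L$ is a $\Zp$-direct factor and the functor $\gM$ is exact). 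You would need to supply this saturation/direct-factor argument rather than appeal to ranks.
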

\begin{proof}
The first isomorphism is obtained by  taking $\Gal(\ol K_0/K_0)$-invariance of the isomorphisms in Theorem~\ref{thm:KisinMod}(\ref{thm:KisinMod:Dcris}). Indeed, we have $(\sig^*\gM[\ivtd p])^{\vphi=1}\subset \Fil^0(\sig^*\gM[\ivtd p]) $ by definition of $\Fil^0(\sig^*\gM[\ivtd p])$.

Let us show the last integrality assertion. Note that  we have $1\otimes\vphi:(\sig^*\gM)^{\vphi=1} \riso \gM^{\vphi=1}$. 
Now Theorem~\ref{thm:KisinMod}(\ref{thm:KisinMod:Vcris}) gives the following isomorphism
\[
L^{\Gal(\ol K_0/K_0)} = \Hom_{\Gal(\ol K_0/K_0)}(\triv, L) \ra \Hom_{\Sig,\vphi}(\triv, \gM(L)) =\gM^{\vphi=1},
\]
sending $f:\triv\ra L$ to $\gM(f):\triv\ra\gM$, which is compatible with the isomorphism  $L[\ivtd p]^{\Gal(\ol K_0/K_0)} \riso \sig^*\gM [1/p]^{\vphi=1} \xrightarrow[1\otimes\vphi]{\sim}\gM[\ivtd p]^{\vphi=1}$ that was just proved.

It remains to show that the mod~$u$ reduction $(\sig^*\gM)^{\vphi=1}\ra \Fil^0\DdR(L[1/p])\cap(W\otimes_{\sig,\Sig}\gM)^{\vphi=1}$ is an isomorphism. Since the map becomes an isomorphism after inverting $p$, it suffices to show that given $m\in \Fil^0\DdR(L[1/p])\cap(W\otimes_{\sig,\Sig}\gM)^{\vphi=1}$ its lift $\tilde m\in(\sig^*\gM)^{\vphi=1}[\ivtd p]$ lies in $(\sig^*\gM)^{\vphi=1}$. 

Note that the natural inclusion $L^{\Gal(\ol K_0/K_0)} \ra L$ corresponds to the following injective map of Kisin modules
\[
\Sig\otimes_{\Zp} L^{\Gal(\ol K_0/K_0)} \cong \Sig\otimes_{\Zp} \gM^{\vphi=1}\xleftarrow[1\otimes\vphi]{\sim}  \Sig\otimes_{\Zp} (\sigma^*\gM)^{\vphi=1} \hra \sigma^*\gM,
\]
and its cokernel is $p$-torsion free. Therefore for $\tilde m \in(\sig^*\gM)^{\vphi=1}[\ivtd p]$, we have $\tilde m\in(\sig^*\gM)^{\vphi=1}$ if and only if its mod~$u$ reduction lies in $W\otimes_{\sig,\Sig}\gM$; i.e.,  any $m\in \Fil^0\DdR(L[1/p])\cap(W\otimes_{\sig,\Sig}\gM)^{\vphi=1}$ can be lifted to $\tilde m\in(\sig^*\gM)^{\vphi=1}$. 
\end{proof}
\begin{thmsub}\label{thm:BK}
For any $p$-divisible group $Y$ over $W$, the isomorphism $\Dcris(V(Y)^*) \cong \DD(Y)(W)[\ivtd p]$ restricts to an isomorphism of $F$-crystals
\[ W\otimes_{\sig,\Sig}\gM(T(Y)^*) \cong \DD(Y)(W).\]
If we invert $p$, then the Hodge filtration on the right hand side induces the the image filtration of $\Fil^\bullet\sig^*\gM(T(Y)^*)[\ivtd p]$ on the left hand side.
\end{thmsub}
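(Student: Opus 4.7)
The plan is to deduce the theorem from Kisin's classification of $p$-divisible groups over $W$, which can be viewed as an integral refinement of the rational identifications recorded in Theorem~\ref{thm:KisinMod}. Recall that by \cite{kisin:fcrys} (for $p>2$; the case $p=2$ being handled by Kim or Liu), the contravariant functor $Y\mapsto \gM(T^*(Y))$ sets up an anti-equivalence between the category of $p$-divisible groups over $W$ and the full subcategory of Kisin modules whose $\vphi$-linearisation $1\otimes\vphi:\sig^*\gM\to \gM$ has cokernel killed by $E(u)=u-p$; i.e., Kisin modules of $E(u)$-height $\leqs 1$. Under this equivalence, Kisin constructs a natural isomorphism of $F$-crystals
\[
W\otimes_{\sig,\Sig}\gM(T^*(Y))\cong \DD(Y)(W),
\]
where the left-hand side is given the $\vphi$-structure inherited from $\gM(T^*(Y))$ via the quotient $\Sig\thra \Sig/u=W$. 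This is the asserted integral $F$-crystal identification, and inverting $p$ recovers exactly the isomorphism from Theorem~\ref{thm:KisinMod}(\ref{thm:KisinMod:Dcris}).

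For the filtration compatibility after inverting $p$, I would argue as follows. When $\fo_K=W$ the natural map $\Dcris(V^*(Y))\to \DdR(V^*(Y))$ is an isomorphism of filtered $K_0$-vector spaces, with the Hodge filtration on the target corresponding to the filtration $\Fil^1\cong (\Lie Y)^*$ on $\DD(Y)(W)[\ivtd p]$ via the classical Berthelot--Breen--Messing identification $\Dcris(V^*(Y))\cong \DD(Y)(W)[\ivtd p]$. On the other hand, Theorem~\ref{thm:KisinMod}(\ref{thm:KisinMod:DdR}) identifies $\DdR(V^*(Y))$ with $\sig^*\gM(T^*(Y))[\ivtd p]/(u-p)$ carrying the image filtration from $\Fil^\bullet(\sig^*\gM(T^*(Y))[\ivtd p])$. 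Since both identifications of the underlying $K_0$-vector space arise from the crystalline (equivalently de Rham) comparison with $\Bcris(W)$, they agree, and the image filtration transports to the Hodge filtration under the integral identification established in the previous paragraph.

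The step that genuinely requires work is the integrality of the $F$-crystal isomorphism: upgrading the rational Theorem~\ref{thm:KisinMod}(\ref{thm:KisinMod:Dcris}) to an isomorphism of $\Zp$-lattices is essentially the main content of Kisin's classification of $p$-divisible groups (and its extension to $p=2$), so I would simply invoke it. The remaining compatibility of the two $K_0$-linear identifications of $\Dcris$ and $\DdR$ with the Kisin module side is straightforward once one traces the constructions through the embedding $\Sig\hra \Acris(W)$ sending $u\mapsto [p^\flat]$ and uses that $[p^\flat]-p\in\Fil^1\Acris(W)$.
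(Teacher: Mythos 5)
Your proposal is correct and takes essentially the same route as the paper: the paper's proof is simply a citation of Kisin's result (\cite[Theorem~1.4.2]{Kisin:IntModelAbType}, resting on \cite{kisin:fcrys}) for $p>2$ or $Y^\vee$ connected, together with the $p=2$ extension from \cite{Kim:ClassifFFGpSchOver2AdicDVR}, which are exactly the inputs you invoke. Your additional tracing of the filtration compatibility through $\DdR$ and the embedding $\Sig\hra\Acris(W)$ is fine but is already part of the cited statement, so no new argument is needed.
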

\begin{proof}
If $p>2$ or $X^\vee$ is connected, then this is a result of Kisin (\emph{cf.} \cite[Theorem~1.4.2]{Kisin:IntModelAbType}. The remaining case when $p=2$ follows from  \cite[Proposition~4.2(1)]{Kim:ClassifFFGpSchOver2AdicDVR}.
\end{proof}

\begin{proof}[Proof of Theorem~\ref{thm:EtTate}]
Let us first show  Theorem~\ref{thm:EtTate} when $X$ is a $p$-divisible group over $R$, where $R$ is as in the beginning of \S\ref{sec:Ccris}. For $t:\triv\ra\DD(X)^\otimes$ as in Theorem~\ref{thm:EtTate}, let  $t_{\et}:\triv\ra V(X)^\otimes$ denote its \'etale realisation as constructed in (\ref{eqn:EtTate}). We choose   $x:R\ra  W$ and  a ``geometric point'' $\bar x$ supported at $x$.
Set $\gM_x:=\gM(T(X_x)_{\bar x}^*)$ and $\bM_x:=\DD(X_x)(W)$. By Theorem~\ref{thm:BK} we have a natural $F$-equivariant isomorphism
\begin{equation}\label{eqn:BKx}
W\otimes_{\sig,\Sig}\gM_x\cong \bM_x.
\end{equation}
Let $t_{\Sig,\bar x} \in(\gM_x^\otimes[\ivtd p])^{\vphi=1}$ be the tensor corresponding to $t_{\et,\bar x}\in (V(X)_{\bar x}^\otimes)^{\Gal(\ol K_0/K_0)}$ by Corollary~\ref{cor:KisinTateTensor}.

We want to show that $t_{\et}$ is integral, for which it suffices to show that $t_{\Sig,\bar x} \in(\gM_x^\otimes)^{\vphi=1}$ by Corollary~\ref{cor:KisinTateTensor}. Recall that $t_{\et,\bar x}$ is constructed so that it corresponds to the morphism $t_x:\triv\ra\DD(X_x)^\otimes$ by the crystalline comparison isomorphism. It now follows from Theorem~\ref{thm:KisinMod}(\ref{thm:KisinMod:Dcris}) and (\ref{eqn:BKx}) that the following natural isomorphism
\[ K_0 \otimes_{\sig,\Sig[1/p]}\gM_x^\otimes[\ivtd p] \cong \bM_x^\otimes[\ivtd p] \big(\cong \Dcris(V(X)_{\bar x}^*)^\otimes\big)\]
matches $1\otimes t_{\Sig,\bar x}$ with $t_x(W)$. But since $t_x(W)\in \Fil^0\bM_x^\otimes$ (not just in $\bM_x^\otimes$), we obtain $t_{\Sig,\bar x}\in (\gM_x^\otimes)^{\vphi=1}$ from Corollary~\ref{cor:KisinTateTensor} and (\ref{eqn:BKx}). This shows Theorem~\ref{thm:EtTate} when $\XX=\Spf R$.

To prove Theorem~\ref{thm:EtTate} in general, note that that $\XX$ admits a Zariski open covering $\set{\UU_\xi}$  where each $\UU_\xi = \Spf R_\xi$ satisfies the assumption as in the beginning of \S\ref{sec:Ccris}. We have just proved that there exists a morphism $t_{\et}|_{\UU^\rig_\xi}:\triv\ra T(X_{\UU_\xi})^\otimes$ for each $\xi$ that satisfies the condition in the theorem, and these morphisms should coincide at each overlap by uniqueness. So the locally defined tensors $\set{t_{\et}|_{\UU^\rig_\xi}}$ glue to give a tensor $t_{\et}$ on $\X$, which concludes the proof.
\end{proof}

\bibliography{bib}

\def\cprime{$'$}
\providecommand{\bysame}{\leavevmode\hbox to3em{\hrulefill}\thinspace}
\providecommand{\MR}{\relax\ifhmode\unskip\space\fi MR }
\providecommand{\MRhref}[2]{%
  \href{http://www.ams.org/mathscinet-getitem?mr=#1}{#2}
}
\providecommand{\href}[2]{#2}
\begin{thebibliography}{10}

\bibitem{Artin:VersalDefAlgStack}
M.~Artin, \emph{Versal deformations and algebraic stacks}, Invent. Math.
  \textbf{27} (1974), 165--189. \MR{0399094 (53 \#2945)}

\bibitem{MR700767}
P.~Berthelot and A.~Ogus, \emph{{$F$}-isocrystals and de {R}ham cohomology.
  {I}}, Invent. Math. \textbf{72} (1983), no.~2, 159--199. \MR{700767
  (85e:14025)}

\bibitem{berthelot:cohorigide}
Pierre Berthelot, \emph{Cohomologie rigide et cohomologie rigide {\`a} support
  propre. premi{\`e}re partie}, Pr\'epublication IRMAR 96-03 (1996).

\bibitem{Berthelot-Breen-Messing:DieudonneII}
Pierre Berthelot, Lawrence Breen, and William Messing, \emph{Th{\'e}orie de
  {D}ieudonn{\'e} cristalline. {II}}, Lecture Notes in Mathematics, vol. 930,
  Springer-Verlag, Berlin, 1982. \MR{667344 (85k:14023)}

\bibitem{Berthelot-Ogus}
Pierre Berthelot and Arthur Ogus, \emph{Notes on crystalline cohomology},
  Princeton University Press, Princeton, N.J., 1978. \MR{MR0491705 (58
  \#10908)}

\bibitem{BoschLuetkebohmert:Rigid2}
Siegfried Bosch and Werner L{{\"u}}tkebohmert, \emph{Formal and rigid geometry.
  {II}. {F}lattening techniques}, Math. Ann. \textbf{296} (1993), no.~3,
  403--429. \MR{1225983 (94e:11070)}

\bibitem{Brinon:CrisDR}
Olivier Brinon, \emph{Repr{\'e}sentations {$p$}-adiques cristallines et de de
  {R}ham dans le cas relatif}, M{\'e}m. Soc. Math. Fr. (N.S.) (2008), no.~112,
  vi+159. \MR{2484979 (2010a:14034)}

\bibitem{BruhatTits:RedGp1}
F.~Bruhat and J.~Tits, \emph{Groupes r{\'e}ductifs sur un corps local. {I}.
  {D}onn{\'e}es radicielles valu{\'e}es}, Inst. Hautes {\'E}tudes Sci. Publ.
  Math. (1972), no.~41, 5--251. \MR{0327923}

\bibitem{BueltelPappas:G-muDisplays}
Oliver B{\"u}ltel and George Pappas, \emph{{$(G,\mu)$}-displays and
  {R}apoport-{Z}ink spaces}, Preprint, {\tt arXiv:math/1702.00291} (2017),
  44~pages.

\bibitem{Chase:DirectProducts}
Stephen~U. Chase, \emph{Direct products of modules}, Trans. Amer. Math. Soc.
  \textbf{97} (1960), 457--473. \MR{0120260}

\bibitem{ChenKisinViehmann:AffDL}
Miaofen Chen, Mark Kisin, and Eva Viehmann, \emph{Connected components of
  affine {D}eligne-{L}usztig varieties in mixed characteristic}, Preprint, {\tt
  arXiv:math/1307.3845} (2013), 60~pages.

\bibitem{Colliot-Thelene-Sansuc:QuadFiber}
Jean-Louis Colliot-Th\'el\`ene and Jean-Jacques Sansuc, \emph{Fibr\'es
  quadratiques et composantes connexes r\'eelles,}, Math. Ann. \textbf{244}
  (1979), 105--134.

\bibitem{Conrad-deJong:Approx}
Brian Conrad and A.~J. de~Jong, \emph{Approximation of versal deformations}, J.
  Algebra \textbf{255} (2002), no.~2, 489--515. \MR{1935511 (2004a:14003)}

\bibitem{ConradGabberPrasad:PRedGp2}
Brian Conrad, Ofer Gabber, and Gopal Prasad, \emph{Pseudo-reductive groups},
  second ed., New Mathematical Monographs, vol.~26, Cambridge University Press,
  Cambridge, 2015. \MR{3362817}

\bibitem{dejong:crysdieubyformalrigid}
A.~J. de~Jong, \emph{Crystalline {D}ieudonn{\'e} module theory via formal and
  rigid geometry}, Inst. Hautes {\'E}tudes Sci. Publ. Math. \textbf{82} (1995),
  5--96. \MR{1383213 (97f:14047)}

\bibitem{deJong:FundGp}
\bysame, \emph{\'{E}tale fundamental groups of non-{A}rchimedean analytic
  spaces}, Compositio Math. \textbf{97} (1995), no.~1-2, 89--118, Special issue
  in honour of Frans Oort. \MR{1355119 (97c:32047)}

\bibitem{deJong-Messing}
A.~J. de~Jong and W.~Messing, \emph{Crystalline {D}ieudonn{\'e} theory over
  excellent schemes}, Bull. Soc. Math. France \textbf{127} (1999), no.~2,
  333--348. \MR{1708635 (2001b:14075)}

\bibitem{Deligne:CorvalisShimura}
Pierre Deligne, \emph{Vari{\'e}t{\'e}s de {S}himura: interpr{\'e}tation
  modulaire, et techniques de construction de mod{\`e}les canoniques},
  Automorphic forms, representations and {$L$}-functions ({P}roc. {S}ympos.
  {P}ure {M}ath., {O}regon {S}tate {U}niv., {C}orvallis, {O}re., 1977), {P}art
  2, Proc. Sympos. Pure Math., XXXIII, Amer. Math. Soc., Providence, R.I.,
  1979, pp.~247--289. \MR{546620 (81i:10032)}

\bibitem{Faltings:IntegralCrysCohoVeryRamBase}
Gerd Faltings, \emph{Integral crystalline cohomology over very ramified
  valuation rings}, J. Amer. Math. Soc. \textbf{12} (1999), no.~1, 117--144.
  \MR{MR1618483 (99e:14022)}

\bibitem{Fargues:AsterisqueLLC}
Laurent Fargues, \emph{Cohomologie des espaces de modules de groupes
  {$p$}-divisibles et correspondances de {L}anglands locales}, Ast{\'e}risque
  (2004), no.~291, 1--199, Vari{{\'e}}t{{\'e}}s de Shimura, espaces de
  Rapoport-Zink et correspondances de Langlands locales. \MR{2074714
  (2005g:11110b)}

\bibitem{fontaine:Asterisque223ExpII}
Jean-Marc Fontaine, \emph{Le corps des p\'eriodes {$p$}-adiques}, Ast\'erisque
  (1994), no.~223, 59--111, With an appendix by Pierre Colmez, P\'eriodes
  $p$-adiques (Bures-sur-Yvette, 1988). \MR{MR1293971 (95k:11086)}

\bibitem{GD:EGA}
Alexander Grothendieck, \emph{El\'ements de g\'eom\'etrie alg\'ebrique,
  {I}--{IV}}, Inst. Hautes \'Etudes Sci. Publ. Math. (1960-1967), no.~4, 8, 11,
  17, 20, 24, 28, and 32.

\bibitem{Grothendieck:BTMontreal}
Alexandre Grothendieck, \emph{Groupes de {B}arsotti-{T}ate et cristaux de
  {D}ieudonn{\'e}}, Les Presses de l'Universit{\'e} de Montr{\'e}al, Montreal,
  Que., 1974, S{{\'e}}minaire de Math{{\'e}}matiques Sup{{\'e}}rieures, No. 45
  ({{\'E}}t{{\'e}}, 1970). \MR{0417192 (54 \#5250)}

\bibitem{HowardPappas:GSpin}
Benjamin Howard and Georgios Pappas, \emph{Rapoport-{Z}ink spaces for spinor
  groups}, Compos. Math. \textbf{153} (2017), no.~5, 1050--1118. \MR{3705249}

\bibitem{Huber:EtCohBook}
Roland Huber, \emph{\'{E}tale cohomology of rigid analytic varieties and adic
  spaces}, Aspects of Mathematics, E30, Friedr. Vieweg \& Sohn, Braunschweig,
  1996. \MR{1734903 (2001c:14046)}

\bibitem{Illusie:DeforBT}
Luc Illusie, \emph{D{\'e}formations de groupes de {B}arsotti-{T}ate
  (d'apr{\`e}s {A}. {G}rothendieck)}, Ast{\'e}risque (1985), no.~127, 151--198,
  Seminar on arithmetic bundles: the Mordell conjecture (Paris, 1983/84).
  \MR{801922}

\bibitem{Katz:SerreTate}
N.~Katz, \emph{Serre-{T}ate local moduli}, Algebraic surfaces ({O}rsay,
  1976--78), Lecture Notes in Math., vol. 868, Springer, Berlin, 1981,
  pp.~138--202. \MR{638600 (83k:14039b)}

\bibitem{Kim:ClassifFSm}
Wansu Kim, \emph{The relative {Breuil-Kisin} classification of {$p$}-divisible
  groups and finite flat group schemes}, Int. Math. Res. Not. \textbf{Advance
  Online Access} (2014), 81 pages.

\bibitem{Kim:Unif}
\bysame, \emph{{R}apoport-{Z}ink uniformisation of {H}odge-type {S}himura
  varieties}, Preprint (2015), 28~pages.

\bibitem{Kim:ClassifFFGpSchOver2AdicDVR}
Wausu Kim, \emph{The classification of $p$-divisible groups over $2$-adic
  discrete valuation rings}, Math. Res. Lett. \textbf{19} (2012), no.~01,
  121--141.

\bibitem{kisin:fcrys}
Mark Kisin, \emph{Crystalline representations and {$F$}-crystals}, Algebraic
  geometry and number theory, Progr. Math., vol. 253, Birkh{\"a}user Boston,
  Boston, MA, 2006, pp.~459--496. \MR{MR2263197 (2007j:11163)}

\bibitem{Kisin:IntModelAbType}
\bysame, \emph{Integral models for {S}himura varieties of abelian type}, J.
  Amer. Math. Soc. \textbf{23} (2010), no.~4, 967--1012.

\bibitem{Knutson:AlgSp}
Donald Knutson, \emph{Algebraic spaces}, Lecture Notes in Mathematics, Vol.
  203, Springer-Verlag, Berlin, 1971. \MR{0302647 (46 \#1791)}

\bibitem{Kottwitz:Gisoc1}
Robert~E. Kottwitz, \emph{Isocrystals with additional structure}, Compositio
  Math. \textbf{56} (1985), no.~2, 201--220. \MR{809866 (87i:14040)}

\bibitem{Lau:DieudonneSemiperfect}
Eike Lau, \emph{Dieudonn{\'e} theory over semiperfect rings and perfectoid
  rings}, Preprint, {\tt arXiv:math/1603.07831} (2016).

\bibitem{Laumon-MoretBailly}
G{{\'e}}rard Laumon and Laurent Moret-Bailly, \emph{Champs alg{\'e}briques},
  Ergebnisse der Mathematik und ihrer Grenzgebiete. 3. Folge. A Series of
  Modern Surveys in Mathematics [Results in Mathematics and Related Areas. 3rd
  Series. A Series of Modern Surveys in Mathematics], vol.~39, Springer-Verlag,
  Berlin, 2000. \MR{1771927 (2001f:14006)}

\bibitem{matsumura:crt}
Hideyuki Matsumura, \emph{Commutative ring theory}, Cambridge Studies in
  Advanced Mathematics, vol.~8, Cambridge University Press, Cambridge, 1986,
  Translated from the Japanese by M. Reid. \MR{879273 (88h:13001)}

\bibitem{Mazur-Messing}
B.~Mazur and William Messing, \emph{Universal extensions and one dimensional
  crystalline cohomology}, Lecture Notes in Mathematics, Vol. 370,
  Springer-Verlag, Berlin, 1974. \MR{0374150 (51 \#10350)}

\bibitem{messingthesis}
William Messing, \emph{The crystals associated to {B}arsotti-{T}ate groups:
  with applications to abelian schemes}, Lecture Notes in Mathematics, Vol.
  264, Springer-Verlag, Berlin, 1972. \MR{0347836 (50 \#337)}

\bibitem{Milne:Shimura}
J.~S. Milne, \emph{Introduction to {S}himura varieties}, Harmonic analysis, the
  trace formula, and {S}himura varieties, Clay Math. Proc., vol.~4, Amer. Math.
  Soc., Providence, RI, 2005, pp.~265--378. \MR{2192012 (2006m:11087)}

\bibitem{Moonen:IntModels}
Ben Moonen, \emph{Models of {S}himura varieties in mixed characteristics},
  Galois representations in arithmetic algebraic geometry ({D}urham, 1996),
  London Math. Soc. Lecture Note Ser., vol. 254, Cambridge Univ. Press,
  Cambridge, 1998, pp.~267--350. \MR{1696489 (2000e:11077)}

\bibitem{RapoportRichartz:Gisoc}
M.~Rapoport and M.~Richartz, \emph{On the classification and specialization of
  {$F$}-isocrystals with additional structure}, Compositio Math. \textbf{103}
  (1996), no.~2, 153--181. \MR{1411570 (98c:14015)}

\bibitem{RapoportViehmann:LocShVar}
Michael Rapoport and Eva Viehmann, \emph{Towards a theory of local {S}himura
  varieties}, M{\"u}nster J. Math. \textbf{7} (2014), no.~1, 273--326.
  \MR{3271247}

\bibitem{RapoportZink:RZspace}
Michael Rapoport and Thomas Zink, \emph{Period spaces for {$p$}-divisible
  groups}, Annals of Mathematics Studies, vol. 141, Princeton University Press,
  Princeton, NJ, 1996. \MR{1393439}

\bibitem{Schlessinger:FunctArtRing}
Michael Schlessinger, \emph{Functors of {A}rtin rings}, Trans. Amer. Math. Soc.
  \textbf{130} (1968), 208--222. \MR{MR0217093 (36 \#184)}

\bibitem{Scholze:CdR}
Peter Scholze, \emph{{$p$}-adic {H}odge theory for rigid-analytic varieties},
  Forum Math. Pi \textbf{1} (2013), e1, 77. \MR{3090230}

\bibitem{Scholze:PerfectoidSurvey}
\bysame, \emph{Perfectoid spaces: a survey}, Current developments in
  mathematics 2012, Int. Press, Somerville, MA, 2013, pp.~193--227.
  \MR{3204346}

\bibitem{ScholzeWeinstein:RZ}
Peter Scholze and Jared Weinstein, \emph{Moduli of {$p$}-divisible groups},
  Camb. J. Math. \textbf{1} (2013), no.~2, 145--237. \MR{3272049}

\bibitem{Valabrega:FewThms}
Paolo Valabrega, \emph{A few theorems on completion of excellent rings}, Nagoya
  Math. J. \textbf{61} (1976), 127--133. \MR{0407007 (53 \#10790)}

\bibitem{Vasiu:GoodRedn1}
Adrian Vasiu, \emph{Good reductions of {S}himura varieties of {H}odge type in
  arbitrary unramified mixed characteristic, part {I}}, Preprint,
  {arXiv:0707.1668} (2007), 53~pages.

\bibitem{Vasiu:ys}
\bysame, \emph{A motivic conjecture of {Milne}}, to appear in J. Reine Angew.
  Math. (2012), 60~pages.

\bibitem{Wedhorn:TannakianDVR}
Torsten Wedhorn, \emph{On {T}annakian duality over valuation rings}, J. Algebra
  \textbf{282} (2004), no.~2, 575--609. \MR{2101076 (2005j:18007)}

\bibitem{Wintenberger:Splitting}
Jean-Pierre Wintenberger, \emph{Un scindage de la filtration de {H}odge pour
  certaines vari{\'e}t{\'e}s alg{\'e}briques sur les corps locaux}, Ann. of
  Math. (2) \textbf{119} (1984), no.~3, 511--548. \MR{744862 (86k:14015)}

\bibitem{Zhu:Satake}
Xinwen Zhu, \emph{Affine {G}rassmannians and the geometric {S}atake in mixed
  characteristic}, Ann. of Math. (2) \textbf{185} (2017), no.~2, 403--492.
  \MR{3612002}

\end{thebibliography}
\bibliographystyle{amsplain}

\end{document}